\newcommand{\note}[1]{\marginpar{\small \sf#1}}
\numberwithin{equation}{section}
\theoremstyle{plain}
\newtheorem{theorem}{Theorem}[section]
\newtheorem{corollary}[theorem]{Corollary}
\newtheorem{lemma}[theorem]{Lemma}
\newtheorem{proposition}[theorem]{Proposition}
\newtheorem{conjecture}[theorem]{Conjecture}
\theoremstyle{definition}
\theoremstyle{remark}
\newtheorem{remark}[theorem]{Remark}
\newcommand{\A}{\mathbb{A}}
\newcommand{\R}{\mathbb{R}}
\newcommand{\Q}{\mathbb{Q}}
\newcommand{\Z}{\mathbb{Z}}
\newcommand{\C}{\mathbb{C}}
\renewcommand{\H}{\mathbb{H}}
\newcommand{\F}{\mathbb{F}}
\newcommand{\D}{\mathbb{D}}
\newcommand{\zxz}[4]{\begin{pmatrix} #1 & #2 \\ #3 & #4 \end{pmatrix}}
\newcommand{\kzxz}[4]{\left(\begin{smallmatrix} #1 & #2 \\ #3 & #4\end{smallmatrix}\right) }
\newcommand{\calA}{A}
\newcommand{\calC}{\mathcal{C}}
\newcommand{\calD}{\mathcal{D}}
\newcommand{\calF}{\mathcal{F}}
\newcommand{\calH}{\mathcal{H}}
\newcommand{\calO}{\mathcal{O}}
\newcommand{\calX}{\mathcal{X}}
\newcommand{\calZ}{\mathcal{Z}}
\newcommand{\frakg}{\mathfrak g}
\newcommand{\frakk}{\mathfrak k}
\newcommand{\frakp}{\mathfrak p}
\newcommand{\bs}{\backslash}
\newcommand{\norm}{\operatorname{N}}
\newcommand{\vol}{\operatorname{vol}}
\newcommand{\tr}{\operatorname{tr}}
\newcommand{\Gl}{\operatorname{GL}}
\newcommand{\Sp}{\operatorname{Sp}}
\newcommand{\GSpin}{\operatorname{GSpin}}
\newcommand{\Mp}{\operatorname{Mp}}
\newcommand{\Uni}{\operatorname{U}}
\newcommand{\Hom}{\operatorname{Hom}}
\newcommand{\Aut}{\operatorname{Aut}}
\newcommand{\Mat}{\operatorname{Mat}}
\newcommand{\End}{\operatorname{End}}
\newcommand{\Sym}{\operatorname{Sym}}
\newcommand{\sig}{\operatorname{sig}}
\newcommand{\sym}{\text{\rm sym}}
\newcommand{\univ}{\text{\rm univ}}
\newcommand{\reg}{\text{\rm reg}}
\newcommand{\main}{\text{\rm main}}
\newcommand{\red}{\text{\rm red}}
\newcommand{\GL}{\operatorname{GL}}
\newcommand{\SO}{\operatorname{SO}}
\newcommand{\Ch}{\operatorname{Ch}}
\newcommand{\Cha}{\widehat{\operatorname{Ch}}}
\newcommand{\ord}{\operatorname{ord}}
\newcommand{\Ei}{\operatorname{Ei}}
\newcommand{\Lie}{\operatorname{Lie}}
\newcommand{\Ht}{\operatorname{ht}}
\newcommand{\pr}{\operatorname{pr}}
\newcommand{\uk}{\mathbf k}
\newcommand{\ff}{\operatorname{if}}
\newcommand{\IM}{\operatorname{Im}}
\newcommand{\diag}{\operatorname{diag}}
\newcommand{\co}{\mathcal  O}
\newcommand{\cha}{\operatorname{char}}
\newcommand{\Ind}{\operatorname{Ind}}
\newcommand{\Diff}{\operatorname{Diff}}
\newcommand{\RZ}{\operatorname{RZ}}
\newcommand{\newV}{\mathbb{V}}
\newcommand{\newH}{\mathbb{H}}
\newcommand{\newL}{ \mathbb{L}}
\newcommand{\newK}{\mathbb{K}}
\newcommand{\newphi}{\varphi}
\newcommand{\kay}{\bar{\mathbb F}_p}
\newcommand{\K}{\mathbb K}
\newcommand{\HH}{\tilde H}
\newcommand{\GSp}{\operatorname{GSp}}
\begin{document}

\title[Arithmetic degrees of special cycles]{Arithmetic degrees of special cycles and derivatives of Siegel Eisenstein series}

\author[Jan H.~Bruinier and Tonghai Yang]{Jan
Hendrik Bruinier and Tonghai Yang}
\address{Fachbereich Mathematik,
Technische Universit\"at Darmstadt, Schlossgartenstrasse 7, D--64289
Darmstadt, Germany}
\email{bruinier@mathematik.tu-darmstadt.de}
\address{Department of Mathematics, University of Wisconsin Madison, Van Vleck Hall, Madison, WI 53706, USA}
\email{thyang@math.wisc.edu}


\thanks{The first author is partially supported by DFG grant BR-2163/4-2 and the LOEWE research unit USAG.
The second author is partially supported by  NSF grant DMS-1762289.
}


\begin{abstract}
Let $V$ be a rational quadratic space of signature $(m,2)$.
A conjecture of Kudla relates the arithmetic degrees of top degree special cycles on an integral model of a Shimura variety associated with $\SO(V)$ to the coefficients of the central derivative of an incoherent Siegel Eisenstein series of genus $m+1$.
We prove this conjecture for the coefficients of non-singular index $T$ when $T$ is not positive definite. We also prove it when $T$ is positive definite and
the corresponding special cycle has dimension $0$. To obtain these results, we establish new local arithmetic Siegel-Weil formulas at the archimedean and non-archimedian places.
\end{abstract}

\maketitle


\section{Introduction}
\label{sect:intro}

The classical Siegel-Weil formula connects the arithmetic of quadratic forms with Eisenstein series for symplectic groups \cite{Si}, \cite{We}, \cite{KR1}. In particular, it yields explicit formulas for the representation numbers of integers by the genus of a quadratic form in terms of generalized divisor sum functions.

The Siegel-Weil formula also has important geometric applications.
For instance, it  leads to formulas for the degrees of special cycles on orthogonal Shimura varieties in terms of Fourier coefficients of Eisenstein series. To describe this,
we let $(V,Q)$ be a rational quadratic space of signature $(m,2)$.
To simplify the exposition, we assume throughout the introduction that $m$ is even, the general case is treated in the body of this paper.
Denote by $H=\SO(V)$ the special orthogonal group of $V$, and let $\calD$ be the corresponding hermitian symmetric space, realized as the Grassmannian of oriented negative definite planes in $V(\R)$.
For a compact open subgroup $K\subset H(\A_f)$ we consider the Shimura variety
\[
X_K=H(\Q)\bs (\calD\times H(\A_f)/ K) .
\]
It is a quasi-projective variety of dimension $m$, which has a canonical model over $\Q$.
Every positive definite subspace $U\subset V$ of dimension $n$ induces an embedding of groups $\SO(U^\perp)\to H$ and thereby a special cycle $Z(U)$ of codimension $n$ on $X_K$.
Moreover,
for every positive definite symmetric matrix $T\in \Sym_n(\Q)$ and every $K$-invariant Schwartz function $\varphi \in S(V^n(\A_f))$ there is a composite codimension $n$ cycle
\[
Z(T,\varphi)
\]
on $X_K$,
which is a certain linear combination of the $Z(U)$ for which $U$ has Gram matrix $2T$.
The classes of these cycles in the cohomology $H^{2n}(X_K,\C)$ and in the Chow group $\Ch^n(X_K)$ are important geometric invariants. Kudla and Millson also defined cycle classes $Z(T,\varphi)$ for positive semi-definite $T\in \Sym_n(\Q)$. They showed that the generating series of the cohomology classes of these cycles is a Siegel modular form of genus $n$ and weight $1+m/2$ (see \cite{KM3}, \cite{Ku:Duke}), generalizing the celebrated work of Hirzebruch-Zagier for Hilbert modular surfaces \cite{HZ}. The analogous statement for the classes in the Chow group was conjectured by Kudla and recently proved in \cite{Zh}, \cite{BW}.

It is natural to ask for more precise information about the automorphic properties of the generating series.
For the special cycles of maximal codimension, that is for $n=m$,
this question can be answered by means of the Siegel-Weil formula.
If $X_K$ has $r$ connected components, the compactly supported cohomology $H_c^{2m}(X_K,\C)$ is isomorphic to $\C^r$ via the degree maps on the connected components.
If $X_K$ is compact, Kudla showed \cite[Theorem 10.3]{Ku:Duke} that the generating series of the degrees of the special cycles is given by a Siegel Eisenstein series of genus $n$ and weight $\kappa=1+m/2$, that is,
\begin{align}
\label{eq:degid}
\sum_T \deg(Z(T,\varphi)) \cdot q^T =C\cdot E\big(\tau,1/2,\lambda(\varphi)\otimes \Phi_{\kappa}\big).
\end{align}
Here $\tau=u+iv$ is a variable in the Siegel upper half plane $\H_n$, and $C$ is an explicit normalizing constant which is independent of $\varphi$.
Moreover, $\lambda(\varphi)$ denotes a certain section of the induced representation $I(s,\chi_V)$ of $\Sp_n(\A_f)$ associated with $\varphi$, and $\Phi_{\kappa}$ denotes the standard section of weight $\kappa$ of the corresponding induced representation of $\Sp_n(\R)$, see Section \ref{sect2.1}. If $X_K$ is non-compact, the Eisenstein series is usually non-holomorphic and the treatment of the non-holomorphic contributions needs extra care, see e.g.~\cite{Fu:Compositio}, \cite{FM}.

The proof of this result involves the Schwartz-forms
$\varphi_{KM}^n(x,z)\in S(V^n(\R))\otimes A^{2n}(\calD)$ constructed by Kudla-Millson \cite{KM3}, which are Poincar\'e dual to special codimension $n$-cycles.  Since they transform with weight $\kappa$ under the maximal compact subgroup $U(n)\subset \Sp_n(\R)$, the theta series
\[
\theta^n_{KM}(\tau,\varphi,z,h)= \det(v)^{-\kappa/2}\sum_{x\in V^n(\Q)} \varphi(h^{-1}x)\cdot \big(\omega(g_\tau)\varphi_{KM}^n(x,z)\big)
\]
is a smooth (non-holomorphic) Siegel modular form of weight $\kappa$ in $\tau$. Here  $h \in  H(\A_f)$,  $\omega$ denotes the Weil representation of $\Sp_n$ and $g_\tau=\kzxz{1}{u}{}{1}
\kzxz{a}{}{}{{}^ta^{-1}}\in \Sp_n(\R)$ with $a\,{}^ta=v$.
The $T$-th Fourier coefficient of this theta series represents the
de Rham cohomology class of $Z(T,\varphi)$.
When   $n=m$, the generating series for the degrees is obtained by integrating $\theta^n_{KM}(\tau,\varphi, z,h)$ over $X_K$.
This can be evaluated by means of the Siegel-Weil formula, leading to \eqref{eq:degid}. As a consequence,
it can be shown that the intersection number of two special cycles
$Z(T_1,\varphi_1)$ and $Z(T_2,\varphi_2)$ of complementary codimensions $n_1$ and $n_2$ is given by the corresponding Fourier coefficient of the block diagonal restriction to $\H_{n_1}\times \H_{n_2}$ of the Eisenstein series (see \cite[Section~10]{Ku:Duke}). By means of the doubling method connections to special values of $L$-functions can be obtained.

\medskip

Kudla initiated a program connecting the Arakelov geometry of special cycles on integral models of orthogonal (and unitary) Shimura varieties to Siegel (Hermitian)  modular forms, see e.g.~\cite{Ku1}, \cite{KRY-book}. In particular, in this setting arithmetic degrees of special cycles are conjecturally connected to derivatives of Siegel Eisenstein series.
We describe some aspects of this program which are important for the present paper.

We consider arithmetic cycles in the sense of Gillet-Soul\'e (see \cite{GS}, \cite{SABK}), which are given by pairs consisting of a cycle on an integral model of $X_K$ and a Green current for the cycle.
For $x\in V(\R)$, Kudla constructed a Green function
\[
z\mapsto \xi_0(x,z)
\]
on $\calD$. It has a logarithmic singularity
along the special divisor determined by $x$, see \eqref{eq:defxi}.
More generally,
if $x=(x_1,\dots,x_n)\in V^n(\R)$ such that the moment matrix $T=Q(x)=\frac{1}{2}((x_i,x_j))_{i,j}$ is invertible, one obtains a Green current for the special codimension $n$ cycle
$\calD_x= \{z\in \calD\mid \; (z,x_1)=\dots (z,x_n)=0\}$ by taking the star product
\[
\xi^n_0(x,z)=\xi_0(x_1,z)*\dots *\xi_0(x_n,z).
\]
It satisfies the current equation
\[
dd^c [\xi^n_0(x,z)] + \delta_{\calD_x} = [\varphi^n_{KM,0}(x,z)],
\]
where $\varphi^n_{KM,0}(x,z)= \varphi^n_{KM}(x,z)\cdot e^{2\pi \tr Q(x)}$ is essentially the Poincar\'e dual form considered above, and $\delta_{\calD_x}$ is the Dirac current given by integration over $\calD_x$.
For the rest of this introduction we assume that
 $T\in \Sym_n(\Q)$ is invertible. Then we obtain a
Green current for the cycle $Z(T,\varphi)$ on $X_K$ by
\[
G(T,\varphi,v,z,h) = \sum_{\substack{x\in V^n(\Q)\\Q(x)=T}}
\varphi(h^{-1}x)\cdot \xi^n_0(xv^{1/2},z),
\]
where $z\in \calD$ and $h\in H(\A_f)$.

To describe the integral models of $X_K$ and the special cycles we are working with, we
assume for convenience that $V$ contains a unimodular even lattice $L$. This assumption can
and will be relaxed when considering local integral models later on, see Remark \ref{rem:maintheo1}. We let $K = \SO(\hat L)$ be the stabilizer of $\hat L=L\otimes\hat \Z $ in $H(\A_f )$, and let $\varphi =\varphi_L= \operatorname{char}(\hat L^n)$
be the characteristic function of $\hat L^n$.
By work of Kisin, Vasiu, and Madapusi Pera, the Shimura variety $X_K$ has a canonical integral model $\calX_K$, which is a smooth stack over $\Z$,  see \cite{Ki}, \cite{MP}, \cite[Theorem 4.2.2]{AGHM}.
There is a polarized abelian scheme $\calA$
of dimension $2^{m+1}$ over $\calX_K$, which is equipped with an action of the Clifford algebra $C(L)$ of $L$. For an $S$-valued point of $\calX_K$ there is a space of special endomorphisms
\[
V(\calA_S)\subset \End_{C(L)}(\calA_S)
\]
on the pull-back $A_S$ of $A$,
which is endowed with a positive definite even quadratic form $Q$, see
\cite[Section~4]{AGHM}. It can be used to define an integral model of $\calZ(T, \varphi)$ of $Z(T, \varphi)$ as the sub-stack of $\calX_K$ whose $S$-valued points have an $x\in V(A_S)^n$
with $Q(x)=T$.
The pair
\[
\widehat \calZ(T,\varphi,v) = \big(\calZ(T,\varphi),G(T,\varphi,v) \big)
\]
determines a class in an arithmetic Chow group.
Through the Green current it depends
on $v = \Im(\tau)$. In analogy with the geometric situation described earlier, we would like to
understand the classes of these cycles and their relations.

As before we focus on the case of top degree cycles, which is here the case $n = m + 1$.
If $T$ is not positive definite, then  $\calZ(T,\varphi)$
vanishes, but the arithmetic cycle $\widehat \calZ(T,\varphi,v)$
has non-trivial current part. On the other hand, if $T$ is positive definite, then $\widehat \calZ(T,\varphi,v)$
has trivial current part, and the cycle is entirely supported in positive characteristic. In
fact, if it is non-trivial then it is supported in the fiber above one single prime $p$. The
dimensions of the irreducible components were recently determined by Soylu \cite{Cihan-thesis}.
In particular, he showed that $\mathcal Z(T, \varphi)(\kay)$ is finite if and only if the reduction of $T$ modulo $p$
is of  rank $n-1$, $n-2$, or of rank $n-3$ (plus a technical condition).
We refer  to Theorem \ref{theo:Soylu} for details.
Throughout this paper we consider the cases when either  $T$ is not positive definite, or $T$ is positive definite and $\calZ(T,\varphi)$ has dimension $0$. 

According to \cite[Theorem 7.4]{MP}, there exists a regular toroidal compactification $\overline\calX_K$ of $\calX_K$ with generic fiber $\overline  X_K$. The cycle $\widehat \calZ(T,\varphi,v)$ defines a class in $\Cha^n_\C(\overline\calX_K)$.
Recall that there exists an arithmetic degree map
\[
\widehat \deg: \Cha^n_\C(\overline\calX_K)\longrightarrow\C
\]
which is given as a sum of local degrees
\[
\widehat \deg(\calZ,G) = \sum_{p\leq \infty} \widehat\deg_p(\calZ,G),
\]
where for an arithmetic cycle $(\calZ,G)$ the local degrees are defined as
\[
\widehat\deg_p(\calZ,G)= \begin{cases} \displaystyle
\sum_{x\in \calZ(\bar\F_p)}\frac{ \Ht_p(x)}{|\Aut(x)|}\cdot \log(p) ,&\text{if $p<\infty$,}\\[3ex]
\displaystyle
\frac{1}2 \int_{\overline \calX_K(\C)} G,&\text{if $p=\infty$.}
\end{cases}
\]
Here $\Ht_p(x)$ denotes the length of the \'etale local ring $\calO_{\calZ,x}$ of $\calZ$ at the point $x$. Kudla conjectured the following description of the arithmetic degrees of special cycles
in terms of derivatives of Siegel Eisenstein series of genus $n$, see \cite{Ku1}, \cite{Ku:MSRI}.

\begin{conjecture}[Kudla]
\label{conj:ku}
Assume that $n=m+1$ and that $T\in \Sym_n(\Q)$ is invertible.
Then
\[
\widehat \deg \big( \widehat \calZ(T,\varphi,v)\big) \cdot q^T= \hat C\cdot E'_T(\tau,0, \lambda(\varphi)\otimes \Phi_{\kappa}),
\]
where $\hat C$ denotes a constant which is independent of $T$ and $\varphi$ (see Theorem \ref{theo:ArithSW-infinite}), $E_T(\tau,s,\Phi)$ denotes the $T$-th Fourier coefficient of a Siegel Eisenstein series $E(\tau,s,\Phi)$, and the derivative is taken with respect to $s$.
%
\end{conjecture}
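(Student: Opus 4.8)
The plan is to prove the identity one place of $\Q$ at a time: both sides decompose as sums of local contributions indexed by the places $v\leq\infty$, and we compare them separately.

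\emph{Decomposition of the left hand side.} By definition of the arithmetic degree, $\widehat\deg(\widehat\calZ(T,\varphi,v))=\sum_{p\leq\infty}\widehat\deg_p(\widehat\calZ(T,\varphi,v))$. If $T$ is not positive definite then $\calZ(T,\varphi)=\emptyset$, because the quadratic form on $V(\calA_S)$ is positive definite; hence all finite local degrees vanish and $\widehat\deg=\widehat\deg_\infty=\tfrac12\int_{\overline\calX_K(\C)}G(T,\varphi,v)$. If $T$ is positive definite then $G(T,\varphi,v)\equiv 0$, since there is no $x\in V^n(\Q)$ with $Q(x)=T$ (a positive definite subspace of $V$ has dimension $\leq m<n$), so $\widehat\deg_\infty=0$; if in addition $\calZ(T,\varphi)$ has dimension $0$, then by Theorem~\ref{theo:Soylu} it is either empty, in which case the left hand side vanishes, or finite and supported in the fibre over a single prime $p$ with $T\bmod p$ of rank $n-1$, $n-2$, or $n-3$, in which case $\widehat\deg=\widehat\deg_p$ is the finite sum of local heights in the statement.

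\emph{Decomposition of the right hand side.} For invertible $T$ the Fourier coefficient factors as $E_T(\tau,s,\lambda(\varphi)\otimes\Phi_\kappa)=C(s)\,W_{T,\infty}(\tau,s)\prod_{p<\infty}W_{T,p}(s)$ into local Whittaker functions, with an elementary normalizing factor $C(s)$. Since the Eisenstein series is incoherent, the set $\Diff(T)$ of places $v$ with $W_{T,v}(0)=0$ has odd cardinality; if $|\Diff(T)|\geq 3$ then $E'_T(\tau,0)=0$ and, correspondingly, the left hand side vanishes as well ($G\equiv 0$, respectively $\calZ(T,\varphi)=\emptyset$), so we may assume $\Diff(T)=\{v_0\}$ is a single place, equal to $\infty$ when $T$ is not positive definite and to the distinguished prime $p$ when $T$ is positive definite. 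Then $E'_T(\tau,0)=C(0)\,W'_{T,v_0}(\tau,0)\prod_{v\neq v_0}W_{T,v}(\tau,0)$, where the product over $v\neq v_0$ is nonzero and is given in closed form by the classical Siegel--Weil formula; it is precisely the finite Siegel--Weil factor that appears in the degree identity~\eqref{eq:degid}.

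\emph{The two local arithmetic Siegel--Weil formulas, and assembly.} It remains to identify the surviving local term on each side, up to the universal constant $\hat C$, and this is the technical heart of the matter. At the archimedean place one shows that $\tfrac12\int_{\overline\calX_K(\C)}G(T,\varphi,v)$, the integral over the complex points of Kudla's Green current assembled from the star products $\xi^n_0(x,z)$, equals $\hat C\,W'_{T,\infty}(\tau,0)$ times the finite Siegel--Weil factor: using the current equation $dd^c[\xi^n_0]+\delta_{\calD_x}=[\varphi^n_{KM,0}]$ together with the classical Siegel--Weil formula in the finite adelic variables, the integral collapses to an archimedean orbital integral against the Kudla--Millson form, which is then evaluated as the derivative at $s=0$ of the archimedean Whittaker function; the contribution of the toroidal boundary $\overline\calX_K\setminus\calX_K$ and the regularization when $X_K$ is non-compact require care. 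At the distinguished finite place $p$ one shows that $\sum_{x\in\calZ(T,\varphi)(\kay)}\tfrac{\Ht_p(x)}{|\Aut(x)|}\log p$ equals $\hat C\,W'_{T,p}(0)$ times the same finite factor: one uniformizes the supersingular locus supporting $\calZ(T,\varphi)$ by a Rapoport--Zink space, rewrites $\Ht_p(x)$ as the length of a deformation ring of special quasi-endomorphisms, and matches this length with the leading Taylor coefficient at $s=0$ of the local representation density of $T$; the $0$-dimensionality hypothesis (corank of $T\bmod p$ at most $3$, via Theorem~\ref{theo:Soylu}) is exactly what makes these lengths finite and the Rapoport--Zink computation accessible. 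Summing the resulting identities over all $v\leq\infty$ yields $\widehat\deg(\widehat\calZ(T,\varphi,v))\cdot q^T=\hat C\cdot E'_T(\tau,0,\lambda(\varphi)\otimes\Phi_\kappa)$.

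\emph{Main obstacle.} The hardest step is the non-archimedean local formula when $T\bmod p$ has corank $2$ or $3$. In contrast to the corank-$1$ case, where the relevant cycle on the Rapoport--Zink space is a divisor whose length is governed by a single local equation, the higher-corank cases demand a genuinely higher-codimensional local intersection computation, to be matched against the second- or third-order Taylor expansion of a representation density; this is where most of the work goes. A secondary difficulty is the evaluation of the archimedean star-product integral, in particular the contributions from the toroidal boundary.
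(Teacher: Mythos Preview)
The statement you are addressing is a \emph{conjecture}; the paper does not prove it in full. Only the cases in Theorem~\ref{maintheo1} are established: $|\Diff(\calC,T)|>1$; $\Diff(\calC,T)=\{\infty\}$; or $\Diff(\calC,T)=\{p\}$ for an odd prime $p$ with $\calZ(T,\varphi)$ zero-dimensional. The case $p=2$ and, more seriously, the case where $T$ is positive definite with $\calZ(T,\varphi)$ of positive dimension remain open (in the latter even the definition of $\widehat\deg$ needs more care, as the paper notes right after the conjecture). Your outline implicitly restricts to the zero-dimensional situation (``if in addition $\calZ(T,\varphi)$ has dimension $0$'') but never returns to say what happens otherwise, so at best you are sketching the proved special cases, not the conjecture.

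For those cases your global reduction to a single bad place matches the paper, but your description of the two local steps is too vague to count as a proof and is actually misleading at the archimedean place. You claim the current equation collapses the Green-current integral to an orbital integral of $\varphi^n_{KM,0}$, which is then ``evaluated as the derivative of the Whittaker function''; but the integral of $\varphi^n_{KM,0}$ gives a \emph{value} of a Whittaker function (Proposition~\ref{prop:geolsw}), not a derivative, and the current equation controls $dd^c\xi^n_0$, not $\xi^n_0$ itself. The paper's actual argument (Section~\ref{sect:LocalASW}) uses the star-product recursion \eqref{eq:wr}, evaluates the main term via the local Siegel--Weil formula on $\calD$ (Proposition~\ref{prop:LocalSiegel-WeilR}) together with a Lie-algebra identity in the induced representation (Corollary~\ref{cor:key}), and matches the resulting boundary term against a genus-$(n{-}1)$ Whittaker derivative via the asymptotics of Theorem~\ref{thm:etalimnew}; induction on $n$ closes the loop, with no toroidal-boundary analysis needed. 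At the finite place the paper likewise does not compute higher-codimension intersections directly: a recursion on both sides (Proposition~\ref{prop:Surprise} for Whittaker functions, Corollary~\ref{cor:LiZhu} via Li--Zhu for local heights) peels off the unimodular part of $T$ and reduces everything to the rank-$3$ Kudla--Rapoport computation on Hilbert modular surfaces (Proposition~\ref{prop:KR}).
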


Note that for $T$ positive definite with $\calZ(T,\varphi)$ of higher dimension the arithmetic degree has to be defined more carefully as in \cite{Te}, but we do not consider this here.
The conjecture can be further generalized to include the cases where $T$ is singular, leading to an identity between the generating series of the arithmetic degrees of the $\widehat\calZ(T,\varphi,v)$ and the central derivative of the Eisenstein series
$E(\tau,s, \lambda(\varphi)\otimes \Phi_{\kappa})$ analogous to
\eqref{eq:degid}, which can be viewed as an {\em arithmetic Siegel-Weil formula}. The full conjecture is known for $m=0$ and for the $m=1$ case of Shimura curves,
see \cite{KRY-tiny}, \cite{KRY-book}.


To state our results on Conjecture~\ref{conj:ku}, we
let $\mathcal C =\bigotimes_{p\le \infty} \mathcal C_p$ be the incoherent quadratic space over $\A$ for which $\mathcal C_f =\otimes_{p <\infty} \mathcal C_p \cong V(\A_f)$ and $\mathcal C_\infty$ is positive definite of dimension $m+2$. The Eisenstein series appearing in Conjecture \ref{conj:ku} is naturally associated with the Schwartz function on $S(\calC^n)$ given by the tensor product of $\varphi$ and the Gaussian on $\calC_\infty^n$ via the intertwining operator $\lambda$.
Hence it is incoherent and vanishes at $s=0$. The conjecture gives a formula for the leading term of the Taylor expansion in $s$ at $s=0$.
Following Kudla \cite{Ku1}, define the Diff set associated with $\calC$ and $T$ as
\begin{equation}
\Diff(\mathcal C, T) =\{ p \le \infty: \;\mathcal C_p  \hbox{ does not represent } T\}.
\end{equation}
Then  $\Diff(\mathcal C, T)$ is a finite set of odd order, and $\infty \in \Diff(\mathcal C, T)$ if and only if $T$ is not positive definite.

\begin{theorem}
\label{maintheo1}
Assume that $T \in \Sym_n(\Q)$ is invertible. Then Conjecture \ref{conj:ku} holds in the following cases:
\begin{enumerate}

\item If $|\Diff(\mathcal C, T)| >1$. In this case both sides of the equality vanish.

\item If  $\Diff(\mathcal C, T) =\{ \infty\}$. In this case
$T$ is not positive definite, and the only contribution comes from the archimedian place,  i.e.,
\[
\widehat \deg \big( \widehat \calZ(T,\varphi,v)\big) \cdot q^T= \widehat \deg_\infty \big( \widehat \calZ(T,\varphi,v)\big) \cdot q^T = \hat C\cdot E'_T(\tau,0, \lambda(\varphi)\otimes \Phi_{\kappa}).
\]

\item If $\Diff(\mathcal C, T) =\{p\}$ for a finite prime $p\neq 2$ and  $\mathcal Z(T,\varphi)(\kay)$ has dimension $0$. In such a case, the only contribution comes from the prime $p$, i.e,
\[
\widehat \deg \big( \widehat \calZ(T,\varphi,v)\big) \cdot q^T= \widehat \deg_p \big( \widehat \calZ(T,\varphi,v)\big) q^T = \hat C\cdot E'_T(\tau,0, \lambda(\varphi)\otimes \Phi_{\kappa}).
\]
\end{enumerate}
\end{theorem}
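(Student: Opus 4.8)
The plan is to reduce the global arithmetic Siegel--Weil identity of Conjecture~\ref{conj:ku} to purely local statements at each place, matching $\widehat\deg_p$ against the $p$-th local factor of $E'_T(\tau,0,\cdot)$. The starting point is the standard factorization of the $T$-th Fourier coefficient of the incoherent Eisenstein series as a product of local Whittaker integrals, $E_T(\tau,s,\lambda(\varphi)\otimes\Phi_\kappa)=C(v,s)\cdot\prod_{p\le\infty}W_{T,p}(s)$. Since $\calC$ is incoherent, each $p\in\Diff(\calC,T)$ contributes a simple zero of $W_{T,p}(s)$ at $s=0$, whence $E_T$ vanishes to order $\ge|\Diff(\calC,T)|$. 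In case~(1) this order is $\ge 2$, so $E'_T(\tau,0,\cdot)=0$; simultaneously $\calZ(T,\varphi)$ is empty in characteristic $0$ and (by the dimension results quoted before Theorem~\ref{theo:Soylu}, together with the fact that two distinct finite primes in $\Diff$ force a non-finiteness obstruction, or simply that the height contribution vanishes) the arithmetic cycle has vanishing degree. I would spell this out by showing that when $|\Diff|>1$, either $\infty\in\Diff$ and the Green current integrates to something governed by a product of two vanishing archimedean-type factors, or $\Diff$ consists of finite primes at which the relevant orbital integrals vanish, so $\widehat\deg_p=0$ for every $p$. This case is essentially formal once the local non-vanishing/vanishing dichotomy is in place.

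For cases~(2) and~(3) the order of vanishing is exactly $1$, so $E'_T(\tau,0,\cdot)=C(v,0)\cdot W'_{T,p_0}(0)\cdot\prod_{p\ne p_0}W_{T,p}(0)$ where $p_0$ is the unique element of $\Diff(\calC,T)$. The heart of the argument is then to identify, place by place, the local arithmetic quantity with the corresponding local Whittaker factor. Concretely: for $p=p_0=\infty$ (case~(2)), I would invoke the archimedean local arithmetic Siegel--Weil formula (Theorem~\ref{theo:ArithSW-infinite} referenced in the statement), which computes $\tfrac12\int_{\overline\calX_K(\C)}G(T,\varphi,v)$ — i.e.\ the star product of Kudla's Green functions $\xi_0^n$ integrated over the complex points — in terms of $W'_{T,\infty}(0)$. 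For $p=p_0$ a finite prime with $\calZ(T,\varphi)(\kay)$ finite (case~(3)), I would invoke the non-archimedean local arithmetic Siegel--Weil formula, which expresses $\sum_{x\in\calZ(T,\varphi)(\bar\F_p)}\Ht_p(x)/|\Aut(x)|\cdot\log p$ via the local intersection/deformation theory (Rapoport--Zink space computations, length of étale local rings) against $W'_{T,p}(0)$; the dimension-zero hypothesis is exactly what makes these lengths finite and the local formula applicable. At the remaining places $p\ne p_0$ one must check two things: that $\widehat\deg_p(\widehat\calZ(T,\varphi,v))=0$ there (the cycle has empty support in those fibers, and the Green current at $\infty$ contributes nothing when $\infty\notin\Diff$ in case~(3), resp.\ at finite $p$ in case~(2) the cycle is empty), and that $W_{T,p}(0)$ equals the expected representation-density normalization — a classical local Siegel--Weil computation. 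Assembling these via $\widehat\deg=\sum_p\widehat\deg_p$ and comparing with the factored derivative yields the claimed identity, with $\hat C$ absorbing $C(v,0)$ and the normalizing constants from the local formulas.

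The main obstacle is the proof of the two local arithmetic Siegel--Weil formulas themselves, which the excerpt flags as the new input ("we establish new local arithmetic Siegel-Weil formulas at the archimedean and non-archimedean places"). On the archimedean side the difficulty is computing the integral over $\calX_K(\C)$ of the star product $\xi_0(x_1v^{1/2},z)*\cdots*\xi_0(x_nv^{1/2},z)$ for $T$ of arbitrary signature: one needs precise control of the singularities and the asymptotics of Kudla's Green function and its iterated $*$-products, and the matching with the derivative of a confluent-hypergeometric-type archimedean Whittaker function. On the non-archimedean side, the obstacle is that $\calZ(T,\varphi)(\kay)$ is supported in a single fiber and one must compute the length of $\calO_{\calZ,x}$ using the Kudla--Rapoport-style deformation theory of special endomorphisms; the quoted result of Soylu (Theorem~\ref{theo:Soylu}) controls when this is a finite sum, and the rank conditions $n-1,n-2,n-3$ on $T\bmod p$ correspond to genuinely different Rapoport--Zink geometry, so a uniform local computation of the intersection multiplicities matching $W'_{T,p}(0)$ is the technically hardest step. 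Everything else — the factorization of Fourier coefficients, the order-of-vanishing bookkeeping via $\Diff$, and the vanishing at the off-$p_0$ places — is standard once these two local theorems are established.
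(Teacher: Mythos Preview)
Your proposal is correct and follows essentially the same approach as the paper: reduce to the factorization of $E_T$ into local Whittaker functions, use $|\Diff(\calC,T)|$ to control the order of vanishing, and in cases~(2) and~(3) invoke the archimedean (Theorem~\ref{thm:alsw}) and non-archimedean (Theorem~\ref{theo:LASW-finite}) local arithmetic Siegel--Weil formulas respectively, with the off-$p_0$ places handled by the classical local Siegel--Weil formula (Proposition~\ref{prop:localSW}) after an unfolding over $H(\A_f)$. One small correction to your case~(1) reasoning: the vanishing of the geometric side is not via ``two vanishing archimedean-type factors'' but rather because non-emptiness of $\calZ(T,\varphi)(\kay)$ (resp.\ non-vanishing of the Green current) forces the neighboring space $V^{(p)}$ (resp.\ $V=V^{(\infty)}$) to represent $T$, which by the Hasse principle pins down $\Diff(\calC,T)=\{p\}$ (resp.\ $\{\infty\}$)---see the proof of Proposition~\ref{prop:Counting} and the first paragraph of the proof of Theorem~\ref{theo:ArithSW-infinite}.
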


\begin{remark}
\label{rem:maintheo1}
Since the cycle $\widehat \calZ(T,\varphi,v)$ is supported in a single fiber, all assertions of Theorem \ref{maintheo1} can be reformulated in terms of `local' models of $\overline X_K$. We will actually prove the local analogues in much greater generality.

Generalizing (2) we will show that if $\infty \in \Diff(\mathcal C, T)$ then
\begin{align}
\label{eq:refi1}
 \widehat \deg_\infty \big( \widehat Z(T,\varphi,v)\big) \cdot q^T = \hat C\cdot E'_T(\tau,0, \lambda(\varphi)\otimes \Phi_{\kappa}).
\end{align}
Since this is an assertion only about the complex fiber $\overline X_K$, we will be able to prove it for any compact open subgroup $K\subset H(\A_f)$ (in particular $V$ does not have to contain an even unimodular lattice) and any $K$-invariant Schwartz function $\varphi\in S(V(\A_f)^n)$,
see Theorem~\ref{theo:ArithSW-infinite}.

To generalize (3) we consider the canonical integral model $\calX_{K,(p)}$ of $\overline X_K$ over the localization $\Z_{(p)}$. In this setting we will show that if $p\in \Diff(\mathcal C, T)$ for a finite prime $p\neq 2$ and $\calZ(T,\varphi)$ is $0$-dimensional, then
\begin{align}
\label{eq:refi2}
\widehat \deg_p \big( \widehat \calZ(T,\varphi,v)\big) q^T = \hat C\cdot E'_T(\tau,0, \lambda(\varphi)\otimes \Phi_{\kappa}).
\end{align}
This will be proved under the assumption that $K$ is the stabilizer of a $\Z_p$-unimodular lattice $L\subset V$ and for $\varphi =\cha(\hat L^n)$, see Theorem \ref{thm:arithswfin}.
\end{remark}


To prove Theorem \ref{maintheo1}, we decompose the Fourier coefficients of the Eisenstein series into local factors.
If
$\Phi=\otimes_v \Phi_v$ is a factorizable section of the induced representation, then
\begin{align*}
E_T(g,s,\Phi)= \prod_{v\leq \infty} W_{T,v}(g,s,\Phi_v),
\end{align*}
where $W_{T,v}(g,s,\Phi_v)$ is the local Whittaker function given  by \eqref{eq:Whittaker}.
It is a basic fact that  the local Whittaker function  $W_{T, p}(g_p, 0, \lambda_p(\varphi_p))$ vanishes for every $p \in \Diff(\mathcal C, T)$.

This implies assertion (1) of Theorem \ref{maintheo1} in a rather direct way. Indeed, if $|\Diff(\mathcal C, T) |>1$ then
the right hand  side of the conjectured identity is automatically zero. To prove that the left hand side also vanishes, we consider for a prime $p\leq \infty$ the neighboring global quadratic space  $V^{(p)}$ at $p$ associated with $\calC$, which is the quadratic space over $\Q$ with local components $V_q^{(p)} \cong \mathcal C_q$ for all $q \ne p$
and such that $V_p^{(p)}$ and $\mathcal C_p$ have  the same dimension and quadratic character but different Hasse invariants (for $p=\infty$ we also require that $V_\infty^{(\infty)}$ has signature $(m,2)$, and hence $V^{(\infty)}=V$).

When $\mathcal Z(T,\varphi)(\kay)$ is non-empty for a prime $p<\infty$, then one can show (see the proof of Proposition \ref{prop:Counting} for example) that $V^{(p)}$ represents $T$.
This implies $\Diff(\mathcal C, T) =\{ p\}$. Similarly, the proof of Theorem \ref{theo:ArithSW-infinite} shows that if
the Green current $G(T,\varphi,v)$ is non-vanishing, then
$V^{(\infty)}$ represents $T$ and hence  $\Diff(\mathcal C, T) =\{ \infty\}$.

In the situation of part (2) of Theorem \ref{maintheo1}, when $\Diff(\mathcal C, T) =\{ \infty\}$, the local Whittaker function $W_{T,\infty}(g,0,\Phi_\kappa)$ vanishes, and hence
\[
E'_T(\tau,0, \lambda(\varphi)\otimes \Phi_{\kappa}) =
\prod_{p< \infty} W_{T,p}(g,0,\lambda(\varphi_p))\times W'_{T,\infty}(g,0,\Phi_\kappa).
\]
The derivative of the archimedian Whittaker function
is given by the following {\em arithmetic local Siegel-Weil formula} for the archimedian local height function
\[
\Ht_\infty(x)= \frac{1}2 \int_{\calD}\xi^n_0(x,z)
\]
on $V^n(\R)$,
which is our second main result (see also Theorem \ref{thm:alsw}).  The contributions from the non-archimedian  places can be computed by means of the \emph{local Siegel-Weil} formula (see Propositions \ref{prop:localSW} and \ref{prop:Measure}) .

\begin{theorem}
\label{thm:intro-alsw}
Let $x\in V^n(\R)$ such that the $Q(x)=T$
is invertible.
Then we have
\begin{align}
\label{eq:intro-lsw}
\Ht_\infty(xv^{1/2}) \cdot q^T
=-B_{n,\infty} \det(v)^{-\kappa/2}\cdot
W_{T, \infty}'(g_\tau,0,\Phi_{\kappa}),
\end{align}
where
$$
B_{n, \infty}=\frac{e(\frac{n^2+n-4}{8}) (n-1)! \prod_{k=1}^{n-1} \Gamma(\frac{n-k}2)}
       {2^{n-2} (2\pi)^{\frac{n(n+3)}4}}.
$$
\end{theorem}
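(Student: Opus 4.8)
The plan is to establish the identity \eqref{eq:intro-lsw} by computing both sides explicitly and matching them. The left-hand side is an integral over $\calD$ of a star product of Kudla's Green functions $\xi_0$; the right-hand side is (up to normalization) the $s$-derivative at $s=0$ of an archimedean Whittaker function attached to the weight-$\kappa$ standard section. Both quantities ultimately reduce to concrete transcendental expressions in the entries of $T$ and $v$, so the proof is a careful bookkeeping of two independent computations rather than an abstract argument.

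\medskip

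First I would reduce to the case $v = 1_n$ by equivariance. The substitution $x \mapsto xv^{1/2}$ on the left and the action of $g_\tau$ on the right are compatible with the $\GL_n(\R)$-action; more precisely, using the known transformation behaviour of $\xi_0^n$ under the Weil representation and of $W_{T,\infty}$ under the Siegel parabolic, both sides scale by the same power of $\det(v)$ and the same exponential factor $e^{-2\pi \tr(Tv)}$ absorbed into $q^T$. This reduces \eqref{eq:intro-lsw} to a statement purely about $\Ht_\infty(x) = \tfrac12 \int_\calD \xi_0^n(x,z)$ and $W'_{T,\infty}(1, 0, \Phi_\kappa)$ for a fixed $x$ with $Q(x) = T$ invertible of signature, say, $(p,q)$ with $p+q=n$ and $q \geq 1$ (the interesting incoherent case $\infty \in \Diff$).

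\medskip

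\textbf{Computing the right-hand side.} The archimedean Whittaker function for the standard weight-$\kappa$ section on $\Sp_n(\R)$ with $\kappa = 1 + m/2$ is classical: it is essentially a confluent hypergeometric function of matrix argument (a Shimura-type integral), and its value and derivative at the relevant point are known in closed form through the work of Shimura and Kudla--Rallis. I would invoke the explicit formula for $W_{T,\infty}(g,s,\Phi_\kappa)$, identify the order of vanishing at $s=0$ (it vanishes to order exactly $q$, the number of negative eigenvalues, so here to order $\geq 1$), and extract the leading Taylor coefficient. The Gamma factors $\prod_{k=1}^{n-1}\Gamma(\tfrac{n-k}{2})$ and the power of $2\pi$ in $B_{n,\infty}$ should emerge from this computation, as should the root of unity $e(\tfrac{n^2+n-4}{8})$, which comes from the archimedean local functional equation / the Weil index $\gamma$-factors.

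\medskip

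\textbf{Computing the left-hand side.} This is the genuinely new input. The star product $\xi_0^n = \xi_0(x_1) * \cdots * \xi_0(x_n)$ must be integrated over the symmetric space $\calD$. I would proceed inductively on $n$: the key is the current equation $dd^c[\xi_0^n(x)] + \delta_{\calD_x} = [\varphi_{KM,0}^n(x)]$, which lets one relate $\int_\calD \xi_0^n$ to $\int_\calD \xi_0^{n-1}$ restricted to a sub-symmetric space plus a correction term involving the Kudla--Millson form. An alternative and perhaps cleaner route is to use the explicit integral representation of $\xi_0(x,z)$ in terms of the exponential integral $\Ei$ (or an incomplete Gamma function) in the variable $R(x,z)$ (the "majorant minus norm" quantity), expand the star product, and integrate term by term using the known volume form on $\calD$; the homogeneity of $\calD$ under $\SO(m,2)$ should make the $x$-dependence collapse to a function of $T = Q(x)$ alone by Witt's theorem. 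Either way, one arrives at an expression matching the hypergeometric form found on the right.

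\medskip

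\textbf{The main obstacle} I anticipate is the left-hand-side computation: integrating an $n$-fold star product of Green currents over a non-compact symmetric space, controlling convergence near the boundary and along the singularities $\calD_{x_i}$, and coaxing the answer into a closed transcendental form. The induction on $n$ is delicate because each step changes the signature and the relevant sub-Grassmannian, and the correction terms must be shown to reassemble into exactly the derivative of a Whittaker function rather than some more complicated object. Matching the precise constant $B_{n,\infty}$ — in particular the root of unity and the factor $(n-1)!$ — will require keeping scrupulous track of normalizations of Haar measures, the Weil representation, and the Kudla--Millson form throughout; this is where I expect most of the technical labor to concentrate. A useful consistency check along the way is the case $n=1$, where $\xi_0$ and $W_{T,\infty}$ are both completely explicit and the formula should reduce to the known archimedean computation in the work of Kudla and of Kudla--Rapoport--Yang.
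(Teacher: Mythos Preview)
Your reduction to $v=1_n$ and your inductive framework via the star-product recursion are both correct and match the paper's structure. However, the core mechanism of the paper's argument is absent from your plan, and the route you sketch in its place is not viable.

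The paper does \emph{not} compute either side of \eqref{eq:intro-lsw} in closed form and match. For general invertible $T$ of mixed signature, no usable closed formula for $W'_{T,\infty}(1,0,\Phi_\kappa)$ is available---that is precisely what makes the theorem nontrivial. Instead, after splitting $\Ht_\infty(x)=\Ht_\infty^{\main}(x)+\Ht_\infty(\tilde x)$ via the star-product recursion (your ``correction term involving the Kudla--Millson form'' is the \emph{main} term, not a correction), the paper attacks $\Ht_\infty^{\main}(x)=\tfrac12\int_\calD \xi_0(x_1,z)\wedge\varphi_{KM,0}^{n-1}((x_2,\dots,x_n),z)$ by three steps you do not anticipate:
\begin{enumerate}
\item Write $\xi_0(x_1,z)=\int_1^\infty \varphi_{G,0}(\sqrt t\,x_1,z)\,dt/t$ and apply the \emph{classical} archimedean local Siegel--Weil formula on $\calD$ (Proposition~\ref{prop:LocalSiegel-WeilR}) to convert the $\calD$-integral of the resulting Schwartz form $\psi_{11}^*=\varphi_G(x_1)\cdot\varphi_{KM}^{n-1,*}$ into a Whittaker function $W_T(m(d(a)),0,\lambda(\psi_{11}^*))$.
\item Identify $\lambda(\psi_{11}^*)$ in the induced representation via a $K_G$-type and Lie-algebra computation: one shows $r(p_-(e_{11}))\Phi_\kappa=\tfrac12(s+\rho_n-\kappa)\lambda(\psi_{11}^*)$, so that at $s=0$ one obtains $W_T(1,0,\lambda(\psi_{11}^*))$ as a first-order differential operator applied to $W'_T(1,0,\Phi_\kappa)$. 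This step has no analogue in your proposal.
\item The resulting $a$-integrand is then a \emph{total derivative} in $a$; evaluating at the endpoints gives $-B_{n,\infty}W'_T(1,0,\Phi_\kappa)$ plus a boundary term at $a\to\infty$, which requires a separate asymptotic analysis of the genus-$n$ Whittaker function (Theorem~\ref{thm:etalimnew}) and turns out to equal $B_{n-1,\infty}W'_{\tilde T_2}(1,0,\Phi_{\kappa-1/2})$.
\end{enumerate}
The induction then closes because this boundary term exactly cancels $\Ht_\infty(\tilde x)$ by the inductive hypothesis.

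Your sketch offers no substitute for step (2): without the Lie-algebra identification, the $a$-integral of the Whittaker function of $\psi_{11}^*$ has no reason to collapse, and ``integrating term by term'' the expanded star product over $\calD$ does not produce a recognizable expression. The $\SO(n)$-invariance shortcut you allude to (reducing to diagonal $T$) is the approach of earlier work (e.g.\ \cite{Liu}, \cite{GaSa}), which the paper explicitly avoids; in any case you would still need to carry out the diagonal-$T$ computation, which is itself substantial and not sketched in your proposal.
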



In the special case $m=0$ Theorem \ref{thm:intro-alsw}
was proved in \cite{KRY-tiny},
for $m=1$ in \cite{Ku1}, and for $m=2$ in \cite{YZZ}.
For the related case of Shimura varieties associated to unitary groups of signature $(m,1)$ it was proved in \cite{Liu}. But the argument of \cite{Liu} does not seem to generalize to the case of orthogonal groups considered in the present paper. Recently, Garcia and Sankaran employed Quillen's theory of super-connections to obtain a different proof  of Theorem \ref{thm:intro-alsw}, see \cite{GaSa}.

In all these works it is first noticed that
because of the equivariance of $\xi^n_0(x,z)$ with respect to the action of $H(\R)$, the local height function $\Ht_\infty(x)$ only depends on $T=Q(x)$.
Then a crucial step consists in proving that
$\Ht_\infty(T):=\Ht_\infty(x)$ is invariant under the action of $\SO(n)$ on $\Sym_n(\R)$ (respectively $\Uni(n)$ on $\operatorname{Herm}_n(\C)$) by conjugation. Hence it suffices to prove the claimed identity for diagonal $T$. In this case the star product reduces to a single integral, which
can be related to the derivative of the Whittaker function by a direct (but rather involved) computation.

Our approach is different. For general nonsingular $T$, we consider the recursive formula for the star product (see \eqref{eq:wr}) and compute its `main term' by means of the classical archimedian local Siegel Weil formula
(see Theorem~\ref{thm:pre-alsw}).
The result turns out to be the sum of
a main term, which is the desired right hand side of \eqref{eq:intro-lsw}, plus a boundary term, given by the derivative of a genus $n-1$ Whittaker function. By an inductive argument, the boundary term cancels against the remaining terms of the star product. This approach does not require proving $\SO(n)$-invariance of the local height function at the outset. We obtain this invariance a posteriori from the obvious invariance of the Whittaker function.

Finally, we describe our approach to part (3) of Theorem \ref{maintheo1}. When $\Diff(\mathcal C, T) =\{ p\}$ for a finite prime $p\neq 2$, the local Whittaker function $W_{T,p}(g,0,\Phi_\kappa)$ vanishes, and hence
\[
E'_T(\tau,0, \lambda(\varphi)\otimes \Phi_{\kappa}) = W'_{T,p}(1,0,\lambda(\varphi_p))\times
\prod_{\substack{q< \infty\\ q\neq p}} W_{T,q}(1,0,\lambda(\varphi_q))\times W_{T,\infty}(\tau,0,\Phi_\kappa).
\]
The derivative of the local Whittaker function at $p$
is given by the following {\em arithmetic local Siegel-Weil formula}, which is our third main result paralleling Theorem \ref{thm:intro-alsw}. The terms away from  $p$ can again be computed by means of the local Siegel-Weil formula.
Recall that
$\varphi =\varphi_L
\in  S( V^n (\A_f ))$
is the characteristic function of $\hat L^n$.


\begin{theorem}
\label{theo:p-ASW} Let $p\ne 2$ be a prime number and assume  that $\mathcal Z(T,\varphi)(\kay)$ is finite. Then for $x \in  \mathcal Z(T,\varphi)(\kay)$, the local height $\Ht_p(x)$ is independent of the choice of $x$ and is given by
 $$
\Ht_p(x) \cdot \log p = \frac{W_{T, p}'(1, 0, \lambda(\varphi))}{W_{T^u, p}(1, 0, \lambda(\varphi))},
$$
where $T^u$ is any unimodular matrix in $\Sym_n(\Z_p)$ (i.e., $\det T^u \in \Z_p^\times$).
\end{theorem}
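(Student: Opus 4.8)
The plan is to reduce the computation of $\Ht_p(x)$ to an arithmetic intersection number on a GSpin Rapoport--Zink space, to evaluate that number as a "derived local density", and then to identify the latter with the normalized Whittaker quotient on the right-hand side. Throughout, the hypotheses $p\neq 2$ and $\dim\mathcal Z(T,\varphi)=0$ will be used to keep the relevant local data simple.

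\emph{Step 1: passage to the local situation.} Since $\mathcal Z(T,\varphi)(\kay)$ is $0$-dimensional and $\Diff(\mathcal C,T)=\{p\}$, the point $x$ lies in the basic (supersingular) locus of the special fiber of $\mathcal X_{K,(p)}$. By the theory of local models and $p$-adic uniformization for GSpin Shimura varieties, the completed local ring $\widehat{\mathcal O}_{\mathcal X_{K,(p)},x}$ is isomorphic to the completed local ring at a point $y$ of a GSpin Rapoport--Zink space $\RZ$, and under this isomorphism $\mathcal O_{\mathcal Z,x}$ corresponds to the local ring at $y$ of the intersection of special cycles $\mathcal Z(x_1)\cap\cdots\cap\mathcal Z(x_n)$, where $x=(x_1,\dots,x_n)$ is regarded as an $n$-tuple of special quasi-endomorphisms of the framing $p$-divisible group with fundamental matrix $T$. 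Thus $\Ht_p(x)$ equals the length of this Artinian local ring.

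\emph{Step 2: the length as a derived density.} For $p\neq 2$ the isometry class of the $\Z_p$-quadratic module spanned by $x_1,\dots,x_n$, together with its embedding into the lattice $L^\circ$ of all special endomorphisms at $y$, is determined by the $\GL_n(\Z_p)$-equivalence class of $T$; hence, by the linear invariance of special cycles on $\RZ$, $\Ht_p(x)$ depends only on $T$, which already yields the asserted independence of $x$. To evaluate it I would invoke Soylu's classification (Theorem \ref{theo:Soylu}): $0$-dimensionality forces $T\bmod p$ to have corank $k\in\{1,2,3\}$, so one may choose the basis so that $x_1,\dots,x_{n-k}$ span a $\Z_p$-unimodular sublattice. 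Intersecting $\RZ$ successively with the unimodular special divisors $\mathcal Z(x_1),\dots,\mathcal Z(x_{n-k})$ replaces it by a GSpin Rapoport--Zink space of dimension lowered by $n-k$, and one is reduced to the length of an intersection of $k$ special divisors on a low-dimensional RZ space; this is a Gross--Keating/Keating quasi-canonical-lifting computation, producing an explicit formula for $\Ht_p(x)\cdot\log p$ as the derivative at $s=0$ of a local density polynomial, which I denote $\partial\alpha_p(T)$.

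\emph{Step 3: the analytic side and the obstacle.} By the local Siegel--Weil formula (Propositions \ref{prop:localSW} and \ref{prop:Measure}) together with the classical identification of the local Whittaker function with a representation density, $W_{T,p}(1,s,\lambda(\varphi))$ is, up to an explicit normalizing factor depending only on $n$ and $L$, the density polynomial $\alpha_p(p^{-s};T,\Lambda)$ of $T$ by a self-dual $\Z_p$-lattice $\Lambda$; this polynomial vanishes at $s=0$ because $p\in\Diff(\mathcal C,T)$, while $W_{T^u,p}(1,0,\lambda(\varphi))$ for a unimodular $T^u$ isolates precisely that normalizing factor (the density of a unimodular form being $1$). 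Hence the right-hand side of the theorem equals $\partial\alpha_p(T)\cdot\log p$, and the proof reduces to matching this with the geometric formula of Step 2, which I would carry out using Katsurada's recursion for local densities. The main obstacle is exactly this matching: one must control the local structure of the intersection of special divisors on $\RZ$ in each of the three corank cases of Theorem \ref{theo:Soylu} (including the technical subcase in corank $3$) precisely enough to read off the length of the Artinian ring, and then recognize the resulting polynomial in $p$ as the value at $s=0$ of the derivative of Katsurada's density polynomial. The assumptions $p\neq 2$ and $\dim\mathcal Z(T,\varphi)=0$ are what make this feasible: they guarantee a clean Jordan splitting of the special-endomorphism lattice and reduce the arithmetic to the well-understood quasi-canonical-lifting regime, rather than the full higher-dimensional intersection theory on $\RZ$ that the general case of Conjecture \ref{conj:ku} would require.
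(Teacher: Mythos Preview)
Your proposal is correct and follows essentially the same route as the paper: pass to the Rapoport--Zink space via $p$-adic uniformization, peel off the unimodular part of $T$ by intersecting with the corresponding special divisors (this is the Li--Zhu reduction, Corollary~\ref{cor:LiZhu}), invoke a known low-rank height computation, and match it with a parallel recursion on the Whittaker/density side. The only differences are organizational. First, rather than reducing to a rank that depends on the corank $k\in\{1,2,3\}$, the paper always strips off a unimodular block of rank $n-3$ and lands uniformly in the rank-$4$ case, where the geometric side is the Kudla--Rapoport Hilbert-modular computation (Proposition~\ref{prop:KR}, with the split case going back to Gross--Keating); this avoids having to treat the three coranks separately. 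Second, the Whittaker-side recursion (Proposition~\ref{prop:Surprise}) is obtained from Kitaoka's density formulas rather than Katsurada's, though the content is the same. One small slip: the density $\alpha_p(1,T^u,L)$ of a unimodular $T^u$ by $L$ is not $1$ but the explicit Euler product of Lemma~\ref{lem6.6}(2); your underlying point, that it is a nonzero constant depending only on $L$ and $n$ and hence serves to normalize, is correct and is exactly how the paper uses it.
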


This theorem will be restated and proved as Theorem \ref{theo:LASW-finite}.  As in the archimedian case the proof is given by an  inductive argument.  According to Soylu's condition mentioned above, 
$\mathcal Z(T,\varphi) (\kay)$ being finite implies that $T$ is $\Z_p$-equivalent to $\diag(T_1, T_2)$ where $T_1$ is $\Z_p$-unimodular of rank $n-3$.  On the Whittaker function side, we will prove the following recursive formula (see Proposition \ref{prop:Surprise}):
\begin{equation}
\frac{W_{T, p}'(1, 0, \lambda(\varphi_{L_p}))}{W_{T^u, p}(1, 0, \lambda(\varphi_{L_p}))}
=\frac{W_{T_2, p}'(1, 0, \lambda(\varphi_{L_{2, p}}))}{W_{T_2^u, p}(1, 0, \lambda(\varphi_{L_{2, p}}))}.
\end{equation}
Here $T^u$ and $T_2^u$ are unimodular  symmetric $\Z_p$ matrices of order $n$ and $3$ respectively, and $L_{2, p}$ is a unimodular $\Z_p$-quadratic lattice of rank $4$ with
$$
L_p = M_{1, p} \oplus L_{2, p}
$$
for a unimodular $\Z_p$-quadratic lattice $M_{1, p}$ whose quadratic matrix is given by $T_1$.

This suggests a similar recursion for the local height function  $\Ht_p(x)$.
Soylu proved that the abelian variety associated with $x$ is  supersingular. The local height function depends only on the associated $p$-divisible group, and it can be computed using the $p$-adic uniformization of the supersingular locus by a Rapoport-Zink space (see Section \ref{sect:LASW-finite}).
The required recursion formula for the local height function is proved by employing recent work of Li and Zhu (\cite[Lemma 3.1.1]{LZ}, see Corollary \ref{cor:LiZhu}).

By the recursion formulas, the proof of Theorem \ref{theo:p-ASW} is reduced to the case $n=3$ in the local situation, where
$L=L_{2, p}$
is a unimodular $\Z_p$-lattice of rank $4$ and
$T=T_{2, p}$ is a symmetric $\Z_p$-matrix of rank $3$, and $x=(x_1, x_2, x_3)$ determines a point in $\mathcal Z(T,\varphi)(\kay)$ on the associated Rapoport-Zink space. This turns out to be exactly the local case considered by Kudla and Rapoport in their work on (twisted) Hilbert modular surfaces \cite{KRHilbert}.

\medskip

This paper is organized as follows.
Section 2 contains some preliminaries and basic facts about Whittaker functions. Moreover, we state the classical local Siegel-Weil formula with an explicit formula for the constant of proportionality. In Section 3 we derive a variant of the archimedian local Siegel Weil formula for integrals of certain Schwartz functions over the hermitian symmetric space of the orthogonal group, again with explicit 
constant of
proportionality. The main result Theorem~\ref{prop:LocalSiegel-WeilR} is one of the key ingredients in the proof of Theorem~\ref{thm:intro-alsw}.
In Section 4 we investigate the asymptotic behavior of the archimedian Whittaker function as one of the radial parameters goes to infinity. Our analysis relies on Shimura's work on confluent hypergeometric functions \cite{Sh}. The main result,  Theorem \ref{thm:etalimnew}, which is of independent interest, is the second main ingredient in the proof of Theorem~\ref{thm:intro-alsw}.
Section 5 is devoted to the proof of the archimedian arithmetic Siegel-Weil formula Theorem~\ref{thm:intro-alsw}.
In Section 6 we recall some facts about the Rapoport-Zink space for $\GSpin$ groups from \cite{HP} and \cite{Cihan-thesis} and prove the non-archimedian local arithmetic Siegel-Weil formula,  Theorem~\ref{theo:p-ASW}. Finally, Section 7 contains the proofs of our main global results, Theorem~\ref{maintheo1} and
the refinements described in Remark \ref{rem:maintheo1}.

We thank Jose Burgos Gil, Stephan Ehlen, Jens Funke, Ben Howard, Steve Kudla, Chao Li, Cihan Soylu, and Torsten Wedhorn for many helpful comments and conversations related to this paper. We also thank the referee for his/her careful reading of our manuscript
and for the insightful comments.


\section{The local Siegel-Weil Formula} \label{sect:localSW}


In this section we introduce the basic local setup and recall the local Siegel Weil formula, see Theorem \ref{theo:LocalSiegel-Weil}.
We make the involved constant explicit in Proposition \ref{prop:localSW}.

\subsection{The basic local set-up and local Whittaker functions}
\label{sect2.1}

Let $F$ be a local field  or the ring of adeles of a number field, and let $\psi$ be a non-trivial additive character of $F$ (or adele class character).  Let $P=NM$ be the standard Siegel parabolic subgroup of the symplectic group $\Sp_n(F)$ given by
\begin{align*}
M&=\left\{ m(a)=\zxz{a}{}{}{{}^ta^{-1}}\mid\; a\in \Gl_n(F)\right\},\\
N&=\left\{ n(b)=\zxz{1}{b}{}{1}\mid \; b\in \Sym_n(F)\right\}.
\end{align*}
We also denote
\[
w= \zxz {0} {-I_n} {I_n} {0}.
\]
Let $\Mp_{n, F}$ be the  metaplectic cover of $\Sp_n(F)$, identified with $\Mp_{n, F}=\Sp_n(F) \times \{ \pm 1 \}$ via the normalized Rao cocycle  $c_R(g_1, g_2)$ given  in \cite{Rao} (with the minor correction in \cite[Page 379]{KuSplit}):
$$
[g_1, \epsilon_1][g_2, \epsilon_2]=[g_1g_2, \epsilon_1 \epsilon_2 c_R(g_1, g_2)].
$$
For $g \in \Sp_n(F)$, we will simply denote $g=[g, 1]$.

Let $(V, Q)$ be a non-degenerate quadratic space over $F$ of dimension $l$. Then there is a Weil representation $\omega=\omega_{V, \psi}$ of $\Mp_{n, F}$ on $S(V^n)$ given by \cite[Page 400]{KuSplit}. In  particular,
\begin{align*}
\omega(n(b)) \phi(x) &= \psi(\tr(Q(x)b)) \phi(x),
\\
\omega(m(a))\phi(x) &= \chi_V(\det a) \gamma(\det a, \frac{1}2\psi)^{-l} (\det a, -1)_F^{\frac{l(l-1)}2} |\det a|^{\frac{l}2} \phi(xa),
\\
 \omega(w) \phi(x) &=\gamma(V^n) \int_{V^n} \phi(y) \psi(-\tr(x, y)) d_\psi y,
\end{align*}
where $d_\psi y$ is the self-dual Haar measure on $V$ with respect to $\psi$, and $\gamma(V^n) = \gamma(\psi\circ \det Q)^{-n}$. Here $\gamma(\psi)$ and $\gamma(a, \psi)$ (for $a \in F^\times$)  are the local Weil indices defined in \cite[Appendix]{Rao}, and
$$
\det Q =2^{-l} \det V =\det (\frac{1}2(e_i, e_j)) \in F^\times/(F^{\times})^2,
$$
for an $F$-basis $\{e_1, \dots, e_l\}$ of $V$. Finally,
\[
\chi_V(a) =(a, (-1)^{\frac{l(l-1)}2} \det V)_F
\]
is the quadratic character associated to $V$.
It is well-known that the Weil representation factors through $\Sp_n(F)$ when $l$ is even.  Since
$$
\gamma(a, \frac{1}2 \psi)^2 = (a, -1)_F,
$$
the formula for $\omega(m(a))\phi$ above  works for both even and odd $l$.
From now on, let $G =\Sp_n(F)$ or $\Mp_{n, F}$ depending on whether $n$ is odd or even, and let $P$ be the standard Siegel parabolic subgroup or the preimage of the standard Siegel parabolic subgroup.
If $F=\R$,  we let $K_G \subset G$ be the maximal compact subgroup given by either
\begin{align*}
\left\{ k=\zxz{a}{b}{-b}{a}\mid \;\uk=a+ib\in \Uni(n)\right\}\cong \Uni(n)
\end{align*}
or the inverse image of $\Uni(n)$ under the covering map (when  $G=\Mp_{n, \R}$).

For a character $\chi$ of $F^\times$, let $I(s, \chi) =\Ind_{P}^G \chi | \det |^s$ be the induced representation. A section $\Phi \in  I(s, \chi)$ satisfies
$$
\Phi(n(b) m(a)g, s)
 = \begin{cases}
 \chi(\det a) |\det a|^{s+\rho_n} \Phi(g, s) &\ff G=\Sp_n(F),
 \\
  \chi(\det a) \gamma(\det a, \frac{1}2 \psi)^{-1} |\det a|^{s+\rho_n} \Phi(g, s) &\ff G=\Mp_{n, F},
 \end{cases}
$$
where
\[
\rho_n = \frac{n+1}2.
\]
For a symmetric matrix $T \in \Sym_n(F)$, the Whittaker function of $\Phi$ with respect to $T$ is defined to be
\begin{equation} \label{eq:Whittaker}
W_{T}(g, s, \Phi) =\int_{\Sym_n(F)} \Phi(w n(b) g, s) \psi(-\tr(T b) )\,d_\psi b,
\end{equation}
where $d_\psi b$ is the self-dual Haar measure on $\Sym_n(F)$ with respect to the pairing $(b_1,b_2)\mapsto \psi(\tr (b_1b_2))$.
It has the transformation behavior
\begin{align}
\label{eq:i1}
W_T(n(b) m(a)g,s,\Phi)&=\psi(\tr(Tb))\chi(a)^{-1}
|a|^{\rho_n-s} \cdot
 \begin{cases}
  W_{{}^{t}aTa}(g,s,\Phi)&\ff G=\Sp_n(F),
 \\
 \gamma(a, \frac{1}2\psi) W_{{}^{t}aTa}(g,s,\Phi)&\ff G=\Mp_{n, F}.
 \end{cases}
\end{align}
Here we have shortened $\gamma(a, \frac{1}2\psi) = \gamma(\det a,\frac{1}2\psi)$  (and similarly for $\chi(a)$).  We remark that $\gamma(a, \frac{1}2\psi) =1$ when $\det a >0$ and $F=\R$.

 Let $s_{l, n} = \frac{l-n-1}2$. Then there is a $G$-equivariant map
\begin{align}
\label{eq:deflambda}
\lambda:  S(V^n) \rightarrow I(s_{l,n}, \chi_V),      \quad \lambda(\phi)(g) =\big(\omega(g) \phi\big)(0).
\end{align}
We will also denote by $\lambda(\phi) $ the associated standard section in $I(s, \chi_V)$ with $\lambda(\phi) (g, s_{l, n}) =\lambda(\phi)(g)$. Assume that $l=n+1$. Then a formal unfolding suggests that there is a Haar measure $dh$ on $H=\SO(V)$ such that  for all $\phi \in S(V^n)$
$$
O_T(\omega(g)\phi, dh)=C \cdot W_{T}(g, 0, \lambda(\phi))
$$
where $C$ is some constant which is independent of $T$ and $\phi$, and
$$
O_T(\phi, dh) = \int_{H(F)} \phi(h^{-1} x) dh
$$
if there is $x\in V^n$ with $Q(x) =T$ (otherwise set the orbital integral to be zero).
This is the content of the so-called local Siegel-Weil formula which we will describe in next two subsections. In particular, we will determine the constant $C$.

\subsection{Kudla's  local Siegel-Weil formula}

In this subsection we review the  local Siegel-Weil formula given in  \cite[Section 5.3]{KRY-book}, following a general result in \cite[Chapter 4]{Ra}. Let the notation be as in Section \ref{sect2.1}, and assume $\dim(V)=n+1$.
%
Let
\begin{equation}
Q: V^n \rightarrow \Sym_n(F),  \quad Q(x) = \frac{1}2 ( (x_i, x_j))
\end{equation}
be the moment map. Let $V_{\reg}^n$ be the subset of $x \in V^n$ with $\det Q(x) \ne 0$, and let $\Sym_{n}^{\reg}(F)$ be the subset of $T \in  \Sym_n(F)$ with $\det T\ne 0$. Then $Q$ induces a regular map from $V_{\reg}^n$ to $\Sym_n^{\reg}(F)$.

Put $a(n) =\frac{n(n+1)}2$. We let $\alpha$ be a gauge form on $V^n$, that is, a generator of $(\wedge^{2a(n)}V^n )^*$ (a top level differential of the topological vector space $V^n$), and let $\beta$ be a gauge form on $\Sym_n(F)$, i.e., a generator of $\left(\wedge^{a(n)}(\Sym_n(F))\right)^*$.

Fix an  $x=(x_1, \dots, x_n) \in V_{\reg}^n$ with $Q(x) =T$. If we  identify  the tangent space $T_x(V_{\reg}^n) $ with $V^n$, then the differential $dQ_x$ is given by
$$
dQ_x(v) = \frac{1}2((x, v) + (v, x)) \in \Sym_n(F), \quad  v \in V^n.
$$
Let
\begin{equation}
j_x:  \Sym_n(F) \rightarrow  V^n,  \quad j_x(u) =\frac{1}2 x Q(x)^{-1} u.
\end{equation}
Then $dQ_x \circ j_x (u) =u$, and  we have the decomposition
$$
T_x(V_{\reg}^n) = \IM(j_x) \oplus \ker(dQ_x).
$$
Now choose any $u=(u_1, \dots, u_{a(n)}) \in  (\Sym_n(F))^{a(n)}$ with $\beta(u) \ne 0$. We define an $a(n)$-form $\nu \in (\wedge^{a(n)}V^n)^*$ on $V_{\reg}^n$ as follows:  for any $t =(t_1, \dots, t_{a(n)}) \in (V^n)^{a(n)}$, we put
\begin{equation} \label{eq:KudlaForm}
\nu(t) = \alpha(j_x(u), t) \beta(u)^{-1}.
\end{equation}
This quantity is independent of the choice of $u$.
Then \cite[Lemma 5.3.1]{KRY-book} asserts that
\begin{align}
\alpha &= Q^*(\beta) \wedge \nu, \\
\label{eq:invariant}
\nu &=(h, g)^* \nu ,
\end{align}
for $ h \in \SO(V)$ and $g \in \GL_n$,
where $\SO(V) \times \GL_n$ acts on $V^n$ via $(h, g) x = hx g^{-1}$. Moreover,   $\nu$ defines a gauge form on $Q^{-1}(T)$ if we identify $ \ker dQ_x$ with   the tangent space $T_x(Q^{-1}(T))$ of $Q^{-1}(T)$. Finally, using the isomorphism
\begin{equation}
i_x: \SO(V) \rightarrow Q^{-1}(T),   \quad i_x(h) =h x
\end{equation}
(here $\dim V= n+1$ is critical to insure that the pointwise stabilizer $H_x$ of $x$ is trivial),
we obtain a gauge form $i_x^*(\nu)$ on $\SO(V)$, which we will still denote by  $\nu$ for simplicity. The key point (see \cite[Lemma 5.3.2]{KRY-book}) is that  this gauge form $\nu$ does not depend on $T$ or $x$, which can be  seen by \eqref{eq:invariant}.

  This gauge form $\nu$  gives a Haar measure $dh=d_\nu h$ on $\SO(V)$. Let $d_\alpha x$ be the Haar measure on $V^n$ associated to $\alpha$ and $d_\beta T$ be the Haar measure on $\Sym_n(F)$ associated to $\beta$, and let $d_\psi x$ and $d_\psi T$ be  the self-dual Haar measures  on $V^n$ and  $\Sym_n(F)$ with respect to $\psi$, respectively. Then there are constants $c(\alpha, \psi)$ and $c(\beta, \psi)$ such that
 \begin{equation} \label{eq:MeasureConstant}
 d_\alpha x = c(\alpha,\psi) d_\psi x,  \quad d_\beta T = c(\beta, \psi) d_\psi T.
 \end{equation}
Finally, we can state Kudla's  local Siegel-Weil formula, which is \cite[Proposition 5.3.3]{KRY-book} (although only stated for $n=2$ there, the proof goes through for general $n$ without any change).

\begin{theorem}[Local Siegel-Weil formula]
\label{theo:LocalSiegel-Weil} Given a gauge form $\alpha$ on $V^n$ and  a  gauge form $\beta$ on $\Sym_n(F)$, let $d_\nu h$ be the Haar measure on $H(F)$ associated to $\alpha$ and $\beta$ as above.
Then one has  for any $\phi \in S(V^n)$, $T \in \Sym_n^{\reg}(F)$,  and $g\in G$,
$$
 O_T(\omega(g)\phi, d_\nu h) =C(V, \alpha, \beta, \psi)\cdot   W_T(g, 0, \lambda(\phi)) ,
$$
where
$$
C(V, \alpha, \beta, \psi) = \frac{ c(\alpha, \psi)}{\gamma(V^{n}) c(\beta, \psi)},
$$
and $\gamma(V^n)=\gamma(V)^n$  by \cite[Lemma 3.4]{KuSplit}.
\end{theorem}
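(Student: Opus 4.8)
The plan is to follow the general method of Rallis \cite[Chapter~4]{Ra}, along the lines of \cite[Proposition~5.3.3]{KRY-book} (where the case $n=2$ is written out; the argument is insensitive to $n$). First I would replace $\phi$ by $\omega(g)\phi$ to reduce everything to $g=1$, and note that since $l=\dim V=n+1$ we have $s_{l,n}=0$, so that $\lambda(\phi)$ is evaluated exactly at the point $s=0$ in the statement. The strategy is then to rewrite \emph{both} $O_T(\phi,d_\nu h)$ and $W_T(1,0,\lambda(\phi))$ as integrals of $\phi$ over the fiber $Q^{-1}(T)\subset V^n$ against two a priori different measures, and to match these measures using the gauge-form identities \cite[Lemmas~5.3.1 and 5.3.2]{KRY-book}.

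\emph{Unfolding the Whittaker integral.} Using the Weil representation formulas of Section~\ref{sect2.1}, $\lambda(\phi)(wn(b))=\big(\omega(w)\omega(n(b))\phi\big)(0)=\gamma(V^n)\int_{V^n}\psi(\tr(Q(y)b))\,\phi(y)\,d_\psi y$. Substituting this into \eqref{eq:Whittaker} — first for the standard section with $\Re(s)$ large, where all integrals converge absolutely, and then continuing to $s=0$ — and interchanging the order of integration gives
\[
W_T(1,0,\lambda(\phi))=\gamma(V^n)\int_{V^n}\phi(y)\Big(\int_{\Sym_n(F)}\psi\big(\tr((Q(y)-T)b)\big)\,d_\psi b\Big)\,d_\psi y .
\]
Since $d_\psi b$ is self-dual for the pairing $(b_1,b_2)\mapsto\psi(\tr(b_1b_2))$, Fourier inversion identifies the inner integral with the Dirac distribution on $\Sym_n(F)$ concentrated where $Q(y)=T$ (of total mass $1$ relative to $d_\psi T$). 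Hence $W_T(1,0,\lambda(\phi))=\gamma(V^n)\int_{Q^{-1}(T)}\phi(y)\,d\mu_T^{\psi}(y)$, where $d\mu_T^{\psi}$ is the fiber (Leray) measure obtained by disintegrating $d_\psi y$ along the moment map $Q$ over $d_\psi T$.

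\emph{The orbital integral and comparison of measures.} Because $\dim V=n+1$, the pointwise stabilizer of $x$ is trivial and $Q^{-1}(T)$ is a single closed $H(F)$-orbit, so $i_x\colon H\to Q^{-1}(T)$, $h\mapsto hx$, is an isomorphism; by unimodularity of $\SO(V)$, $O_T(\phi,d_\nu h)=\int_{H(F)}\phi(hx)\,d_\nu h=\int_{Q^{-1}(T)}\phi(y)\,|\nu|(y)$, where $|\nu|$ is the measure on $Q^{-1}(T)$ attached to the gauge form $\nu$ of \eqref{eq:KudlaForm} (this uses that $d_\nu h$ is by construction the Haar measure attached to $i_x^{*}\nu$, together with \cite[Lemma~5.3.2]{KRY-book}). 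The relation $\alpha=Q^{*}(\beta)\wedge\nu$ of \cite[Lemma~5.3.1]{KRY-book} says precisely that disintegrating $d_\alpha x$ along $Q$ over $d_\beta T$ returns the fiber measure $|\nu|$; combined with \eqref{eq:MeasureConstant} this forces $d\mu_T^{\psi}=\tfrac{c(\alpha,\psi)}{c(\beta,\psi)}\,|\nu|$. Assembling the pieces,
\[
W_T(1,0,\lambda(\phi))=\gamma(V^n)\,\frac{c(\alpha,\psi)}{c(\beta,\psi)}\,O_T(\phi,d_\nu h),
\]
which is the claimed identity with $C(V,\alpha,\beta,\psi)=c(\alpha,\psi)\big/\big(\gamma(V^n)\,c(\beta,\psi)\big)$, and the general-$g$ case follows from the reduction above.

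\emph{Main obstacle.} The delicate point is making the unfolding rigorous at $s=0$: the intermediate integrals are not absolutely convergent there, so one must carry out the computation for the standard section in the region of convergence — keeping track of the Iwasawa factor $|a(wn(b))|^{s}$ multiplying the integrand — and then invoke meromorphic continuation, which is exactly where the machinery of \cite[Chapter~4]{Ra} and the hypothesis $\det T\neq 0$ enter (the latter also ensuring convergence of the orbital integral, since $Q^{-1}(T)$ is then closed and $\phi$ is Schwartz). A secondary technical input is upgrading the pointwise identity of gauge forms $\alpha=Q^{*}(\beta)\wedge\nu$ to a genuine disintegration of the associated measures along the submersion $Q$, for which one uses the standard theory of gauge forms on fibrations; granting this, tracking the constant $c(\alpha,\psi)/c(\beta,\psi)$ is pure bookkeeping.
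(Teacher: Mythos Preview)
Your approach is exactly the one the paper takes: the paper does not give its own argument but simply cites \cite[Proposition~5.3.3]{KRY-book} and remarks that the proof there, written for $n=2$, goes through verbatim for general $n$. Your sketch is precisely that proof.

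There is, however, a bookkeeping slip in the measure comparison that makes your displayed identity internally inconsistent with your stated constant. From $d_\alpha x=c(\alpha,\psi)\,d_\psi x$ and $d_\beta T=c(\beta,\psi)\,d_\psi T$, together with the disintegration $d_\alpha x=|\nu|\otimes d_\beta T$ coming from $\alpha=Q^{*}(\beta)\wedge\nu$, one finds
\[
d_\psi x \;=\; c(\alpha,\psi)^{-1}\,|\nu|\otimes d_\beta T \;=\; \frac{c(\beta,\psi)}{c(\alpha,\psi)}\,|\nu|\otimes d_\psi T,
\]
so the fiber measure is $d\mu_T^{\psi}=\dfrac{c(\beta,\psi)}{c(\alpha,\psi)}\,|\nu|$, the reciprocal of what you wrote. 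Plugging this in gives
\[
W_T(1,0,\lambda(\phi))=\gamma(V^n)\,\frac{c(\beta,\psi)}{c(\alpha,\psi)}\,O_T(\phi,d_\nu h),
\]
and \emph{this} is what yields $O_T=C(V,\alpha,\beta,\psi)\,W_T$ with $C(V,\alpha,\beta,\psi)=c(\alpha,\psi)/\big(\gamma(V^n)c(\beta,\psi)\big)$. As written, your displayed line would produce the inverse constant. Fix the ratio and the argument is clean.
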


We remark that our $C(V, \alpha, \beta, \psi)$ is the reciprocal  of the same notation in \cite{KRY-book}.

\subsection{Explicit construction}

Let $\underline{e} =(e_1, \dots, e_{n+1})$ be an ordered basis of $V$ and put $J=Q(\underline{e}) = \frac{1}2 ( (e_i, e_j)) \in \Sym_{n+1}(F)$.
When $F$ is $p$-adic, let $L=\oplus \mathcal O_F e_j$ be the associated $\mathcal O_F$-lattice. Using this basis, we identify $V$ with $F^{n+1}$ (column vectors)  and $V^n$ with  $M_{n+1, n}$.

Let  $E_{ij}$  denote a matrix whose $(ij)$-entry is one and all other entries are zero (we do not identify the size of the matrix). Then  $\{ E_{ij},  1\le i \le n+1, 1 \le j \le n\}$ is a basis of $V^n$. Let
 $de_{ij}$ be its dual basis, and  let $\alpha =\bigwedge_{ij} de_{ij}$ be the gauge form on $V^n$ (up to sign, which does not affect the associated Haar measure) with
\begin{equation}
\alpha ( (E_{ij})) =\alpha (E_{11}, E_{12}, \dots, E_{n+1, n}) =1.
\end{equation}
Notice  that $Y_{ij} =E_{ij} + E_{ji}$ is a basis of $\Sym_n(F)$ ($1 \le i \le j \le n$), and let $dy_{ij} $ be its dual basis.
Let $\beta = \bigwedge_{ij} dy_{ij}$, then (up to sign)
\begin{equation}
\beta(Y_{11}, Y_{12}, \dots, Y_{n, n}) =1.
\end{equation}

\begin{proposition}
\label{prop:localSW} Let $J$,  $\alpha$, and $\beta$ be given  as above, and let $d_\nu h$ be the associated Haar measure on $H(F)=\SO(V)(F)$.
We take   $\psi(x)=e(x)=e^{2 \pi i x}$ when $F=\R$ and  assume that $\psi$ is unramified  when $F$ is $p$-adic. Then
$$
O_T(\omega(g)\phi, d_\nu h) = C(J) \cdot W_{T}(g, 0, \lambda(\phi))
$$
for all $\phi\in S(V^n)$ and $g \in G$.  Here
$$
C(J)= \gamma(V^{n})^{-1} |2|_F^{n+\frac{n(n-1)}4}  |\det (2J)|_F^{-n/2}.
$$
Finally,  when $F=\R$ and $V$ has signature $(p, q)$, then  $\gamma(V^{n}) =e(\frac{n(q-p)}{8})$.
\end{proposition}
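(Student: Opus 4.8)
The plan is to compute the two measure-comparison constants $c(\alpha,\psi)$ and $c(\beta,\psi)$ of \eqref{eq:MeasureConstant} separately and then substitute into the formula $C(V,\alpha,\beta,\psi) = c(\alpha,\psi)/(\gamma(V^n)\,c(\beta,\psi))$ of Theorem \ref{theo:LocalSiegel-Weil}. The point is that $\alpha$ and $\beta$ are defined via the \emph{coordinate} bases $\{E_{ij}\}$ and $\{Y_{ij}\}$, whereas the self-dual measures $d_\psi x$ and $d_\psi T$ are defined via the \emph{quadratic} pairings $(x,y)\mapsto\psi(\tr(x,y)_V)$ on $V^n$ and $(b_1,b_2)\mapsto\psi(\tr(b_1b_2))$ on $\Sym_n(F)$. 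So in each case one must write the relevant symmetric bilinear pairing in the chosen coordinate basis, take the determinant of its Gram matrix, and use the standard fact that the self-dual Haar measure for a pairing with Gram matrix $G$ (in a basis whose coordinate measure is normalized to give the unit cube volume $1$, resp.\ $\vol(\mathcal O_F^N)=1$ in the $p$-adic case with $\psi$ unramified) differs from the coordinate measure by $|\det G|_F^{1/2}$.

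First I would handle $\beta$. On $V^n = M_{n+1,n}(F)$, identified with $\bigoplus F\,e_i$ in each column, the pairing is $(x,y)\mapsto \tr({}^t x\,(2J)\,y)$ where $2J = ((e_i,e_j))$. In the basis $\{E_{ij}\}$ this quadratic form has Gram matrix $(2J)\otimes I_n$ (up to reordering), whose determinant is $\det(2J)^n$. Hence $c(\alpha,\psi) = |\det(2J)|_F^{n/2}$. For $\beta$ on $\Sym_n(F)$: in the basis $\{Y_{ij}: i\le j\}$ the pairing $(b_1,b_2)\mapsto\tr(b_1b_2)$ has Gram matrix $\tr(Y_{ij}Y_{kl})$; the diagonal entries are $\tr(Y_{ii}^2)=2$ for $i=j$ and $\tr(Y_{ij}^2) = 2$ as well when $i<j$ — one has to be careful here: $Y_{ij}^2 = E_{ii}+E_{jj}$ for $i\ne j$, giving trace $2$, while $Y_{ii}^2 = 4E_{ii}$ has trace... no, $Y_{ii}=2E_{ii}$ so $Y_{ii}^2 = 4E_{ii}$, trace $4$. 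Cross terms vanish. So the Gram matrix is diagonal with $n$ entries equal to $4$ and $\binom{n}{2}$ entries equal to $2$, determinant $4^n\cdot 2^{n(n-1)/2} = 2^{2n + n(n-1)/2}$. Thus $c(\beta,\psi) = |2|_F^{n + n(n-1)/4}$. Substituting, $C(J) = c(\alpha,\psi)/(\gamma(V^n)c(\beta,\psi)) = \gamma(V^n)^{-1}|2|_F^{-(n+n(n-1)/4)}|\det(2J)|_F^{n/2}$ — wait, this has $|\det(2J)|_F$ in the numerator, so I would need to recheck which of $\alpha,\beta$ carries which Gram determinant, or whether $c(\alpha,\psi)=|\det(2J)|_F^{-n/2}$ because the self-dual measure is \emph{larger} when the form is "bigger." In fact the self-dual measure for Gram matrix $G$ assigns the coordinate unit cube volume $|\det G|_F^{-1/2}$ (the Fourier transform normalization forces the reciprocal), so $d_\alpha x = |\det(2J)|_F^{n/2} d_\psi x$ would be backwards and instead $c(\alpha,\psi) = |\det(2J)|_F^{-n/2}\cdot(\text{correction})$; getting the direction of this comparison right in both places is the first thing to nail down, and then the stated formula $C(J) = \gamma(V^n)^{-1}|2|_F^{n+n(n-1)/4}|\det(2J)|_F^{-n/2}$ falls out.

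For the final archimedean assertion, I would use the explicit value of the Weil index: for $F=\R$ and $\psi(x)=e(x)$, one has $\gamma(\psi)=e(1/8)$, and for a quadratic space of signature $(p,q)$, $\gamma(V) = \gamma(\psi\circ Q_V) = e((p-q)/8)$ by Milgram's formula / the standard computation of the real Weil index. Since $\gamma(V^n)=\gamma(V)^n$ by \cite[Lemma 3.4]{KuSplit}, this gives $\gamma(V^n) = e(n(p-q)/8)$; the sign of the exponent in the claim ($e(n(q-p)/8)$) depends on a fixed convention for which space the index is attached to, so I would trace through the conventions in \cite{KuSplit} to match it exactly.

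The main obstacle I expect is not any single computation but rather the bookkeeping of normalization conventions: the direction of the self-dual vs.\ coordinate measure comparison ($|\det G|^{+1/2}$ vs.\ $|\det G|^{-1/2}$), the factor-of-$2$ bookkeeping coming from the quadratic form $Q = \frac12(\cdot,\cdot)$ versus the bilinear form $(\cdot,\cdot)$ and from the symmetric-matrix coordinates $Y_{ij}$, and the sign convention in the Weil index. Everything else is a short explicit linear-algebra calculation of two Gram determinants. I would organize the write-up as: (i) recall that for a nondegenerate $\psi$-pairing on an $N$-dimensional $F$-vector space with Gram matrix $G$ in a basis, the self-dual measure is $|\det G|_F^{-1/2}$ times the coordinate measure of that basis (in the $p$-adic unramified case this is the measure giving the $\mathcal O_F$-lattice volume $1$); (ii) compute the Gram matrix of $(x,y)\mapsto\tr({}^tx(2J)y)$ in $\{E_{ij}\}$, obtaining $\det = \det(2J)^n$, hence $c(\alpha,\psi)$; (iii) compute the Gram matrix of $(b_1,b_2)\mapsto\tr(b_1b_2)$ in $\{Y_{ij}\}_{i\le j}$, obtaining $\det = 2^{2n}\cdot 2^{n(n-1)/2}$, hence $c(\beta,\psi) = |2|_F^{n+n(n-1)/4}$; (iv) substitute into $C(V,\alpha,\beta,\psi)$; (v) for $F=\R$ invoke the explicit real Weil index to get $\gamma(V^n) = e(n(q-p)/8)$.
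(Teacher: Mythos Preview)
Your overall strategy is exactly the paper's: compute $c(\alpha,\psi)$ and $c(\beta,\psi)$ separately and substitute into Theorem~\ref{theo:LocalSiegel-Weil}. The execution differs. The paper evaluates each ratio by choosing a convenient test function (the characteristic function of $L^n$ or $\Sym_n(\mathcal O_F)$ in the $p$-adic case, a Gaussian in the real case), computing its Fourier transform with respect to both $d_\alpha$ (resp.\ $d_\beta$) and $d_\psi$, and reading off the constant. You instead compute the Gram determinant of the self-duality pairing in the coordinate basis and invoke the general fact that the self-dual measure is $|\det G|_F^{1/2}$ times the coordinate measure. Both routes give the same numbers; yours is more uniform (one argument covers the archimedean and non-archimedean cases simultaneously), while the paper's test-function approach is more self-contained and sidesteps exactly the sign ambiguity you flag.

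Your Gram determinants are correct: $\det(2J)^n$ on $V^n$ and $4^n\cdot 2^{n(n-1)/2}=2^{2n+n(n-1)/2}$ on $\Sym_n(F)$ in the $\{Y_{ij}\}$ basis. The direction you waver on is: the self-dual measure assigns the coordinate parallelepiped volume $|\det G|_F^{+1/2}$ (not $-1/2$), so $c(\alpha,\psi)=|\det(2J)|_F^{-n/2}$ and $c(\beta,\psi)=|2|_F^{-n-n(n-1)/4}$, and the quotient gives the stated $C(J)$. A one-line sanity check (the $1$-dimensional case $B(x,y)=axy$ on $\R$ with $\psi=e$) fixes this sign once and for all. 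For the archimedean Weil index, your reasoning is right that the exponent flips sign relative to the naive Milgram value because the paper's definition $\gamma(V^n)=\gamma(\psi\circ\det Q)^{-n}$ carries an inverse; the paper simply cites \cite[(3.4)]{KuSplit} for this, and your plan to trace the convention there is the same.
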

By the proposition, we see that $d_\nu h$ depends only on $|\det(2J)|_F$.
For this reason, we will sometime denote $d_\nu h$ by $d_J h$ or $d_L h$ in the $p$-adic case. We also write $C(L)=C(J)$ in the $p$-adic case as $\det(2J) =\det L$.
\begin{proof}First assume that $F$ is $p$-adic.  Let $\co_F$ be   ring of integers  of $F$. Let $L=\oplus \co_F e_i  = \co_F^n  \subset V=F^n$ and $f=\cha(L^n) =\cha(M_{n+1, n}(\co_F) )\in S(V^n)$. Then
the Fourier transforms of $f$ with respect to $d_\alpha x$  and $d_\psi x$ are given by
\begin{align*}
\hat{f}_\alpha(X) &= \int_{M_{n+1, n}(\co_F)} \psi(-\tr (2\, {}^tX J Z)) \prod  dz_{ij} = \cha(L^{\prime, n})(X),
\\
\hat f_\psi(X)  &=\int_{M_{n+1, n}(\co_F)} \psi(-\tr (2 \, {}^tX J Z)) d_\psi Z = \cha(L^{\prime, n})(X) \vol(L^n, d_\psi x),
\end{align*}
where $L'$ is the dual lattice of $L$ with respect to $\psi$.  Since $d_\psi x$ is the self-dual Haar measure on $V$ with respect to $\psi$, one has
$$
\vol(L, d_\psi x) 
=|\det (2J)|_F^{\frac{1}2}  .
$$
Consequently, $c(\alpha, \psi) =   |\det (2J)|_F^{-n/2}$.

 Next, for  $t =(t_{ij}) \in \Sym_{n}(F)$, $d_{\beta} t = |2|_F^{-n} \prod dt_{ij}$. Let $f =\cha(\Sym_n(\co_F))$, then
\begin{align*}
\hat{f}_\beta(y)
 &=\int_{\Sym_n(\co_F)} \psi(-\tr(y t) ) |2|_F^{-n} \prod_{i} \psi(-y_{ii} t_{ii} )dt_{ii}  \prod_{i<j} \psi(-2 y_{ij} t_{ij}) dt_{ij}
 \\
 &=|2|_F^{-n }\prod_i \cha( \co_F)(y_{ii}) \prod_{i <j} \cha(\frac{1}2 \co_F)(y_{ij}).
\end{align*}
On the other hand, if $d_\psi t$ is the self-dual Haar measure on $\Sym_n(F)$  with respect to $\psi$, then
$$
\vol( \Sym_n(\co_F), d_\psi t) = |2|_F^{\frac{n(n-1)}{4}} ,
$$
and
\begin{align*}
\hat{f}_\psi(y) &= \int_{\Sym_n(\co_F)} \psi(-\tr(y t) )d_\psi t
\\
 &=\vol( \Sym_n(\co_F), d_\psi t)  \prod \cha(\co_F)(y_{ii}) \prod_{i <j} \cha(\frac{1}2 \co_F)(y_{ij}).
\end{align*}
So  $c(\beta, \psi) = |2|_F^{-n -\frac{n(n-1)}4} $. Now it is clear that $C(V, \alpha,\beta, \psi) =C(J)$ as claimed.

 Now assume that $F=\R$ and $\psi(x) =e(x)$. To compute the quantity $c(\alpha, \psi)$,  we write $J={}^t P \diag(a_1, \dots, a_{n+1})P$ and denote $|J| ={}^t P \diag(|a_1|, \dots, |a_{n+1}|) P$. We consider the Schwartz function on $M_{n+1, n}(\R)$ given by
$$
f(x) = e^{-2 \pi  \tr (x^t |J| x) }= e^{-2 \pi \sum |a_i|\tilde{x}_{ij}^2},
$$
where we write $Px = (\tilde{x}_{ij})$.
Then its Fourier transformation with respect to $d_\alpha x =\prod dx_{ij}$ is
\begin{align*}
\hat{f}_\alpha(x) = \int_{M_{n+1, n}(\R)} f(y) e(\tr (2\, {}^tx J y)) \prod dy_{ij} = 2^{-\frac{n(n+1)}2} |\det J|^{-\frac{n}2} f(x),
\end{align*}
and so $c(\alpha, \psi) =  |\det 2J|^{-\frac{n}2}$ as claimed.

 To compute $c(\beta, \psi)$, notice that
$d_\beta T=2^{-n} \prod_i dt_{ii} \prod_{i <j} dt_{ij}$ for $T=(t_{ij}) \in \Sym_n(\R)$, and consider the Schwartz function on $\Sym_n(\R)$
given by
$$
f(T) =e^{-\pi(\sum_i t_{ii}^2 + 2\sum_{i <j} t_{ij}^2)}.
$$
Then its Fourier transformation with respect to $d_\beta T$ is
\begin{align*}
\hat{f}_\beta(b) &= \int_{\Sym_n(\R)} f(T) \psi(-\tr (Tb))\, d_\beta T
 \\
  &=2^{-n} \prod_j  e^{-\pi b_{jj}^2} \int_\R e^{-\pi (t_{jj} + i b_{jj})^2} dt_{jj}
    \prod_{j < k} e^{-2 \pi b_{jk}^2} \int_{\R} e^{-2 \pi (t_{jk} + i b_{jk})^2 }dt_{jk}
    \\
  &= 2^{-n -\frac{n(n-1)}4} f(b).
\end{align*}
This shows the equality $c(\beta, \psi) = 2^{-n -\frac{n(n-1)}4}$. We have again $C(V, \alpha, \beta, \psi) =C(J)$ as claimed.  The formula for $\gamma(V^n)$ is given by $\beta_V(w)$ in \cite[(3.4)]{KuSplit}.
\end{proof}

The following proposition shows how to compute the Haar measure $d_\nu h$ in some cases and will be used in Section \ref{sect:ArithSW}.

\begin{proposition}
\label{prop:Measure}
Let $F$ be a $p$-adic local field with $p \ne 2$ and a uniformizer $\pi$, and let $\psi$ be an unramified additive character of $F$.   For a lattice $L$ over $\calO_F$, let  $K_L =\SO(L)$ be the stabilizer of $L$ in  $\SO(V)(F)$, where $V=L\otimes_{\mathcal O_F} F$. Let $d_L h$ be the Haar measure on $H(F)= \SO(V)(F)$ defined above.
\begin{enumerate}
\item[(1)]
When $L$ is unimodular of rank $n+1$, we have
$$
\frac{\vol(K_L, d_L h)}{C(L)} = W_T(1, 0, \lambda(\phi_L))
$$
for any unimodular  symmetric matrix $T \in \Sym_{n}(\mathcal O_F)$. Here $\phi_L=\cha(L^n)$.
\item[(2)]
Assume $L=L_1 \oplus L_0$ with $L_1$ unimodular of rank $n-1$ and $L_0=(\co_E, \pi \norm_{E/F})$, where $E$ is the unique unramified quadratic field extension.  Let $T=\diag(T_1, \pi)$ with $T_1 =\frac{1}2( (e_i, e_j))$ for some $\co_F$-basis $\{e_1, \dots, e_{n-1}\}$ of $L_1$. Then
$$
\frac{\vol(K_L, d_L h)}{C(L)} = W_T(1, 0, \lambda(\phi_L)).
$$
\end{enumerate}
In both cases, $C(L) =C(J) $ is given by Proposition  \ref{prop:localSW}.
\end{proposition}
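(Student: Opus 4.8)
The plan is to derive both parts from the local Siegel--Weil formula together with a direct evaluation of the orbital integral as the volume of $K_L$. Recall that the measure $d_L h$ is by definition the measure $d_\nu h$ attached to the basis of $L$, for which Proposition~\ref{prop:localSW} gives $C(V,\alpha,\beta,\psi)=C(L)$. Hence Theorem~\ref{theo:LocalSiegel-Weil}, applied with $g=1$ and $\phi=\phi_L=\cha(L^n)$, yields
\[
O_T(\phi_L, d_L h) = C(L)\cdot W_T(1,0,\lambda(\phi_L)).
\]
Since $O_T(\phi_L, d_L h)=\vol\big(\{h\in H(F):\ h^{-1}x\in L^n\},\,d_L h\big)$ for any fixed $x\in V^n$ with $Q(x)=T$, the whole statement reduces to exhibiting such an $x$ for which
\[
\{g\in H(F):\ g x\in L^n\}=K_L=\SO(L).
\]
The inclusion $\supseteq$ is immediate from $x\in L^n$; the content is the reverse inclusion, i.e.\ that $gx\in L^n$ already forces $g$ to stabilize all of $L$.

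For part (1): since $p\neq 2$ and $T$ is unimodular, $2T$ is a unimodular symmetric matrix, and as unimodular $\calO_F$-lattices are classified up to isometry by rank and discriminant, $L$ admits an orthogonal splitting $L=M\perp M^\perp$ with $M$ isometric to $(\calO_F^n,2T)$; a basis of $M$ provides $x\in L^n$ with $Q(x)=T$. If now $gx\in L^n$, then $gM\subseteq L$ is a unimodular sublattice of corank one, hence an orthogonal direct summand, say $L=gM\perp N$ with $N$ unimodular of rank one. But $gL=gM\perp gM^\perp$ is also such a splitting, and a nondegenerate $F$-line contains at most one unimodular $\calO_F$-lattice, so $gM^\perp=N$ and $gL=L$.

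For part (2): take $x_1,\dots,x_{n-1}$ a basis of $L_1$ realizing $T_1$, and $x_n\in L_0$ with $Q(x_n)=\pi$; such $x_n$ exists and is primitive in $L_0=(\calO_E,\pi\norm_{E/F})$, since $Q(x_n)=\pi$ forces $\norm_{E/F}(x_n)=1$, so $x_n\in\calO_E^\times$. If $gx\in L^n$, then $gL_1\subseteq L$ is a unimodular direct summand, $L=gL_1\perp N$ with $N$ of rank two; comparing determinants of Gram matrices gives $\det N=\det L_0$, which has valuation two. A short computation (using $p\neq 2$) shows that $gx_n$ lies in $N$, is primitive there, and that $N$ is $\pi$-modular. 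Finally one checks that a binary quadratic $F$-space contains a \emph{unique} $\pi$-modular $\calO_F$-lattice containing a given vector of norm $\pi$; since both $N$ and $gL_0$ (which is $\cong L_0$, hence $\pi$-modular) contain $gx_n$, we get $N=gL_0$, whence $gL=gL_1\perp gL_0=gL_1\perp N=L$.

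In both cases $O_T(\phi_L,d_L h)=\vol(K_L,d_L h)$, and combining with the displayed identity gives the assertion. The main obstacle is the stabilizer computation in part (2): one must upgrade the hypothesis that $g$ carries the $n$ vectors $x_i$ into $L$ to the statement that $g$ preserves the rank-$(n+1)$ lattice $L$, and the crux is the rigidity fact that in a binary quadratic space the $\pi$-modular lattice is pinned down by a single vector of norm of valuation one. Everything else is the $p\neq 2$ classification of quadratic $\calO_F$-lattices together with discriminant bookkeeping.
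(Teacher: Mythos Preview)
Your proposal is correct and follows essentially the same approach as the paper: apply the local Siegel--Weil formula (Proposition~\ref{prop:localSW}) to reduce to showing that $\{g\in H(F):gx\in L^n\}=K_L$ for a suitable $x$, then verify this via the splitting-off of the unimodular piece and a determination of the rank-one or rank-two complement. The only cosmetic difference is in the endgame of part~(2): the paper fixes an explicit orthogonal basis of the rank-two complement $M_0$, uses $Q(he_n)=\pi$ to force $a_1=a_2=1$, and then checks by a norm computation in $E/F$ that the coordinates of $he_{n+1}$ are integral, whereas you package the same content as the uniqueness of a $\pi$-modular binary lattice containing a fixed vector of norm $\pi$---these are equivalent arguments.
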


\begin{proof}
We prove (2) using Proposition  \ref{prop:localSW} with $\phi=\phi_L$,  and leave the slightly easier (1) to the reader. Choose a basis $\{e_n, e_{n+1}\}$ of $L_0$ so that $Q(a e_n + be_{n+1}) =\pi (a^2 +\epsilon b^2)$ for some $\epsilon \in \co_F^\times$. Let $e = (e_1, \dots, e_n) \in L^n$, then $Q( e) =T$. We claim that
$$
K_T:=\{ h \in H(F):\,  h e  \in  L^n\} =K_L.
$$
Clearly, $K_L\subset K_T$. We just need to prove that $he_{n+1} \in L$ for $h \in K_T$. In this case,
 $hL_1 \subset L$ is unimodular, so
$L= hL_1 \oplus M_0$,  where $M_0=L\cap (hL_1)^\perp$ is a rank $2$ lattice  with $\det M_0=\det L_0= \epsilon \pi^2$. Write
 $$
 M_0=\mathcal O_F \tilde e_n + \mathcal O_F \tilde e_{n+1},  \quad  Q(x \tilde e_{n} + y \tilde e_{n+1}) = \epsilon_1 \pi^{a_1} x^2 + \epsilon_2 \pi^{a_2} y^2$$
  with $\epsilon_1 \epsilon_2 =\epsilon$ and non-negative integers $a_i$ satisfying $a_1 +a_2=2$. Since
$$
h e_n = x \tilde e_n + y \tilde e_{n+1} \in L \cap (h L_1)^\perp =M_0,
$$
we have $x, y \in \co_F $ and
$$\pi = \norm(e_n) = \norm(he_n) = \epsilon_1 \pi^{a_1} x^2 + \epsilon_2 \pi^{a_2} y^2,
$$
 which implies $a_1=a_2 =1$. Now write
$$
h e_{n+1} = a  \tilde e_n + b \tilde e_{n+1}, \quad a,  b \in F.
$$
Then  $$\epsilon \pi= \norm(e_{n+1}) =\norm(h e_{n+1})=  \pi (\epsilon_1  a^2 + \epsilon_2  b^2),$$
  i.e.,
$$
\epsilon_1 \epsilon =\norm_{E/F}( \epsilon_1 a + \sqrt{-\epsilon} b),
$$
which implies $ \epsilon_1 a + \sqrt{-\epsilon} b \in E^1$, which is integral over $\co_F$. So  $a, b \in \mathcal O_F$ and $h(e_{n+1}) \in L$.  This proves $K_T=K_L$.  Applying the local Siegel-Weil formula to $\phi=\phi_L$, we have then
$$
 \int_{H(F)} \phi_L(h^{-1}e) d_L x = C(L) \cdot W_T(1, 0, \lambda(\phi_L).
$$
The left hand side is equal to $\vol(K_T, d_Lx)$. So we have
$$
\vol(K_L, d_L h) = C(L) \cdot W_T(1, 0, \lambda(\phi_L)
$$
as claimed.
\end{proof}

We remark that the Whittaker functions involved in the above proposition have explicit formulas, see Section \ref{sect:ArithSW}.

Now we describe $\nu$ and $d_\nu h=d_J h$ more explicitly by choosing the basis $\underline{e}$ and thus $J$ nicely, i.e., we assume $J=\diag(a_1, \dots, a_{n+1})$. It will be used in next section.

Let $\mathfrak h= \mathfrak s \mathfrak o(V)$ be  the Lie algebra of $\SO(V)$. In terms of coordinates with respect to the basis $\underline{e}$, one has
$X=(x_{ij}) \in  \mathfrak h$ if and only if  $ {}^tX J + J X =0$, i.e., $a_i x_{ij} + a_j x_{ji}=0$. Hence we have the following lemma.

\begin{lemma}
Let  $X_{ij} = a_j E_{ij} - a_i E_{ji}$ for $1 \le  i <j \le n+1$. Then $\{ X_{ij} \}$ gives a basis of $\mathfrak h$ as an $F$-vector space.
\end{lemma}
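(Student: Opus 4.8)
The statement is that $\{X_{ij} = a_j E_{ij} - a_i E_{ji} : 1 \le i < j \le n+1\}$ is a basis of $\mathfrak{h} = \mathfrak{so}(V)$ over $F$, where $J = \diag(a_1,\dots,a_{n+1})$ with all $a_i \ne 0$. The plan is to verify first that each $X_{ij}$ actually lies in $\mathfrak{h}$, then that the family is linearly independent, and finally that its cardinality equals $\dim \mathfrak{h}$; together these give the basis claim.

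For the first step I would simply recall, as in the discussion preceding the lemma, that $X = (x_{k\ell}) \in \mathfrak{h}$ precisely when ${}^tX J + JX = 0$, which in coordinates reads $a_\ell x_{k\ell} + a_k x_{\ell k} = 0$ for all $k,\ell$ (and in particular $x_{kk} = 0$). For $X_{ij}$ the only nonzero entries are $(X_{ij})_{ij} = a_j$ and $(X_{ij})_{ji} = -a_i$, so the defining relation at the $(i,j)$ slot is $a_j \cdot a_j + a_i \cdot(-a_i)$... wait — more carefully, the relation to check is $a_j x_{ij} + a_i x_{ji} = a_j a_j + a_i(-a_i)$? No: the relation is $a_j x_{ij} + a_i x_{ji} = 0$, i.e. $a_j \cdot a_j + a_i \cdot (-a_i)$ is not what appears; rather $a_j(X_{ij})_{ij} + a_i (X_{ij})_{ji} = a_j a_j + a_i(-a_i)$. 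This is wrong, so let me restate: the correct relation from $a_\ell x_{k\ell} + a_k x_{\ell k}=0$ with $(k,\ell)=(i,j)$ is $a_j x_{ij} + a_i x_{ji} = 0$, and plugging in $x_{ij}=a_j$, $x_{ji}=-a_i$ gives $a_j a_j + a_i(-a_i)$, which need not vanish. So the right normalization must be $x_{ij} = a_j$, $x_{ji} = -a_i$ tested against $a_j x_{ij} + a_i x_{ji}$... I will instead just cite the relation $a_i x_{ij} + a_j x_{ji} = 0$ with indices as in the paragraph above the lemma ($a_i x_{ij} + a_j x_{ji}=0$), under which $X_{ij}$ with entries $(i,j)\mapsto a_j$, $(j,i)\mapsto -a_i$ satisfies $a_i a_j + a_j(-a_i) = 0$. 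That is the computation to carry out cleanly in the write-up; it is routine once the indexing convention is pinned down.

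For linear independence: the matrices $E_{ij}$ ($i \ne j$) are linearly independent in $\Mat_{n+1}(F)$, and each $X_{ij}$ for $i<j$ is $a_j E_{ij} - a_i E_{ji}$ with $a_i, a_j \ne 0$; distinct pairs $(i,j)$ involve disjoint pairs of basis matrices $\{E_{ij}, E_{ji}\}$, so a vanishing $F$-linear combination $\sum_{i<j} c_{ij} X_{ij} = 0$ forces $c_{ij} a_j = 0$ and $c_{ij} a_i = 0$ for each $i<j$, hence all $c_{ij} = 0$. Finally, counting: there are $\binom{n+1}{2} = \frac{(n+1)n}{2}$ pairs $i<j$, and $\dim_F \mathfrak{so}(V) = \binom{n+1}{2}$ since $V$ has dimension $n+1$ and the form is non-degenerate. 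An independent family of the correct size in a finite-dimensional space is a basis, which completes the argument.

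I do not expect any genuine obstacle here — the only thing requiring care is fixing the index convention so that the entries $a_j$ and $-a_i$ (rather than $a_i$ and $-a_j$) are the ones that make ${}^tX J + JX = 0$ hold, and then the rest is bookkeeping. The spanning could alternatively be argued directly (solve $a_i x_{ij} + a_j x_{ji} = 0$ to see every $X \in \mathfrak{h}$ is a combination of the $X_{ij}$), but the dimension-count shortcut is cleaner and is what I would use.
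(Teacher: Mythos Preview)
Your approach is correct and is exactly what the paper has in mind: it states the characterization $a_i x_{ij} + a_j x_{ji} = 0$ and then declares the lemma as an immediate consequence, with no further argument. The only thing to fix is the indexing confusion in your first step --- the relation is $a_i x_{ij} + a_j x_{ji} = 0$ (as in the paper), and for $X_{ij}$ with $(X_{ij})_{ij} = a_j$, $(X_{ij})_{ji} = -a_i$ this reads $a_i a_j + a_j(-a_i) = 0$; once you write that cleanly the rest (independence via disjoint supports, dimension count $\binom{n+1}{2}$) is exactly right.
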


\begin{proposition} \label{prop:GaugeForm}  Let the notation be as above. Then one has
$$
\nu(X_{12}, X_{13}, \dots,  X_{n, n+1}) = \pm 1.
$$
\end{proposition}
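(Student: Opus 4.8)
The plan is to evaluate $\nu$ on the given basis of $\mathfrak{h}$ at a conveniently chosen point, exploiting that by \cite[Lemma 5.3.2]{KRY-book} the gauge form $i_x^*\nu$ on $\SO(V)$ does not depend on the choice of $x\in V_{\reg}^n$ or of $T=Q(x)$. Since $J=\diag(a_1,\dots,a_{n+1})$ with all $a_i\neq 0$, I would take $x=\left(\begin{smallmatrix} I_n\\ 0\end{smallmatrix}\right)\in M_{n+1,n}(F)=V^n$, for which $T:=Q(x)={}^txJx=\diag(a_1,\dots,a_n)$ is invertible. Identifying $T_x(V_{\reg}^n)$ with $V^n$, the differential of $i_x\colon \SO(V)\to Q^{-1}(T)$ at the identity sends $X\in\mathfrak{h}$ to the matrix product $Xx$, and since ${}^tXJ+JX=0$ one checks $dQ_x(Xx)={}^tx({}^tXJ+JX)x=0$, so each $X_{ij}x$ lies in $\ker(dQ_x)$. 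Choosing $u=(Y_{11},Y_{12},\dots,Y_{nn})$ in any fixed order, so that $\beta(u)=\pm 1$, the defining formula \eqref{eq:KudlaForm} for $\nu$ then gives
\[
\nu(X_{12},X_{13},\dots,X_{n,n+1})=\pm\,\alpha\big(j_x(Y_{11}),\dots,j_x(Y_{nn}),\,X_{12}x,\dots,X_{n,n+1}x\big),
\]
i.e.\ the determinant, in the basis $\{E_{ij}\}$ of $V^n$, of the $n(n+1)\times n(n+1)$ matrix whose columns are these vectors.

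The next step is to write out that matrix. Using $Q(x)^{-1}=\diag(a_1^{-1},\dots,a_n^{-1})$ one gets $j_x(Y_{kk})=a_k^{-1}E_{kk}$ and $j_x(Y_{kl})=\frac12 a_k^{-1}E_{kl}+\frac12 a_l^{-1}E_{lk}$ for $k<l\le n$, while $X_{kl}x=a_lE_{kl}-a_kE_{lk}$ for $1\le k<l\le n$ and $X_{i,n+1}x=-a_iE_{n+1,i}$. Hence every column is supported either on a single diagonal position $(k,k)$, or on the pair of positions $\{(k,l),(l,k)\}$ with $1\le k<l\le n$, or on a single bottom-row position $(n+1,i)$; moreover each of these $n(n+1)$ positions is met in this way. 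Reordering rows and columns to match, the matrix becomes block diagonal: $n$ blocks $(a_k^{-1})$ coming from the $j_x(Y_{kk})$; for each $1\le k<l\le n$ a $2\times 2$ block $\left(\begin{smallmatrix}\frac12 a_k^{-1} & a_l\\ \frac12 a_l^{-1} & -a_k\end{smallmatrix}\right)$, of determinant $-1$, coming from the pair $\{j_x(Y_{kl}),X_{kl}x\}$; and $n$ blocks $(-a_i)$ coming from the $X_{i,n+1}x$.

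Multiplying the block determinants, the total is $\pm\prod_{k=1}^n a_k^{-1}\cdot(-1)^{\binom{n}{2}}\cdot\prod_{i=1}^n(-a_i)=\pm(-1)^{n+\binom{n}{2}}=\pm 1$, which is the assertion. There is no genuine obstacle here: the whole argument reduces to a single finite determinant computation. The parts that demand the most care are the identification $d(i_x)_e(X)=Xx$ (so that the abstract gauge form on $\SO(V)$ really is computed by the above $\alpha$-determinant, with the $X_{ij}x$ inserted in place of a general tuple in $(V^n)^{a(n)}$) and the bookkeeping of which positions each column occupies. The cancellation of all powers of $2$ and of the $a_i$ — which is forced because the answer must be a unit — is transparent from the block structure: the $-1$'s come from the $2\times 2$ blocks, and each factor $a_k^{-1}$ contributed by $j_x(Y_{kk})$ is cancelled by the $-a_k$ contributed by $X_{k,n+1}x$.
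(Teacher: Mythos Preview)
Your proof is correct and follows essentially the same approach as the paper: the same choice $x=\left(\begin{smallmatrix}I_n\\0\end{smallmatrix}\right)$, the same computations of $j_x(Y_{ij})$ and $di_x(X_{ij})=X_{ij}x$, and the same evaluation of $\alpha$ on the resulting tuple. The only difference is cosmetic: the paper simply asserts that the wedge of all these vectors equals $\pm\bigwedge E_{ij}$, whereas you make this explicit by reorganizing into $1\times1$ and $2\times2$ blocks and computing each determinant, which makes the cancellation of the $a_i$ and the factors of $\tfrac12$ transparent.
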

\begin{proof} We choose
$$
x = (e_1, \dots, e_n) = \begin{pmatrix} I_n \\ 0 \end{pmatrix}  \in V^n
$$
and $T= Q(x) = \diag(a_1, \dots, a_n)$. Then
$$
j_x(Y_{ij}) = \frac{1}2 x T^{-1} Y_{ij} = \frac{1}2 (a_i^{-1} E_{ij} + a_j^{-1} Y_{ji}).
$$
Recall that $i_x: H \rightarrow Q^{-1}(T), h \mapsto h x$.  Hence   the associated map on tangent space $di_x: \mathfrak h \rightarrow V^{a(n)}$ is given by
$$
di_x (X_{ij})= X_{ij} x
=  \begin{cases}
 X_{ij} &\ff  j\le n,
 \\
 -a_{i} E_{n+1, i}  &\ff j=n+1.
\end{cases}
$$
Therefore,
$$
\left(\bigwedge_{1 \le i<j \le n+1} (di_x)(X_{ij})\right)  \wedge \left(\bigwedge_{1 \le j \le n} j_x(Y_{ij})\right)
 = \pm  \bigwedge_{\substack{ 1 \le i \le n \\ 1 \le j \le  n+1}} E_{ij},
$$
and
\begin{align*}
\nu(\wedge X_{ij} ) &= \alpha\left( (\bigwedge_{1 \le i<j \le n+1} (di_x)(X_{ij}))  \wedge (\bigwedge_{1 \le j \le n} j_x(Y_{ij}))\right) \beta(\wedge Y_{ij})^{-1}
\\
&=\pm \alpha(\bigwedge_{\substack{ 1 \le i \le n \\ 1 \le j \le  n+1}} E_{ij}) =\pm 1.
\end{align*}
This concludes the proof of the proposition.
\end{proof}


\section{The local Siegel-Weil formula on a hermitian symmetric domain}

\label{sect:3}

Let $V$ be a quadratic space over $\R$ of signature $(m, 2)$, and let $H=\SO(V)$. Let $\calD$ be the corresponding hermitian domain, which we realize as the Grassmannian of oriented negative $2$-planes in $V$. 
The purpose of this section is to prove Proposition~\ref{prop:LocalSiegel-WeilR}, a variant of the archimedian local Siegel-Weil formula involving an integral over $\calD$.
Throughout this section we fix the additive character $\psi(x)=e(x)$ of $\R$ and assume that $n=m+1$. Recall that $\rho_n=\frac{n+1}2$.

Let $e,f\in V$ be isotropic vectors such that $(e,f)=1$, and let $V_0= (\R e+\R f)^\perp\subset V$. Then $V_0$ has signature $(m-1,1)$ and we have the Witt decomposition $V=V_0+\R e+\R f$.
The hermitian symmetric domain $\mathcal D$ can also  be realized as the tube domain
\begin{align}
\label{eq:defH}
\mathcal H = \{ z = x+ i y \in V_{0, \C}: \, Q(y) <0\},
\end{align}
via the isomorphism
\[
\calH\to \calD,\quad z\mapsto \R \Re(w(z))+\R\Im(w(z)),
\]
where
\[
w(z)= z+ e-Q(z)f\in V_\C.
\]
Then
$H(\R)$ acts on $\mathcal H$ by linear fractional transformations, characterized by
$$
hw(z) =j(h, z) \cdot w(h z)
$$
where $j(h,z)$ denotes the automorphy factor
$H(\R)\times \calH\to \C^\times$, $j(h,z)= (h w(z),f)$.

The map $z\mapsto w(z)$ can be viewed as a section of the tautological bundle over $\calD$. The Petersson norm of this section is $-\frac{1}{2}(w(z),\overline w(z) ) =-(y,y)$.
Hence
\begin{align}
\label{eq:defom}
\Omega= dd^c \log(-(y,y))
\end{align}
defines an invariant $(1,1)$-form on $\calH\cong \calD$, the first Chern form of the dual of the tautological bundle on $\calD$ equipped with the Petersson metric.  Here $d^c=\frac{1}{4\pi i}(\partial-\bar\partial)$.
According to
\cite[Proposition~4.11]{Ku2},
in the coordinates of $\calH$,
it is given by
\begin{equation} \label{eq:Omega}
\Omega= d d^c \log \left(-\frac{1}2 (\omega(z), \omega(\bar z)) \right)
    = -\frac{1}{2 \pi i} \left( -\frac{(y, dz) \wedge (y, d\bar z)}{(y, y)^{2} } + \frac{(dz, d\bar z)}{2 (y, y)}  \right).
\end{equation}
Moreover, it can be obtained from the Kudla-Millson form $\varphi_{KM}(x,z)$ (see \eqref{eq:defphikm}) by
\begin{align}
\label{eq:omega1}
\Omega=\varphi_{KM}(0,z),
\end{align}
an identity which we will only need in Section \ref{sect:LocalASW}.
Notice that $-\Omega$ is a K\"ahler form, and therefore
$(-\Omega)^m$ is a positive invariant top degree form  on $\mathcal H$.

 \begin{proposition}[Local Siegel-Weil formula on $\calD$]
\label{prop:LocalSiegel-WeilR}
Let $\phi_\infty(x, z) \in S(V_\R^n)\otimes C^\infty(\calD)$ with $\phi_\infty(hx, hz) = \phi_\infty(x, z)$ for all $z \in \mathcal D$, $x \in V_\R^n$ and $h \in H(\R)$.
Then $\lambda(\phi_\infty)$ is independent of $z$, and
$$
\int_{\mathcal D} \phi_\infty(x, z)\, \Omega^m =B_{n, \infty}\cdot  W_{T}(1, 0, \lambda(\phi_\infty)) ,
$$
with $T=Q(x)$ and
\begin{align*}
B_{n, \infty}
&=\frac{e(\frac{n^2+n-4}{8}) (n-1)! \prod_{k=1}^{n-1} \Gamma(\frac{n-k}2)}
       {2^{n-2} (2\pi)^{\frac{n(n+3)}4}}.
\end{align*}
 In particular, one  has  $B_{2, \infty} =\frac{i}{4 \sqrt{2} \pi^2}$ and
\begin{equation} \label{eq:Bquotient}
\frac{B_{n, \infty}}{B_{n-1, \infty} }= i^n \frac{\Gamma(\rho_n)}{(2 \pi)^{\rho_n}}.
\end{equation}
\end{proposition}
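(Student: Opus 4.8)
The plan is to reduce the integral over $\calD$ to an orbital integral on $H(\R)$ and then to apply the local Siegel--Weil formula in the form of Proposition~\ref{prop:localSW} (this is exactly the critical case there, since $\dim V = m+2 = n+1$). Fix a base point $z_0 \in \calD$ and set $K_\infty = \{h \in H(\R) : hz_0 = z_0\}$, a maximal compact subgroup isomorphic to $\SO(m)\times\SO(2)$; since $H(\R)$ acts transitively on $\calD$ we have $\calD \cong H(\R)/K_\infty$. Write $\phi_0 = \phi_\infty(\cdot, z_0) \in S(V_\R^n)$, so that the equivariance hypothesis reads $\phi_\infty(x, hz_0) = \phi_0(h^{-1}x)$. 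Because the $H(\R)$-action $(h\cdot\phi)(x) = \phi(h^{-1}x)$ on $S(V^n)$ commutes with the Weil representation and $\lambda(\phi)(g) = (\omega(g)\phi)(0)$ is unchanged by it (as $h\cdot 0 = 0$), we get $\lambda(\phi_\infty(\cdot, hz_0)) = \lambda(h\cdot\phi_0) = \lambda(\phi_0)$, and transitivity gives the first assertion that $\lambda(\phi_\infty)$ is independent of $z$.

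For the main identity, note that $-\Omega$ is the invariant Kähler form, so $(-\Omega)^m$ is a positive $H(\R)$-invariant volume form on $\calD$. Take the gauge form $\nu$ on $H(\R)$ of Proposition~\ref{prop:GaugeForm} (built from a diagonal $J$), and split $\mathfrak{h} = \mathfrak{so}(V) = \mathfrak{k} \oplus \mathfrak{p}$ with $\mathfrak{p} \cong T_{z_0}\calD$; the basis $\{X_{ij}\}$ of Proposition~\ref{prop:GaugeForm} separates these two summands, so $\nu = \pm\,\nu_{\mathfrak k}\wedge\nu_{\mathfrak p}$ for the corresponding top forms, and the Haar measure $d_\nu h$ descends on $H(\R)/K_\infty$ to $\vol(K_\infty,\nu_{\mathfrak k})$ times the $H(\R)$-invariant volume form $\omega_{\mathfrak p}$ on $\calD$ taking the value $\nu_{\mathfrak p}$ at $z_0$. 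Writing $(-\Omega)^m = \kappa\,\omega_{\mathfrak p}$ with a constant $\kappa > 0$ (invariant volume forms being unique up to scalar) and unfolding $H(\R)\to\calD$ against the right-$K_\infty$-invariant function $F(h) = \phi_0(h^{-1}x) = \phi_\infty(x, hz_0)$ yields
$$\int_{\calD}\phi_\infty(x,z)\,\Omega^m = \frac{(-1)^m\kappa}{\vol(K_\infty,\nu_{\mathfrak k})}\int_{H(\R)}\phi_0(h^{-1}x)\,d_\nu h = \frac{(-1)^m\kappa}{\vol(K_\infty,\nu_{\mathfrak k})}\,O_T(\phi_0, d_\nu h).$$
Assuming $\det T \ne 0$, Proposition~\ref{prop:localSW} gives $O_T(\phi_0, d_\nu h) = C(J)\,W_T(1,0,\lambda(\phi_0)) = C(J)\,W_T(1,0,\lambda(\phi_\infty))$, so the Proposition is reduced to the constant identity $B_{n,\infty} = (-1)^m\kappa\,C(J)/\vol(K_\infty,\nu_{\mathfrak k})$.

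The phase comes out immediately: choosing the basis so that $2J = \diag(1,\dots,1,-1,-1)$ (legitimate since $\det V$ is a square in signature $(m,2)$), Proposition~\ref{prop:localSW} gives $C(J) = \gamma(V^n)^{-1}2^{\,n+n(n-1)/4}$ with $\gamma(V^n) = e(\tfrac{n(2-m)}{8}) = e(\tfrac{n(3-n)}{8})$, and since $(-1)^m = (-1)^{n-1} = e(\tfrac{n-1}{2})$ the product $(-1)^m\gamma(V^n)^{-1}$ equals exactly $e(\tfrac{n^2+n-4}{8})$, matching the phase of $B_{n,\infty}$. The remaining positive real factor is then pinned down by two explicit computations: that of $\kappa$, obtained by evaluating $(-\Omega)^m$ at $z_0$ in the coordinates of \eqref{eq:Omega} against the $\mathfrak p$-vectors $X_{i,m+1}, X_{i,m+2}$ $(1 \le i \le m)$ — this is the source of the powers of $2\pi$ and of an elementary combinatorial factor — and that of $\vol(K_\infty,\nu_{\mathfrak k}) = \vol(\SO(m)\times\SO(2))$ with respect to the Haar measure attached to $\nu_{\mathfrak k} = \bigwedge X_{ij}^*\big|_{\mathfrak k}$, which is a product of volumes of compact special orthogonal groups and produces the factors $\prod_{k=1}^{n-1}\Gamma(\tfrac{n-k}{2})$. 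In practice the cleanest route is to record the case $m=1$ directly (where \eqref{eq:Omega} is essentially the hyperbolic area form and $K_\infty = \SO(2)$), giving $B_{2,\infty} = i/(4\sqrt 2\,\pi^2)$, and then to establish the recursion $B_{n,\infty}/B_{n-1,\infty} = i^n\Gamma(\rho_n)/(2\pi)^{\rho_n}$, which only needs the ratios $\gamma(V^n)^{-1}/\gamma(V^{n-1})^{-1}$, $\vol(\SO(m))/\vol(\SO(m-1)) = \vol(S^{m-1})$, and $\kappa_n/\kappa_{n-1}$. I expect the main obstacle to be precisely this last bookkeeping: the gauge form $\nu$ is defined only abstractly through \eqref{eq:KudlaForm}, and carefully relating it both to the Kähler geometry of $\calD$ via \eqref{eq:Omega} and to the standard volumes of $\SO(m)\times\SO(2)$ requires some care, whereas the structural steps above are formal. (The identity \eqref{eq:omega1}, $\Omega = \varphi_{KM}(0,z)$, is not needed here but reconciles this with the Kudla--Millson picture used later.)
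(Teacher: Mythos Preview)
Your proposal is correct and follows essentially the same route as the paper's proof in Section~\ref{sect:3}: reduce to an orbital integral via $\calD\cong H(\R)/K_\infty$, apply Proposition~\ref{prop:localSW}, and then isolate the constant as $(-1)^m\kappa\,C(J)/\vol(K_\infty)$; your phase computation $(-1)^m\gamma(V^n)^{-1}=e\!\left(\frac{n^2+n-4}{8}\right)$ matches. The paper fills in exactly the two computations you flag as the remaining bookkeeping: (i) the ratio $\kappa$ is obtained by expanding $(-\Omega)^m$ in tube-domain coordinates (Lemma~\ref{lem:Omega} and the following lemma, combined in Proposition~\ref{prop:Omega}), yielding $(dl_{z_0})^*(-\Omega)^m = \pm\tfrac{m!}{(2\pi)^m}\,\nu_{\mathfrak p}$; (ii) for $\vol(K_+,dh_+)=\vol(\SO(m))$ the paper does not quote a classical volume formula but instead applies Proposition~\ref{prop:localSW} a second time, now to the positive-definite space $V_+$ with the Gaussian test function, and reads off the volume from the explicit Whittaker value of Proposition~\ref{prop:whitt0}. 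This last trick is worth knowing, as it keeps everything internal to the Siegel--Weil machinery; your suggested alternative of checking $B_{2,\infty}$ directly and then verifying the recursion \eqref{eq:Bquotient} via sphere volumes would work just as well.
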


The basic idea of the proof is simple and natural: we relate the gauge form on the tangent space $\mathfrak p$ of $\D$ with the differential form  $\Omega^m$ precisely.  The actual calculation is a little long and technical, and can be skipped on first reading. We will also provide an alternative proof in Section \ref{sect:5.1}.

\subsection{The differential $\Omega^m $ and the gauge form $\nu$}
\label{sect:3.1}
Let $\underline{e}=(e_1, e_2, \dots, e_m, e_n, e_{n+1} )$ be an ordered basis of $V$ with  quadratic matrix
$$
J=\frac{1}2 ( (e_i, e_j))
=\kzxz {I_m} {0} {0} {-I_2}.
$$
We write $V_+ $ (respectively $V_-$) for be the subspace  generated by the $e_i$ with $1\le i \le m$ (respectively $i =n, n+1$). Let $K_{\pm} =\SO(V_{\pm})$, then $K_\infty=K_+\times K_-$ is a maximal connected compact subgroup of $H(\R)$. In the notation of the last section, $a_i=1$ for $1 \le i \le m$ and $-1$ for $i=n, n+1$.

Let $\mathfrak h= \mathfrak s \mathfrak o(V)$ be  the Lie algebra of $\SO(V)$. Then $X \in \mathfrak h$ if and only if
$X =\kzxz {X_1} {X_2} {{}^tX_2} {X_3}$ with $ {}^tX_1=-X_1 \in M_{m}$, $ {}^tX_3=-X_3 \in M_2$, and $X_2 \in  M_{m, 2}$.
In other words, one has a decomposition
$$
\mathfrak h=\mathfrak k_+ \oplus \mathfrak k_- \oplus \mathfrak p,
$$
where $\mathfrak k_\pm$  is the Lie algebra of $K_\pm=\SO(V_\pm)$
given by matrices satisfying ${}^tX_1=-X_1$ (respectively
${}^tX_3=-X_3$).

It is easy to see that  the gauge form $\nu$ given in Proposition \ref{prop:GaugeForm}  has the following decomposition (up to sign):
\begin{equation} \label{eq:LieDecomposition}
\nu = \nu_+ \wedge \nu_- \wedge \nu_{\mathfrak p},
\end{equation}
where $\nu_+$, $\nu_-$, and $\nu_{\mathfrak p}$ are  the gauge forms  on $K_+$, $K_-$, and on $H(\R)/K_\infty$, which are characterized by
\begin{align} \label{eq2.2}
&\nu_+(X_{12}, \dots, X_{m-1, m}) =1, \notag
\\
 &\nu_-(X_{n, n+1})=1, \quad \hbox{ and }
 \\
 &\nu_{\mathfrak p}(X_{1n}, X_{1,n+1}, \dots, X_{mn}, X_{m, n+1})=1. \notag
\end{align}

Now we deal with the relation between $\Omega^m$ and $\nu_{\mathfrak p}$. We use a tube domain realization for $\calD$ as above. To this end we define a different basis $\underline{e}'$ of $V$ as follows.
Let $e=\frac{1}2(e_1 + e_{n+1})$, $f= \frac{1}2(e_1-e_{n+1})$, and $\underline{e}'=(e_n,  e_2\, \dots, e_m, e,f)$. Its associated matrix is
$$
J'= \begin{pmatrix}
-1 &0 &0 &0
\\
0 &I_{m-1} &0 &0
\\
0 &0 &0 &\frac{1}2
\\
0&0 &\frac{1}2&0
\end{pmatrix}.
$$
We put
\begin{align}
\label{eq:V0coord}
V_0 &=\oplus_{i=2}^n \R e_i \cong \R^m,
\\
\nonumber
z &= \sum z_i e_i = {}^t(z_n, z_2, \dots, z_m)
\end{align}
with quadratic form $Q(z) =\sum \epsilon_i z_i^2$ with $\epsilon_i =\pm 1$ depending on whether $i <n$ or $i \ge n$.
Then $V=  V_0 \oplus \R e \oplus \R f$ is a Witt decomposition as considered before. We write $\calH$ for the corresponding tube domain realization of $\calD$ as in \eqref{eq:defH}. We will also identity $V$ with $\R^{n+1}$ and $V^n$ with $M_{n+1, n}(\R)$ with respect to the basis $\underline{e}'$:
\begin{align*}
  v&=  \sum_{2 \le i \le n}  z_i e_i + v_0 e + v_1 f = {}^t( z_n,  z_2,\dots, z_m,v_0,  v_1) = [v]_{\underline{e}'},\\
  x &= (\tilde x_1, \dots, \tilde x_n) =[x]_{\underline{e}'} \in M_{n+1, n}(\R).
 \end{align*}
Similarly, we will use $[v]_{\underline{e}}$ and $[x]_{\underline{e}}$ to denote the coordinates of $v$ and $x$  with respect to the basis $\underline{e}$ when necessary. For $\gamma \in H(\R)$, we denote $[\gamma]_{\underline{e}}$ and $[\gamma]_{\underline{e}'} $ for its coordinates with respect to the bases $\underline{e}$ and $\underline{e}'$ respectively. Then one has
\begin{equation} \label{eq:transfer}
[v]_{\underline{e}'} = A [v]_{\underline{e}}, \quad [\gamma]_{\underline{e}'} = A [\gamma]_{\underline{e}} A^{-1}, \quad  \hbox{ and  }
A = \begin{pmatrix}
0 &0 &1&0
\\
0 &I_{m-1} &0 &0
\\
1 &0 &0 &1
\\
1 &0 &0 &-1
\end{pmatrix}.
\end{equation}

We now compute the action of $H(\R)$ on $\calH$ more explicitly.
For $h \in H(\R)$, write
$$
[h]_{\underline{e}'} = \begin{pmatrix}
\tilde h_{11} &\tilde H_{12} &\tilde h_{13} &\tilde h_{14}
\\
\tilde H_{21} &\tilde H_{22} &\tilde H_{23} &\tilde H_{24}
\\
\tilde h_{31} &\tilde H_{32} &\tilde h_{33} &\tilde h_{34}
\\
\tilde h_{41} &\tilde H_{42} &\tilde h_{43} &\tilde h_{44}
\end{pmatrix},
$$
where all the $\tilde H_{ij}$ are matrices, all  $\tilde h_{ij}s$ are numbers, and $\tilde H_{22}$ is a square matrix of order $m-1$.
Then  for $z={}^t(z_n, z_2, \dots , z_m) = \begin{pmatrix} z_n \\ \underline z \end{pmatrix} \in \mathcal H$, we have
\begin{align*}
h (z) &= j(h, z)^{-1} \begin{pmatrix} z_n \tilde{h}_{11} + \tilde H_{12} \underline z + \tilde h_{13} - \tilde h_{14} Q(z)\\  z_n \tilde H_{21}+ \tilde H_{22} \underline z + \tilde H_{23} - \tilde H_{24} Q(z) \underline z \end{pmatrix}, \\
j(h, z) &= z_n \tilde{h}_{31} + \tilde H_{32} \underline z + \tilde h_{33} - \tilde h_{34} Q(z).
\end{align*}
Fixing the base point $z=   i e_n    \in \mathcal H$, we have the isomorphism
\begin{align}
l_z: \,  H(\R)/K_\infty \cong \mathcal H,  \quad
 h \mapsto h(z)= \begin{pmatrix} \tilde h_{11}i+\tilde h_{13} -\tilde h_{14} \\ \tilde H_{21}i +\tilde H_{23} -\tilde H_{24} \end{pmatrix}(\tilde h_{31} +\tilde h_{33}i -\tilde h_{34})^{-1}.
\end{align}
This induces an isomorphism between  $\mathfrak  p$ and  the tangent space $T_z(\mathcal H) \cong V_{0, \C}$ (extending to the tangent bundle, too):
\begin{align}
dl_z: \,   \mathfrak p  \cong V_{0, \C},  \quad
 X\mapsto X(z) =\begin{pmatrix}\tilde x_{13} -\tilde x_{14} +\tilde x_{31}  \\ \tilde X_{23} -\tilde X_{24} \end{pmatrix}
               + i \begin{pmatrix} \tilde x_{11} -\tilde x_{33} +\tilde x_{34}  \\ \tilde X_{21} \end{pmatrix},
\end{align}
where $\tilde x_{i}$ and $\tilde X_{ij}$ are the coordinates of  $X$ with respect to $\underline{e}'$ just as for $h$. In terms of the coordinates with respect to $\underline{e}$, one has
$$
[X]_{\underline{e}} =
 \begin{pmatrix}
   0 &0 & x_{13}  &x_{14}
   \\
   0  &0_{m-1, m-1}   &X_{23} &X_{24}
   \\
   x_{13}  & X_{23}^t &0  &0
   \\
  x_{14}   &X_{24}^t&0 &0
  \end{pmatrix}  \in \mathfrak p,
$$
and by a direct
direct calculation using (\ref{eq:transfer}) we obtain
\begin{equation}
dl_z (X) = \begin{pmatrix} -i (x_{14} + i x_{13}) \\ X_{24} + i X_{23}  \end{pmatrix}.
\end{equation}
So we have proved the following lemma.

\begin{lemma} \label{lem:Omega}  The isomorphism  $dl_z$ induces
$$
(dl_z)^* (dx_2\wedge dy_2 \wedge \cdots \wedge dx_n \wedge dy_n) = \pm   \nu_{\mathfrak p},
$$
where $(dx_j +i dy_j)$ is the dual $\C$-basis of the basis $(e_j)_{2 \le j \le n}$ of $V_{0, \C}$.
\end{lemma}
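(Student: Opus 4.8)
The plan is to verify the identity by a direct computation on a basis, since both sides are top-degree ($2m$-)forms on the $2m$-dimensional real vector space $\mathfrak p$ and hence proportional. Concretely, I would evaluate $(dl_z)^*\bigl(dx_2\wedge dy_2\wedge\cdots\wedge dx_n\wedge dy_n\bigr)$ on the ordered $\R$-basis $(X_{1n},X_{1,n+1},X_{2n},X_{2,n+1},\dots,X_{mn},X_{m,n+1})$ of $\mathfrak p$, i.e.\ the very basis on which $\nu_{\mathfrak p}$ is normalized to $1$ in \eqref{eq2.2}, and check that the value is $\pm1$; the identity then follows.

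For this, first record the block coordinates $(x_{13},x_{14},X_{23},X_{24})$ of the basis vectors $X_{in}$, $X_{i,n+1}$. From $X_{ij}=a_jE_{ij}-a_iE_{ji}$ with $a_i=1$ for $1\le i\le m$ and $a_i=-1$ for $i=n,n+1$ one reads off: $X_{1n}$ has only $x_{13}=-1$; $X_{1,n+1}$ has only $x_{14}=-1$; for $2\le i\le m$, $X_{in}$ has only the $i$-th entry of $X_{23}$ equal to $-1$, and $X_{i,n+1}$ has only the $i$-th entry of $X_{24}$ equal to $-1$. Plugging these into the formula $dl_z(X)=\left(\begin{smallmatrix}-i(x_{14}+ix_{13})\\ X_{24}+iX_{23}\end{smallmatrix}\right)$ established just above the lemma yields, as elements of $V_{0,\C}$ written in the basis $(e_j)_{2\le j\le n}$,
\[
dl_z(X_{1n})=-e_n,\qquad dl_z(X_{1,n+1})=ie_n,\qquad dl_z(X_{in})=-ie_i,\qquad dl_z(X_{i,n+1})=-e_i\quad(2\le i\le m).
\]

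It then remains to express these $2m$ vectors in the real coordinates dual to $(dx_j,dy_j)_{2\le j\le n}$ and compute the determinant of the resulting coordinate matrix. After reordering rows and columns so that the pair attached to the index $n$ comes first, this matrix is block diagonal, with the $e_n$-block equal to $\kzxz{-1}{0}{0}{1}$ and each $e_i$-block ($2\le i\le m$) equal to $\kzxz{0}{-1}{-1}{0}$; its determinant is therefore $\pm1$. Since $\nu_{\mathfrak p}$ evaluated on the same basis is $1$, this gives $(dl_z)^*\bigl(dx_2\wedge dy_2\wedge\cdots\wedge dx_n\wedge dy_n\bigr)=\pm\nu_{\mathfrak p}$. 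The only real obstacle is bookkeeping — keeping straight the two bases $\underline e$ and $\underline e'$, the block-coordinate conventions, and the ordering of the wedge factors — but nothing conceptual is involved, and the exact sign does not matter since both gauge forms are defined only up to sign.
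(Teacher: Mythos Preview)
Your proof is correct and follows essentially the same approach as the paper: the paper derives the formula $dl_z(X)=\left(\begin{smallmatrix}-i(x_{14}+ix_{13})\\ X_{24}+iX_{23}\end{smallmatrix}\right)$ and then simply states the lemma as an immediate consequence, while you carry out explicitly the evaluation on the basis $(X_{1n},X_{1,n+1},\dots,X_{mn},X_{m,n+1})$ that the paper leaves to the reader. Your bookkeeping of the block coordinates and the resulting $2\times 2$ blocks is accurate.
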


Recall the formula \eqref{eq:Omega} for the $H(\R)$-invariant $(1,1)$-form $\Omega$.

\begin{lemma}
Using the above notation, we have
$$
(-\Omega)^m=\frac{m!}{(2 \pi )^m} \frac{\bigwedge_{j=2}^n dx_j \wedge dy_j}{(-Q(y))^m}.
$$
\end{lemma}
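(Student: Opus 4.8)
\emph{Proof proposal.} The plan is to compute everything in the tube--domain coordinates of \eqref{eq:V0coord}. Write $z=\sum_{j=2}^{n} z_j e_j$ with $z_j=x_j+iy_j$, so that $dz_j,d\bar z_j$ form a basis of the $(1,0)$-- and $(0,1)$--forms. Since the Gram matrix of $\underline e'$ restricted to $V_0$ is diagonal, i.e.\ $(e_i,e_j)=2\epsilon_i\delta_{ij}$ with $\epsilon_j=1$ for $2\le j\le m$ and $\epsilon_n=-1$, one has $(y,dz)=\sum_j 2\epsilon_j y_j\,dz_j$, $(y,d\bar z)=\sum_k 2\epsilon_k y_k\,d\bar z_k$, $(dz,d\bar z)=\sum_j 2\epsilon_j\,dz_j\wedge d\bar z_j$ and $(y,y)=2Q(y)$. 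Substituting into the explicit formula \eqref{eq:Omega} for $\Omega$ yields
\[
-\Omega=\frac{1}{2\pi i}\sum_{j,k} h_{jk}\,dz_j\wedge d\bar z_k,\qquad
h_{jk}=\frac{\epsilon_j\delta_{jk}}{2Q(y)}-\frac{\epsilon_j\epsilon_k\,y_j y_k}{Q(y)^2},
\]
so that $h=\tfrac{1}{2Q(y)}\bigl(D-\tfrac{2}{Q(y)}vv^{t}\bigr)$ with $D=\diag(\epsilon_j)$ and $v=(\epsilon_j y_j)_j$.

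Next I would invoke the elementary identity that for a $(1,1)$--form $\eta=\sum_{j,k}a_{jk}\,dz_j\wedge d\bar z_k$ in $m$ complex variables one has $\eta^{m}=m!\,\det(a)\,\bigwedge_{j=2}^{n}(dz_j\wedge d\bar z_j)$ (keeping the interleaved ordering avoids sign headaches), together with $dz_j\wedge d\bar z_j=-2i\,dx_j\wedge dy_j$. Applied to $\eta=-\Omega$, for which $a=(2\pi i)^{-1}h$, this gives
\[
(-\Omega)^{m}=m!\,\frac{(-2i)^{m}}{(2\pi i)^{m}}\,\det(h)\,\bigwedge_{j=2}^{n} dx_j\wedge dy_j
=\frac{m!\,(-1)^{m}}{\pi^{m}}\,\det(h)\,\bigwedge_{j=2}^{n} dx_j\wedge dy_j.
\]

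It then remains to compute $\det(h)=(2Q(y))^{-m}\det\bigl(D-\tfrac{2}{Q(y)}vv^{t}\bigr)$. By the matrix determinant lemma $\det\bigl(D-\tfrac{2}{Q(y)}vv^{t}\bigr)=\det(D)\bigl(1-\tfrac{2}{Q(y)}v^{t}D^{-1}v\bigr)$; here $\det(D)=-1$ (a single $-1$ on the diagonal, at the index $n$), while $v^{t}D^{-1}v=\sum_j\epsilon_j y_j^{2}=Q(y)$, so the bracket equals $(-1)(1-2)=1$ and $\det(h)=(2Q(y))^{-m}$. Substituting this back and using $(-1)^{m}(2Q(y))^{-m}=(2(-Q(y)))^{-m}$ gives exactly
\[
(-\Omega)^{m}=\frac{m!}{(2\pi)^{m}}\,\frac{\bigwedge_{j=2}^{n} dx_j\wedge dy_j}{(-Q(y))^{m}},
\]
as claimed; note that since $Q(y)<0$ on $\calH$ the right-hand side is a positive top-degree form, consistent with $-\Omega$ being a K\"ahler form.

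The only genuine care needed is the bookkeeping of the factors of $2$, $i$, $\pi$ and the signs introduced when reordering the $1$-forms $dz_j, d\bar z_j$; there is no conceptual obstacle. Once $\Omega$ is written as a constant times $\sum_{j,k} h_{jk}\,dz_j\wedge d\bar z_k$, the short computation of $\det(h)$ via the matrix determinant lemma is the heart of the argument.
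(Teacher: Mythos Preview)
Your argument is correct. Both your approach and the paper's exploit the same structural fact---that the coefficient matrix of $-\Omega$ is a diagonal matrix plus a rank-one perturbation---but they package it differently. The paper writes $\Omega$ as $-\frac{1}{2\pi i}\bigl(-\tfrac{1}{Q(y)^2}\alpha+\tfrac{1}{2Q(y)}\beta\bigr)$, observes the nilpotency $\alpha^2=0$, and then expands $\Omega^m$ via the binomial identity $\Omega^m=\bigl(-\tfrac{1}{4\pi i Q(y)}\bigr)^m\bigl(\beta^m-\tfrac{2m}{Q(y)}\alpha\wedge\beta^{m-1}\bigr)$, computing $\beta^m$ and $\alpha\wedge\beta^{m-1}$ by hand. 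You instead invoke the standard identity $\eta^m=m!\det(a)\bigwedge_j(dz_j\wedge d\bar z_j)$ for a $(1,1)$-form $\eta=\sum a_{jk}\,dz_j\wedge d\bar z_k$, and reduce the whole computation to evaluating $\det(h)$ via the matrix determinant lemma. Your route is more streamlined and arguably more conceptual: the nilpotency $\alpha^2=0$ and the paper's explicit wedge computations are all absorbed into the single linear-algebra fact $\det(D+uv^t)=\det(D)(1+v^tD^{-1}u)$. The paper's approach, on the other hand, is more self-contained in that it does not appeal to the determinant formula for top powers of $(1,1)$-forms.
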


\begin{proof}
Using the coordinates of \eqref{eq:V0coord}, one sees
\begin{align*}
\Omega&= -\frac{1}{2 \pi i} \left( -\frac{1}{Q(y)^2} \sum \epsilon_i \epsilon_j y_i y_j dz_i \wedge d\bar{z}_j  + \frac{1}{2 Q(y)} \sum \epsilon_i  dz_i \wedge d \bar{z}_i\right)
\\
&=-\frac{1}{2 \pi i} \left(-\frac{1}{Q(y)^2} \alpha + \frac{1}{2Q(y)} \beta\right),
\end{align*}
where $\alpha$ and $\beta$ have the obvious meanings. Notice that
$$
\alpha^2 =\sum_{i, j,k, l=2}^{n} \epsilon_i \epsilon_j \epsilon_k \epsilon_l y_iy_j y_k y_l \cdot \alpha(i, j, k, l)
$$
with
$$
 \alpha(i, j, k, l) =dz_i \wedge d\bar{z}_j \wedge dz_k \wedge d\bar{z}_l.
$$
Since $\alpha(i, j, k, l) = -\alpha(k, j, i, l)$, we have
$\alpha^2=0$. This implies
$$
\Omega^m =\left(-\frac{1}{4 \pi i Q(y)}\right)^m (\beta^m -\frac{2m}{Q(y)} \alpha \wedge \beta^{m-1}).
$$
It is easy to check that
\begin{align*}
\beta^m &=- m! \bigwedge_{j=2}^n dz_j \wedge d\bar z_j,
\\
 \beta^{m-1} &= -(m-1)!  \sum_{l=2}^n \epsilon_l \beta_l,
\end{align*}
where $\beta_l $ is  $\bigwedge_{j=2}^n dz_j \wedge d\bar z_j$ with $dz_l \wedge d \bar z_l$ missing. So
$$
\alpha \wedge \beta^{m-1} = -(m-1)! \sum_{l=2}^n \epsilon_l y_l^2 \bigwedge_{j=2}^n dz_j \wedge d\bar z_j = -(m-1)! Q(y) \bigwedge_{j=2}^n dz_j \wedge d\bar z_j,
$$
and therefore
$$
\Omega^m=\left(-\frac{1}{4 \pi i Q(y)}\right)^m m! \bigwedge_{j=2}^n dz_j \wedge d\bar z_j=\frac{m!}{(2 \pi Q(y))^m} \bigwedge_{j=2}^n dx_j \wedge dy_j
$$
as claimed.
\end{proof}

It is well-known that  $\frac{\bigwedge_{j=2}^n dx_j \wedge dy_j}{(-Q(y))^m}$ is the $H(\R)$-invariant Haar measure on $\mathcal H$ associated to $\bigwedge_{j=2}^n dx_j \wedge dy_j$. So we obtain  the following proposition from the above two lemmas.

\begin{proposition} \label{prop:Omega}
Let the notation be as above and $z=ie_n \in \mathcal H$. Then
$$
(d l_z)^* (-\Omega)^m =\pm \frac{m!}{(2 \pi )^m}  \nu_{\mathfrak p}.
$$
\end{proposition}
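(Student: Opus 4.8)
The plan is to obtain Proposition~\ref{prop:Omega} by combining the two preceding lemmas of this subsection; the computational content has already been carried out there, so what remains is a short bookkeeping argument at the base point.

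First I would evaluate the coordinate formula for $(-\Omega)^m$ at $z=ie_n$. By the lemma expressing $(-\Omega)^m$ in the tube-domain coordinates,
\[
(-\Omega)^m=\frac{m!}{(2\pi)^m}\cdot\frac{1}{(-Q(y))^m}\,\bigwedge_{j=2}^n dx_j\wedge dy_j,
\]
and at the base point $y=\IM(ie_n)=e_n$, so the sign convention $Q(z)=\sum_i\epsilon_i z_i^2$ with $\epsilon_n=-1$ gives $-Q(y)=1$ there. Hence $(-\Omega)^m|_{ie_n}=\frac{m!}{(2\pi)^m}\bigwedge_{j=2}^n dx_j\wedge dy_j|_{ie_n}$ as a $2m$-covector on $T_{ie_n}\calH=V_{0,\C}$.

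Next I would pull this back through the isomorphism $dl_z\colon\mathfrak p\xrightarrow{\sim}V_{0,\C}$ and invoke Lemma~\ref{lem:Omega}, which asserts precisely $(dl_z)^*\big(\bigwedge_{j=2}^n dx_j\wedge dy_j\big)=\pm\nu_{\mathfrak p}$. This gives $(dl_z)^*(-\Omega)^m=\pm\frac{m!}{(2\pi)^m}\nu_{\mathfrak p}$ as covectors on $\mathfrak p$, which is the assertion of the proposition. If one prefers to read the identity as one of forms on $H(\R)/K_\infty$, it suffices to note that $(l_z)^*(-\Omega)^m$ is $H(\R)$-invariant (because $\Omega$ is invariant on $\calH$ and $l_z$ is equivariant) and $\nu_{\mathfrak p}$ is by construction invariant, so agreement at the identity coset forces agreement everywhere.

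I do not expect any genuine obstacle. The only points needing care are the harmless overall sign (the statement is only up to $\pm$, so the sign in Lemma~\ref{lem:Omega} and the orientation chosen for $\nu_{\mathfrak p}$ play no role) and the base-point evaluation $Q(e_n)=\epsilon_n=-1$, which is exactly what pins the normalizing constant to $\tfrac{m!}{(2\pi)^m}$; this constant later enters the formula for $B_{n,\infty}$ in Proposition~\ref{prop:LocalSiegel-WeilR}.
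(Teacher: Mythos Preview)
Your proposal is correct and follows essentially the same approach as the paper: combine the coordinate formula for $(-\Omega)^m$ with Lemma~\ref{lem:Omega}, using the base-point evaluation $-Q(e_n)=1$ and invariance to pass from the identity coset to all of $H(\R)/K_\infty$. The paper phrases the invariance step slightly differently (observing that $\frac{\bigwedge_j dx_j\wedge dy_j}{(-Q(y))^m}$ is the invariant measure associated to $\bigwedge_j dx_j\wedge dy_j$), but the content is the same.
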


\begin{proof}[Proof of Proposition \ref{prop:LocalSiegel-WeilR}]
First,  let $z =ie_n \in \mathcal H$ as Proposition~\ref{prop:Omega}, and let $\nu=\nu_+ \wedge \nu_- \wedge \nu_{\mathfrak p}$ be as in (\ref{eq:LieDecomposition}), and let $d_\nu h$, $dh_+$, $dh_-$, and $d_{\mathfrak p}h$ be the associated Haar measures. Then, by
 Proposition~\ref{prop:Omega}, we have
\begin{align*}
\int_{H(\R)} \phi_\infty(h^{-1}x, z) \,d_\nu h
 &=  \vol(K_+, dh_+) \vol(K_-, dh_-) \int_{H(\R)/K_\infty} \phi_\infty(x, hz) \,d_{\mathfrak p} h
 \\
  &= \vol(K_+, dh_+) \vol(K_-, dh_-) \frac{ (2\pi)^m}{m!} \int_{\mathcal H} \phi_\infty(x, z) (-\Omega)^m
  \\
  &= (-1)^m \vol(K_+, dh_+) \vol(K_-, dh_-) \frac{ (2\pi)^m}{m!} \int_{\mathcal D} \phi_\infty(x, z) \,\Omega^m.
\end{align*}
On the other hand, Proposition \ref{prop:localSW} gives
$$
\int_{H(\R)} \phi_\infty(h^{-1}x, z) \,d_\nu h =C(\diag(I_m, -I_2))\cdot    W_{T, \infty} (1, 0, \lambda(\phi_\infty)).
$$
Consequently,
$$
B_{n, \infty}=\frac{(-1)^m m! C(\diag(I_m, -I_2))}{  (2 \pi)^m \vol(K_+, dh_+) \vol(K_-, dh_-)}.
$$
Applying Proposition \ref{prop:localSW} to $K_+=\SO(V_+)$ and $\phi_\infty =e^{-2\pi \tr Q_+(x)}$, one sees by  Proposition~\ref{prop:whitt0} that
\begin{align*}
\vol(K_+, dh_+) e^{-2 \pi (m-1) }   &=C(I_{m}) W_{I_{m-1}, \infty}(1, 0, \Phi_{\frac{m}2})
\\
 &=C(I_m) \frac{ (-2\pi i)^{\frac{m(m-1)}2}}{\Gamma_{m-1}(\frac{m}2)} e^{-2 \pi (m-1)},
\end{align*}
where $\Gamma_n(s)$ is given by (\ref{eq:siegel}).
We obtain
$$
\vol(K_+, dh_+)= C(I_m) \frac{ (-2\pi i)^{\frac{m(m-1)}2}}{\Gamma_{m-1}(\frac{m}2)}
  =\frac{ 2^{m-1} \pi^{\frac{m(m+1)}4} }{\prod_{k=0}^{m-1} \Gamma(\frac{m-k}2)}.
$$
Similarly, $\vol(K_-, dh_-) =2 \pi$. Plugging these formulas into that of $B_{n, \infty}$, one proves Proposition \ref{prop:LocalSiegel-WeilR}.
\end{proof}

We remark that the above calculation of $\vol(K_+, dh_+)$ has the following well-known formula as a consequence.

\begin{corollary}
Let $l\ge 1$ be an integer, and let
$$
\SO_l(\R)=\{ g \in \GL_l(\R):\, g \, {}^tg =I_n, \det g =1\}
$$
be the standard special orthogonal group. Let $\nu_l$ be the gauge form defined as $\nu_+$ for $K_+=\SO_m(\R)$, and let $dh_l$ be the associated  Haar measure on $\SO_l(\R)$.  Then
$$
\vol(\SO_l(\R), dh_l) =\frac{ 2^{l-1} \pi^{\frac{l(l+1)}4} }{\prod_{k=0}^{l-1} \Gamma(\frac{l-k}2)}.
$$
\end{corollary}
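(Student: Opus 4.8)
The plan is to recognize that the asserted formula, for an arbitrary positive integer $l$, is exactly the computation of $\vol(K_+,dh_+)$ already carried out in the proof of Proposition~\ref{prop:LocalSiegel-WeilR}, with the auxiliary integer $m$ there replaced by $l$. Indeed, in that proof $K_+=\SO(V_+)$, where $V_+$ is $\R^m$ with the quadratic form whose matrix $\tfrac12((e_i,e_j))$ equals $I_m$; this group is exactly $\SO_m(\R)$, and the gauge form $\nu_+$ coincides with $\nu_l$ for $l=m$, both being normalized by $\nu(X_{12},\dots,X_{l-1,l})=1$ with $X_{ij}=E_{ij}-E_{ji}$. So it suffices to repeat that argument for a general $l\ge 1$.

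Concretely, I would take $V=\R^l$ with $J=Q(\underline e)=I_l$, so $\SO(V)=\SO_l(\R)$, and apply Proposition~\ref{prop:localSW} in the case $\dim V=l=n+1$, i.e.\ $n=l-1$, to the Gaussian $\phi(x)=e^{-2\pi\tr Q(x)}\in S(V^{l-1})$ and the matrix $T=I_{l-1}=Q(e_1,\dots,e_{l-1})$. Since $Q$ is $\SO_l(\R)$-invariant one has $\phi(h^{-1}x)=e^{-2\pi(l-1)}$ for all $h$, so the orbital integral on the left collapses to $\vol(\SO_l(\R),dh_l)\cdot e^{-2\pi(l-1)}$; the right-hand side is $C(I_l)\cdot W_{I_{l-1},\infty}(1,0,\Phi_{l/2})$, because $\lambda(\phi)$ is the standard weight-$l/2$ section. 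Hence
\[
\vol(\SO_l(\R),dh_l)=C(I_l)\,e^{2\pi(l-1)}\,W_{I_{l-1},\infty}(1,0,\Phi_{l/2}).
\]
To finish I would substitute the explicit value of $C(I_l)$ from Proposition~\ref{prop:localSW} (with $F=\R$, $n=l-1$; here $\gamma(V^{l-1})=e(-\tfrac{l(l-1)}{8})$ since $V$ has signature $(l,0)$) and the explicit archimedean Whittaker value $W_{I_{l-1},\infty}(1,0,\Phi_{l/2})=\frac{(-2\pi i)^{l(l-1)/2}}{\Gamma_{l-1}(l/2)}\,e^{-2\pi(l-1)}$ supplied by Proposition~\ref{prop:whitt0}. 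After cancelling $e^{-2\pi(l-1)}$ against $e^{2\pi(l-1)}$ and expanding the Siegel gamma factor $\Gamma_{l-1}(l/2)=\pi^{(l-1)(l-2)/4}\prod_{j=0}^{l-2}\Gamma(\tfrac{l-j}{2})$ via \eqref{eq:siegel}, the powers of $i$ and the signs cancel (reflecting the positive-definiteness of $V$), and the remaining powers of $2$ and $\pi$ together with the Gamma factors collapse to $\frac{2^{l-1}\pi^{l(l+1)/4}}{\prod_{k=0}^{l-1}\Gamma(\tfrac{l-k}{2})}$, as claimed. The degenerate case $l=1$, where $\SO_1(\R)$ is trivial and both sides equal $1$, is consistent and is covered by the same computation with the conventions $\Gamma_0=1$ and empty product $=1$.

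There is no genuine obstacle: the whole content is the bookkeeping of the constant $C$, the Siegel gamma function, and the archimedean Whittaker function, identical to what appears in the proof of Proposition~\ref{prop:LocalSiegel-WeilR}. The only point needing a little care is to check that the Haar measure $dh_l$ attached to $\nu_l$ matches the one produced by the gauge forms $\alpha$ and $\beta$ used in Proposition~\ref{prop:localSW}, and to keep track of small values of $l$.
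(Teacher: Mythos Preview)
Your proposal is correct and takes essentially the same approach as the paper: the corollary is stated immediately after the proof of Proposition~\ref{prop:LocalSiegel-WeilR} with the remark that it is a direct consequence of the computation of $\vol(K_+,dh_+)$ carried out there, and you have simply reproduced that computation with $m$ replaced by a general $l$. Your identification of the ingredients (Proposition~\ref{prop:localSW} applied to $V=\R^l$ with $J=I_l$, the Gaussian Schwartz function, and the explicit Whittaker value from Proposition~\ref{prop:whitt0}) matches the paper's argument exactly.
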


\section{Asymptotic properties of Whittaker functions}
\label{sect:4}


Throughout this section we consider the local field $F=\R$, the additive character $\psi(x)=e(x)$, and the group $G=\Sp_n(\R)$ or $\Mp_{n,\R}$.
We investigate the asymptotic behavior of the archimedian Whittaker function for $G$ as one of the radial parameters of the Levi subgroup $M$ goes to $\infty$. The main results are
Theorem~\ref{thm:etalimnew} and Corollary \ref{cor:wderivasy}.
Our analysis is based on Shimura's work on confluent hypergeometric functions \cite{Sh}.
%
We fix a quadratic character $\chi$ of $\R^\times$ and an half integer $\kappa \in \rho_n+\Z$ (not necessarily equal to $\rho_n$) satisfying the compatibility condition
$$
\begin{cases}
&(-1)^\kappa = \chi(-1)\quad  \ff n \equiv 1 \pmod 2,
\\
& \kappa \equiv \frac{1}2\chi(-1)  \quad \ff n \equiv 0 \pmod 2.
\end{cases}
$$
We also fix a matrix $T \in \Sym_n(\R)$.

\subsection{Basic properties of archimedian Whittaker functions}

Let  $\Phi=\Phi_\kappa\in I_n(s,\chi)$ be  the weight $\kappa$ standard section, that is, the unique function in $I_n(s,\chi)$ whose restriction to $K_G$ is the character $\det(\uk)^{\kappa}$. Then
the Whittaker integral \eqref{eq:Whittaker} can be expressed in terms of Shimura's confluent hypergeometric function.
As in \cite[Lemma 9.3]{Ku1}, the following result can be proved. 

\begin{lemma}
\label{lem:whitshim}
Assume that $\det(T)\neq 0$.
If $a\in \GL_n^+(\R)$ and $y=a\,{}^ta$, then
\[
 W_T(m(a),s,\Phi_\kappa)=C_{n,\infty} \cdot
|a|^{s+\rho_n}\xi(y,T,\alpha,\beta),
\]
where
\note{}
\begin{align*}
\xi(y,T,\alpha,\beta)&= \int_{\Sym_n(\R)} \det(x+iy)^{-\alpha}
\det(x-iy)^{-\beta} e(-\tr(Tx))\, dx
\end{align*}
denotes Shimura's confluent hypergeometric function of matrix argument
\cite[(1.25)]{Sh} with
\begin{align*}
\alpha&=\frac{1}{2}\left( s+\rho_n+\kappa\right),\\
\beta&=\frac{1}{2}\left( s+\rho_n-\kappa\right).
\end{align*}
Here $dx=\bigwedge_{i\leq i} dx_{ij}$ is the Lebesgue measure on $\Sym_n(\R)\cong\R^{\frac{n(n+1)}{2}}$, and $C_{n,\infty}= 2^{\frac{n(n-1)}{4}}$.
\end{lemma}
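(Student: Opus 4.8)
The plan is to evaluate the Whittaker integral \eqref{eq:Whittaker} at $g=m(a)$ directly, by computing the value of the standard section $\Phi_\kappa$ on the elements $wn(b)m(a)$ and recognizing the resulting integral as Shimura's function $\xi$. The starting point is the standard description of $\Phi_\kappa$ through the action of $G$ on the Siegel upper half space $\H_n$: writing $\mathbf i=iI_n$ for the base point and $j(g,Z)=\det(CZ+D)$ for the automorphy factor of $g=\kzxz{A}{B}{C}{D}$, one has
\[
\Phi_\kappa(g,s)=j(g,\mathbf i)^{-\kappa}\,|j(g,\mathbf i)|^{\kappa-s-\rho_n}
\]
(in the metaplectic case $G=\Mp_{n,\R}$ with the branch of $j(g,\mathbf i)^{-\kappa}$ determined by the Rao cocycle used to split $\Mp_{n,\R}$), which one checks on $N$, $M$, and $K_G$ using the compatibility condition relating $\kappa$ and $\chi$; in particular the restriction to $K_G$ is $\det(\uk)^\kappa$, and on $M$ it is $\chi(\det a)|\det a|^{s+\rho_n}$, as required of a section in $I_n(s,\chi)$.

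Next I would run the elementary $\H_n$-computation for $g=wn(b)m(a)$. With $y=a\,{}^ta$ one gets $m(a)\langle\mathbf i\rangle=iy$, $n(b)\langle iy\rangle=b+iy$, and $w\langle b+iy\rangle=-(b+iy)^{-1}$; by the cocycle relation for $j$, using $j(w,Z)=\det Z$ and $j(n(b)m(a),\mathbf i)=\det({}^ta^{-1})=(\det a)^{-1}$, one obtains $j(g,\mathbf i)=\det(b+iy)(\det a)^{-1}$. Substituting into the formula for $\Phi_\kappa$, using $\overline{\det(b+iy)}=\det(b-iy)$ together with the identities $\alpha+\beta=s+\rho_n$ and $\alpha-\beta=\kappa$ to split the powers, the $\det(b\pm iy)$-contributions collapse to $\det(b+iy)^{-\alpha}\det(b-iy)^{-\beta}$ while the $a$-contributions collapse (for $a\in\GL_n^+(\R)$) to $|a|^{s+\rho_n}$, giving $\Phi_\kappa(wn(b)m(a),s)=|a|^{s+\rho_n}\det(b+iy)^{-\alpha}\det(b-iy)^{-\beta}$. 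Plugging this into \eqref{eq:Whittaker} produces exactly the integrand of $\xi(y,T,\alpha,\beta)$, up to the normalization of Haar measure.

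The final point is the measure constant: $d_\psi b$ is the self-dual measure on $\Sym_n(\R)$ for the pairing $\psi(\tr(b_1b_2))$, whereas Shimura's integral \cite[(1.25)]{Sh} uses Lebesgue measure $dx=\bigwedge_{i\le j}dx_{ij}$. The computation already carried out in the proof of Proposition~\ref{prop:localSW} (there $d_\beta b=2^{-n}dx$ and $c(\beta,\psi_\infty)=2^{-n-n(n-1)/4}$) yields $d_\psi b=2^{n(n-1)/4}\,dx$, which is precisely the asserted constant $C_{n,\infty}=2^{n(n-1)/4}$. All of the above manipulations are licit for $\Re(s)$ large, where both the Whittaker integral and the integral defining $\xi$ converge absolutely; the identity then holds for all $s$ by the meromorphic continuation of $\xi(y,T,\alpha,\beta)$ established by Shimura, the hypothesis $\det T\neq 0$ serving only to keep us in the natural range.

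I expect the only real nuisance—rather than a genuine obstacle—to be the bookkeeping of roots of unity and of branches of fractional powers of determinants: establishing the displayed formula for $\Phi_\kappa$ in the metaplectic case ($n$ even, half-integral $\kappa$) requires carrying along the Weil-index factor $\gamma(\det a,\tfrac12\psi)$ appearing in \eqref{eq:i1} and checking that the chosen square roots are consistent, and the sign of $\det a$ must be tracked to pass from $(\det a)^\kappa|\det a|^{s+\rho_n-\kappa}$ to $|a|^{s+\rho_n}$. None of this is deep; it is exactly the computation performed, in the relevant case, in \cite[Lemma 9.3]{Ku1}, whose argument we follow.
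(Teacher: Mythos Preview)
Your proposal is correct and is precisely the argument the paper has in mind: the paper does not write out a proof but simply states that the lemma ``can be proved as in \cite[Lemma 9.3]{Ku1}'', and your write-up is exactly that computation (explicit evaluation of $\Phi_\kappa(wn(b)m(a),s)$ via the automorphy factor, followed by the measure comparison already carried out in Proposition~\ref{prop:localSW}). There is nothing to add.
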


The normalizing factor $C_{n,\infty}$ comes from comparing the measures $dn$ and $dx$.
Recall that the Siegel gamma function of genus $n$ is defined by
\begin{align}
\label{eq:siegel}
\Gamma_n(s):&=\int\limits_{\substack{x\in \Sym_n(\R)\\ x>0}} e^{-\tr(x)} \det(
x)^{s-\rho_n}\, dx\\
\nonumber
&=\pi^{\frac{n(n-1)}{4}} \prod_{k=0}^{n-1} \Gamma(s-\frac{k}{2}).
\end{align}
Following Shimura, we define another special function by
\begin{align}
\label{eq:eta}
\eta(y,T, \alpha,\beta)= \int\limits_{\substack{u\in \Sym_n(\R)\\ u>- T\\u>T}}
 e^{-\tr(uy)} \det(u+T)^{\alpha-\rho_n}\det(u-T)^{\beta-\rho_n}\, du.
\end{align}
For all regular $T$, by \cite[Remark 4.3]{Sh}, the integral converges when $\Re(\alpha)>\rho_n-1$ and  $\Re(\beta)>\rho_n-1$.
According to \cite[(1.29)]{Sh}, we have
\begin{align}
\label{eq:xieta}
\xi(y,T,\alpha,\beta)&= \frac{i^{n(\beta-\alpha)}2^{-n(\rho_n-1)}(2\pi)^{n\rho
_n}}{\Gamma_n(\alpha)\Gamma_n(\beta)}\cdot \eta(2y,\pi T,\alpha,\beta),
\end{align}
and therefore
\begin{align}
\label{eq:weta}
W_T(m(a),s,\Phi_\kappa)&=c_n(\alpha,\beta) \cdot
|a|^{s+\rho_n}\cdot  \eta(2y,\pi T,\alpha,\beta),\quad\text{where}\\
\nonumber
c_n(\alpha,\beta)&=C_{n,\infty}\cdot \frac{i^{n(\beta-\alpha)}2^{-n(\rho_n-1)}(2\pi)^{n\rho_n
}}{\Gamma_n(\alpha)\Gamma_n(\beta)}.
\end{align}

\begin{lemma}
\label{lem:etatrafo}
If $S\in \GL_n^+(\R)$, we have
\begin{align*}
\eta({}^tS gS,h,\alpha,\beta)&= |S|^{2(\rho_n-\alpha-\beta)}
\eta( g,Sh\,{}^tS,\alpha,\beta),\\
W_T(m(a), s, \Phi_\kappa)&= |S|^{\rho_n-s} W_{{}^tSTS}(m(S^{-1}a), s, \Phi_\kappa).
\end{align*}
 \end{lemma}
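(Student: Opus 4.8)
The plan is to prove both identities by elementary changes of variables, treating the one for $\eta$ first. For the first identity I substitute $u=S^{-1}v\,{}^tS^{-1}$ in the integral \eqref{eq:eta} defining $\eta({}^tSgS,h,\alpha,\beta)$; equivalently $v=Su\,{}^tS$. Since $S\in\GL_n^+(\R)$, congruence by $S$ preserves positive-definiteness, so the region $\{u+h>0,\ u-h>0\}$ is carried bijectively onto $\{v+Sh\,{}^tS>0,\ v-Sh\,{}^tS>0\}$. By cyclicity of the trace, $\tr(u\,{}^tSgS)=\tr(vg)$. From $u\pm h=S^{-1}(v\pm Sh\,{}^tS)\,{}^tS^{-1}$ one gets $\det(u\pm h)=|S|^{-2}\det(v\pm Sh\,{}^tS)$. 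Finally, the linear automorphism $v\mapsto S^{-1}v\,{}^tS^{-1}$ of $\Sym_n(\R)$ has Jacobian $|S|^{-(n+1)}$ (the map $X\mapsto{}^tAXA$ on symmetric matrices scales volume by $(\det A)^{n+1}$). Collecting the powers of $|S|$ produces the factor $|S|^{-2(\alpha-\rho_n)-2(\beta-\rho_n)-(n+1)}$, and since $n+1=2\rho_n$ this exponent equals exactly $2(\rho_n-\alpha-\beta)$. This gives $\eta({}^tSgS,h,\alpha,\beta)=|S|^{2(\rho_n-\alpha-\beta)}\eta(g,Sh\,{}^tS,\alpha,\beta)$, valid whenever the defining integrals converge, that is for $\Re(\alpha),\Re(\beta)>\rho_n-1$, and hence in general by meromorphic continuation.

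For the Whittaker identity the quickest route is to invoke the transformation law \eqref{eq:i1}. Applying it with Levi element $m(S)$ and with the group variable there taken to be $m(S^{-1}a)$, and using $m(S)m(S^{-1}a)=m(a)$, the left-hand side becomes $W_T(m(a),s,\Phi_\kappa)$ while the right-hand side is $\chi(S)^{-1}|S|^{\rho_n-s}\,W_{{}^tSTS}(m(S^{-1}a),s,\Phi_\kappa)$, with an extra factor $\gamma(\det S,\tfrac12\psi)$ when $G=\Mp_{n,\R}$. Now $\chi(\det S)=1$ because the only quadratic characters of $\R^\times$ are trivial on $\R_{>0}^\times$, and $\gamma(\det S,\tfrac12\psi)=1$ since $\det S>0$ (the remark following \eqref{eq:i1}), so this yields the claim. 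Alternatively, one deduces it from the first identity via \eqref{eq:weta} with $y=a\,{}^ta$: applying the $\eta$-identity with $S$ replaced by ${}^tS^{-1}$, and noting $2S^{-1}a\,{}^ta\,{}^tS^{-1}={}^t({}^tS^{-1})(2a\,{}^ta)({}^tS^{-1})$ together with ${}^tS^{-1}(\pi\,{}^tSTS)\,{}^t({}^tS^{-1})=\pi T$, turns $W_{{}^tSTS}(m(S^{-1}a),s,\Phi_\kappa)$ into $|S|^{-(s+\rho_n)-2(\rho_n-\alpha-\beta)}$ times $W_T(m(a),s,\Phi_\kappa)$; since $\alpha+\beta=s+\rho_n$ the exponent simplifies to $s-\rho_n$, which is the assertion.

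The whole argument is formal once the determinant and Jacobian powers are bookkept correctly; the only point requiring a moment of care is the Jacobian factor $|S|^{n+1}$ for the congruence action on $\Sym_n(\R)$ together with the numerology $n+1=2\rho_n$, which is what collapses all the exponents to the stated shape. Convergence is never an obstacle, since each substitution merely reparametrizes a single absolutely convergent integral in the range $\Re(\alpha),\Re(\beta)>\rho_n-1$, and the resulting identities propagate by analytic continuation in $s$.
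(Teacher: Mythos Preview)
Your proof is correct and follows essentially the same approach as the paper. The paper simply cites Shimura's formula (3.1.K) for the first identity and says the second follows from it via \eqref{eq:weta}; you have spelled out the change of variables behind Shimura's formula and then given both the paper's derivation of the second identity (through \eqref{eq:weta}) and an alternative one directly from \eqref{eq:i1}.
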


\begin{proof}
The first assertion follows from (3.1.K) of \cite{Sh}. The second assertion follows from this by means of \eqref{eq:weta}.
\end{proof}

The special values of Eisenstein series and Whittaker functions at $s=0$ will be of particular interest. Here we collect the facts that we will require.

\begin{proposition}
\label{prop:whitt0}
Assume that $\det(T)\neq 0$ and that $\kappa=\rho_n$.
\begin{enumerate}
\item[(i)] If $\sig(T)=(n-j,j)$ with $0\leq j\leq n$, then
\[
\ord_{s=0} W_T(m(a),s,\Phi_\kappa)\geq \Big\lfloor \frac{j+1}{2}\Big\rfloor .
\]
\item[(ii)] If $\sig(T)=(n,0)$, then
\[
W_T(m(a),0,\Phi_\kappa) = \frac{(-2\pi i )^{n\kappa}2^{-\frac{n(n-1)}{4}}}{\Gamma_n(\kappa)} (\det y)^{\kappa/2} e^{-2\pi \tr Ty}.
\]
\end{enumerate}
\end{proposition}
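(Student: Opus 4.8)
Both assertions will be derived from Lemma~\ref{lem:whitshim} and the identities \eqref{eq:xieta}--\eqref{eq:weta}. Specialising to $\kappa=\rho_n$, one has $\alpha=\rho_n+s/2$ and $\beta=s/2$, so that
\[
W_T(m(a),s,\Phi_\kappa)=c_n(\alpha,\beta)\,|a|^{s+\rho_n}\,\eta(2y,\pi T,\alpha,\beta),
\]
where the constant $c_n(\alpha,\beta)$ carries the factor $\Gamma_n(\alpha)^{-1}\Gamma_n(\beta)^{-1}$. From \eqref{eq:siegel}, $\Gamma_n(\rho_n+s/2)$ is holomorphic and non-vanishing at $s=0$ (the arguments $(n+1-k)/2$, $0\le k\le n-1$, are all $\ge 1$), whereas $1/\Gamma_n(s/2)$ vanishes at $s=0$ to order $\#\{0\le k\le n-1:\ k\ \mathrm{even}\}=\lceil n/2\rceil$. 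Moreover, by Lemma~\ref{lem:etatrafo} we may replace $T$ by $S^{t}TS$ for a convenient $S\in\GL_n^{+}(\R)$, and hence assume $T=\diag(I_{n-j},-I_j)$; the accompanying change of $y$ is recorded by the second formula of that lemma. Thus everything comes down to the order of the pole of $\eta(2y,\pi T,\alpha,\beta)$ at $s=0$.

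\textbf{Part (i).} By the above, $\ord_{s=0}W_T(m(a),s,\Phi_\kappa)=\lceil n/2\rceil-\big(\text{order of the pole of }\eta\text{ at }s=0\big)$, so it suffices to show this pole order is at most $\lceil n/2\rceil-\lceil j/2\rceil$; note $\lceil j/2\rceil=\lfloor(j+1)/2\rfloor$. In \eqref{eq:eta} the factor $\det(u+T)^{s/2}$ and the Gaussian $e^{-\tr(uy)}$ are harmless, so the pole is produced only by $\det(u-T)^{s/2-\rho_n}$ along the part of the boundary of $\{u\in\Sym_n(\R):u>T,\ u>-T\}$ where $u-T$ degenerates. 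The decisive observation is that there $u-T$ can be singular only on a subspace on which $T$ is positive definite: a null vector $w$ of $u-T$ with $u+T\ge0$ satisfies $\langle Tw,w\rangle=\tfrac12\langle(u+T)w,w\rangle\ge0$, and equality would force $w\in\ker(u+T)$, which is impossible as $T$ is nondegenerate; hence $\operatorname{corank}(u-T)\le n-j$ on the boundary. Near a corank-$r$ stratum ($r\le n-j$) one chooses block coordinates in which $u-T$ has an $r\times r$ ``small'' block $v$ --- the Schur complement onto $\ker(u-T)$ --- so that $\det(u-T)=\det(v)\cdot(\text{unit})$ and $du=dv\,dw'$ for a complementary smooth measure $dw'$; the local contribution to $\eta$ reduces to a Siegel $\Gamma$-type integral $\int_{0<v\ \mathrm{small}}\det(v)^{s/2-\rho_n}(\text{smooth})\,dv$, whose pole at $s=0$ has order $\#\{0\le k\le r-1:\ k\equiv n-r\pmod 2\}$. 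This count is at most $\lceil n/2\rceil-\lceil j/2\rceil$ for every $r\le n-j$, with equality when $r=n-j$, so the total pole order of $\eta$ is $\le\lceil n/2\rceil-\lceil j/2\rceil$, which is the required bound. Equivalently, and more briefly, one may quote Shimura's analytic continuation of $\xi$ for nondegenerate $T$ of signature $(p,q)=(n-j,j)$, which exhibits $\xi$ as $\Gamma_p(\alpha)^{-1}\Gamma_q(\beta)^{-1}$ times a function holomorphic in $(\alpha,\beta)$, so that $\ord_{s=0}W_T\ge\ord_{s=0}\Gamma_q(s/2)^{-1}=\lceil q/2\rceil=\lfloor(j+1)/2\rfloor$. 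The genuinely technical point, in either route, is precisely this local analysis along the degeneracy loci of $\det(u-T)$ (equivalently, a resolution of the singularities of the integrand in \eqref{eq:eta}); it is carried out in \cite{Sh}, and one would simply invoke it.

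\textbf{Part (ii).} When $j=0$ the parameter $\beta=s/2$ is zero at $s=0$, and Lemma~\ref{lem:whitshim} gives
\[
W_T(m(a),0,\Phi_\kappa)=C_{n,\infty}\,|a|^{\rho_n}\int_{\Sym_n(\R)}\det(x+iy)^{-\rho_n}\,e(-\tr(Tx))\,dx .
\]
The remaining integral is the classical Fourier transform of a power of $\det$ on the Siegel tube domain: moving the contour into $\{W\in\Sym_n(\C):\Im W>0\}$, where $\det(W)^{-\rho_n}$ is holomorphic, shows that it vanishes unless $T\ge0$ and that for $T>0$ it equals $\mathrm{const}\cdot\Gamma_n(\rho_n)^{-1}(\det T)^{\rho_n-\rho_n}e^{-2\pi\tr(Ty)}$ --- the matrix analogue of $\int_{\R}(x+iy)^{-\alpha}e^{-2\pi itx}\,dx=\frac{(-2\pi i)^{\alpha}}{\Gamma(\alpha)}t^{\alpha-1}e^{-2\pi ty}$ for $t,y>0$; the same value also drops out of \eqref{eq:xieta} on letting $\beta\to0$. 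Substituting $(\det T)^{0}=1$, $C_{n,\infty}=2^{n(n-1)/4}$ and $|a|^{\rho_n}=(\det y)^{\kappa/2}$, and collecting the powers of $2$, $\pi$ and $i$, produces precisely the asserted formula; the passage from the normalised diagonal $T$ to a general positive definite $T$ and general $y$ is immediate from Lemma~\ref{lem:etatrafo}, since $\tr(Ty)$ is invariant and $|a|^{\rho_n}=(\det y)^{\kappa/2}$ transforms compatibly. Here nothing is hard beyond the bookkeeping of constants; the only real obstacle in the whole proposition is the pole-order analysis of $\eta$ in part (i) discussed above.
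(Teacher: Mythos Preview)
Your proposal is essentially correct and follows the same route as the paper: both parts are reduced to Shimura's results in \cite{Sh}. For (i) the paper quotes \cite[Theorem~4.2]{Sh} directly, stating that $\Gamma_{n-j}(\beta-\tfrac{j}{2})^{-1}\Gamma_j(\alpha-\tfrac{n-j}{2})^{-1}\eta$ is entire, and then reads off the vanishing order from the ratio $\Gamma_{n-j}(\beta-\tfrac{j}{2})/\Gamma_n(\beta)$; your formulation via $\xi$ being $\Gamma_{n-j}(\alpha)^{-1}\Gamma_j(\beta)^{-1}$ times an entire function is equivalent (the two are related by the multiplicativity $\Gamma_n(\beta)=\pi^{\ast}\Gamma_j(\beta)\Gamma_{n-j}(\beta-\tfrac{j}{2})$), and your heuristic stratum-by-stratum discussion is a reasonable sketch of why Shimura's theorem holds but is not needed once you invoke it.

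The one point worth flagging is in (ii): your direct evaluation of $\int_{\Sym_n(\R)}\det(x+iy)^{-\rho_n}e(-\tr(Tx))\,dx$ is not literally available, since the $\xi$-integral converges only for $\Re(\alpha+\beta)>n$, and at $s=0$ one has $\alpha+\beta=\rho_n=(n+1)/2$; for $n\ge 2$ the integral diverges. The paper avoids this by working on the $\eta$-side and citing the explicit identities (4.35.K), (4.12.K), (4.6.K) of \cite{Sh}, which give $\eta(g,h,\rho_n,\beta)=\Gamma_n(\beta)\det(g)^{-\beta}e^{-\tr(gh)}$ for $T>0$; plugging into \eqref{eq:weta} then yields the stated value immediately. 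Your ``letting $\beta\to 0$ in \eqref{eq:xieta}'' is the right spirit, but to make it rigorous you should either go through $\eta$ as the paper does or first establish the Fourier integral for $\Re(\alpha)>n$ and then continue analytically.
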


\begin{proof}
According to \cite[Theorem 4.2]{Sh}, the function
\[
\Gamma_{n-j}(\beta-\frac{j}{2})^{-1}\Gamma_j(\alpha-\frac{n-j}{2})^{-1}
\eta(2y,\pi T, \alpha,\beta)\]
is holomorphic for $(\alpha,\beta)\in \C^2$.
Hence, in view of \eqref{eq:weta}, $W_T(m(a),s,\Phi_\kappa)$ is equal to a holomorphic function in a neighborhood of $s=0$ times the gamma factor
\[
\frac{\Gamma_{n-j}(\beta-\frac{j}{2})}{\Gamma_n(\beta)}.
\]
Therefore, the first assertion follows from \eqref{eq:siegel} by working out the vanishing order of this gamma factor.

To prove (ii), we use  (4.35.K), (4.12.K),  and (4.6.K) of \cite{Sh} to see that for $\sig(T)=(n,0)$ we have
\[
\eta(g,h,\rho_n, \beta)=\Gamma_n(\beta)\det(g)^{-\beta} e^{-\tr(gh)}.
\]
By means of \eqref{eq:weta} we get
\begin{align*}
W_T(m(a),0,\Phi_\kappa)&=c_n(\rho_n,\beta)|a|^{s+\rho_n} \eta(2y,\pi T,\rho_n,\beta) \mid_{s=0}\\
&=\frac{(-2\pi i )^{n\rho_n}2^{-\frac{n(n-1)}{4}}}{\Gamma_n(\rho_n)} (\det y)^{\rho_n/2} e^{-2\pi \tr Ty},
\end{align*}
where $y=a\,{}^ta$.
%
This proves the proposition.
\end{proof}



\begin{remark}  \label{rem2.11}
Assume that $\sig(T)=(n,0)$ and that $\kappa=\rho_n$.
Then with the constant $B_{n,\infty}$ of Proposition \ref {prop:LocalSiegel-WeilR} we have
$$
 B_{n, \infty} \cdot  W_{T}(1, 0, \Phi_\kappa) =-2e^{-2 \pi \tr T}.
$$
\end{remark}

Later we will also need the following lemmas.

\begin{lemma}
\label{lem:trafo}
If $f$ is a measurable function on $\Sym_n(\R)$ and $a\in \GL_n(\R)$, we have
\[
\int_{\Sym_n(\R)} f(ab\,{}^ta)\, db = |a|^{-2\rho_n} \int_{\Sym_n(\R)} f(b)\, db.
\]
\end{lemma}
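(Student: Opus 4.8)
The statement to prove is Lemma 4.11 (labeled \texttt{lem:trafo}): for a measurable function $f$ on $\Sym_n(\R)$ and $a\in \GL_n(\R)$,
\[
\int_{\Sym_n(\R)} f(ab\,{}^ta)\, db = |a|^{-2\rho_n} \int_{\Sym_n(\R)} f(b)\, db,
\]
where $db=\bigwedge_{i\le j}db_{ij}$ is Lebesgue measure and $\rho_n=\frac{n+1}{2}$, so $2\rho_n = n+1$.

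\medskip

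The plan is to compute the Jacobian of the linear automorphism $\sigma_a\colon \Sym_n(\R)\to\Sym_n(\R)$, $b\mapsto ab\,{}^ta$, and show that its determinant has absolute value $|\det a|^{n+1}$. Then the change-of-variables formula gives the claim immediately, since the integrand is transported and $db$ scales by $|\det\sigma_a|^{-1}$. The map $a\mapsto \sigma_a$ is a homomorphism from $\GL_n(\R)$ to $\GL(\Sym_n(\R))$, so $\det\sigma_a$ is a polynomial character of $\GL_n(\R)$, hence of the form $(\det a)^k$ for some integer $k$; it remains only to pin down $k$ and the sign.

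\medskip

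First I would reduce to determining $k$ by a density/multiplicativity argument: since $b\mapsto\det\sigma_b$ is a continuous homomorphism $\GL_n(\R)\to\R^\times$, it factors through $\det$, so $\det\sigma_a = (\det a)^k$ for a fixed $k\in\Z$ (and the sign of $\det\sigma_a$ is positive, e.g.\ because $\sigma_a=\sigma_{a'}^2$ when $a=a'{}^2$, or because $\GL_n^+(\R)$ is connected and $\sigma_{I_n}=\id$, so on $\GL_n^+$ the Jacobian is positive, and on the other component one checks $\det\sigma_{\diag(-1,1,\dots,1)}>0$ directly). Then I would evaluate $k$ on diagonal matrices $a=\diag(t,1,\dots,1)$: under $\sigma_a$, the coordinate $b_{11}$ scales by $t^2$, each $b_{1j}$ for $j>1$ scales by $t$, and the remaining $b_{ij}$ with $i,j>1$ are fixed. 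Hence $\det\sigma_a = t^2\cdot t^{\,n-1} = t^{\,n+1}$, giving $k=n+1=2\rho_n$. Alternatively, one can invoke the classical formula $\det\sigma_a=(\det a)^{n+1}$ for the action of $\GL_n$ on symmetric matrices, but the diagonal computation is self-contained and short.

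\medskip

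With $|\det\sigma_a| = |a|^{n+1}=|a|^{2\rho_n}$ in hand, the change of variables $b'=ab\,{}^ta$ yields $db' = |a|^{2\rho_n}\,db$, so
\[
\int_{\Sym_n(\R)} f(ab\,{}^ta)\,db = |a|^{-2\rho_n}\int_{\Sym_n(\R)} f(b')\,db',
\]
which is the assertion. I do not anticipate a genuine obstacle here; the only point requiring a moment's care is the bookkeeping of how the entries $b_{ij}$ transform and confirming that the exponent is $n+1$ rather than, say, $n$ or $2n$—the off-diagonal entries contributing one power of $t$ each and the single diagonal entry $b_{11}$ contributing two is exactly what produces $2\rho_n$.
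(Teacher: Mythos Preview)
Your argument is correct. The paper states this lemma without proof, treating it as an elementary change-of-variables fact; your computation of the Jacobian of $b\mapsto ab\,{}^ta$ via the character argument and the diagonal test $a=\diag(t,1,\dots,1)$ is exactly the standard way to establish it, and nothing more is needed. (A minor remark: since the lemma only requires $|\det\sigma_a|$, the discussion of the sign of $\det\sigma_a$ is unnecessary, though harmless.)
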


\begin{lemma}
\label{lem:gaussian}
If $S\in \Sym_n(\R)$ is positive definite,
we have
\[
\int\limits_{\substack{v\in \R^{n}}}
e^{-{}^tv \,S v } \, dv = \pi^{n/2}\det(S)^{-1/2}.
\]
Here $dv$ denotes the Lebesgue measure on $\R^n$.
\end{lemma}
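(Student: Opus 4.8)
The plan is to reduce the assertion to the one-dimensional Gaussian integral by diagonalizing $S$. Since $S\in\Sym_n(\R)$ is real symmetric and positive definite, the spectral theorem provides an orthogonal matrix $O\in\Orth(n)$ and a diagonal matrix $D=\diag(\lambda_1,\dots,\lambda_n)$ with $\lambda_i>0$ for all $i$ such that $S={}^tODO$. First I would perform the linear substitution $w=Ov$; its Jacobian has absolute value $|\det O|=1$, so the integral is unchanged and becomes $\int_{\R^n} e^{-{}^twDw}\,dw=\prod_{i=1}^n\int_\R e^{-\lambda_i w_i^2}\,dw_i$ by Fubini, the interchange being justified by the rapid decay of the integrand (a consequence of positive definiteness, which also guarantees absolute convergence in the first place).

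Then I would invoke the classical scalar identity $\int_\R e^{-\lambda t^2}\,dt=\sqrt{\pi/\lambda}$ for $\lambda>0$, obtained from $\int_\R e^{-t^2}\,dt=\sqrt{\pi}$ (the standard polar-coordinates computation) via the scaling $t\mapsto t/\sqrt\lambda$. Multiplying over $i=1,\dots,n$ gives $\prod_{i=1}^n\sqrt{\pi/\lambda_i}=\pi^{n/2}\big(\prod_{i=1}^n\lambda_i\big)^{-1/2}=\pi^{n/2}\det(S)^{-1/2}$, since $\det(S)=\det(D)=\prod_i\lambda_i$. This yields the claim.

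There is no genuine obstacle here; the only points deserving a word of care are the convergence and the measure-preserving nature of the substitution, both of which are immediate. As an alternative I might instead use a Cholesky factorization $S={}^tAA$ with $A$ upper triangular and invertible, substitute $w=Av$ (Jacobian $|\det A|=\det(S)^{1/2}$), and reduce directly to $\int_{\R^n}e^{-{}^tww}\,dw=\pi^{n/2}$; I would select whichever presentation is shortest in the final write-up.
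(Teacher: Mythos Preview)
Your proof is correct; the paper states this lemma without proof, treating it as a standard fact. Your spectral-theorem reduction to the one-dimensional Gaussian integral is exactly the expected argument, and the alternative via Cholesky factorization would work equally well.
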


\begin{lemma}
\label{lem:block}
Let $u=\zxz{u_1}{u_{12}}{{}^t u_{12}}{u_2}$ be a symmetric block matrix.
Then the following are equivalent:
\begin{enumerate}
\item $u>0$,
\item $u_1>0$ and $u_2> {}^t u_{12} \,u_1^{-1} u_{12}$,
\item $u_2>0$ and $u_1> u_{12}\, u_2^{-1} \,{}^t u_{12}$.
\end{enumerate}
In this case we have
\[
\det(u) = \det(u_1)\det (u_2- {}^t u_{12} u_1^{-1} u_{12})= \det(u_2)\det(u_1-u_{12} u_2^{-1} \,{}^t u_{12}).
\]
\end{lemma}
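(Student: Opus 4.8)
The plan is to deduce all three equivalences and the determinant identity from the block Gauss elimination (Schur complement) decomposition. First I would observe that, whenever $u_1$ is invertible, setting $P=\kzxz{I}{-u_1^{-1}u_{12}}{0}{I}$ one has $\det P = 1$ and a direct multiplication (using that $u_1$, hence $u_1^{-1}$, is symmetric) gives the congruence
\[
{}^tP\, u\, P = \zxz{u_1}{0}{0}{u_2 - {}^tu_{12}\, u_1^{-1} u_{12}}.
\]
Since $P$ is invertible, $u>0$ holds if and only if ${}^tP\,u\,P>0$; and as the right-hand side is block diagonal, this in turn is equivalent to $u_1>0$ together with $u_2 - {}^tu_{12}\, u_1^{-1} u_{12}>0$.

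To run this argument cleanly I would split the equivalence $(1)\Leftrightarrow(2)$ into its two implications. If $u>0$, then the principal submatrix $u_1$ is positive definite --- this is seen by restricting the quadratic form $v\mapsto {}^tv\,u\,v$ to vectors supported on the first block of coordinates --- so in particular $u_1$ is invertible, the displayed identity applies, and it yields $(2)$. Conversely, $(2)$ already includes the hypothesis $u_1>0$, so $u_1$ is invertible and the identity once more gives $u>0$. The equivalence $(1)\Leftrightarrow(3)$ is proved identically after interchanging the two blocks, i.e.\ using instead $Q=\kzxz{I}{0}{-u_2^{-1}\,{}^tu_{12}}{I}$ to obtain ${}^tQ\,u\,Q = \zxz{u_1 - u_{12}\,u_2^{-1}\,{}^tu_{12}}{0}{0}{u_2}$. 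Finally, taking determinants in these two identities and using $\det P = \det Q = 1$ gives both expressions for $\det(u)$; note that the identities, and hence the determinant formulas, are valid as soon as $u_1$, respectively $u_2$, is invertible, which in particular holds when $u>0$.

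I do not expect any genuine obstacle here: this is a standard fact of linear algebra. The only point that requires a moment's attention is the logical order in the implication $(1)\Rightarrow(2)$, where one must first extract positivity (and therefore invertibility) of $u_1$ from $u>0$ before the Schur complement $u_2 - {}^tu_{12}\,u_1^{-1} u_{12}$ is even defined; all the remaining steps are one-line block-matrix computations.
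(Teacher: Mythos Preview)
Your argument is correct and is essentially identical to the paper's: both rely on the Jacobi (Schur complement) decomposition $u = L\,\mathrm{diag}(u_1,\,u_2-{}^tu_{12}u_1^{-1}u_{12})\,{}^tL$ with $L$ unitriangular (and the analogous version pivoting on $u_2$), from which the equivalences and the determinant formulas follow immediately. Your version is slightly more explicit about the logical order in $(1)\Rightarrow(2)$, but the content is the same.
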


\begin{proof}
This is direct consequence of the Jacobi decompositions
\begin{align*}
\zxz{u_1}{u_{12}}{{}^t u_{12}}{u_2} &= \zxz{1}{0}{{}^t(u_1^{-1}u_{12})}{1}
\zxz{u_1}{0}{0}{u_2- {}^t u_{12} u_1^{-1} u_{12}}\zxz{1}{u_1^{-1}u_{12}}{0}{1},\\
\zxz{u_1}{u_{12}}{{}^t u_{12}}{u_2} &= \zxz{1}{u_{12} u_2^{-1}}{0}{1}
\zxz{u_1-u_{12} u_2^{-1} \,{}^t u_{12}}{0}{0}{u_2}\zxz{1}{0}{{}^t(u_{12} u_2^{-1})}{1},
\end{align*}
whenever the inverses make sense.
See also \cite[Lemma 2.1]{Sh}.
\end{proof}

\subsection{Asymptotic properties}
\label{sect:4.2}

Here we investigate the asymptotic behavior of the Whittaker function $W_T(g,s,\Phi_\kappa)$.
We assume that $T\in \Sym_n(\R)$ with $\det(T)\neq 0$, and
$a\in \GL_n^+(\R)$. We put $y=a\,{}^ta$.
Recall that
\[
\eta(y,T, \alpha,\beta)= \int\limits_{\substack{u\in \Sym_n(\R)\\ u>- T\\u>T}}
 e^{-\tr(uy)} \det(u+T)^{\alpha-\rho_n}\det(u-T)^{\beta-\rho_n}\, du.
\]
We write $T$ and the variable of integration $u$ in block form as
\begin{align}
\label{eq:varblock}
T=\zxz{T_1}{T_{12}}{{}^tT_{12}}{T_2}, \qquad u=\zxz{u_1}{u_{12}}{{}^tu_{12}}{u_2}
\end{align}
with $T_1\in \R$, $T_2\in \Sym_{n-1}(\R)$, and $T_{12}\in \R^{1\times(n-1)}$, and analogously for $u$.

\begin{theorem}
\label{thm:etalimnew}
Let  $y =\kzxz {y_1} {y_{12}} {{}^t y_{12}} {y_2}\in \Sym_{n}(\R)$ be a positive definite matrix in block form as in \eqref{eq:varblock}. If $T_1\leq 0$ we have
\begin{align*}
&\lim_{y_1\to \infty} e^{T_1y_1} y_1^\beta \cdot \eta^{(n)}(y,T,\alpha,\beta)=0.
\end{align*}
If $T_1>0$ we have
\begin{align*}
&\lim_{y_1\to \infty} e^{T_1y_1} y_1^\beta \cdot \eta^{(n)}(y,T,\alpha,\beta)\\
&=e^{-2T_{12}{}^ty_{12}+\tr(\tilde{T}_2-T_2)y_2}
\cdot \Gamma(\beta+1-\rho_n) \pi^{\frac{n-1}{2}} (2T_1)^{\alpha-\rho_
n}\eta^{(n-1)}(y_2,\tilde{T}_2,\alpha-\frac{1}{2},\beta),
\end{align*}
where
\begin{align*}
\tilde{T}=\zxz{\tilde{T}_1}{0}{0}{\tilde{T}_2}=\zxz{T_1}{0}{0}{T_2-{}^tT_{12}T_1^{-1}T_{12}}.
\end{align*}
Here we have added a superscript to $\eta$ to indicate in which genus  it is considered.
\end{theorem}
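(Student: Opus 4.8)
The plan is to reduce the $n$-dimensional $\eta$-integral to the $(n-1)$-dimensional one by integrating out the block variables $u_1\in\R$ and $u_{12}\in\R^{1\times(n-1)}$, keeping $u_2$ fixed, and then applying Laplace's method in the parameter $y_1$. First I would use the Jacobi decomposition of Lemma~\ref{lem:block} to factor the two determinants appearing in $\eta^{(n)}$: writing $v_2=u_2-{}^tu_{12}u_1^{-1}u_{12}$, one has $\det(u+T)=(u_1+T_1)\det\big((u_2+T_2)-{}^t(u_{12}+T_{12})(u_1+T_1)^{-1}(u_{12}+T_{12})\big)$, and similarly for $\det(u-T)$ with signs reversed. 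The inequalities $u>T$ and $u>-T$ translate, via the same lemma, into $u_1>|T_1|$ together with Schur-complement conditions on $u_2$ relative to $u_{12}$.

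The key step is the change of variables. With $y$ in block form, $\tr(uy)=u_1y_1+2\,{}^tu_{12}\,{}^ty_{12}+\tr(u_2y_2)$, so the $u_1$-dependence in the exponential is $e^{-u_1y_1}$. As $y_1\to\infty$, the integral localizes near the lower endpoint $u_1=|T_1|=-T_1$ (using $T_1\le 0$, resp.\ $T_1>0$). When $T_1\le 0$ the integrand near $u_1=-T_1$ is dominated by $\det(u+T)^{\alpha-\rho_n}=(u_1+T_1)^{\alpha-\rho_n}(\cdots)$, which is integrable but produces, after the substitution $u_1\mapsto -T_1+t/y_1$, a contribution that decays faster than $e^{-(-T_1)y_1}y_1^{-\beta}$ — this gives the vanishing assertion. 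When $T_1>0$, near $u_1=T_1$ the relevant factor is $\det(u-T)^{\beta-\rho_n}=(u_1-T_1)^{\beta-\rho_n}(\cdots)$; substituting $u_1=T_1+t/y_1$ and letting $y_1\to\infty$ freezes $u_1\to T_1$ in every remaining factor, produces $e^{-T_1y_1}\int_0^\infty e^{-t}t^{\beta-\rho_n}\,dt=e^{-T_1y_1}y_1^{\rho_n-\beta-1}\Gamma(\beta+1-\rho_n)$ from the $t$-integral, and hence the prefactor $e^{T_1y_1}y_1^\beta$ cancels everything except $\Gamma(\beta+1-\rho_n)\,y_1^{\rho_n-1}$; one checks that the residual $y_1^{\rho_n-1}=y_1^{(n-1)/2}$ is exactly absorbed by the Gaussian integral over $u_{12}$ below.

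Next I would carry out the $u_{12}$-integration. Once $u_1$ is frozen at $T_1$, the Schur complement in $\det(u+T)$ becomes $(u_2+T_2)-{}^t(u_{12}+T_{12})(2T_1)^{-1}(u_{12}+T_{12})$ while that in $\det(u-T)$ becomes $(u_2-T_2)-{}^tu_{12}\cdot 0^{-1}\cdot u_{12}$ — the singular limit $u_1-T_1\to 0$ forces the $\det(u-T)$-block to contribute only through $u_2-T_2$ after one rescales $u_{12}$ appropriately; more precisely, substituting $u_{12}=T_{12}+w$ and expanding, the leading Gaussian in $w$ comes from $\exp\big(-(u_1-T_1)^{-1}\cdot(\text{quadratic in }w)\big)$-type factors hidden in the determinant, together with the linear term $e^{-2\,{}^tw\,{}^ty_{12}}$ from $\tr(uy)$. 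Completing the square and applying Lemma~\ref{lem:gaussian} gives the factor $\pi^{(n-1)/2}(2T_1)^{\alpha-\rho_n}$, the shift $u_2\mapsto u_2+$ (correction) produces the matrix $\tilde T_2=T_2-{}^tT_{12}T_1^{-1}T_{12}$ in place of $T_2$, and the bookkeeping of the exponential constants yields the scalar $e^{-2T_{12}{}^ty_{12}+\tr(\tilde T_2-T_2)y_2}$. What remains is precisely $\Gamma(\beta+1-\rho_n)\cdot\eta^{(n-1)}(y_2,\tilde T_2,\alpha-\tfrac12,\beta)$, where the shift $\alpha\to\alpha-\tfrac12$ records that one of the $\rho_n=\frac{n+1}{2}$ has dropped to $\rho_{n-1}=\frac n2$.

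The main obstacle will be making the interchange of limit and integral rigorous: one must produce a $y_1$-uniform integrable majorant for $e^{T_1y_1}y_1^\beta$ times the integrand on the full domain $\{u_1>|T_1|\}$, not just near the endpoint, and simultaneously control the region where $u_2$ or $u_{12}$ is large. Here I would invoke Shimura's convergence analysis (\cite[Remark~4.3]{Sh}) for the defining integral of $\eta^{(n-1)}$ under the hypotheses $\Re(\alpha-\tfrac12)>\rho_{n-1}-1$, $\Re(\beta)>\rho_{n-1}-1$ (which follow from $\Re(\alpha),\Re(\beta)>\rho_n-1$), split the $u_1$-range into $(|T_1|,|T_1|+1)$ and $(|T_1|+1,\infty)$, bound the determinant factors on each piece by elementary inequalities (Hadamard-type bounds relating $\det$ of a positive matrix to its diagonal), and apply dominated convergence. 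The $T_1\le 0$ case is then a clean corollary of the same estimates, since the endpoint exponent there produces a strictly larger power of $y_1$ in the denominator.
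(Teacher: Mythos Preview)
Your overall plan---reduce $\eta^{(n)}$ to $\eta^{(n-1)}$ by integrating out the $(u_1,u_{12})$ block and localizing near the endpoint $u_1=T_1$---is the same strategy as the paper's. But the specific mechanism you describe for the $u_{12}$-integration is wrong, and this is a genuine gap. You write that ``the leading Gaussian in $w$ comes from $\exp\big(-(u_1-T_1)^{-1}\cdot(\text{quadratic in }w)\big)$-type factors hidden in the determinant.'' There is no such thing: the determinant factors $\det(u\pm T)^{\,\cdot}$ are power functions, not exponentials, and no Gaussian is hidden in them. In your order of operations (freeze $u_1\to T_1$ first via $u_1=T_1+t/y_1$, then rescale $u_{12}=T_{12}+w/\sqrt{y_1}$), the Schur complement in $\det(u-T)$ becomes $(u_2-T_2)-t^{-1}\,{}^tww$, so the $w$-integral is over the bounded ellipsoid $\{{}^tww< t(u_2-T_2)\}$ against a \emph{power} weight, not a Gaussian over $\R^{n-1}$. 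Lemma~\ref{lem:gaussian} does not apply; what you actually need is a Beta-type integral, and the $\Gamma(\beta+1-\rho_n)\pi^{(n-1)/2}$ then comes out only after combining this Beta factor with the $\Gamma(\beta)$ from the $t$-integral.

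The paper avoids this by a different order of substitutions that you should look at: first shift $u\mapsto u+T$ (so the integrand becomes $e^{-\tr(uy)}|u|^{\beta-\rho_n}|u+2T|^{\alpha-\rho_n}$ over $\{u>0,\,u+2T>0\}$), then rescale the first row and column of $u$ by $y_1^{-1/2}$. After taking $y_1\to\infty$, one uses $|u|=|u_2|(u_1-u_{12}u_2^{-1}\,{}^tu_{12})$ and integrates $e^{-u_1}(u_1-u_{12}u_2^{-1}\,{}^tu_{12})^{\beta-\rho_n}$ over $u_1>u_{12}u_2^{-1}\,{}^tu_{12}$; this produces $\Gamma(\beta+1-\rho_n)e^{-u_{12}u_2^{-1}\,{}^tu_{12}}$, and \emph{that} exponential is the genuine Gaussian to which Lemma~\ref{lem:gaussian} applies. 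The $T_1\le 0$ case then falls out because, after the rescaling, the domain condition $\{u+2T>0\}$ becomes empty in the limit. Finally, you omit the analytic continuation step: the computation is only valid in a half-plane such as $\Re(\alpha)>\rho_n$, $\Re(\beta)>\rho_n-\tfrac12$, and one must invoke Shimura's functional equation and integral representations \cite[(4.24), (4.35.K)]{Sh} to extend the identity to all $(\alpha,\beta)\in\C^2$.
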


\begin{remark}
In the case $n=1$ the function $\eta^{(0)}$ is to be interpreted as the constant function with value $1$. Then the theorem states
\begin{align}
\label{eq:etaasy1}
&\lim_{y_1\to \infty} e^{T_1y_1} y_1^\beta \cdot \eta^{(1)}(y,T,\alpha,\beta)=\begin{cases} 0,&\text{if $T_1 < 0$,}\\
\Gamma(\beta)(2T_1)^{\alpha-\rho_1},&\text{if $T_1>0$.}
\end{cases}
\end{align}
On the other hand,
for $T\in \R^\times$ and $y\in \R_{>0}$ we have
\begin{align*}
\eta^{(1)}(y,T,\alpha,\beta) =
e^{-|T|y} \cdot |2T|^{\alpha+\beta-1}\cdot
\begin{cases}
\Gamma(\alpha) U(\alpha,\alpha+\beta,
2|T|y),&\text{if $T<0$,}\\
\Gamma(\beta) U(\beta,\alpha+\beta,2|T|y),&\text{if $T>0$,}
\end{cases}
\end{align*}
where $U(a,b,z)$ denotes Kummer's confluent hypergeometric function, see \cite[(13.1.3)]{AS}. The asymptotic behavior of the Kummer function
$U(a,b,y)= y^{-a}+O(y^{-a-1})$ as $y\to \infty$
(see e.g.~\cite[(13.5.2)]{AS}) matches with \eqref{eq:etaasy1}.
\end{remark}

\begin{proof}[Proof of Theorem \ref{thm:etalimnew}.]
\emph{Step 1.}
We first consider the case where $\Re(\alpha)>\rho_n$ and $\Re(\beta)>\rho_n-1/2$.
We put
\[
y'= \zxz{y_1^{-1/2}}{}{}{1} y \zxz{y_1^{-1/2}}{}{}{1} = \zxz{1}{y_{12}/y_1^{1/2}}{{}^ty_{12}/y_1^{1/2}}{y_2}.
\]
By means of Lemma \ref{lem:trafo} we rewrite the integral as follows:
\begin{align*}
\eta(y,T,\alpha,\beta)&= e^{-\tr Ty}
 \int\limits_{\substack{u\in \Sym_n(\R)\\ u+2T>0\\u>0}}
 e^{-\tr(uy)} |u+2T|^{\alpha-\rho_n}|u|^{\beta-\rho_n}\, du\\
&=y_1^{-\beta} e^{-\tr Ty}\\
&\phantom{=}{}\times
 \!\!\int\limits_{\substack{\kzxz{u_1/y_1}{u_{12}/y_1^{1/2}}{{}^tu_{12}/y_1^{1/2}}{u_2} +2T>0\\u>0}}\!\!
 e^{-\tr(uy')} \left|\kzxz{u_1/y_1}{u_{12}/y_1^{1/2}}{{}^tu_{12}/y_1^{1/2}}{u_2}+2T\right|^{\alpha-\rho_n}\cdot
|u|^{\beta-\rho_n}\, du.
\end{align*}
Here and throughout the proof of the theorem we briefly write $|u|$ for the determinant of $u$.
In view of Lemma~\ref{lem:block}, we obtain
\begin{align}
\label{eq:lebes}
&\eta(y,T,\alpha,\beta)\\
\nonumber
&=
y_1^{-\beta} e^{-\tr Ty}
 \int\limits_{\substack{u_2+2T_2>0\\u>0}}\!\!
\chi(u,y_1,T) e^{-\tr(uy')} \left|\kzxz{u_1/y_1}{u_{12}/y_1^{1/2}}{{}^tu_{12}/y_1^{1/2}}{u_2}+2T\right|^{\alpha-\rho_n}\cdot
|u|^{\beta-\rho_n}\, du,
\end{align}
where $\chi(u,y_1,T)$ denotes the characteristic function
\[
\chi(u,y_1,T) = \begin{cases}
1,&\text{if \,$\frac{u_1}{y_1}+2T_1 -(\frac{u_{12}}{\sqrt{y_1}}+2T_{12})(u_2+2T_2)^{-1}\,{}^t(\frac{u_{12}}{\sqrt{y_1}}+2T_{12})>0$,}\\[1ex]
0,&\text{if \,$\frac{u_1}{y_1}+2T_1 -(\frac{u_{12}}{\sqrt{y_1}}+2T_{12})(u_2+2T_2)^{-1}\,{}^t(\frac{u_{12}}{\sqrt{y_1}}+2T_{12})\leq 0$.}
\end{cases}
\]

We now compute the desired limit as $y_1\to \infty$ \emph{assuming} that
the integration can be interchanged with the limit.
After that we will come back to the justification of the interchange.
We have
\begin{align*}
&\lim_{y_1\to \infty} e^{T_1y_1} y_1^\beta \cdot \eta^{(n)}(y,T,\alpha,\beta)\\
&=e^{-2T_{12}{}^ty_{12}-\tr T_2 y_2} \\
&\phantom{=}{}\times
 \int\limits_{\substack{u_2+2T_2>0\\u>0}}\!\!
\lim_{y_1\to \infty}\chi(u,y_1,T) e^{-\tr(uy')} \left|\kzxz{u_1/y_1}{u_{12}/y_1^{1/2}}{{}^tu_{12}/y_1^{1/2}}{u_2}+2T\right|^{\alpha-\rho_n}\cdot
|u|^{\beta-\rho_n}\, du\\
&=e^{-2T_{12}{}^ty_{12}-\tr T_2 y_2}
 \int\limits_{\substack{\kzxz{0}{0}{0}{u_2}+2T>0\\u>0}}\!\!
 e^{-u_1-\tr u_2y_2} \big|\kzxz{0}{0}{0}{u_2}+2T\big|^{\alpha-\rho_n}\cdot
|u|^{\beta-\rho_n}\, du.
\end{align*}
If $T_1\leq 0$, then the domain of integration is empty and the integral vanishes as claimed.

If $T_1>0$, and $n=1$, then the remaining integral reduces to the Euler integral for the Gamma function, which implies the assertion in this case. If $T_1>1$ and $n>1$, then we use Lemma~\ref{lem:block} again (but now the other of the two formulas) to write
\begin{align*}
\left|\kzxz{0}{0}{0}{u_2}+2T\right|^{\alpha-\rho_n}
&= 2T_1 \cdot |u_2+2T_2-{}^t(2T_{12})(2T_1)^{-1}(2T_{12})|\\
&= 2\tilde{T}_1 \cdot |u_2+2\tilde{T}_2|.
\end{align*}
Inserting this in the integral, we obtain
\begin{align}
\label{eq:intref}
&\lim_{y_1\to \infty} e^{T_1y_1} y_1^\beta \cdot \eta^{(n)}(y,T,\alpha,\beta)\\
\nonumber
&=e^{-2T_{12}{}^ty_{12}-\tr T_2 y_2} (2T_1)^{\alpha-\rho_n}
 \int\limits_{\substack{u_2+2\tilde{T}_2>0\\u>0}}\!\!
 e^{-u_1-\tr u_2y_2} \big|u_2+2\tilde{T}_2\big|^{\alpha-\rho_n}\cdot
|u|^{\beta-\rho_n}\, du\\
\nonumber
&=e^{-2T_{12}{}^ty_{12}-\tr T_2 y_2} (2T_1)^{\alpha-\rho_n}
\\
\nonumber
&\phantom{=}{}\times
 \int\limits_{\substack{u_2+2\tilde{T}_2>0\\u_2>0\\u_1-u_{12}u_2^{-1}\, {}^tu_{12}>0}}\!\!
 e^{-u_1-\tr u_2y_2} \big|u_2+2\tilde{T}_2\big|^{\alpha-\rho_n}\cdot
|u_2|^{\beta-\rho_n}(u_1-u_{12}u_2^{-1} \,{}^tu_{12})^{\beta-\rho_n}\, du.
\end{align}
We carry out the $u_1$-integration and employ Lemma \ref{lem:gaussian}
to get
\begin{align*}
&\lim_{y_1\to \infty} e^{T_1y_1} y_1^\beta \cdot \eta^{(n)}(y,T,\alpha,\beta)\\
&=e^{-2T_{12}{}^ty_{12}-\tr T_2 y_2} (2T_1)^{\alpha-\rho_n}\Gamma(\beta-\rho_n+1)
\\&\phantom{=}{}\times
 \int\limits_{\substack{u_2+2\tilde{T}_2>0\\u_2>0}}\!\!
e^{-\tr u_2y_2}\cdot\big|u_2+2\tilde{T}_2\big|^{\alpha-\rho_n}\cdot
|u_2|^{\beta-\rho_n}
\int\limits_{u_{12}\in \R^{1\times(n-1)}}
 e^{-u_{12}u_2^{-1}\, {}^tu_
{12}}
 \, du_{12}\,du_2\\
&=e^{-2T_{12}{}^ty_{12}-\tr T_2 y_2} (2T_1)^{\alpha-\rho_n}\pi^{\frac{n-1}{2}}\Gamma(\beta-\rho_n+1)
\\&\phantom{=}{}\times
 \int\limits_{\substack{u_2+2\tilde{T}_2>0\\u_2>0}}\!\!
e^{-\tr u_2y_2}\cdot\big|u_2+2\tilde{T}_2\big|^{\alpha-\rho_n}\cdot
|u_2|^{\beta+\frac{1}{2}-\rho_n}
\,du_2.
\end{align*}
Shifting the variable of integration, we get
\begin{align*}
&\lim_{y_1\to \infty} e^{T_1y_1} y_1^\beta \cdot \eta^{(n)}(y,T,\alpha,\beta)\\
&=\exp\big(-2T_{12}{}^ty_{12}-\tr T_2 y_2+\tr\tilde{T}_2y_2\big) (2T_1)^{\alpha-\rho_n}\pi^{\frac{n-1}{2}}\Gamma(\beta-\rho_n+1)
\\&\phantom{=}{}\times
 \int\limits_{\substack{u_2+\tilde{T}_2>0\\u_2-\tilde{T}_2>0}}\!\!
e^{-\tr u_2y_2}\cdot\big|u_2+\tilde{T}_2\big|^{\alpha-\rho_{n-1}-\frac{1}{2}}\cdot
|u_2-\tilde{T}_2|^{\beta-\rho_{n-1}}
\,du_2.
\end{align*}
Since the latter integral is $\eta^{(n-1)}(y_2,\tilde{T}_2,\alpha-\frac{1}{2},\beta)$, we obtain the claimed formula.

\emph{Step 2.}
We now justify the interchange of the integral and the limit in \eqref{eq:lebes}.
To this end we bound the integrand in
\begin{align}
\label{eq:itb}
 \int\limits_{\substack{u_2+2T_2>0\\u>0}}\!\!
\chi(u,y_1,T) e^{-\tr(uy')} \left|\kzxz{u_1/y_1}{u_{12}/y_1^{1/2}}{{}^tu_{12}/
y_1^{1/2}}{u_2}+2T\right|^{\alpha-\rho_n}\cdot
|u|^{\beta-\rho_n}\, du
\end{align}
by an integrable function which is independent of $y_1$.
On the domain of integration and where $\chi(u,y,T)$ is non-zero,
the quantity
\begin{align*}
&\left|\kzxz{u_1/y_1}{u_{12}/y_1^{1/2}}{{}^tu_{12}/
y_1^{1/2}}{u_2}+2T\right|\\
&=|u_2+2T_2| \cdot \left(\frac{u_1}{y_1}+2T_1 -(\frac{u_{12}}{\sqrt{y_1}}+2T_{12})(u_2+2T_2)^{-1}\,{}^t(\frac{u_{12}}{\sqrt{y_1}}+2T_{12})\right)
\end{align*}
is bounded by
\[
|u_2+2T_2| \cdot (u_1+2|T_1|)
\]
from above when $y_1>1$. It is bounded by $0$ from below.
Moreover, for $y_1>4y_{12}y_2^{-1}\,{}^ty_{12}$ we have
\[
y'>\frac{1}{2}\zxz{1}{0}{0}{y_2}
\]
and therefore
\[
e^{-\tr(uy')} \leq e^{-u_1/2-\tr(u_2y_2)}.
\]
Hence, for such $y_1$ the integrand in \eqref{eq:itb} is bounded by
\[
e^{-u_1/2-\tr(u_2y_2)}(u_1+2|T_1|)^{\Re(\alpha)-\rho_n}|u_2+2T_2|^{\Re(\alpha)-\rho_n} \cdot |u|^{\Re(\beta)-\rho_n}
\]
on the domain of integration.
Here we have also used that $\Re(\alpha)>\rho_n$.
There exists a constant $C=C(\alpha)>0$ such that this is bounded by
\[
C\cdot e^{-u_1/4-\tr(u_2y_2)}|u_2+2T_2|^{\Re(\alpha)-\rho_n} \cdot |u|^{\Re(\beta)-\rho_n}
\]
locally uniformly in $\alpha$. Note that this function is independent of $y_1$. Consequently, by the dominated convergence theorem, the interchange of the integral and the limit in \eqref{eq:lebes} follows if
\begin{align}
I(y_2,T_2,\alpha,\beta)=\int\limits_{\substack{u_2+2T_2>0\\u>0}}
e^{-u_1/4-\tr(u_2y_2)}|u_2+2T_2|^{\Re(\alpha)-\rho_n} \cdot |u|^{\Re(\beta)-\rho_n}\,du
\end{align}
converges. But this integral is of the same form as the one on the right hand side of \eqref{eq:intref}.  The same computation shows that the integral is given by  a constant times
\[
e^{\frac{1}{2}\tr(y_2 T_2)}\cdot\eta^{(n-1)}(\frac{1}{2}y_2,T_2, \Re(\alpha)-\frac{1}{2},\Re(\beta)).
\]
This shows the convergence for $\Re(\alpha)>\rho_n$ and $\Re(\beta)>\rho_n-1/2$.

\emph{Step 3.}
We now show that the formulas of the theorem actually hold for all $\alpha,\beta\in \C$. If $T$ is positive definite, we use the functional equation of $\eta$ under $(\alpha,\beta)\mapsto(\rho_n-\beta,\rho_n-\alpha)$ and argue as in \cite[pp.~281]{Sh}.  For general $T$, we then apply the integral representation (4.24) in \cite[pp.~289]{Sh} to deduce the assertion.
\end{proof}

%


If $a\in \R^\times$, we let
\begin{align}
\label{eq:da}
d(a)=\begin{pmatrix}
a&&&\\
&1&&\\
&&\ddots&\\
&&&1
\end{pmatrix}\in \Gl_n(\R).
\end{align}
Theorem \ref{thm:etalimnew} implies the following asymptotic behavior of the Whittaker function.

\begin{corollary}
\label{thm:wasy}
Let $\kappa=\rho_n$.
For general invertible $T$ we have
\begin{align*}
&\lim_{a\to \infty} e^{2\pi (T_1 a^2+T_1^{-1}T_{12}{}^tT_{12})} a^{-\rho_n} \cdot W_T(m(d(a)),s,\Phi_\kappa)\\
&=\begin{cases}
\displaystyle \frac{i^{-n}(2\pi)^{\rho_n} }{\Gamma(\frac{s}{2}+\rho_n)}
(\pi T_1)^{s/2}\cdot
W_{\tilde{T}_2}(1,s,\Phi_{\kappa-1/2}) ,&\text{if $T_1>0$,}\\[2ex]
0,&\text{if $T_1\leq 0$.}
\end{cases}
\end{align*}
Here the Whittaker function on the left hand side is in genus $n$ and the one on the right hand side in genus $n-1$ (which is clear from the size of the matrices in the subscripts).
\end{corollary}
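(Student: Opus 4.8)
The plan is to read off the corollary from Theorem~\ref{thm:etalimnew} by specializing the radial parameter and then converting the resulting genus $n-1$ integral back into a Whittaker function. First I would put $\kappa=\rho_n$, so that $\alpha=\tfrac{s}{2}+\rho_n$ and $\beta=\tfrac{s}{2}$, and take $a=d(a)$ in \eqref{eq:weta}. Then $y=a\,{}^ta=\operatorname{diag}(a^2,1,\dots,1)$, hence
\[
W_T(m(d(a)),s,\Phi_\kappa)=c_n(\alpha,\beta)\,a^{s+\rho_n}\,\eta^{(n)}(2y,\pi T,\alpha,\beta),
\]
and in the block notation of \eqref{eq:varblock} the matrix $2y$ has upper-left entry $2a^2$, vanishing off-diagonal block, and lower-right block $2I_{n-1}$. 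As $a\to\infty$ the entry $2a^2\to\infty$, so Theorem~\ref{thm:etalimnew} applies with $T$ replaced by $\pi T$ and $y$ replaced by $2y$.

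In the case $T_1>0$, Theorem~\ref{thm:etalimnew} (noting that the ``$\tilde T_2$'' attached to $\pi T$ is $\pi\tilde T_2$, that $\tr(\tilde T_2-T_2)=-T_1^{-1}T_{12}{}^tT_{12}$, and that the off-diagonal term contributes nothing since $y_{12}=0$) gives
\[
\lim_{a\to\infty}e^{2\pi T_1a^2}(2a^2)^\beta\,\eta^{(n)}(2y,\pi T,\alpha,\beta)=e^{-2\pi T_1^{-1}T_{12}{}^tT_{12}}\,\Gamma(\beta+1-\rho_n)\,\pi^{\frac{n-1}{2}}\,(2\pi T_1)^{\alpha-\rho_n}\,\eta^{(n-1)}(2I_{n-1},\pi\tilde T_2,\alpha-\tfrac12,\beta).
\]
Since $\alpha-\tfrac12=\tfrac{s}{2}+\rho_{n-1}$, $\beta=\tfrac{s}{2}$ and $\kappa-\tfrac12=\rho_{n-1}$, and since $\det\tilde T_2\neq0$ (because $\det T=T_1\det\tilde T_2$ by Lemma~\ref{lem:block}), \eqref{eq:weta} applied in genus $n-1$ with $a'=I_{n-1}$ identifies the last factor as $c_{n-1}(\alpha-\tfrac12,\beta)^{-1}W_{\tilde T_2}(1,s,\Phi_{\kappa-1/2})$. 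Substituting $W_T(m(d(a)),s,\Phi_\kappa)=c_n(\alpha,\beta)a^{s+\rho_n}\eta^{(n)}$ and multiplying by $e^{2\pi(T_1a^2+T_1^{-1}T_{12}{}^tT_{12})}a^{-\rho_n}$, the exponential prefactors cancel and one is left with
\[
\frac{c_n(\alpha,\beta)}{c_{n-1}(\alpha-\tfrac12,\beta)}\cdot 2^{-\beta}\,\Gamma(\beta+1-\rho_n)\,\pi^{\frac{n-1}{2}}\,(2\pi T_1)^{\alpha-\rho_n}\cdot W_{\tilde T_2}(1,s,\Phi_{\kappa-1/2}).
\]

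The remaining step is to simplify the constant. Using the explicit value of $c_n(\alpha,\beta)$ from \eqref{eq:weta} and the identity $\Gamma_n(\gamma)=\pi^{\frac{n-1}{2}}\Gamma_{n-1}(\gamma)\,\Gamma(\gamma-\tfrac{n-1}{2})$, which follows directly from \eqref{eq:siegel}, one finds
\[
\frac{c_n(\alpha,\beta)}{c_{n-1}(\alpha-\tfrac12,\beta)}=\frac{i^{-n}\,2^{\rho_n}\,\pi}{\Gamma(\alpha)\,\Gamma(\beta+1-\rho_n)};
\]
the factor $\Gamma(\beta+1-\rho_n)$ then cancels the one produced by Theorem~\ref{thm:etalimnew}, while $2^{-\beta}(2\pi T_1)^{\alpha-\rho_n}=(\pi T_1)^{s/2}$ and $2^{\rho_n}\pi\cdot\pi^{\frac{n-1}{2}}=(2\pi)^{\rho_n}$ (both using $\alpha-\rho_n=\tfrac{s}{2}$, $\beta=\tfrac{s}{2}$), giving exactly $\tfrac{i^{-n}(2\pi)^{\rho_n}}{\Gamma(s/2+\rho_n)}(\pi T_1)^{s/2}W_{\tilde T_2}(1,s,\Phi_{\kappa-1/2})$. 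For $T_1\le0$, Theorem~\ref{thm:etalimnew} instead yields $\lim_{a\to\infty}e^{2\pi T_1a^2}(2a^2)^\beta\eta^{(n)}(2y,\pi T,\alpha,\beta)=0$, so the whole expression tends to $0$. The only genuinely delicate point is this last constant computation: one must carry the powers of $2$, $\pi$, $i$ and the Siegel gamma functions through the two invocations of \eqref{eq:weta} and check that they collapse to the stated closed form — everything else is a direct substitution into Theorem~\ref{thm:etalimnew}, whose analytic content (interchanging the limit with the integral) we may take as given.
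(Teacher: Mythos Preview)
Your proof is correct and follows essentially the same route as the paper: specialize \eqref{eq:weta} at $m(d(a))$, apply Theorem~\ref{thm:etalimnew} with $y=2d(a)^2$ and $T\to\pi T$, reconvert the genus $n-1$ $\eta$-function to a Whittaker function via \eqref{eq:weta}, and collapse the constants. The only cosmetic difference is that the paper packages the Siegel gamma recursion you use as the two labeled identities \eqref{eq:mg2} and \eqref{eq:mg3}, whereas you invoke the general relation $\Gamma_n(\gamma)=\pi^{(n-1)/2}\Gamma_{n-1}(\gamma)\Gamma(\gamma-\tfrac{n-1}{2})$ directly; these are the same computation.
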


\begin{proof}
Let $a>0$. According to \eqref{eq:weta}, we have
\begin{align*}
a^{-\rho_n} \cdot W_T(m(d(a)),s,\Phi_\kappa)&=c_n(\alpha,\beta) \cdot
a^{s}\cdot  \eta^{(n)}(2d(a)^2,\pi T,\alpha,\beta),\\
\nonumber
c_n(\alpha,\beta)&= \frac{i^{n(\beta-\alpha)}2^{-n(\rho_n-1)/2}(2\pi)^{n\rho_n
}}{\Gamma_n(\alpha)\Gamma_n(\beta)}.
\end{align*}
If $T_1\leq 0$, we obtain by means of Theorem \ref{thm:etalimnew} that
\begin{align*}
\lim_{a\to \infty} e^{2\pi (T_1 a^2+T_1^{-1}T_{12}{}^tT_{12})} a^{-\rho_n} \cdot W_T(m(d(a)),s,\Phi_\kappa)=0.
\end{align*}
If $T_1>0$, we find
\begin{align*}
&\lim_{a\to \infty} e^{2\pi (T_1 a^2+T_1^{-1}T_{12}{}^tT_{12})} a^{-\rho_n} \cdot W_T(m(d(a)),s,\Phi_\kappa)\\
&=c_n(\alpha,\beta)\Gamma(\beta+1-\rho_n) \pi^{\frac{n-1}{2}}
(2\pi T_1)^{\alpha-\rho_n}2^{-\frac{s}{2}}\eta^{(n-1)}(2,\pi\tilde{T}_2,\alpha-\frac{1}{2},\beta).
\end{align*}
Using
\eqref{eq:weta} in genus $n-1$, we see that
\begin{align*}
W_{\tilde{T}_2}(1,s,\Phi_{\kappa-1/2})&=c_{n-1}(\frac{s}{2}+\rho_{n-1},\frac{s}{2}) \cdot
\eta^{(n-1)}(2,\pi \tilde{T}_2,\alpha-\frac{1}{2},\beta).
\end{align*}
Inserting this we get
\begin{align*}
&\lim_{a\to \infty}e^{2\pi (T_1 a^2+T_1^{-1}T_{12}{}^tT_{12})}  a^{-\rho_n} \cdot W_T(m(d(a)),s,\Phi_\kappa)\\
&=\frac{c_n(\alpha,\beta)\Gamma(\beta+1-\rho_n)}{ c_{n-1}(\frac{s}{2}+\rho_{n-1},\frac{s}{2}) }\pi^{\frac{n-1}{2}}
(\pi T_1)^{s/2}\cdot W_{\tilde T_2}(1,s,\Phi_{\kappa-1/2}).
\end{align*}
Employing the relations
\begin{align}
\label{eq:mg2}
\Gamma_{n-1}(\beta)\Gamma_1(\beta+1-\rho_n)&=\pi^{-\frac{n-1}{2}}\Gamma_n(\beta),\\
\label{eq:mg3}
\Gamma_{n-1}(\beta+\rho_{n-1})\Gamma_1(\beta+\rho_n)&=\pi^{-\frac{n-1}{2}}\Gamma_n(\beta+\rho_n),
\end{align}
we find
\begin{align*}
\frac{c_n(\alpha,\beta)\Gamma(\beta+1-\rho_n)}{ c_{n-1}(\frac{s}{2}+\rho_{n-1},\frac{s}{2}) } = \frac{i^{-n}2^{\rho_n} \pi}{\Gamma(\frac{s}{2}+\rho_n)}.
\end{align*}
Consequently, we obtain
\begin{align*}
&\lim_{a\to \infty} e^{2\pi (T_1 a^2+T_1^{-1}T_{12}{}^tT_{12})}
a^{-\rho_n} \cdot W_T(m(d(a)),s,\Phi_\kappa)\\
&=\frac{i^{-n}(2\pi)^{\rho_n} }{\Gamma(\frac{s}{2}+\rho_n)}
(\pi T_1)^{s/2}\cdot W_{\tilde T_2}(1,s,\Phi_{\kappa-1/2}).
\end{align*}
This concludes the proof of the corollary.
\end{proof}

\begin{corollary}
\label{cor:wderivasy}
Let $\kappa=\rho_n$.
If $T$ has signature $(n-j,j)$ with $j\geq 1$, then
\begin{align*}
&\lim_{a\to \infty} e^{2\pi (T_1 a^2+T_1^{-1}T_{12}{}^tT_{12})} a^{-\rho_n} \cdot W_T'(m(d(a)),0,\Phi_\kappa)\\
&=\begin{cases} \displaystyle\frac{i^{-n}(2\pi)^{\rho_n}}{\Gamma(\rho_{n})}
\cdot
W'_{\tilde T_2}(1,0,\Phi_{\kappa-1/2}) ,&\text{if $T_1>0$,}\\[2ex]
0,&\text{if $T_1\leq 0$.}
\end{cases}
\end{align*}
\end{corollary}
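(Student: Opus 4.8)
The plan is to deduce Corollary~\ref{cor:wderivasy} by differentiating the asymptotic identity of Corollary~\ref{thm:wasy} in $s$ at $s=0$, and to justify that this differentiation commutes with the limit $a\to\infty$.

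First I would upgrade the pointwise limit in Corollary~\ref{thm:wasy} to a locally uniform limit for $s$ in a complex neighbourhood of $s=0$. All the functions $s\mapsto e^{2\pi(T_1a^2+T_1^{-1}T_{12}{}^tT_{12})}a^{-\rho_n}W_T(m(d(a)),s,\Phi_\kappa)$, as well as the limit function on the right hand side of Corollary~\ref{thm:wasy}, are holomorphic in $s$ near $s=0$ (for the Whittaker functions this is contained in Proposition~\ref{prop:whitt0}). The dominating function produced in Step~2 of the proof of Theorem~\ref{thm:etalimnew} is independent of $y_1$ and locally uniform in $(\alpha,\beta)$, and Step~3 propagates the asymptotic formula to all $(\alpha,\beta)$ by analytic continuation; combining this with Vitali's theorem gives the desired local uniformity. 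The Weierstrass convergence theorem then allows differentiating in $s$ term by term, and since the prefactor does not involve $s$ we obtain
\[
\lim_{a\to\infty} e^{2\pi(T_1a^2+T_1^{-1}T_{12}{}^tT_{12})}\,a^{-\rho_n}\,W_T'(m(d(a)),0,\Phi_\kappa)=R'(0),
\]
where $R(s)$ denotes the right hand side of Corollary~\ref{thm:wasy}.

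Next I would evaluate $R'(0)$. If $T_1\le 0$ then $R\equiv 0$, hence $R'(0)=0$, which is the claim. If $T_1>0$, I would write $R(s)=i^{-n}(2\pi)^{\rho_n}\,\Gamma(\tfrac{s}{2}+\rho_n)^{-1}(\pi T_1)^{s/2}\,W_{\tilde{T}_2}(1,s,\Phi_{\kappa-1/2})$ and apply the product rule. The decisive point is that $W_{\tilde{T}_2}(1,0,\Phi_{\kappa-1/2})=0$: since $\kappa=\rho_n$ we have $\kappa-\tfrac12=\rho_{n-1}$, and since $T_1>0$ the Jacobi decomposition of Lemma~\ref{lem:block} shows that $T$ is congruent to $\diag(T_1,\tilde{T}_2)$, so $\tilde{T}_2=T_2-{}^tT_{12}T_1^{-1}T_{12}$ is nonsingular of signature $(n-1-j,j)$ with $j\ge 1$; Proposition~\ref{prop:whitt0}(i) in genus $n-1$ then gives $\ord_{s=0}W_{\tilde{T}_2}(1,s,\Phi_{\rho_{n-1}})\ge\lfloor\tfrac{j+1}{2}\rfloor\ge 1$. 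Consequently, in the Leibniz expansion of $R'(0)$ only the term in which the derivative hits $W_{\tilde{T}_2}$ survives, and since $\Gamma(\tfrac{s}{2}+\rho_n)^{-1}(\pi T_1)^{s/2}$ takes the value $\Gamma(\rho_n)^{-1}$ at $s=0$, this yields $R'(0)=\frac{i^{-n}(2\pi)^{\rho_n}}{\Gamma(\rho_n)}W'_{\tilde{T}_2}(1,0,\Phi_{\kappa-1/2})$, as asserted.

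I expect the main obstacle to be the first step: establishing that the convergence in Corollary~\ref{thm:wasy} is locally uniform in the complex variable $s$ near $s=0$, so that $\tfrac{d}{ds}$ commutes with $\lim_{a\to\infty}$. Once this is in place, what remains is the short computation above, which uses only the signature of the Schur complement $\tilde{T}_2$ and the vanishing order of the genus $(n-1)$ Whittaker function recorded in Proposition~\ref{prop:whitt0}.
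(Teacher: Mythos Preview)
Your approach is correct and is exactly what the paper intends: Corollary~\ref{cor:wderivasy} is stated without proof as an immediate consequence of Corollary~\ref{thm:wasy}, obtained by differentiating in $s$ at $s=0$ and using that $W_{\tilde T_2}(1,0,\Phi_{\kappa-1/2})$ vanishes by Proposition~\ref{prop:whitt0}(i) when $T_1>0$. Your identification of the signature of $\tilde T_2$ via the Jacobi decomposition and the resulting Leibniz computation are precisely the intended ingredients; the uniformity in $s$ you flag as the main obstacle is indeed implicit in the locally uniform bounds of Step~2 of Theorem~\ref{thm:etalimnew} together with the analytic continuation in Step~3.
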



\section{The archimedian arithmetic Siegel-Weil formula}
\label{sect:LocalASW}


Here we use the archimedidean local Siegel-Weil formula (see Proposition \ref{prop:LocalSiegel-WeilR}), the asymptotic behavior of Theorem \ref{thm:etalimnew}, and some computations in the induced representation to prove Theorem \ref{thm:intro-alsw}.

We use the same setup and notation as in Section \ref{sect:3}.
In particular, $V$ is a quadratic space over $\R$ of signature $(m, 2)$, and $\kappa=\frac{m+2}{2}$. Moreover,   $\calD$ is the hermitian domain associated with $H=\SO(V)$, realized as the Grassmannian of oriented negative $2$-planes in $V$.

\subsection{Green currents and local heights}
For $z\in \calD$ the orthogonal complement $z^\perp$ is positive definte of dimension $m$. If $x\in V$, we denote the orthogonal projetion of $x$ to $z$ and $z^\perp$ by $x_z$ and $x_{z^\perp}$, respectively. The quadratic form
\[
(x,x)_z= (x_{z^\perp},x_{z^\perp}) - (x_z,x_z)
\]
is positive definite on $V$, the so called majorant associated with $z$.
We also put
\[
R(x,z)= -(x_z,x_z)
\]
so that
$(x,x)_z= (x,x)+2R(x,z)$.
For $0\ne x \in V$, we define
$$
\mathcal D_x=\{z\in \mathcal D\mid\; z\perp x\}=\{z\in \mathcal D\mid\; R(x,z)=0\}.
$$
Then $\calD_x$ is a non-trivial analytic  divisor of $\mathcal D$ if $Q(x) >0$, and it is empty  if $Q(x) \le 0$ (which we will view as the zero divisor).
Following \cite{Ku1}
we define the Kudla Green function
\begin{align}
\label{eq:defxi}
\xi(x,z) = -\Ei(-2\pi R(x,z)) \cdot e^{-\pi (x,x)},
\end{align}
where
$\Ei(u)= \int_{-\infty}^u e^t\,\frac{dt}{t}$ is the exponential integral, see
\cite[Chapter 5]{AS}.
If $x\in V$ is fixed, then $\xi(x,z)$ is a smooth function on $\calD\setminus \calD_x$ with a logarithmic singularity along $\calD_x$. It has the equivariance property
$\xi(gx,gz)=\xi(x,z)$ for $g\in H(\R)$.
The differential form
\begin{align}
\label{eq:defphikm}
\varphi_{KM}(x,z)=dd^c\xi(x,z)
\end{align}
extends to a smooth $(1,1)$-form on all of $\calD$, where $d^c=\frac{1}{4\pi i}(\partial-\bar\partial)$.
It is the Kudla-Millson Schwartz form which is Poincar\'e dual for the cycle $e^{-\pi (x,x)}\calD_x$, see \cite{KM2}, \cite{Ku1}. More precisely, as currents on $\calD$, we have the identity
\[
dd^c[\xi(x)]+e^{-\pi (x,x)}\delta_{\calD_x} = [\varphi_{KM}(x)].
\]
Because of the $H(\R)$-equivariance of $\xi(x,z)$, the $(1,1)$ form
\begin{align*}
\Omega=\varphi_{KM}(0,z)
\end{align*}
is $H(\R)$-invariant on $\calD$. In fact, it is equal to the invariant differential form defined earlier in \eqref{eq:defom}.
%

For $x=(x_1,\dots,x_n)\in V^n$ and $z\in \calD$ we also define the  Kudla-Millson Schwartz form in genus $n$ as
\begin{align*}
\varphi_{KM}^n(x,z)&=\varphi_{KM}(x_1,z)\wedge \dots\wedge \varphi_{KM}(x_n,z).
\end{align*}
With respect to the action of $G$ through the Weil representation it transforms under the maximal compact subgroup $K_G$ with the character
$\det(\uk)^{\kappa}$.
If $Q(x)\in \Sym_n(\R)$ is invertible,
the form $\varphi_{KM}^n(x,z)$ is Poincar\'e dual for the special cycle $e^{-\pi \tr (x,x)}\calD_x$, where
\[
\calD_x= \{ z\in \calD\mid \; \text{$z\perp x_i$ for $i=1,\dots,n$}\},
\]
see \cite{KM2}.
We define a Green current for the cycle $e^{-\pi \tr (x,x)}\calD_x$ by taking the star product
\begin{align}
\xi^n(x,z)=\xi(x_1,z)*\dots *\xi(x_n,z)
\end{align}
in the sense of \cite{GS}.
As a current on compactly supported smooth differential forms it satisfies the equation
\begin{align}
\label{eq:currentn}
dd^c[\xi^n(x)]+e^{-\pi \tr (x,x)}\cdot \delta_{\calD_x} = [\varphi_{KM}^n(x)].
\end{align}
When $\calD_x$ is compact, it follows from the growth estimates in \cite[Section 6]{KM2} that $\xi^n(x,z)$ is rapidly decaying and extends to a current on forms of moderate growth with \eqref{eq:currentn} still holding.
A recursive formula for the star product is given by
\begin{align}
\label{eq:wr}
\xi^n(x,z)=\xi(x_1,z)\wedge \varphi_{KM}^{n-1}((x_2,\dots,x_n),z)+e^{-\pi (x_1,x_1)}\delta_{\calD_{x_1}}\wedge \xi^{n-1}((x_2,\dots,x_n),z),
\end{align}
where $\varphi_{KM}^{0}$ has to be interpreted as $1$ and $\xi^{0}$ as $0$. The current $\xi^n(x,z)$ is invariant under permutations of the components of $x$.

Sometimes it is convenient to put
\begin{align*}
\xi_0^n(x,z) &= \xi^n(x,z)\cdot e^{\pi \tr (x,x)},\\
\varphi^n_{KM,0}(x,z)&=\varphi^n_{KM}(x,z)\cdot e^{\pi \tr(x,x)}.
\end{align*}
Then the current equation becomes
\[
dd^c[\xi^n_0(x)]+\delta_{\calD_x} = [\varphi^n_{KM,0}(x)].
\]

Note that the current equation \eqref{eq:currentn} together with
Proposition \ref{prop:whitt0} implies the following geometric local Siegel-Weil formula, which is the local archimedian version of
\eqref{eq:degid}.

\begin{proposition}
\label{prop:geolsw}
Assume that $n=m$ and that $T=Q(x)$ is invertible. Then
\[
\int_{\calD} \varphi_{KM}^n(x,z) =\begin{cases}2e^{-2\pi\tr T},& \text{if $T$ is positive definite,}\\
0,&\text{if $T$ is not positive definite.}
\end{cases}
\]
Moreover, in both cases this is equal to
\[
{2}^{1-\frac{n}{2}(\kappa+\frac{3}{2})}{\pi }^{-n\kappa}
 i^{n\kappa}
\Gamma_n(\kappa) |T|^{-\frac{1}{2}} W_T(1,\frac{1}{2},\Phi_\kappa),
%
\]
where $\Phi_\kappa\in I_n(s,\chi_V)$ is the weight $\kappa$ standard section, that is, the unique standard section whose restriction to $K_G$ is the character $\det(\uk)^{\kappa}$.
\end{proposition}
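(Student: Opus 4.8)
The plan is to derive this as a combination of the current equation \eqref{eq:currentn}, the archimedian local Siegel--Weil formula of Proposition~\ref{prop:LocalSiegel-WeilR}, and the explicit evaluation of the Whittaker function at a positive definite argument from Proposition~\ref{prop:whitt0}(ii). First I would observe that $n=m$ means $\calD_x$ has dimension $m-n=0$, so it is compact, and $\varphi_{KM}^n(x,z)$ is a top-degree form on the $m$-dimensional complex manifold $\calD$. Integrating the current equation \eqref{eq:currentn} over $\calD$, the $dd^c$-exact term integrates to zero (by Stokes, using the rapid decay of $\xi^n(x,z)$ noted after \eqref{eq:currentn}), so
\[
\int_{\calD}\varphi_{KM}^n(x,z) = e^{-\pi\tr(x,x)}\cdot \deg(\calD_x).
\]
Now $\calD_x$ is nonempty if and only if the span of $x_1,\dots,x_n$ is positive definite, i.e.\ $T=Q(x)$ is positive definite; in that case $z^\perp$ must be exactly $\Span(x_1,\dots,x_n)$, so $\calD_x$ is a single oriented point (together with its reverse orientation), contributing $\deg(\calD_x)=2$. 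Since $\tr(x,x)=2\tr Q(x)=2\tr T$, this gives $\int_{\calD}\varphi_{KM}^n(x,z)=2e^{-2\pi\tr T}$ when $T>0$ and $0$ otherwise, which is the first displayed identity.

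For the second identity I would apply Proposition~\ref{prop:LocalSiegel-WeilR} with the $H(\R)$-invariant Schwartz function $\phi_\infty(x,z)=\varphi_{KM,0}^n(x,z)=\varphi_{KM}^n(x,z)e^{\pi\tr(x,x)}$; note this is indeed of the form $S(V_\R^n)\otimes C^\infty(\calD)$ and is $H(\R)$-invariant in the required sense because $\varphi_{KM}(gx,gz)=\varphi_{KM}(x,z)$. By \eqref{eq:omega1} (i.e.\ \eqref{eq:defom}) we have $\Omega=\varphi_{KM}(0,z)$, so $\Omega^m$ is the natural invariant volume form and the proposition yields
\[
\int_{\calD}\varphi_{KM}^n(x,z)\,e^{\pi\tr(x,x)}\cdot\text{(const)} = B_{n,\infty}\cdot W_T(1,0,\lambda(\phi_\infty)),
\]
where one must identify $\lambda(\phi_\infty)$ with the weight-$\kappa$ standard section $\Phi_\kappa$ up to the value of $s$ at which it is evaluated. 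The point is that $\varphi_{KM}^n$ transforms under $K_G$ with character $\det(\uk)^\kappa$ (stated just after \eqref{eq:defphikm} in genus $n$), so $\lambda(\varphi_{KM,0}^n)$ is a multiple of $\Phi_\kappa$; and since $\dim V = m+2 = n+2$ here, we have $s_{l,n}=\frac{(n+2)-n-1}{2}=\frac12$, so $\lambda(\varphi_{KM,0}^n)$ lives in $I_n(s,\chi_V)$ with value at $s=\tfrac12$ a scalar times $\Phi_\kappa(\cdot,\tfrac12)$. I would determine that scalar by evaluating at $g=1$, i.e.\ computing $\varphi_{KM,0}^n(0,z_0)$ at a base point $z_0$, which by \eqref{eq:omega1} is $\Omega^n(z_0)$ evaluated suitably — or, more cleanly, by comparing the $T>0$ case of both sides using Proposition~\ref{prop:whitt0}(ii), which gives the closed form of $W_T(1,0,\Phi_\kappa)$ for $\sig(T)=(n,0)$. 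Matching $2e^{-2\pi\tr T}$ against the explicit value $\frac{(-2\pi i)^{n\kappa}2^{-n(n-1)/4}}{\Gamma_n(\kappa)}(\det y)^{\kappa/2}e^{-2\pi\tr Ty}$ at $y=I_n$ (note $\Phi_\kappa$ here is evaluated at $s=\tfrac12$, not $s=0$, so I would use Lemma~\ref{lem:whitshim} with $s=1/2$, giving $\alpha=\kappa$, $\beta=\tfrac12+\rho_n-\kappa=\tfrac12$ when $\kappa=\rho_n=\tfrac{m+2}2$; wait, $\rho_n=\frac{n+1}2=\frac{m+2}2=\kappa$, so indeed $\beta=\tfrac12$ and one uses the $\eta(g,h,\rho_n,\beta)$ formula from the proof of Proposition~\ref{prop:whitt0}) pins down the proportionality constant and yields the stated factor ${2}^{1-\frac{n}{2}(\kappa+\frac{3}{2})}{\pi}^{-n\kappa}i^{n\kappa}\Gamma_n(\kappa)|T|^{-1/2}$.

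The main obstacle I expect is bookkeeping the constants and the precise value of $s$: the local Siegel--Weil formula of Proposition~\ref{prop:LocalSiegel-WeilR} is stated with $W_T(1,0,\lambda(\phi_\infty))$, but "$0$" there refers to the center of the functional equation for $I(s,\chi_V)$ with the normalization where the section $\lambda(\phi)$ sits at $s_{l,n}=\frac{l-n-1}{2}$; since $l=n+2$, that center corresponds to $s=\tfrac12$ in the $\Phi_\kappa$-normalization used in Lemma~\ref{lem:whitshim} and Proposition~\ref{prop:whitt0}. So I must carefully track the shift $s\mapsto s+\tfrac12$ and make sure the final exponent of $\pi$ and power of $i$ (coming from $c_n(\alpha,\beta)$, the factor $C_{n,\infty}=2^{n(n-1)/4}$ in Lemma~\ref{lem:whitshim}, and $B_{n,\infty}$) combine into exactly the asserted constant; this is routine but error-prone, and cross-checking via the $T>0$ closed form — where both sides are explicitly $2e^{-2\pi\tr T}$ — is the safeguard I would use to catch sign and power-of-two mistakes.
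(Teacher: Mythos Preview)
Your argument for the first displayed identity is correct and is exactly what the paper does: it just says this is a direct consequence of the current equation \eqref{eq:currentn} applied to the constant function $1$.

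For the second identity your proposal has two issues. First, Proposition~\ref{prop:LocalSiegel-WeilR} cannot be invoked here: that result (and the underlying Theorem~\ref{theo:LocalSiegel-Weil}) is proved under the standing hypothesis $\dim V=n+1$ of Sections~\ref{sect:localSW}--\ref{sect:3}, whereas in the present proposition $n=m$ and $\dim V=m+2=n+2$. In particular the orbital integral picture behind Proposition~\ref{prop:LocalSiegel-WeilR} requires the pointwise stabilizer $H_x$ to be trivial, which uses $\dim V=n+1$; so this route does not apply. Second, there is an arithmetic slip in your parameter check: with $n=m$ one has $\rho_n=\frac{n+1}{2}=\frac{m+1}{2}$, while $\kappa=\frac{m+2}{2}$, so $\kappa=\rho_n+\tfrac12$, \emph{not} $\kappa=\rho_n$. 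At $s=\tfrac12$ this gives $\alpha=\kappa$ and $\beta=0$ (not $\beta=\tfrac12$), which is precisely the specialization of Shimura's $\omega$-function used in \cite[(4.35.K)]{Sh}.

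The paper's approach to the second identity is your ``more cleanly'' alternative, carried out directly: it omits the details, but the idea is to compute $W_T(1,\tfrac12,\Phi_\kappa)$ explicitly from Lemma~\ref{lem:whitshim} and \cite[(4.6.K),\,(4.35.K)]{Sh} (with $\alpha=\kappa$, $\beta=0$) and match the resulting expression against $2e^{-2\pi\tr T}$. Once you correct $\rho_n$ and drop the appeal to Proposition~\ref{prop:LocalSiegel-WeilR}, that computation goes through and yields the stated constant.
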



\begin{proof}
The first statement is a direct consequence of the current equation \eqref{eq:currentn} applied to the constant function with value $1$.
The second statement can be deduced from the first one by means of the formulas of \cite{Sh}. Since we do not need it here, we omit the proof.
\end{proof}

Throughout the rest of this subsection we assume that $n=m+1$.
Then $\xi^{n}_0(x,z)$ is a top degree current, which can be evaluated at the constant function $1$.
For $x\in V^n$ we define the archimedian local height function by
\begin{align}
\label{eq:deflh}
\Ht_\infty(x)=\frac{1}2 \int_{\calD} \xi_0^n(x,z).
\end{align}
In this section we prove the archimedian arithmetic local Siegel Weil formula Theorem~\ref{thm:intro-alsw}, relating $\Ht_\infty(x)$
to the {\em derivative} of a Whittaker function in genus $n$.
It can be viewed as an arithmetic analogue of Proposition \ref{prop:geolsw}.
We restate the Theorem for convenience.

\begin{theorem}
\label{thm:alsw}
Let $x\in V^n(\R)$ such that the moment matrix $T=Q(x)$
is invertible.
Then we have
\begin{align}
\label{eq:i-lsw}
\Ht_\infty(xv^{1/2}) \cdot q^T
=-B_{n,\infty}\det(v)^{-\kappa/2}\cdot
W_{T}'(g_\tau,0,\Phi_{\kappa}),
\end{align}
where $B_{n,\infty}$ is the constant in Proposition \ref{prop:LocalSiegel-WeilR},
and $\Phi_\kappa\in I_n(s,\chi_V)$ is the weight $\kappa$ standard section, that is, the unique standard section whose restriction to $K_G$ is the character $\det(\uk)^{\kappa}$. The derivative of the Whittaker function is taken with respect to $s$.
\end{theorem}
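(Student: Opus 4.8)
The plan is to prove Theorem~\ref{thm:alsw} by induction on $n=m+1$, using the recursive structure of the star product \eqref{eq:wr} on the geometric side and the asymptotic/recursive behavior of the archimedian Whittaker function (Corollary~\ref{cor:wderivasy}) on the analytic side. First I would reduce to the case $v=I_n$ (equivalently $\tau=iI_n$ absorbed into $g_\tau$): by the equivariance $\xi^n_0(gx,gz)=\xi^n_0(x,z)$ for $g\in H(\R)$ the left side depends only on $T'=Q(xv^{1/2})=v^{1/2}Tv^{1/2}$, while on the right side Lemma~\ref{lem:etatrafo} relates $W_T(m(a),s,\Phi_\kappa)$ for different $a$; so it suffices to establish $\Ht_\infty(x)=-B_{n,\infty}W'_T(1,0,\Phi_\kappa)$ for all invertible $T=Q(x)$, and then the general statement follows by the transformation formulas (taking care of the $\det(v)^{-\kappa/2}q^T$ normalization, which is exactly the automorphy factor that makes both sides transform the same way).

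The base case $n=1$ ($m=0$) is essentially \cite{KRY-tiny}, but I would also recover it directly: there $\calD$ is a point, $\Ht_\infty(x)=\frac12\xi_0(x)=-\frac12\Ei(-2\pi R)\,e^{\pi(x,x)}$-type expression, and the genus-one Whittaker function is written via the Kummer function $U$ as in the Remark following Theorem~\ref{thm:etalimnew}; differentiating at $s=0$ and using the known Laurent expansion of $U$ around the relevant parameter gives the constant $-B_{1,\infty}$. For the inductive step, I would use the recursion \eqref{eq:wr} with $x=(x_1,x_2,\dots,x_n)$: integrating over $\calD$ gives
\[
\Ht_\infty(x)=\frac12\int_\calD \xi(x_1,z)\wedge\varphi^{n-1}_{KM,0}((x_2,\dots,x_n),z)+\frac12\int_{\calD_{x_1}}\xi^{n-1}_0((x_2,\dots,x_n),z).
\]
The second term, when $Q(x_1)>0$ so that $\calD_{x_1}\ne\emptyset$, is (up to the obvious identification of $\calD_{x_1}$ with the hermitian domain for $\SO(x_1^\perp)$) precisely $\Ht_\infty$ in genus $n-1$ for the tuple $(x_2,\dots,x_n)$ projected to $x_1^\perp$, whose moment matrix is $\tilde T_2$ in the notation of Corollary~\ref{cor:wderivasy}; by induction this is $-B_{n-1,\infty}W'_{\tilde T_2}(1,0,\Phi_{\kappa-1/2})$. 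The first term is a "main term" that I would evaluate using the archimedian local Siegel--Weil formula on $\calD$, Proposition~\ref{prop:LocalSiegel-WeilR}: the integrand $\xi(x_1,z)\wedge\varphi^{n-1}_{KM,0}(\cdot,z)$ is an $H(\R)$-invariant Schwartz-type function (after inserting the Gaussian/Weil-representation action), so the integral equals $B_{n,\infty}$ times a genus-$n$ Whittaker integral for a section built from $\xi(x_1,\cdot)$, which one computes in the induced representation.

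The main obstacle I anticipate is matching these two contributions against the right-hand side $-B_{n,\infty}W'_T(1,0,\Phi_\kappa)$. The strategy is to apply the Leibniz rule to $W_T(m(d(a)),s,\Phi_\kappa)$ — more precisely to use Corollary~\ref{cor:wderivasy}, which says that the $a\to\infty$ asymptotics of $W'_T(m(d(a)),0,\Phi_\kappa)$ is governed by $W'_{\tilde T_2}(1,0,\Phi_{\kappa-1/2})$ with explicit constant $\frac{i^{-n}(2\pi)^{\rho_n}}{\Gamma(\rho_n)}$, together with the quotient formula \eqref{eq:Bquotient} for $B_{n,\infty}/B_{n-1,\infty}$. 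The delicate point is that the star-product recursion and the Whittaker recursion are "orthogonal": the star product singles out the component $x_1$ geometrically, whereas the Whittaker asymptotics singles out the $(1,1)$-entry $T_1$ of $T$; reconciling them requires either choosing coordinates in which $x_1$ corresponds to the first basis vector (so $Q(x_1)=T_1$, legitimate because the left side only depends on $T$ up to $\GL_n(\R)$-equivalence by the equivariance), or invoking a limiting argument in which one scales $x_1$ and tracks that the boundary term $\int_{\calD_{x_1}}\xi^{n-1}_0$ is exactly the obstruction to the naive identity. I expect the cleanest route is: prove the identity first for $T$ whose $(1,1)$-block is very positive (so the asymptotic formula applies on the nose), obtain there the decomposition "main term $+$ boundary term" matching Corollary~\ref{cor:wderivasy} term by term via \eqref{eq:Bquotient}, then deduce the general invertible case from the real-analyticity of both sides in $T$ (both sides being real-analytic on $\Sym_n^{\reg}(\R)$, by Shimura's analysis of $\eta$ for the Whittaker side and by the smoothness of the star product in its parameters on the geometric side). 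A secondary technical point is justifying that $\xi^n(x,z)$ is integrable and that the recursion \eqref{eq:wr} holds as an identity of currents on functions of moderate growth when $\calD_x$ is non-compact — this is handled by the growth estimates of \cite[Section~6]{KM2} already cited in the excerpt, so I would invoke those rather than reprove them.
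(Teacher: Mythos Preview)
Your overall architecture matches the paper's: reduce to $v=I_n$, induct on $n$ via the star-product recursion \eqref{eq:wr}, identify the boundary term with a genus-$(n-1)$ local height via Lemma~\ref{lem:term2}, and match it to the genus-$(n-1)$ Whittaker derivative using Corollary~\ref{cor:wderivasy} and the ratio \eqref{eq:Bquotient}. That part is correct.

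The gap is in your treatment of the main term $\Ht_\infty^{\main}(x)=\frac12\int_\calD \xi_0(x_1,z)\,\varphi^{n-1}_{KM,0}((x_2,\dots,x_n),z)$. You propose to feed this directly into Proposition~\ref{prop:LocalSiegel-WeilR}, but $\xi_0(x_1,z)$ is \emph{not} Schwartz in $x_1$ (it blows up logarithmically along $\calD_{x_1}$), so the local Siegel--Weil formula does not apply to it as stated. The paper instead writes $\xi_0(x_1,z)=\int_1^\infty \varphi_{G,0}(\sqrt t\,x_1,z)\,\frac{dt}{t}$ (Lemma~\ref{lem:xigauss}), applies Proposition~\ref{prop:LocalSiegel-WeilR} to the genuine Schwartz function $\psi_{11}^*=\varphi_G\otimes\varphi_{KM}^{n-1,*}$, and obtains $\Ht_\infty^{\main}(x)$ as an integral over $a\in[1,\infty)$ of $W_{d(a)Td(a)}(1,0,\lambda(\psi_{11}^*))$ (Proposition~\ref{prop:3.8}). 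The crucial step you are missing is then a Lie-algebra/$K$-type computation in the induced representation (Proposition~\ref{prop:rd} and Corollaries~\ref{cor:e11}, \ref{cor:key}) showing that $\lambda(\psi_{11}^*)$ is, up to the factor $\frac12(s+\rho_n-\kappa)$, the image of $\Phi_\kappa$ under $r(p_-(e_{11}))$; since $\kappa=\rho_n$, at $s=0$ this produces $W_T'$, and moreover the resulting expression in $a$ is an exact $a$-derivative. Integrating from $1$ to $\infty$ then yields precisely $-B_{n,\infty}W_T'(1,0,\Phi_\kappa)$ plus a boundary term at $a=\infty$ computed by Corollary~\ref{cor:wderivasy}, which is exactly what cancels the inductive term. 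Your alternative route---prove the identity for $T_1$ large and extend by real-analyticity---does not work as written: Corollary~\ref{cor:wderivasy} is a \emph{limit} in the group variable $a$, not an identity for fixed large $T_1$, and for any fixed $T$ you still need to evaluate $\Ht_\infty^{\main}(x)$ exactly, which brings you back to the missing Lie-algebra step.
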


Let $x\in V^n$ and assume that $T=Q(x)$ is invertible.
To prove Theorem \ref{thm:alsw} we employ the recursive formula \eqref{eq:wr} for the star product. It implies that
\begin{align}
\label{eq:lh2}
\Ht_\infty(x)=\Ht_\infty^\main(x)
+\frac{1}{2}\int_{\calD_{x_1}} \xi^{n-1}_0((x_2,\dots,x_n),z),
\end{align}
where we write
\begin{align}
\label{eq:lh1}
\Ht_\infty^\main(x)= \frac{1}{2}\int_{\calD} \xi_0(x_1,z)\wedge \varphi_{KM,0}^{n-1}((x_2,\dots,x_n),z)
\end{align}
for the main term
of the local height function.
The second summand on the right hand side of \eqref{eq:lh2}
vanishes when $Q(x_1)\leq 0$, in which case $\calD_{x_1
}$ is empty.  When $Q(x_1)>0$, this quantity
is a local height function in genus $n-1$ for the quadratic space $V_1=x_1^
\perp\subset V$ of signature $(m-1,2)$.
The divisor
$\calD_{x_1}$ is naturally isomorphic to the Grassmannian of $V_1$.
%
Let
\[
\pr:V\to V_1, \quad \pr(y)=y-\frac{(y,x_1)}{(x_1,x_1)}x_1
\]
be the orthogonal projection
and put
$\tilde x = (\pr(x_2),\dots,\pr(x_n))\in V_1^{n-1}$.
If we write $T$ in block form as in \eqref{eq:varblock}
then the
moment matrix of $\tilde x$ is
\begin{align}
\label{eq:tt}
\tilde T_2=
Q(\tilde x) = T_2-{}^tT_{12} T_1^{-1} T_{12}.
\end{align}

\begin{lemma}
\label{lem:term2}
Assume the above notation.
If $z\in \calD_{x_1}$, we have
\[
\xi_0^{n-1}((x_2,\dots,x_n),z)= \xi^{n-1}_0(
\tilde x,z)
\]
and
\begin{align*}
\frac{1}{2}\int_{\calD_{x_1}} \xi^{n-1}_0((x_2,\dots,x_n),z)= \Ht_\infty(\tilde x).
\end{align*}
Here the height function on the right hand side is for the tuple
$\tilde x\in V_1^{n-1}$.
\end{lemma}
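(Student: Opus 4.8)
The plan is to show that the natural embedding $\iota\colon \calD_{x_1}=\Gr(V_1)\hookrightarrow\calD$ pulls the current $\xi^{n-1}_0((x_2,\dots,x_n),z)$ on $\calD$ back to the current $\xi^{n-1}_0(\tilde x,z)$ on $\Gr(V_1)$, and then to recognize the resulting integral over $\calD_{x_1}$ as the height $\Ht_\infty(\tilde x)$ for $V_1$. The geometric input I would establish first is purely linear algebra: if $z\in\calD_{x_1}$, then the negative $2$-plane $z$ lies in $V_1=x_1^\perp$, so $x_1\perp z$; writing $y=\pr(y)+\lambda x_1$ with $\lambda=(y,x_1)/(x_1,x_1)$ and projecting onto $z$, the $x_1$-term drops out, whence $y_z=\pr(y)_z$ and $R(y,z)=-(y_z,y_z)$ equals the quantity $R(\pr(y),z)$ computed \emph{inside} $V_1$ at the point $z\in\Gr(V_1)$.

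Since $\xi_0(y,z)=\xi(y,z)e^{\pi(y,y)}=-\Ei(-2\pi R(y,z))$ depends on $(y,z)$ only through $R(y,z)$, the identity $R(x_i,z)=R(\pr(x_i),z)$ gives $\iota^*\xi_0(x_i,z)=\xi_0(\pr(x_i),z)$ as functions on $\calD_{x_1}$ (with log singularity along $\calD_{x_1}\cap\calD_{x_i}$), for $i=2,\dots,n$. Because $x$ is fixed, $\varphi_{KM,0}(x,z)=dd^c\xi_0(x,z)$ as smooth forms on $\calD\setminus\calD_x$, and $dd^c$ commutes with pullback of smooth forms; applying it on $\calD_{x_1}$ away from the singular locus and extending smoothly across yields $\iota^*\varphi_{KM,0}(x_i,z)=\varphi_{KM,0}(\pr(x_i),z)$ as smooth forms on $\Gr(V_1)$ (this is the compatibility of Kudla--Millson forms with the sub-Grassmannian embedding, cf.~\cite{KM2}). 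Finally, $\det T\neq 0$ forces $x_1,\dots,x_n$ to be linearly independent, so for every $S\subset\{2,\dots,n\}$ the intersection $\bigcap_{i\in S}\calD_{x_i}$ meets $\calD_{x_1}$ properly, with $\calD_{x_1}\cap\bigcap_{i\in S}\calD_{x_i}=\bigcap_{i\in S}\calD^{V_1}_{\pr(x_i)}$; hence the Dirac currents restrict as $\iota^*\delta_{\calD_{x_i}}=\delta_{\calD^{V_1}_{\pr(x_i)}}$.

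With these three facts in hand I would assemble the star product. By the Gillet--Soul\'e construction, $\xi^{n-1}_0((x_2,\dots,x_n),z)=\xi_0(x_2,z)*\cdots*\xi_0(x_n,z)$ is a universal local expression built as a sum of wedges of the currents $\xi_0(x_i,z)$, the forms $\varphi_{KM,0}(x_i,z)$, and the Dirac currents $\delta_{\calD_{x_i}}$, and this construction is compatible with restriction to submanifolds transverse to the cycles involved. Restricting term by term along $\iota$ and using the displayed identities turns this expression into the analogous star product $\xi_0(\pr(x_2),z)*\cdots*\xi_0(\pr(x_n),z)=\xi^{n-1}_0(\tilde x,z)$ formed inside $V_1$, which proves the first assertion. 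Alternatively one can induct on $n$ using the recursion \eqref{eq:wr}: its first term $\xi_0(x_2,z)\wedge\varphi^{n-2}_{KM,0}(\cdots)$ and second term $\delta_{\calD_{x_2}}\wedge\xi^{n-2}_0(\cdots)$ restrict, by the facts above and the inductive hypothesis, precisely to the two terms of the corresponding recursion for $V_1$.

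For the second assertion: by Lemma~\ref{lem:block} one has $\det T=T_1\det\tilde T_2$ with $T_1=Q(x_1)\neq 0$ and $\det T\neq 0$, so $\tilde T_2=Q(\tilde x)$ is invertible and $\Ht_\infty(\tilde x)$ is defined by \eqref{eq:deflh} applied to the space $V_1$ of signature $(m-1,2)$, for which $\Gr(V_1)=\calD_{x_1}$. Since pairing $\xi^{n-1}_0((x_2,\dots,x_n),z)$ against $\delta_{\calD_{x_1}}$ is by definition the integral of its restriction, we get
\[
\tfrac12\int_{\calD_{x_1}}\xi^{n-1}_0((x_2,\dots,x_n),z)=\tfrac12\int_{\Gr(V_1)}\iota^*\xi^{n-1}_0((x_2,\dots,x_n),z)=\tfrac12\int_{\Gr(V_1)}\xi^{n-1}_0(\tilde x,z)=\Ht_\infty(\tilde x).
\]
The main obstacle I anticipate is making the restriction of the star-product current to $\calD_{x_1}$ fully rigorous: one must invoke the locality of the Gillet--Soul\'e star product together with the properness (transversality) of all the intersections $\calD_{x_1}\cap\bigcap_{i\in S}\calD_{x_i}$, which is exactly where $\det T\neq 0$ is used. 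Everything else — the identity $R(y,z)=R(\pr(y),z)$ and the bookkeeping of Gaussian factors — is elementary.
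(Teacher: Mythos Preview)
Your proposal is correct and rests on the same key observation as the paper: for $z\in\calD_{x_1}$ one has $R(y,z)=R(\pr(y),z)$, and since $\xi_0(y,z)=-\Ei(-2\pi R(y,z))$ depends only on $R$, the restriction identity follows. The paper's proof is literally one sentence stating this and invoking the definition of $\xi_0$, whereas you supply the details the paper suppresses---compatibility of $\varphi_{KM,0}$ and the Dirac currents under $\iota^*$, and the transversality needed to restrict the star product---so the approaches are the same in substance, with yours being more explicit about the current-theoretic bookkeeping.
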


\begin{proof}
If $z\in \calD_{x_1}$ and $y\in V$, then $R(y,z)=R(\pr(y),z)$. Hence the assertion is a direct consequence of the definition of $\xi_0(y,z)$.
\end{proof}

The following result gives a formula for the main term of the local height function.

\begin{theorem}
\label{thm:pre-alsw}
Assume that $n=m+1$.
Let $x\in V^n$ such that $T=Q(x)$
is invertible, and put $\tilde T_2=Q(\tilde x)$ as in \eqref{eq:tt}.
Then we have
\[
e^{-2\pi \tr T} \cdot \Ht_\infty^\main(x)=-B_{n,\infty} \cdot W_T'(1,0,\Phi_\kappa)
+B_{n-1,\infty} e^{-2\pi (\tr T-\tr\tilde T_2)} \cdot W'_{\tilde T_2}(1,0,\Phi_{\kappa-1/2}).
\]
If $n=1$, the second summand on the right hand side is interpreted as $0$.
\end{theorem}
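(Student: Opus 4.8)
The plan is to compute the main term $\Ht_\infty^\main(x)$ by applying the archimedean local Siegel--Weil formula of Proposition~\ref{prop:LocalSiegel-WeilR} to a suitable Schwartz function, and then to identify the resulting Whittaker function via the asymptotic analysis of Section~\ref{sect:4}. First I would observe that the Schwartz form $z\mapsto \xi_0(x_1,z)\wedge \varphi_{KM,0}^{n-1}((x_2,\dots,x_n),z)$ appearing in \eqref{eq:lh1} is a top-degree form on $\calD$, and that the integrand is $H(\R)$-equivariant in the sense required by Proposition~\ref{prop:LocalSiegel-WeilR}: writing $\varphi_{KM,0}^{n-1}(\cdot,z)\wedge\Omega$ or rather absorbing the $\Omega^m$-factor appropriately, one recognizes the integral as $B_{n,\infty}\cdot W_T(1,0,\lambda(\phi_\infty))$ for the Schwartz function $\phi_\infty(x,z)=\xi_0(x_1,z)\wedge \varphi^{n-1}_{KM,0}((x_2,\dots,x_n),z)$ divided by $\Omega^m$ (which makes sense since both are top-degree forms). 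The key point is to compute $\lambda(\phi_\infty)$ in the induced representation; because $\xi_0(x_1,z)$ arises from $\varphi_{KM,0}(x_1,z)$ via the exponential integral, which on the spectral side corresponds to integrating the weight-$\kappa$ section, one expects $\lambda(\phi_\infty)$ to be related to the $s$-derivative of $\Phi_\kappa$ at $s=0$, producing the term $-B_{n,\infty}W_T'(1,0,\Phi_\kappa)$.

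Next I would make this precise using the relation between the Kudla Green function and the standard section. The function $\xi(x,z)=-\Ei(-2\pi R(x,z))e^{-\pi(x,x)}$ satisfies $dd^c\xi(x,z)=\varphi_{KM}(x,z)$, and under the Weil representation action the generating function $\sum \omega(g_\tau)\xi(x,z)$ gives, up to normalization, the derivative in $s$ at the special point of the Whittaker integral attached to $\Phi_\kappa$ — this is the mechanism by which logarithmic Green functions produce derivatives of Eisenstein series. Concretely, one writes $\xi_0(x_1v^{1/2},z)$ as an integral $\int_1^\infty \varphi_{KM,0}(x_1\sqrt{rv},z)\frac{dr}{r}$ (the standard integral representation of $-\Ei$), interchanges this with the $\calD$-integral, applies Proposition~\ref{prop:LocalSiegel-WeilR} in genus $n-1$ to the slice at each $r$, and recognizes the $r$-integral of the resulting Whittaker functions as producing a derivative. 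The boundary behavior as $r\to\infty$ is exactly where the asymptotic formula of Corollary~\ref{cor:wderivasy} enters: the limit $\lim_{a\to\infty}e^{2\pi(T_1a^2+\dots)}a^{-\rho_n}W_T'(m(d(a)),0,\Phi_\kappa)$ has a main part and a "boundary" part in genus $n-1$, and these two pieces are precisely the two terms on the right-hand side of the theorem.

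More carefully, I would organize the genus-$n$ Whittaker derivative via the doubling/recursion coming from writing $T$ in block form $T=\kzxz{T_1}{T_{12}}{{}^tT_{12}}{T_2}$ and using Lemma~\ref{lem:etatrafo} and Corollary~\ref{cor:wderivasy} to reduce $W_T'(1,0,\Phi_\kappa)$ to a genus-$(n-1)$ object plus an "integrated" remainder. On the geometric side, the main term $\Ht_\infty^\main(x)$ by its definition \eqref{eq:lh1} is itself a single integral over $\calD$ (not a star product), and the Kudla--Millson form $\varphi^{n-1}_{KM,0}$ contributes a factor that, after applying the genus-$(n-1)$ local Siegel--Weil formula along the divisor $\calD_{x_1}$ (using Lemma~\ref{lem:term2} to identify $\tilde x$), yields exactly $W_{\tilde T_2}'(1,0,\Phi_{\kappa-1/2})$ weighted by $B_{n-1,\infty}$ and the exponential factor $e^{-2\pi(\tr T-\tr\tilde T_2)}$. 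Matching the constants $B_{n,\infty}/B_{n-1,\infty}=i^n\Gamma(\rho_n)/(2\pi)^{\rho_n}$ from \eqref{eq:Bquotient} against the constant $i^{-n}(2\pi)^{\rho_n}/\Gamma(\rho_n)$ appearing in Corollary~\ref{cor:wderivasy} is the bookkeeping that makes the two sides agree; note these are reciprocals, which is the expected consistency check.

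The main obstacle I anticipate is the precise spectral identification: showing that $\lambda(\xi_0(x_1,\cdot)\wedge\varphi^{n-1}_{KM,0})$, viewed through Proposition~\ref{prop:LocalSiegel-WeilR}, is genuinely $-W_T'(1,0,\Phi_\kappa)$ up to the stated constant, rather than some other section. This requires understanding how the operator "$-\Ei(-2\pi R(x,z))$" acts relative to the $s$-variable of the induced representation — essentially that differentiating $\Phi_\kappa$ at $s=0$ and then forming the Whittaker integral matches integrating the Kudla--Millson form against the Green current. I would handle this by the integral representation $\xi(x,z)=\int_1^\infty\varphi_{KM}(x\sqrt r,z)\,\frac{dr}{r}$ combined with the scaling relation $\omega(m(\sqrt r))$ on Schwartz functions, which converts the $r$-integral into the $s$-derivative at $s=0$ of the standard section via the Mellin transform; the convergence at $r=\infty$ is controlled by Corollary~\ref{cor:wderivasy} (this is precisely why that asymptotic result, rather than just its value, is needed), and at $r=1$ there is no issue. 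The case $n=1$ is the base case where $\varphi^0_{KM,0}=1$, $\calD_{x_1}$ is a point, and the formula reduces to the known genus-one statement with the second term absent, providing both a sanity check and the start of the induction that Theorem~\ref{thm:alsw} will run on.
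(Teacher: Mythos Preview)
Your overall architecture is right and matches the paper: rewrite $\xi_0(x_1,z)$ via an integral representation, apply the archimedean local Siegel--Weil formula (Proposition~\ref{prop:LocalSiegel-WeilR}) to get a Whittaker integral in the parameter $a$, evaluate the boundary at $a\to\infty$ using Corollary~\ref{cor:wderivasy}, and match constants via \eqref{eq:Bquotient}. However, there is a genuine gap at the key step, and one factual error.

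The error: the integral representation of $\xi_0$ uses the \emph{Gaussian}, not the Kudla--Millson form. Lemma~\ref{lem:xigauss} gives $\xi_0(x_1,z)=\int_1^\infty \varphi_{G,0}(\sqrt{t}\,x_1,z)\,\frac{dt}{t}$; your formula with $\varphi_{KM,0}$ is wrong. Consequently, after applying Proposition~\ref{prop:LocalSiegel-WeilR} the Schwartz function that appears is $\psi_{11}^*(x,z)=\varphi_G(x_1,z)\cdot\varphi_{KM}^{n-1,*}((x_2,\dots,x_n),z)$, and one arrives at Proposition~\ref{prop:3.8}:
\[
\Ht_\infty^\main(x)=B_{n,\infty}\,e^{2\pi\tr T_2}\int_1^\infty W_{d(a)Td(a)}(1,0,\lambda(\psi_{11}^*))\,e^{2\pi Q(x_1 a)}\,\frac{da}{a}.
\]

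The gap: you now need to turn the value $W_T(m(d(a)),0,\lambda(\psi_{11}^*))$ into the derivative $W_T'(m(d(a)),0,\Phi_\kappa)$. Your proposed mechanism --- that the $r$-integral ``is a Mellin transform that produces the $s$-derivative'' via the scaling $\omega(m(\sqrt r))$ --- does not work here, because only $x_1$ is being scaled; the relevant group element is $m(d(\sqrt t))$, not $m(\sqrt t\,I_n)$, so the usual relation between Mellin in $t$ and the $s$-variable of the induced representation does not apply. The paper closes this gap with a Lie algebra computation: it shows (Proposition~\ref{prop:rd} and Corollary~\ref{cor:e11}) that $r(p_-(e_{11}))\Phi_\kappa=\tfrac{1}{2}(s+\rho_n-\kappa)\,\lambda(\psi_{11}^*)$, and computes $r(p_-(e_{11}))$ explicitly on the Whittaker function (Proposition~\ref{prop:e11}) to obtain the first-order differential identity of Corollary~\ref{cor:key},
\[
W_T(m(d(a)),0,\lambda(\psi_{11}^*))=2\Big(2\pi T_1 a^2-\tfrac{\kappa}{2}+\tfrac{a}{2}\partial_a\Big)W_T'(m(d(a)),0,\Phi_\kappa).
\]
With this in hand the integrand becomes an exact $a$-derivative, so the integral collapses to the boundary terms at $a=1$ and $a=\infty$, which are precisely the two summands in the theorem. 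Without this Lie-algebra step (or an equivalent identification of $\lambda(\psi_{11}^*)$ inside $I(s,\chi_V)$), the argument does not go through.
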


The proof of Theorem~\ref{thm:pre-alsw} will be given in the next three subsections.

\begin{proof}[Proof that Theorem~\ref{thm:pre-alsw} implies
Theorem \ref{thm:alsw}]
Recall that for $\tau\in \H_n$ we have put $g_\tau=\kzxz{1}{u}{}{1}
\kzxz{a}{}{}{{}^ta^{-1}}\in \Sp_n(\R)$ with $a\in \GL_n^+(\R)$ and $a\,{}^ta=v$.
Using the transformation behavior \eqref{eq:i1} of the Whittaker function, we find that
\begin{align}
\label{eq:i11}
\det(v)^{-\kappa/2}W_{T}(g_\tau,s,\Phi_\kappa)&=e^{2\pi i\tr(T u)}
|a|^{-s} \cdot W_{{}^{t}aTa}(1,s,\Phi_\kappa).
\end{align}
Since the signature of $V$ is $(m,2)$ and $n=m+1$, the matrix $T$ must have negative eigenvalues. By Proposition \ref{prop:whitt0}, the Whittaker function $W_{T}(g,s,\Phi_\kappa)$ vanishes at $s=0$. Employing \eqref{eq:i11}, we see that
\eqref{eq:i-lsw} is equivalent to
\begin{align*}
\Ht_\infty(xa) \cdot e^{-2\pi \tr {}^{t}aTa}
=-B_{n,\infty}
\cdot W'_{{}^{t}aTa}(1,0,\Phi_\kappa).
\end{align*}
Consequently, it suffices to prove \eqref{eq:i-lsw} for $\tau=i 1_n$, that is
\begin{align}
\label{eq:ii-lsw}
\Ht_\infty(x) \cdot e^{-2\pi \tr T}
=-B_{n,\infty}
\cdot W'_{T}(1,0,\Phi_\kappa).
\end{align}

We show \eqref{eq:ii-lsw} by induction on $n$.
If $n=1$, then $\Ht_\infty^\main(x)=\Ht_\infty(x)$, and we have nothing to show. Assume now that $n>1$.
According to \eqref{eq:lh2} and Lemma \ref{lem:term2} we have
\begin{align*}
\Ht_\infty(x)=\Ht_\infty^\main(x) + \Ht_\infty(\tilde x).
\end{align*}
By Theorem \ref{thm:pre-alsw}, we obtain
\begin{align*}
e^{-2\pi \tr T}\cdot \Ht_\infty(x)&=-  B_{n,\infty}\cdot W_T'(1,0,\Phi_\kappa)+ B_{n-1,\infty} e^{-2\pi (\tr T-\tr\tilde T_2)} \cdot W'_{\tilde T_2}(1,0,\Phi_{\kappa-1/2})\\
&\phantom{=}{} + e^{-2\pi \tr T}\cdot \Ht_\infty(\tilde x).
\end{align*}
If we use Theorem \ref{thm:alsw} in genus $n-1$ to compute the last term on the right hand side, we get the assertion.
\end{proof}

\subsection{The main term of the local height}
In this subsection we assume again that $n=m+1$.
We give a first formula for the main term of the local height in terms of a certain Whittaker function.
We begin by rewriting the Green function
$\xi(x,z)$ defined in \eqref{eq:defxi} in terms of the Gaussian
\begin{align}
\varphi_G(x,z)&=e^{-\pi(x,x)_z}\in S(V),\\
\varphi_{G,0}(x,z)&= \varphi_G(x,z) \cdot e^{\pi(x,x)} = e^{-2\pi R(x,z)}.
\end{align}

\begin{lemma}
\label{lem:xigauss}
If $x\in V$ and $z\in \calD$, we have
\[
\xi_0(x,z) =
\int_{t=1}^\infty \varphi_{G,0}(\sqrt{t}x,z) \,\frac{dt}{t}.
\]
\end{lemma}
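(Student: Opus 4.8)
The plan is to reduce everything to a one-variable integral identity for the exponential integral. First I would recall the definitions: by \eqref{eq:defxi} we have $\xi_0(x,z) = \xi(x,z)e^{\pi(x,x)} = -\Ei(-2\pi R(x,z))$, while $\varphi_{G,0}(\sqrt t x,z) = e^{-2\pi R(\sqrt t x,z)} = e^{-2\pi t R(x,z)}$, using that $R(\cdot,z)$ is a quadratic form in its vector argument, so $R(\sqrt t x,z) = t\,R(x,z)$. Thus the claimed identity is equivalent to
\[
-\Ei(-2\pi R(x,z)) = \int_{t=1}^\infty e^{-2\pi t R(x,z)}\,\frac{dt}{t}.
\]

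Next I would verify this scalar identity. Write $r = 2\pi R(x,z)$; note $r \geq 0$ when $Q(x) > 0$ is the interesting range, though the computation works for $r$ in the appropriate half-line. In the integral on the right substitute $s = -tr$ (equivalently $s = tr$ up to sign, matching the convention $\Ei(u) = \int_{-\infty}^u e^{\tau}\frac{d\tau}{\tau}$), so that $\frac{dt}{t} = \frac{ds}{s}$ and the lower limit $t=1$ becomes $s = -r$, the upper limit $t=\infty$ becomes $s \to -\infty$. This gives
\[
\int_{t=1}^\infty e^{-tr}\,\frac{dt}{t} = \int_{s=-r}^{-\infty} e^{s}\,\frac{ds}{s} = -\int_{-\infty}^{-r} e^{s}\,\frac{ds}{s} = -\Ei(-r),
\]
which is exactly the definition of $-\Ei(-r)$ recorded after \eqref{eq:defxi} (citing \cite[Chapter 5]{AS}). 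Substituting back $r = 2\pi R(x,z)$ yields the asserted formula.

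The only genuine point requiring care is convergence and the boundary behavior: when $R(x,z) > 0$ the integral $\int_1^\infty e^{-2\pi t R(x,z)}\frac{dt}{t}$ converges absolutely and the manipulation is unproblematic; when $R(x,z) = 0$ (i.e. $z \in \calD_x$) both sides diverge logarithmically, consistently with the logarithmic singularity of $\xi$ along $\calD_x$ noted earlier, and when $R(x,z) < 0$ one is in the regime where $\Ei$ is interpreted via its principal value — but since $R(x,z) = -(x_z,x_z) \geq 0$ always (as $z^\perp$ carries the negative part), this last case does not actually arise in our setting and need not be addressed. So I expect no real obstacle here; the lemma is essentially the integral representation $\Ei(-u) = -\int_1^\infty e^{-ut}\frac{dt}{t}$ for $u>0$ dressed up in the geometric notation, and the proof is a two-line change of variables once the scaling property $R(\sqrt t x,z) = t R(x,z)$ is observed.
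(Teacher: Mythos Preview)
Your proposal is correct and takes essentially the same approach as the paper: both reduce to the integral representation $-\Ei(-u) = \int_1^\infty e^{-ut}\,\frac{dt}{t}$ after unwinding the definitions and using the homogeneity $R(\sqrt{t}x,z) = tR(x,z)$. One small slip: in your parenthetical you write ``$z^\perp$ carries the negative part'', but it is $z$ itself that is the negative definite plane (and $z^\perp$ is positive definite), which is why $(x_z,x_z)\le 0$ and hence $R(x,z)\ge 0$; the conclusion you draw is nonetheless correct.
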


\begin{proof}
The statement follows from the  integral representation
\[
-\Ei(-z) = \int_1^\infty \frac{e^{-zt}}{t}\,dt
\]
by inserting the definitions of $\xi_0(x,z)$ and $R(x,z)$.
\end{proof}

By our assumption on $m$, the Schwartz form  $\varphi_{KM}^{n-1}$ is a top degree differential form on $\calD$. We write it as
\[
\varphi_{KM}^{n-1}(y,z)=\varphi_{KM}^{n-1,*}(y,z)\cdot \Omega^{n-1}.
\]
For $x\in V^n$, we define a Schwartz function\footnote{Later we will also define a $\Sym_n(\R)$-valued Schwartz function $\psi$ whose $(1,1)$-component will by $\psi_{11}$.}
by
\begin{align}
\label{eq:psi11}
\psi_{11}^*(x,z)=\varphi_G(x_1,z)\cdot \varphi_{KM}^{n-1,*}((x_2,\dots,x_n),z).
\end{align}

\begin{proposition}
\label{prop:3.8}
Let $x\in V^n$, put $T=Q(x)$, and write $T=\kzxz{T_1}{T_{12}}{{}^tT_{12}}{T_2}$ as in \eqref{eq:varblock}. The main term
of the local
height function
is given by
\[
\Ht_\infty^\main(x) = B_{n,\infty} e^{2\pi \tr T_2}
\int_1^\infty W_{d(a)T d(a)}(1,0,\lambda(\psi_{11}^*))\cdot e^{2\pi Q(x_1 a)
}\,\frac{da}{a},
\]
where $B_{n,\infty}$ denotes the constant in Proposition \ref{prop:LocalSiegel-WeilR}
and $d(a)$ is given by \eqref{eq:da}.
\end{proposition}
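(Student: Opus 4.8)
The plan is to reduce the identity to the archimedian local Siegel--Weil formula on $\calD$ (Proposition~\ref{prop:LocalSiegel-WeilR}), after inserting the integral representation of the Kudla Green function from Lemma~\ref{lem:xigauss}. The point is that, up to elementary exponential factors, the integrand $\xi_0(x_1,z)\wedge\varphi_{KM,0}^{n-1}((x_2,\dots,x_n),z)$ is an average over a one--parameter family of the Schwartz function $\psi_{11}^*$ defined in \eqref{eq:psi11}, and each member of that family is an $H(\R)$--invariant pairing of exactly the type covered by Proposition~\ref{prop:LocalSiegel-WeilR}.

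Concretely I would proceed as follows. First, by Lemma~\ref{lem:xigauss} and the substitution $t=a^2$,
\[
\xi_0(x_1,z)=2\int_1^\infty \varphi_{G,0}(a x_1,z)\,\frac{da}{a},
\]
so the definition \eqref{eq:lh1} of the main term gives, after the factor $\tfrac12$ cancels,
\[
\Ht_\infty^\main(x)=\int_1^\infty\left(\int_\calD \varphi_{G,0}(a x_1,z)\wedge\varphi_{KM,0}^{n-1}((x_2,\dots,x_n),z)\right)\frac{da}{a},
\]
\emph{provided} the order of integration may be interchanged. Next I would strip off the exponential factors: since $(ax_1,ax_1)=2Q(x_1a)$ and the Gram matrix of $(x_2,\dots,x_n)$ equals $2T_2$, one has $\varphi_{G,0}(ax_1,z)=e^{2\pi Q(x_1a)}\varphi_G(ax_1,z)$ and $\varphi_{KM,0}^{n-1}((x_2,\dots,x_n),z)=e^{2\pi\tr T_2}\,\varphi_{KM}^{n-1,*}((x_2,\dots,x_n),z)\,\Omega^{n-1}$. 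Recalling \eqref{eq:psi11} and that $(ax_1,x_2,\dots,x_n)=x\,d(a)$ with $d(a)$ as in \eqref{eq:da}, the inner integral becomes
\[
e^{2\pi Q(x_1a)+2\pi\tr T_2}\int_\calD \psi_{11}^*(x\,d(a),z)\,\Omega^{n-1}.
\]
Now $\psi_{11}^*$ satisfies the hypothesis of Proposition~\ref{prop:LocalSiegel-WeilR}: the invariance $\psi_{11}^*(hx,hz)=\psi_{11}^*(x,z)$ for $h\in H(\R)$ follows from the equivariance of $\varphi_G$ and of $\varphi_{KM}^{n-1}$ together with the $H(\R)$--invariance of $\Omega$, which forces the coefficient $\varphi_{KM}^{n-1,*}$ to be invariant; and since $\dim V=m+2=n+1$, the section $\lambda(\psi_{11}^*)$ lies in $I_n(0,\chi_V)$ as required. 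Because $Q(x\,d(a))={}^td(a)\,T\,d(a)=d(a)\,T\,d(a)$, Proposition~\ref{prop:LocalSiegel-WeilR} gives
\[
\int_\calD \psi_{11}^*(x\,d(a),z)\,\Omega^{n-1}=B_{n,\infty}\cdot W_{d(a)Td(a)}(1,0,\lambda(\psi_{11}^*)),
\]
and inserting this into the $a$--integral produces the asserted formula.

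The step I expect to be the real obstacle is justifying the interchange of the $a$--integral with the integral over $\calD$. For $a\ge1$ the Gaussian $\varphi_{G,0}(ax_1,z)=e^{-2\pi a^2 R(x_1,z)}$ is nonnegative, bounded by $\varphi_{G,0}(x_1,z)$, and decays rapidly in $z$ away from $\calD_{x_1}$, while $\xi_0(x_1,z)$ has only a logarithmic — hence locally integrable — singularity along $\calD_{x_1}$. Combining this with the growth estimates for the Kudla--Millson form in \cite[Section~6]{KM2}, which in particular make $\varphi_{KM}^{n-1}((x_2,\dots,x_n),\cdot)$ absolutely integrable over $\calD$ (as is already implicit in Proposition~\ref{prop:geolsw} applied in genus $n-1$, valid since $n-1=m$), one checks that the iterated integral of the absolute value of the integrand is finite, so Fubini applies. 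Once this analytic point is settled, the remainder is just the bookkeeping of the exponential factors indicated above.
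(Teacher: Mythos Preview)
Your proof is correct and follows the same route as the paper: insert the integral representation of $\xi_0$ from Lemma~\ref{lem:xigauss}, substitute $t=a^2$ (so that the factor $\tfrac12$ disappears), strip off the exponential factors to recognize $\psi_{11}^*(x\,d(a),z)\,\Omega^{n-1}$, and then apply the local Siegel--Weil formula on $\calD$ (Proposition~\ref{prop:LocalSiegel-WeilR}) with moment matrix $d(a)Td(a)$. Your treatment is in fact slightly more careful than the paper's, which does not comment on the Fubini step or on checking the $H(\R)$--invariance of $\psi_{11}^*$.
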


\begin{proof}
Using Lemma \ref{lem:xigauss} and \eqref{eq:psi11}, we see
\begin{align*}
\Ht_\infty^\main(x)&= \frac{1}{2}\int_{\calD} \xi_0(x_1,z)\wedge \varphi_{KM,0}^{n-1}((x_2,\dots,x_n),z)\\
&= \frac{1}{2}\int_{t=1}^\infty
\int_{\calD} \varphi_{G,0}(\sqrt{t}x_1,z) \varphi_{KM,0}^{n-1}((x_2,\dots,x_n),z)\,\frac{dt}{t}\\
&=e^{2\pi \tr T_2}\cdot \int_{a=1}^\infty
\int_{\calD} \psi_{11}^*(x d(a),z)\, \Omega^{n-1}\cdot
e^{2\pi Q(x_1 a)}\frac{da}{a}.
\end{align*}
By the local Siegel Weil formula, Proposition \ref{prop:LocalSiegel-WeilR},
we have
\[
\int_{\calD} \psi_{11}^*(x ,z) \,\Omega^{n-1} = B_{n,\infty}\cdot
W_T(1,0,\lambda(\psi_{11}^*)).
\]
Inserting this, we obtain the assertion.
\end{proof}

\subsection{Some Lie algebra computations}

In this subsection, we temporarily drop the assumption that $n=m+1$. We compute the Whittaker function $W_T(1,s,\lambda(\psi_{11}^*))$
more explicitly. We begin by recalling from \cite[Section 5]{BFK} some facts about the Lie algebra of $G$.
Let
\[
\frakg = \frakk +\frakp_++\frakp_-
\]
be the Harish-Chandra decomposition of $\frakg=\Lie(G)\otimes_\R\C$.
Let $S=\Sym_n(\R)$. Then there are isomorphisms
\begin{align}
\label{eq:piso}
p_\pm: S_\C\longrightarrow \frakp_\pm, \quad X\mapsto  p_\pm(X) = \frac{1}{2}\zxz{X}{\pm iX}{\pm i X}{-X}.
\end{align}
The group $K_G$ acts on $\frakg$
by the adjoint representation, $\operatorname{Ad}(k)g= kgk^{-1}$,  and on
$S_\C$ by $\uk . X = \uk X {}^t\uk$ for $k\in K_G$. For the isomorphism
\eqref{eq:piso} we have
\begin{align}
\operatorname{Ad}(k)p_+(X)&=p_+(\uk . X),\\
\operatorname{Ad}(k)p_-(X)&=p_-(\bar \uk . X).
\end{align}
The trace pairing
\[
\langle p_+(X),p_-(Y)\rangle = \tr(XY)
\]
is invariant under the action of $K_G$, and therefore $\frakp_\pm^*\cong \frakp_\mp$ as $K_G$-modules.
Let $(e_\alpha)$ be a basis of $S$, and write $(e_\alpha^\vee)$ for the dual basis with respect to the trace form. Then $(p_-(e_\alpha^\vee))$ is a basis of $\frakp_-$, and we write $(\eta'_\alpha)$ for the dual basis of $\frakp_-^*$. We identify $\frakp_-^*$ with $S_\C$ by the map
\begin{align}
\label{eq:dualid}
\psi=\sum_\alpha \psi_\alpha \eta_\alpha'\mapsto \sum_\alpha \psi_\alpha e_\alpha.
\end{align}

Recall that the Lie algebra $\mathfrak{gl}_n(\C)\cong \Mat_n(\C)$ is isomorphic to $\frakk$ via the map
\begin{align}
\label{eq:kiso}
k: \Mat_n(\C)\longrightarrow \frakk, \quad Y\mapsto  k(Y) = 
\frac{1}{2}\zxz{Y-{}^t Y}{-i(Y+{}^tY)}{i(Y+{}^tY)}{Y-{}^t Y}.
\end{align}
Let  $E_{jk}\in \Mat_n(\C)$ be the elementary matrix having the entry $1$ at the position $(j,k)$ and all other entries $0$.
Then the matrices 
\begin{align}
\label{eq:yjk}
Y_{jk}= k(E_{jk})
=\frac{1}{2}\zxz{E_{jk}-E_{kj}}{-i(E_{jk}+E_{kj}}{i(E_{jk}+E_{kj})}{E_{jk}-E_{kj}},
\end{align}
for $1\leq j,k\leq n$, form a basis of $\frakk$.

We denote by  $\C(\ell)$ the $K_G$-module given by
the action of $K_G$ on $\C$ by multiplication with $\det(\uk)^\ell$.
Recall that the space of differential forms $A^{p,q}(\H_n)$ on $\H_n$ can be described by the isomorphism
\[
A^{p,q}(\H_n)\longrightarrow \left[C^{\infty}(G)\otimes \wedge^p (\frakp_+^*)\otimes \wedge^q (\frakp_-^*)\right]^{K_G}.
\]
Here, the operator corresponding to $\bar\partial$ on $A^{p,q}(\H_n)$ is given by
\begin{align}
\label{eq:D}
D=\sum_\alpha p_-(e_\alpha^\vee)\otimes \eta'_\alpha,
\end{align}
where $\eta'_\alpha$ acts on $\wedge^\cdot (\frakp^*)$ by exterior multiplication.

The following result, which describes the action of $K_G$ on $\psi_{11}$, is taken from the unpublished manuscript \cite{BFK2}.
We thank Jens Funke and Steve Kudla for allowing to include it here.

\begin{proposition}
Identify $\frakp_-^*$ with $S_\C$ as in \eqref{eq:dualid}.
There is a Schwartz form
\[
\psi\in [S(V^n)\otimes A^{n-1,n-1}(\calD)\otimes \frakp_-^*\otimes \C(-\kappa)]^{K_G}
\]
with diagonal components
\[
\psi_{rr}(x)= \varphi_G(x_r)\cdot\varphi_{KM}(x_1)\wedge\dots\wedge \widehat{\varphi_{KM}(x_r)}\wedge\dots\wedge
\varphi_{KM}(x_n) ,
\]
which satisfies $\psi(0)= \Omega^{n-1} \cdot 1_n$ and
\begin{align*}
\omega(k)\psi(x) &= \det(\uk)^\kappa\cdot \uk^{-1} \psi(x)\,
 {}^t\uk^{-1} \qquad\text{for $k\in K_G$}
.
\end{align*}
\end{proposition}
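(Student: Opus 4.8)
\emph{Approach.} My plan is to build $\psi$ directly out of the genus-$n$ Gaussian by means of the very Weil-representation operators that produce the Kudla--Millson form, and then to read off the three asserted properties. Recall (cf. \cite{KM2}, \cite{Ku1}) that the genus-$n$ form $\varphi^n_{KM}(x,z)=\varphi_{KM}(x_1,z)\wedge\cdots\wedge\varphi_{KM}(x_n,z)$ is obtained from the genus-$n$ Gaussian $\varphi^n_G(x,z)=\varphi_G(x_1,z)\cdots\varphi_G(x_n,z)$ by applying, in each of the $n$ variables, the fixed operator on $S(V)$ that realizes the passage from $\varphi_G(x_i,z)$ to $\varphi_{KM}(x_i,z)$; under \eqref{eq:piso} and \eqref{eq:dualid} this operator is assembled from the Weil-representation action of $\frakp_+$ and $\frakp_-$ together with exterior multiplication in $\frakp_\pm^*$, and it is $K_G$-equivariant. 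The construction of $\psi$ is to leave exactly one variable, the $r$-th, carrying only a bare Gaussian $\varphi_G(x_r,z)$ but to retain a single free $\frakp_-^*$-index coming from one residual application of the $\frakp_-$-part of the operator: one places that index in the $(r,r)$-entry when the residual $\frakp_-$-action is taken on $x_r$ itself, and in the symmetric $(r,s)$ and $(s,r)$ entries when it instead lands on the $s$-th variable, and sums over $r$. Organizing these contributions yields a well-defined element
\[
\psi\in\big[S(V^n)\otimes A^{n-1,n-1}(\calD)\otimes\frakp_-^*\otimes\C(-\kappa)\big]^{K_G},
\]
symmetric in the matrix index because $\varphi^n_{KM}$ is symmetric in the $x_i$.

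\emph{The three properties.} The diagonal components are immediate from the construction: when the free index sits in the $(r,r)$-slot, the $r$-th variable carries $\varphi_G(x_r)$ and every other variable carries the full $\varphi_{KM}(x_i)$, so
\[
\psi_{rr}(x)=\varphi_G(x_r)\cdot\varphi_{KM}(x_1)\wedge\cdots\wedge\widehat{\varphi_{KM}(x_r)}\wedge\cdots\wedge\varphi_{KM}(x_n).
\]
For the value at $x=0$ one uses $\varphi_G(0,z)=1$ and $\varphi_{KM}(0,z)=\Omega$ from \eqref{eq:omega1}: the diagonal entries become $\Omega^{n-1}$, while a direct inspection shows the off-diagonal entries vanish at the origin (each involves a residual $\frakp_-$-action on a Gaussian), so $\psi(0)=\Omega^{n-1}\cdot 1_n$. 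The bidegree statement follows because each of the $n-1$ fully-applied genus-one operators contributes a $(1,1)$-piece while the residual $\frakp_-^*$-index carries no form-degree. Finally, the transformation law $\omega(k)\psi(x)=\det(\uk)^\kappa\,\uk^{-1}\psi(x)\,{}^t\uk^{-1}$ follows by combining the $K_G$-equivariances of the building blocks: $\varphi^n_G$ transforms under $K_G$ by the fixed character $\det(\uk)^{(m-2)/2}$, the $n-1$ fully-applied operators absorb the $\omega(k)$-action into the natural $K_G$-action on their $\frakp_\pm^*$-parts and supply the complementary power of $\det(\uk)$, and the residual $\frakp_-^*$-index transforms precisely by $X\mapsto\uk^{-1}X\,{}^t\uk^{-1}$ by the compatibility of \eqref{eq:piso} and \eqref{eq:dualid} with $\operatorname{Ad}(k)p_-(X)=p_-(\bar\uk.X)$; tracking all three gives $\det(\uk)^\kappa$ on the scalar part and the stated matrix conjugation.

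\emph{Main obstacle.} The only genuinely delicate point is the bookkeeping: pinning down the precise normalization, writing the off-diagonal entries explicitly, and verifying cleanly that the sum over the bare variable assembles into a $\Sym_n(\C)$-valued form with exactly the stated equivariance rather than into the larger space $\Mat_n(\C)$ or into several $K_G$-isotypic components. For this one must invoke the explicit Schr\"odinger-model (or Fock-model) formulas for $\varphi_{KM}$ and the Lie-algebra identities of \cite{BFK}, as in the unpublished manuscript \cite{BFK2}. A useful simplification is that $\frakp_-^*\cong\Sym_n(\C)$ is an irreducible $K_G$-module, generated by the highest weight vector $E_{11}$; hence it suffices to establish the construction in a single matrix slot and to propagate it by $K_G$-equivariance, which reduces the entire computation to the genus-one operator identity relating $\varphi_G(x,z)$ and $\varphi_{KM}(x,z)$ together with its interaction with the wedge product.
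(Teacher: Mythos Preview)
Your proposal is a strategy outline rather than a proof, and it contains a conceptual slippage that would need to be resolved before the argument could be made rigorous. The passage from $\varphi_G$ to $\varphi_{KM}$ is implemented by operators living on the \emph{orthogonal} side (they raise the $\frakp_H^*$-degree and produce differential forms on $\calD$), whereas the free index on $\psi$ lives in $\frakp_-^*$ of the \emph{symplectic} group $G$, identified with $\Sym_n(\C)$. Your phrase ``a single free $\frakp_-^*$-index coming from one residual application of the $\frakp_-$-part of the operator'' conflates these two structures; there is no evident decomposition of the Kudla--Millson operator into symplectic $\frakp_\pm$-parts, so as written the off-diagonal entries $\psi_{jk}$ for $j\neq k$ are not actually defined. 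Likewise, the claim that the off-diagonal entries vanish at $x=0$ because ``each involves a residual $\frakp_-$-action on a Gaussian'' is not justified: Weil-representation actions of $\frakp_-$ on the Gaussian do not automatically vanish at the origin.

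The paper's proof avoids these ambiguities by working entirely in the Fock model, where $\varphi_{KM,(j)}$ is an explicit polynomial in the variables $z_{\alpha j}$ and one can simply \emph{write down} closed formulas for all $\psi_{jk}$ (the off-diagonal ones by symmetrizing the quadratic monomials $z_{\alpha j}z_{\beta k}$). The vanishing $\psi_{jk}(0)=0$ for $j\neq k$ then follows from the Schr\"odinger--Fock intertwiner, and the $K_G$-transformation law is verified by a direct Lie-algebra computation of $\omega(Y_{jk})\psi_{lm}$ using the known formula for $\omega(Y_{jk})$ in the Fock model; this shows that the $\psi_{jk}$ span an irreducible $K_G$-module with highest weight vector $\psi_{nn}$, isomorphic to $\det^\kappa\otimes\Sym^2(\C^n)^\vee$. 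Your closing remark that one should invoke explicit model formulas and exploit irreducibility is exactly right---but that \emph{is} the content of the proof, and it is precisely what you have not carried out.
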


\begin{proof}
To prove this result we use the Fock model realization of the Weil representation as described in the appendix of \cite{FM-AJM}. Let $\calF=\calF(\C^{(m+2)\times n})$ be the space of polynomial functions on $V_\C^n\cong  \C^{(m+2)\times n}$. As in \cite{FM-AJM} we denote the variables by $z_{\alpha j}$, $z_{\mu j}$, where $\alpha=1,\dots, m$, $\mu= m+1,m+2$, and $j=1,\dots,n$. The Lie algebra $\frakg\times \mathfrak{so}(V)_\C$ acts on $\calF$ via the Weil representation.

Let $\mathfrak{so}(V)_\C= \frakk_H\oplus \frakp_H$ be the Cartan decomposition as in Section \ref{sect:3.1}. Let $X_{\alpha\mu}$ be the standard basis of $\frakp_H$ and denote  by $\omega_{\alpha\mu}$ the corresponding dual basis of $\frakp_H^*$.
The Kudla-Millson Schwartz forms can be viewed as elements of
\[
\left[ \calF\otimes \wedge^\cdot \frakp_H^*\right]^{K_H}.
\] 
We have 
\[
\varphi_{KM}^n =\varphi_{KM,(1)}\wedge \dots \wedge \varphi_{KM,(n)},
\]
where
\[
\varphi_{KM,(j)} = -\frac{1}{8\pi^2}\cdot\sum_{\alpha,\beta=1}^m z_{\alpha j} z_{\beta j} \otimes \omega_{\alpha m+1}\wedge \omega_{\beta m+2}.
\]
The Gaussian $\varphi_{G,(j)}$ corresponds to the constant polynomial $1$ for every $j$.

We define the Schwartz form $\psi=(\psi_{jk})$ in the Fock model by putting
\begin{align*}
\psi_{kk} &= \varphi_{G,(k)}\wedge \prod_{l\neq k} \varphi_{KM,(l)}\quad\text{and}\\
\psi_{jk} &= -\frac{1}{8\pi^2} \left(-\frac{1}{2}\sum_{\alpha,\beta=1}^m (z_{\alpha j} z_{\beta k}+z_{\alpha k} z_{\beta j})
 \otimes \omega_{\alpha m+1}\wedge \omega_{\beta m+2}\right)\wedge \prod_{l\neq j,k} \;\varphi_{KM,(l)}\quad \text{for $j\neq k$.}
\end{align*}
This has the desired diagonal components. Using the intertwining operator between the Schr\"odinger and the Fock model of the Weil representation, it is easily checked that $\psi_{jk}(0)=0$ for $j\neq k$. On the other hand, by \eqref{eq:omega1}, we have $\psi_{jj}(0)=\Omega^{n-1}$, and therefore $\psi(0)=   \Omega^{n-1}\cdot 1_n$.

To verify the transformation law under 
$K_G$, we compute the action of the Lie algebra $\frakk$ under the Weil representation. Recall that the basis element $Y_{jk}$ defined in \eqref{eq:yjk} acts by 
\[
\omega(Y_{jk}) = \frac{1}{2}(m-2)\delta_{jk}+\sum_{\alpha=1}^m z_{\alpha j} \frac{\partial}{\partial z_{\alpha k}} - \sum_{\mu=m+1}^{m+2} z_{\mu k} \frac{\partial}{\partial z_{\mu j}}. 
\]
In fact, since  the element $Y_{jk}$ corresponds to  $\frac{1}{2i}(w_k'\circ w_j'')$ in the notation of \cite{FM-AJM}, this claim follows from \cite[Lemma A.1]{FM-AJM}.

Now a direct computation shows
\begin{align*}
\omega(Y_{jj}) \psi_{ll} &= \begin{cases} (\kappa-2)\cdot \psi_{ll}&\text{if $j=l$,}\\
\kappa\cdot \psi_{ll}&\text{if $j\neq l$,}
\end{cases}\\
\omega(Y_{jk}) \psi_{ll} &= \begin{cases} -2\psi_{kl}&\text{if $j=l$ and $j\neq k$,}\\
0 &\text{if $j\neq l$ and $j\neq k$,}
\end{cases}\\
\omega(Y_{jk}) \psi_{jk} &= \;\;\;-\psi_{kk} \quad\text{if $j\neq k$.}
\end{align*}
%
This implies that the $\psi_{jk}$ generate an irreducible representation of $K_G$, which has $\psi_{nn}$ as a highest weight vector, and which is isomorphic to $\det^\kappa\otimes \Sym^2(\C^n)^\vee$. Hence, we obtain the claimed transformation law. 
\end{proof}


%

The intertwining operator $\lambda: S(V^n)\to I(s_0,\chi_V)$ (see \eqref{eq:deflambda}) induces a map
\begin{align*}
[S(V^n)\otimes A^{n-1,n-1}(\calD)\otimes \frakp_-^*\otimes \C(-\kappa)]^{K_G}\longrightarrow [I(s_0,\chi_V)\otimes A^{n-1,n-1}(\calD)\otimes \frakp_-^*\otimes \C(-\kappa)]^{K_G},
\end{align*}
which we also denote by $\lambda$.
We define $\Psi\in [I(s_0,\chi_V)\otimes \frakp_-^*\otimes \C(-\kappa)]^{K_G}$
by
\begin{align}
\Psi\cdot \Omega^{n-1} = \lambda( \psi),
\end{align}
and write $\Psi(g,s)$ for the corresponding extension to a standard section.

\begin{corollary}
\label{cor:psitrafo}
Identify $\frakp_-^*$ with $S_\C$ as in \eqref{eq:dualid}.
For $k\in K_G$ and $g\in G$ we have
\[
\Psi(gk,s) = \det(\uk)^\kappa\cdot \uk^{-1} \Psi(g,s)\,
 {}^t\uk^{-1}.
\]
Moreover,
\begin{align*}
\Psi(1,s)= 1_n.
\end{align*}
\end{corollary}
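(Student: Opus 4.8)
The plan is to obtain both assertions as a formal consequence of the preceding Proposition, using only the $G$-equivariance of $\lambda$ (see \eqref{eq:deflambda}) and the defining relation $\Psi\cdot\Omega^{n-1}=\lambda(\psi)$. First I would recall that $\lambda$ is $G$-equivariant: since $\lambda(\phi)(g)=(\omega(g)\phi)(0)$, one has $\lambda(\omega(g_0)\phi)(g)=(\omega(gg_0)\phi)(0)=\lambda(\phi)(gg_0)$ for every $g_0\in G$. Applying this componentwise to the matrix-valued Schwartz form $\psi$ -- with $\lambda$ acting only in the Schwartz variable, leaving the form part in $A^{n-1,n-1}(\calD)$ and the coefficient in $\frakp_-^*\otimes\C(-\kappa)\cong S_\C$ (via \eqref{eq:dualid}) untouched -- gives $\lambda(\psi)(gk)=\lambda(\omega(k)\psi)(g)$ for $k\in K_G$ and $g\in G$.

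Next I would insert the transformation law of the Proposition, namely $\omega(k)\psi(x)=\det(\uk)^\kappa\,\uk^{-1}\psi(x)\,{}^t\uk^{-1}$. Since $\uk$ does not depend on the Schwartz variable, the map $A\mapsto\det(\uk)^\kappa\,\uk^{-1}A\,{}^t\uk^{-1}$ on $S_\C$ commutes with $\lambda$, so $\lambda(\omega(k)\psi)=\det(\uk)^\kappa\,\uk^{-1}\lambda(\psi)\,{}^t\uk^{-1}$. Combining the two identities yields $\lambda(\psi)(gk)=\det(\uk)^\kappa\,\uk^{-1}\lambda(\psi)(g)\,{}^t\uk^{-1}$. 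Now $\Omega^{n-1}$ is a nowhere-vanishing top form on $\calD$, and $\lambda(\psi)(g)$ is a constant multiple of it by Proposition \ref{prop:LocalSiegel-WeilR} (the form $\psi$ has the $H(\R)$-equivariance required there), namely $\lambda(\psi)(g)=\Psi(g,s_0)\cdot\Omega^{n-1}$ by definition; cancelling $\Omega^{n-1}$ gives $\Psi(gk,s_0)=\det(\uk)^\kappa\,\uk^{-1}\Psi(g,s_0)\,{}^t\uk^{-1}$.

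It then remains to propagate this identity from $s=s_0$ to all $s$ and to evaluate at $g=1$. For the first point, both sides of the claimed formula, as functions of $g\in G$, transform under left translation by $n(b)m(a)$ exactly as a standard section of $I(s,\chi_V)$ does -- the factor $\det(\uk)^\kappa$ is a scalar and commutes with $A\mapsto\uk^{-1}A\,{}^t\uk^{-1}$, and $\Psi(\cdot,s)$ is a standard section by construction -- hence each side is determined by its restriction to $K_G$ through the Iwasawa decomposition, and on $K_G$ both are independent of $s$; since they already agree at $s=s_0$, they agree for all $s$. For the evaluation at $g=1$, the Proposition gives $\psi(0)=\Omega^{n-1}\cdot 1_n$, so $\lambda(\psi)(1)=(\omega(1)\psi)(0)=\psi(0)=\Omega^{n-1}\cdot 1_n$, whence $\Psi(1,s_0)=1_n$; as $1\in K_G$ and $\Psi(\cdot,s)$ is a standard section, $\Psi(1,s)=1_n$ for all $s$.

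The only point requiring real care is the bookkeeping of the $K_G$-actions: one must verify that the three actions -- through $\omega$ on $S(V^n)$, through the $K_G$-module structure of $\frakp_-^*\cong S_\C$, and through the twist $\C(-\kappa)$ -- combine into exactly the matrix operation $A\mapsto\det(\uk)^\kappa\,\uk^{-1}A\,{}^t\uk^{-1}$, together with the (harmless) check that $\lambda(\psi)(g)$ really is a scalar multiple of $\Omega^{n-1}$ rather than an arbitrary element of $A^{n-1,n-1}(\calD)$. Both of these are, however, already packaged into the statement of the Proposition and of Proposition \ref{prop:LocalSiegel-WeilR}, so for the Corollary it suffices to invoke them.
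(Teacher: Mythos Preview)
Your argument is correct and is exactly the derivation the paper has in mind: the Corollary is stated without proof as an immediate consequence of the preceding Proposition together with the $G$-equivariance of $\lambda$ and the definition $\Psi\cdot\Omega^{n-1}=\lambda(\psi)$. Your extra care in invoking the first assertion of Proposition~\ref{prop:LocalSiegel-WeilR} to justify that $\lambda(\psi)(g)$ is a constant multiple of $\Omega^{n-1}$ (equivalently, that $\Psi$ is well defined independently of $z\in\calD$) and in passing from $s=s_0$ to general $s$ via the standard-section property is entirely appropriate and fills in details the paper leaves implicit.
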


This corollary characterizes $\Psi$ uniquely. We now use the action of $\frakp_-$ in the induced representation to find a different expression for $\Psi$.

\begin{proposition}
\label{prop:rd}
Let $D$ be the operator defined in \eqref{eq:D}, and let
\[
r(D): I(s,\chi_V)\longrightarrow I(s,\chi_V)\otimes \frakp_-^*
\]
be the induced operator on the induced representation. Then
\[
r(D)\Phi_\kappa(g,s) =\frac{1}{2} (s+\rho_n-\kappa)\Psi(g,s).
\]
\end{proposition}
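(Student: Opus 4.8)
The plan is to compute the action of the operator $r(D)$ directly on the weight-$\kappa$ standard section $\Phi_\kappa$, using only the transformation law of $\Phi_\kappa$ under $P$ and under $K_G$, and then to recognize the answer as a $K_G$-equivariant $\frakp_-^*$-valued section which (after normalization) must coincide with $\Psi$ by the uniqueness statement in Corollary~\ref{cor:psitrafo}. First I would recall that $D=\sum_\alpha p_-(e_\alpha^\vee)\otimes \eta_\alpha'$, so that $r(D)\Phi_\kappa(g,s)=\sum_\alpha \bigl(r(p_-(e_\alpha^\vee))\Phi_\kappa\bigr)(g,s)\otimes \eta_\alpha'$, where $r$ denotes the right regular action of $\frakg$ on functions on $G$. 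Since $\Phi_\kappa$ is a standard section, it is determined by its restriction to $K_G$, where it equals $\det(\uk)^\kappa$; the right action of $\frakp_-$ does not preserve $K_G$, so one must use the Iwasawa-type decomposition $\frakg = \frakp \oplus \frakk$ (complexified: $\frakg = (\frakn\oplus\frakm_\C)\oplus\frakk$ or rather the decomposition adapted to $P\backslash G$) to write $p_-(X) = (\text{piece in }\Lie(P)_\C) + (\text{piece in }\frakk)$, and then apply the defining transformation properties of $\Phi_\kappa$ under each piece.

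Concretely, I would evaluate at $g=1$ first: $\bigl(r(p_-(X))\Phi_\kappa\bigr)(1,s)$ picks out, via the $\Lie(P)$-component, a contribution proportional to $(s+\rho_n)$ coming from differentiating $|\det a|^{s+\rho_n}$ along the Levi direction $X\in \Sym_n \cong \frakm$-ish piece, and via the $\frakk$-component a contribution proportional to $-\kappa$ coming from differentiating $\det(\uk)^\kappa$. The precise bookkeeping uses the isomorphisms $p_\pm: S_\C \to \frakp_\pm$ of \eqref{eq:piso} and $k: \Mat_n(\C)\to \frakk$ of \eqref{eq:kiso}: writing $p_-(X)$ in block form and matching against the block form of a general element of $\Lie(P)_\C$ plus $k(Y)$, one finds the $\Lie(P)_\C$-projection of $p_-(X)$ is (up to the identifications) $X$ sitting in the symmetric part of $\frakm_\C$, contributing a factor $\tfrac12(s+\rho_n)$, while the $\frakk$-projection is $k(-X)$ (or $k$ of a suitable multiple), contributing $\tfrac12(-\kappa)\cdot(\text{something})$; assembling, $\bigl(r(p_-(e_\alpha^\vee))\Phi_\kappa\bigr)(1,s) = \tfrac12(s+\rho_n-\kappa)\,e_\alpha$ after the identification \eqref{eq:dualid}. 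Summing over $\alpha$ gives $r(D)\Phi_\kappa(1,s) = \tfrac12(s+\rho_n-\kappa)\cdot 1_n = \tfrac12(s+\rho_n-\kappa)\,\Psi(1,s)$ by Corollary~\ref{cor:psitrafo}.

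To pass from $g=1$ to general $g$, I would check that $r(D)\Phi_\kappa$ transforms under the right $K_G$-action exactly as $\Psi$ does, i.e. $\bigl(r(D)\Phi_\kappa\bigr)(gk,s) = \det(\uk)^\kappa\,\uk^{-1}\bigl(r(D)\Phi_\kappa\bigr)(g,s)\,{}^t\uk^{-1}$. This follows because $D$ is built from the $K_G$-equivariant datum $\sum_\alpha p_-(e_\alpha^\vee)\otimes\eta_\alpha'$ (the canonical element of $\frakp_-\otimes\frakp_-^*$, which is $\operatorname{Ad}(k)$-invariant up to the identification $\frakp_-^*\cong\frakp_+$ and the twist by $\det^\kappa$ built into how $\bar\partial$-type operators interact with sections of $\C(-\kappa)$), together with the fact that $\Phi_\kappa$ has $K_G$-weight $\det^\kappa$; the commutation of $r(D)$ with right translation by $K_G$ up to the adjoint twist is then a formal consequence of $\operatorname{Ad}(k)p_-(e_\alpha^\vee)=p_-(\bar\uk.e_\alpha^\vee)$ and the duality \eqref{eq:dualid}. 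Since both $\tfrac12(s+\rho_n-\kappa)^{-1}r(D)\Phi_\kappa$ and $\Psi$ are sections in $[I(s,\chi_V)\otimes\frakp_-^*\otimes\C(-\kappa)]^{K_G}$ with the same value $1_n$ at the identity, and such sections are determined by their value at $1$ (by the characterization following Corollary~\ref{cor:psitrafo}), they agree, which is the claimed identity.

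The main obstacle will be the explicit Lie-algebra bookkeeping in the second paragraph: correctly splitting $p_-(X)$ into its $\Lie(P)_\C$ and $\frakk$ components in block-matrix form, tracking the factor $\tfrac12$'s coming from \eqref{eq:piso}/\eqref{eq:kiso}, and making sure the $\rho_n$-shift and the sign of $\kappa$ come out right — in particular confirming that the Levi contribution is $+(s+\rho_n)$ and not $+(s+\rho_n)$ times a wrong constant, and that there is no extra contribution from the unipotent radical $\frakn$ (which there should not be, since $p_-(X)$ has no $\frakn_\C$-component, or whatever component it has is killed because $\Phi_\kappa$ is invariant under $N$). This is exactly the kind of computation carried out in \cite[Section 5]{BFK} and \cite{Ku1}, so I would follow those conventions; once the constant $\tfrac12(s+\rho_n-\kappa)$ is pinned down at $g=1$, the rest is the soft equivariance argument.
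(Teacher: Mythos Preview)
Your approach is essentially identical to the paper's: establish the $K_G$-equivariance of $r(D)\Phi_\kappa$ (the paper invokes multiplicity one of $K_G$-types in $I(s,\chi_V)$, you use the uniqueness in Corollary~\ref{cor:psitrafo}, which amounts to the same thing), then compute $r(p_-(X))\Phi_\kappa(1,s)$ by splitting $p_-(X)$ into Levi, compact, and unipotent pieces. Regarding your hedged remark about the $\frakn$-component: the paper's explicit splitting $p_-(X) = \tfrac12\kzxz{X}{0}{0}{-X} + \tfrac{i}{2}\kzxz{0}{X}{-X}{0} - i\kzxz{0}{X}{0}{0}$ shows that $p_-(X)$ \emph{does} have a nonzero $\frakn_\C$-component, and it is the $N$-invariance of $\Phi_\kappa$ that makes its contribution vanish.
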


\begin{proof}
We first show that $r(D)\Phi_\kappa(g,s)$ has the same $K_G$-type as $\Psi$.
Via the isomorphism \eqref{eq:dualid}, the operator $D$ induces an operator
\[
\tilde D : I(s,\chi_V)\longrightarrow I(s,\chi_V)\otimes S_\C, \quad
\tilde D=\sum_\alpha p_-(e_\alpha^\vee)\otimes e_\alpha.
\]
It satisfies
\[
\operatorname{Ad}(k)\tilde D = \uk^{-1}.\tilde D
\]
for $k\in K_G$, where the action on the left hand side is on the first factor of the tensor product and the action on the right hand side on the second factor. In fact, we have
\begin{align*}
\operatorname{Ad}(k)\tilde D &= \sum_\alpha p_-(\bar\uk.e_\alpha^\vee)\otimes e_\alpha\\
&= \sum_\alpha \sum_\beta \tr(\bar\uk.e_\alpha^\vee e_\beta)\cdot p_-(e_\beta^\vee)\otimes e_\alpha\\
&=\sum_\beta p_-(e_\beta^\vee)\otimes \sum_\alpha \tr(e_\alpha^\vee ({}^t\bar\uk.e_\beta))\cdot e_\alpha\\
&=\sum_\beta p_-(e_\beta^\vee)\otimes  \uk^{-1}.e_\beta
\end{align*}
(see also \cite[Lemma 5.1]{BFK}).
But this implies, again using the identification \eqref{eq:dualid}, that $r(D)\Phi_\kappa$ has the transformation law
\begin{align*}
r(k)r(D)\Phi_\kappa(g,s)&= r(\operatorname{Ad}(k) D) r(k) \Phi_\kappa(g,s)\\
&= \det(\uk)^\kappa \cdot \uk^{-1} \Phi_\kappa(g,s) {}^t\uk^{-1}.
\end{align*}
In other works, it has the same $K_G$-type as $\Psi$.

Since the different $K_G$ types in $I(s,\chi_V)$ have multiplicity one, there exists a constant $c(s)$ such that
\begin{align}
\label{eq:uptoc}
r(D)\Phi_\kappa(g,s) =c(s)\Psi(g,s).
\end{align}
To determine the constant, we evaluate at the unit element. According to
Corollary \ref{cor:psitrafo}, we have $\Psi(1,s)=1$. We now consider
$r(D)\Phi_\kappa$. For $X\in S_\C$ we compute $r(p_-(X))\Phi_\kappa$.
In the Lie algebra $\frakg$ we write
\begin{align}
\label{eq:xsplit}
p_-(X) = \frac{1}{2}\zxz{X}{- iX}{- i X}{-X}= \frac{1}{2} \zxz{X}{0}{0}{-X}
+\frac{i}{2} \zxz{0}{X}{-X}{0} -i \zxz{0}{X}{0}{0}.
\end{align}
We compute the actions of the three summands individually.
We have
\begin{align*}
\frac{1}{2}dr\zxz{X}{0}{0}{-X}\Phi_\kappa(1,s)&=
\frac{1}{2}\frac{d}{dt}\Phi_\kappa\left(m(e^{tX}),s\right)\mid_{t=0}\\
&=\frac{1}{2}\frac{d}{dt}\det(e^{tX})^{s+\rho_n}\Phi_\kappa(1,s)\mid_{t=0}\\
&=\frac{1}{2}(s+\rho_n)\tr(X).
\end{align*}
Next, we compute, using the action of $K_G$,
\begin{align*}
\frac{i}{2}dr\zxz{0}{X}{-X}{0}\Phi_\kappa(1,s)&=
\frac{i}{2}\frac{d}{dt}\Phi_\kappa\left(\exp t \zxz{0}{X}{-X}{0}
,s\right)\mid_{t=0}\\
&=\frac{i}{2}\frac{d}{dt}\Phi_\kappa\left(\zxz{\cos(tX)}{\sin(tX)}{-\sin(tX)}{\cos(tX)}
,s\right)\mid_{t=0}\\
&=\frac{i}{2}\frac{d}{dt}\det(e^{itX})^{\kappa}\Phi_\kappa(1,s)\mid_{t=0}\\
&=-\frac{1}{2}\kappa\tr(X).
\end{align*}
Finally, we notice that
\begin{align*}
-i dr\zxz{0}{X}{0}{0}\Phi_\kappa(1,s)&=
-i\frac{d}{dt}\Phi_\kappa\left(\exp t \zxz{0}{X}{0
}{0}
,s\right)\mid_{t=0}\\
&=-i\frac{d}{dt}\Phi_\kappa\left(n(tX)
,s\right)\mid_{t=0}\\
&=0.
\end{align*}
Putting the terms together, we obtain
\begin{align*}
r(p_-(X))\Phi_\kappa(1,s)&= \frac{1}{2}(s+\rho_n-\kappa)\tr(X),\\
r(D)\Phi_\kappa(g,s) &= \frac{1}{2}(s+\rho_n-\kappa)\cdot 1_n.
\end{align*}
This shows that the constant $c(s)$ in \eqref{eq:uptoc} is equal to $\frac{1}{2}(s+\rho_n-\kappa)$.
\end{proof}



\begin{corollary}
\label{cor:e11}
Let $e_{11}\in S$ be the matrix whose upper left entry is $1$ and whose other entries are all $0$.
We have
\begin{align*}
r(p_-(e_{11}))\Phi_\kappa(g,s)&=\frac{1}{2} (s+\rho_n-\kappa)\lambda(\psi_{11}^*)(g,s),\\
r(p_-(e_{11}))W_{T}(g, s, \Phi_\kappa)&=\frac{1}{2} (s+\rho_n-\kappa)W_T(g,s,
\lambda(\psi_{11}^*)).
\end{align*}
\end{corollary}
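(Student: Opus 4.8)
The plan is to deduce Corollary \ref{cor:e11} directly from Proposition \ref{prop:rd} by extracting the $(1,1)$-entry of the matrix identity $r(D)\Phi_\kappa(g,s)=\tfrac12(s+\rho_n-\kappa)\Psi(g,s)$. First I would recall that under the identification $\frakp_-^*\cong S_\C$ of \eqref{eq:dualid}, the operator $D=\sum_\alpha p_-(e_\alpha^\vee)\otimes\eta_\alpha'$ has components indexed by a basis of $S=\Sym_n(\R)$; writing $\Phi_\kappa$-valued matrices in this basis, the $(1,1)$-component of the $\frakp_-^*$-valued object $r(D)\Phi_\kappa$ is precisely $r(p_-(e_{11}))\Phi_\kappa$, where $e_{11}\in S$ is the indicated elementary symmetric matrix. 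On the right-hand side, the $(1,1)$-component of $\Psi\cdot\Omega^{n-1}=\lambda(\psi)$ is $\lambda(\psi_{11})$ by the definition of $\Psi$ together with the fact that $\psi_{11}=\varphi_G(x_1)\cdot\varphi_{KM}(x_2)\wedge\cdots\wedge\varphi_{KM}(x_n)=\psi_{11}^*\cdot\Omega^{n-1}$, so that $\lambda(\psi_{11})=\lambda(\psi_{11}^*)\cdot\Omega^{n-1}$ and hence the $(1,1)$-component of $\Psi$ is $\lambda(\psi_{11}^*)$. Comparing components in Proposition \ref{prop:rd} then yields the first displayed formula
\[
r(p_-(e_{11}))\Phi_\kappa(g,s)=\tfrac12(s+\rho_n-\kappa)\,\lambda(\psi_{11}^*)(g,s).
\]

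For the second formula I would simply apply the Whittaker functional to both sides. The point is that the Whittaker integral \eqref{eq:Whittaker} is $G$-equivariant in the appropriate sense: it is given by integration of $\Phi(wn(b)g,s)$ against the character, and the right action $r$ of $\frakg$ on $I(s,\chi_V)$ commutes with forming $W_T$ because the operator $r(X)$ for $X\in\frakg$ acts by right translation at $g$, which passes through the integral over $\Sym_n(F)$ untouched (the integration variable $b$ enters only through $n(b)$ on the left of $g$). Concretely, $W_T(g,s,r(p_-(e_{11}))\Phi_\kappa)=r(p_-(e_{11}))W_T(g,s,\Phi_\kappa)$, where on the right $r$ now denotes the induced action on the space of Whittaker functions. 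Applying $W_T(g,s,-)$ to the first identity and using this commutation then gives
\[
r(p_-(e_{11}))W_T(g,s,\Phi_\kappa)=\tfrac12(s+\rho_n-\kappa)\,W_T(g,s,\lambda(\psi_{11}^*)),
\]
which is the second assertion.

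The only genuinely delicate point is the bookkeeping in the first paragraph: one must be careful that ``taking the $(1,1)$-entry'' is compatible on both sides with the chosen identification $\frakp_-^*\cong S_\C$ and with the transformation laws recorded in Corollary \ref{cor:psitrafo}. Since $e_{11}$ is symmetric and $\{e_{11},\ldots\}$ extends to a basis of $S$, and since the dual basis element $\eta_\alpha'$ corresponding to $e_\alpha$ is by construction dual with respect to the trace form, extracting $r(p_-(e_{11}))$ from $r(D)$ is exactly pairing with $e_{11}$ in the second tensor factor; and pairing the identity $\Psi\cdot\Omega^{n-1}=\lambda(\psi)$ with $e_{11}$ reproduces $\lambda(\psi_{11})$ because $\psi=(\psi_{jk})$ is indexed by $S_\C$ in the same way (its ``$e_{11}$-component'' is $\psi_{11}$). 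Once this matching is spelled out, both formulas are immediate consequences of Proposition \ref{prop:rd}, so I do not anticipate any serious obstacle beyond this notational care; no new analytic input is required.
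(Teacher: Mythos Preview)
Your proposal is correct and follows essentially the same approach as the paper. The paper's proof is a two-line argument stating that the first equality is a direct consequence of Proposition~\ref{prop:rd} and that the second follows because the Whittaker integral is an intertwining map of $(\frakg,K)$-modules; you have simply unpacked both of these assertions in detail, carefully verifying that extracting the $(1,1)$-component from the matrix identity of Proposition~\ref{prop:rd} (via the trace pairing with $e_{11}$) gives exactly $r(p_-(e_{11}))\Phi_\kappa$ on the left and $\lambda(\psi_{11}^*)$ on the right, and spelling out the commutation of the right $\frakg$-action with the Whittaker integral.
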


\begin{proof}
The first equality is a direct consequence of Proposition \ref{prop:rd}. It implies the second equality, since the Whittaker integral is an intertwining map of $(\frakg,K)$-modules.
\end{proof}

\begin{proposition}
\label{prop:e11}
For $a\in \Gl_n(\R)$ we have
\begin{align*}
r(p_-(e_{11})) W_T\left(m(a),s,\Phi_\kappa\right)=\left(2\pi \tr ( T a e_{11} \,{}^ta) -\frac{\kappa}{2}+\frac{1}{2}
\sum_{i=1}^n a_{i1}\cdot
\frac{\partial}{\partial a_{i1}}\right)
W_T\left(m(a),s,\Phi_\kappa\right).
\end{align*}
\end{proposition}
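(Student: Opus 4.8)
The strategy is to compute the action of the Lie algebra element $p_-(e_{11})\in\frakp_-$ on the Whittaker function $W_T(m(a),s,\Phi_\kappa)$ directly, using the splitting \eqref{eq:xsplit} of $p_-(X)$ into a Levi part, a compact part, and a unipotent part, exactly as in the proof of Proposition~\ref{prop:rd}, but now evaluated at the point $m(a)$ rather than at the identity. Since the Whittaker integral \eqref{eq:Whittaker} is an intertwining map of $(\frakg,K_G)$-modules, we have $r(p_-(e_{11}))W_T(g,s,\Phi_\kappa) = W_T(g,s,r(p_-(e_{11}))\Phi_\kappa)$ and we may differentiate under the integral sign; equivalently one differentiates the right translate $t\mapsto W_T(m(a)\exp(tp_-(e_{11})),s,\Phi_\kappa)$ at $t=0$. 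First I would write $p_-(e_{11}) = \frac12\kzxz{e_{11}}{0}{0}{-e_{11}} + \frac{i}{2}\kzxz{0}{e_{11}}{-e_{11}}{0} - i\kzxz{0}{e_{11}}{0}{0}$ and handle the three terms separately.

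For the first term, $\frac12\,dr\kzxz{e_{11}}{0}{0}{-e_{11}}$ acts by right differentiation along $m(e^{t e_{11}})$; using $m(a)m(b)=m(ab)$ and the transformation law \eqref{eq:i1} (or rather the definition of the induced representation), the chain rule produces the first-order differential operator $\frac12\sum_{i}a_{i1}\frac{\partial}{\partial a_{i1}}$ acting on $a$ (because right multiplication of $a$ by $e^{t e_{11}}$ perturbs exactly the first column of $a$). For the compact term $\frac{i}{2}\,dr\kzxz{0}{e_{11}}{-e_{11}}{0}$, one uses $K_G$-equivariance: $\Phi_\kappa$ and hence $W_T$ transform under $K_G$ by $\det(\uk)^\kappa$, and $\exp t\kzxz{0}{e_{11}}{-e_{11}}{0} = \kzxz{\cos(te_{11})}{\sin(te_{11})}{-\sin(te_{11})}{\cos(te_{11})}\in K_G$ with $\uk = e^{i t e_{11}}$, so differentiating $\det(e^{ite_{11}})^\kappa = e^{it\kappa\,\tr e_{11}}$ gives the scalar $\frac{i}{2}\cdot i\kappa\,\tr(e_{11}) = -\frac{\kappa}{2}$. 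The subtlety here, as opposed to the computation at the identity in Proposition~\ref{prop:rd}, is that the compact element sits to the \emph{right} of $m(a)$; one must move it past $m(a)$, but since we evaluate at $t=0$ after differentiating, the $K_G$-covariance of $\Phi_\kappa$ still applies and yields the same constant $-\kappa/2$, independent of $a$. For the unipotent term $-i\,dr\kzxz{0}{e_{11}}{0}{0}$, right translation gives $m(a)n(te_{11})$; using the $N$-equivariance of the Whittaker function \eqref{eq:i1}, $W_T(m(a)n(b),s,\Phi) = \psi(\tr({}^taTa\,b))W_T(m(a),s,\Phi)$, so differentiating at $t=0$ produces $2\pi i\cdot i\,\tr({}^taTa\,e_{11})\cdot(-i)\cdot(-i)$... more carefully: $-i\cdot\frac{d}{dt}\psi(t\,\tr({}^taTa\,e_{11}))|_{t=0} = -i\cdot 2\pi i\,\tr({}^taTa\,e_{11}) = 2\pi\,\tr(T a e_{11}\,{}^ta)$, using $\tr({}^taTae_{11}) = \tr(Tae_{11}{}^ta)$.

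Summing the three contributions yields exactly
\[
r(p_-(e_{11}))W_T(m(a),s,\Phi_\kappa) = \Big(2\pi\,\tr(Tae_{11}\,{}^ta) - \frac{\kappa}{2} + \frac12\sum_{i=1}^n a_{i1}\frac{\partial}{\partial a_{i1}}\Big)W_T(m(a),s,\Phi_\kappa),
\]
as claimed. I expect the main obstacle to be purely bookkeeping: correctly tracking the several factors of $i$ and the normalization of $\psi(x)=e(x)$ in the unipotent term, and being careful that in the compact and unipotent terms the group element $\exp(tp_-(\cdot))$ multiplies $m(a)$ on the right (so one must use right-translation formulas and the equivariance of $W_T$ under $N$ and $K_G$ from \eqref{eq:i1} and the weight-$\kappa$ property of $\Phi_\kappa$), whereas in Proposition~\ref{prop:rd} everything was evaluated at $g=1$. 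There is no analytic difficulty: differentiation under the integral in \eqref{eq:Whittaker} is justified for $\Re(s)$ large by the absolute convergence of the Whittaker integral together with standard estimates on $\Phi_\kappa$, and the resulting identity of meromorphic functions then holds for all $s$ by analytic continuation.
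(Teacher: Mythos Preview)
Your proposal is correct and follows essentially the same approach as the paper: both split $p_-(e_{11})$ via \eqref{eq:xsplit} into Levi, compact, and unipotent pieces and compute the action of each on $W_T(m(a),s,\Phi_\kappa)$ by right translation, obtaining respectively the differential operator $\tfrac12\sum_i a_{i1}\partial_{a_{i1}}$, the scalar $-\kappa/2$ from the weight-$\kappa$ transformation under $K_G$, and $2\pi\,\tr(Tae_{11}\,{}^ta)$ from the relation $m(a)n(tX)=n(taX\,{}^ta)m(a)$ together with the $N$-equivariance of $W_T$. The only cosmetic difference is that the paper writes out the intermediate step $m(a)n(tX)=n(taX\,{}^ta)m(a)$ explicitly before applying \eqref{eq:i1}, whereas you state the resulting formula $W_T(m(a)n(b),s,\Phi)=\psi(\tr({}^taTa\,b))W_T(m(a),s,\Phi)$ directly.
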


\begin{proof}
For the proof we put $X=e_{11}$ and split $p_-(X)$ as in \eqref{eq:xsplit}.
We compute the action of the three terms individually. We  have
\begin{align*}
\frac{1}{2}dr\zxz{X}{0}{0}{-X}W_T(m(a),s,\Phi_\kappa)&=
\frac{1}{2}\frac{d}{dt}W_T\left(m(a)m(e^{tX}),s,\Phi_\kappa\right)\mid_{t=0}\\
&=\frac{1}{2}\frac{d}{dt}W_T\left(m(a+taX),s,\Phi_\kappa\right)\mid_{t=0}\\
&=\frac{1}{2}\sum_{i=1}^n a_{i1}\cdot
\frac{\partial}{\partial a_{i1}} W_T(m(a),s,\Phi_\kappa).
\end{align*}
Next, we compute, using the action of $K_G$,
\begin{align*}
\frac{i}{2}dr\zxz{0}{X}{-X}{0}W_T(m(a),s,\Phi_\kappa)&=
\frac{i}{2}\frac{d}{dt} W_T\left(m(a)\exp t \zxz{0}{X}{-X}{0}
,s,\Phi_\kappa\right)\mid_{t=0}\\
&=\frac{i}{2}\frac{d}{dt}\det(e^{itX})^{\kappa}W_T(m(a), s, \Phi_\kappa)\mid_{t=0}\\
&=-\frac{\kappa}{2} \cdot  W_T(m(a), s, \Phi_\kappa) .
\end{align*}
Finally, we notice that
\begin{align*}
-i dr\zxz{0}{X}{0}{0}W_T(m(a),s,\Phi_\kappa)&=
-i\frac{d}{dt}W_T\left(m(a) n(tX)
,s,\Phi_\kappa\right)\mid_{t=0}\\
&=-i\frac{d}{dt}W_T\left(n(taX\,{}^ta)m(a)
,s,\Phi_\kappa\right)\mid_{t=0}\\
&=-i\frac{d}{dt}e(t\tr ( T aX\,{}^ta)) \mid_{t=0} W_T\left( m(a)
,s,\Phi_\kappa\right)\\
&=2\pi \tr ( T a e_{11} \,{}^ta)\cdot W_T\left( m(a)
,s,\Phi_\kappa\right).
\end{align*}
Putting the terms together, we obtain
\begin{align*}
r(p_-(e_{11})) W_T\left(m(a),s,\Phi_\kappa\right)=\left(2\pi \tr ( T a e_{11} \,{}^ta) -\frac{\kappa}{2}+\frac{1}{2}
\sum_{i=1}^n a_{i1}\cdot
\frac{\partial}{\partial a_{i1}}\right)
W_T\left(m(a),s,\Phi_\kappa\right).
\end{align*}
 \end{proof}

\begin{corollary}
\label{cor:key}
Assume that $n=m+1$ and $\det(T)\neq 0$.
Write $T$ in block form as in \eqref{eq:varblock}, and recall the definition \eqref{eq:da} of $d(a)$. For $a\in \R_{>0}$ we have
\begin{align*}
W_{T }(m(d(a)),0,\lambda(\psi_{11}^*))
&=2 \left( 2\pi T_1 a^2-\frac{\kappa}{2}+\frac{a}{2}\frac{\partial}{\partial a}\right) W'_T(m(d(a)),0,\Phi_\kappa).
\end{align*}
\end{corollary}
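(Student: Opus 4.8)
The plan is to combine the two preceding results, Corollary~\ref{cor:e11} and Proposition~\ref{prop:e11}, evaluated at the specific group element $g=m(d(a))$ for $a\in\R_{>0}$, and then to take the $s$-derivative at $s=0$. First I would specialize Proposition~\ref{prop:e11} to $a$ replaced by $d(a)$ as in \eqref{eq:da}. Since $d(a)=\diag(a,1,\dots,1)$, the only nonzero entry in the first column is $a_{11}=a$, so the differential operator $\sum_{i=1}^n a_{i1}\partial/\partial a_{i1}$ collapses to $a\,\partial/\partial a$, and the trace term becomes $\tr(T\,d(a)e_{11}\,{}^td(a)) = T_1 a^2$, where $T_1$ is the upper-left entry of $T$ as in \eqref{eq:varblock}. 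Thus
\[
r(p_-(e_{11}))W_T\bigl(m(d(a)),s,\Phi_\kappa\bigr)
=\Bigl(2\pi T_1 a^2-\tfrac{\kappa}{2}+\tfrac{a}{2}\tfrac{\partial}{\partial a}\Bigr)W_T\bigl(m(d(a)),s,\Phi_\kappa\bigr).
\]

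Next I would invoke Corollary~\ref{cor:e11}, which identifies the left-hand side above (as a function of $g$) with $\tfrac12(s+\rho_n-\kappa)\,W_T(g,s,\lambda(\psi_{11}^*))$. Setting $g=m(d(a))$ gives
\[
\tfrac12(s+\rho_n-\kappa)\,W_T\bigl(m(d(a)),s,\lambda(\psi_{11}^*)\bigr)
=\Bigl(2\pi T_1 a^2-\tfrac{\kappa}{2}+\tfrac{a}{2}\tfrac{\partial}{\partial a}\Bigr)W_T\bigl(m(d(a)),s,\Phi_\kappa\bigr).
\]
Now I use the hypothesis $\kappa=\rho_n$, so that $s+\rho_n-\kappa=s$, and the identity reads
\[
\tfrac{s}{2}\,W_T\bigl(m(d(a)),s,\lambda(\psi_{11}^*)\bigr)
=\Bigl(2\pi T_1 a^2-\tfrac{\kappa}{2}+\tfrac{a}{2}\tfrac{\partial}{\partial a}\Bigr)W_T\bigl(m(d(a)),s,\Phi_\kappa\bigr).
\]

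The final step is to differentiate both sides with respect to $s$ and set $s=0$. On the left, Leibniz's rule gives $\tfrac12 W_T(m(d(a)),0,\lambda(\psi_{11}^*)) + \tfrac{s}{2}W_T'(\cdots)\big|_{s=0} = \tfrac12 W_T(m(d(a)),0,\lambda(\psi_{11}^*))$, since the second term vanishes at $s=0$. Here one must check that $W_T(m(d(a)),s,\lambda(\psi_{11}^*))$ is holomorphic near $s=0$ — this is where I expect the only real subtlety to lie, though it follows from the same analysis of the $\eta$-integral (via \eqref{eq:weta} and \cite[Theorem 4.2]{Sh}) that underlies Proposition~\ref{prop:whitt0}, since $\psi_{11}^*$ is a fixed Schwartz function and $\lambda(\psi_{11}^*)$ differs from a weight-$(\kappa-\tfrac12\cdot\text{stuff})$ section by a polynomial, so no pole is introduced. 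On the right, since $W_T(m(d(a)),0,\Phi_\kappa)=0$ by Proposition~\ref{prop:whitt0}(i) (recall $\sig(T)$ has a negative part because $n=m+1>\dim$ of the positive part), differentiating the operator applied to $W_T$ at $s=0$ yields $\bigl(2\pi T_1 a^2-\tfrac{\kappa}{2}+\tfrac{a}{2}\tfrac{\partial}{\partial a}\bigr)W_T'(m(d(a)),0,\Phi_\kappa)$ — the $T_1a^2$ and $\kappa/2$ terms contribute nothing extra since they multiply $W_T(m(d(a)),0,\Phi_\kappa)=0$, and the $\tfrac{a}{2}\partial/\partial a$ term likewise kills the $s$-independent evaluation. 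Equating the two and multiplying by $2$ gives exactly
\[
W_{T}\bigl(m(d(a)),0,\lambda(\psi_{11}^*)\bigr)
=2\Bigl(2\pi T_1 a^2-\tfrac{\kappa}{2}+\tfrac{a}{2}\tfrac{\partial}{\partial a}\Bigr)W'_T\bigl(m(d(a)),0,\Phi_\kappa\bigr),
\]
which is the claim. The main obstacle, as noted, is justifying the holomorphy of the $\lambda(\psi_{11}^*)$-Whittaker function at $s=0$ and the vanishing at $s=0$ of the $\Phi_\kappa$-Whittaker function along the way, both of which reduce to already-established facts about Shimura's confluent hypergeometric functions.
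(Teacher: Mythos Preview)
Your proposal is correct and follows essentially the same route as the paper: combine Corollary~\ref{cor:e11} with Proposition~\ref{prop:e11} specialized to $g=m(d(a))$, use $\rho_n=\kappa$, and exploit the vanishing of $W_T(m(d(a)),0,\Phi_\kappa)$ from Proposition~\ref{prop:whitt0}(i). The only cosmetic difference is that the paper writes the identity as $W_T(m(d(a)),s,\lambda(\psi_{11}^*))=\tfrac{2}{s}\bigl(2\pi T_1 a^2-\tfrac{\kappa}{2}+\tfrac{a}{2}\tfrac{\partial}{\partial a}\bigr)W_T(m(d(a)),s,\Phi_\kappa)$ and evaluates at $s=0$, whereas you multiply through by $s/2$ and differentiate---these are equivalent, and your explicit mention of the holomorphy of $W_T(g,s,\lambda(\psi_{11}^*))$ at $s=0$ makes transparent what the paper uses implicitly. (Your parenthetical explanation of why the operator terms ``contribute nothing extra'' is unnecessary: since the differential operator is independent of $s$, it commutes with $\partial_s$, so $\partial_s\bigl[(\text{operator})W_T\bigr]=(\text{operator})\partial_s W_T$ directly.)
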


\begin{proof}
Using Corollary \ref{cor:e11}
and Proposition \ref{prop:e11},
 we see
\begin{align*}W_T(m(d(a)),s,\lambda(\psi_{11}^*))
&=2(s+\rho_n-\kappa)^{-1}\cdot
\left(2\pi T_1 a^2-\frac{\kappa}{2}+\frac{a}{2}
\frac{\partial}{\partial a}\right)
W_T\left(m(d(a)),s,\Phi_\kappa\right)
.
\end{align*}
Since $n=m+1$, we have $\rho_n=\kappa$. Moreover, because of the signature of $V$, the matrix $T$ is not positive definite. Hence, according to Proposition \ref{prop:whitt0}, the Whittaker function on the right hand side vanishes at $s=0$. This implies the assertion.
\end{proof}


\subsection{The main term of the local height revisited}
Here we combine the results of the previous two subsections with the asymptotic properties of Whittaker functions derived in Section~\ref{sect:4.2}.

\begin{proof}[Proof of Theorem \ref{thm:pre-alsw}]
Recall that $n=m+1$, $x\in V^n$, and that $T=Q(x)$ is invertible. We have to show that
\[
e^{-2\pi \tr T}\cdot \Ht_\infty^\main(x)=-B_{n,\infty}\cdot W_T'(1,0,\Phi_\kappa)
+B_{n-1,\infty} e^{-2\pi (\tr T -\tr \tilde T_2)} \cdot W'_{\tilde T_2}(1,0,\Phi_{\kappa-1/2}),
\]
where $\tilde T_2$ is defined by \eqref{eq:tt}.
According to Proposition \ref{prop:3.8} we know that
\[
\Ht_\infty^\main(x) = B_{n,\infty} e^{2\pi \tr T_2}
\int_1^\infty W_{d(a)T d(a)}(1,0,\lambda(\psi_{11}^*))\cdot e^{2\pi Q(x_1 a)
}\,\frac{da}{a}.
\]
Inserting \eqref{eq:i1} and the formula of Corollary \ref{cor:key}, we obtain
\begin{align*}
\Ht_\infty^\main(x) &= 2 B_{n,\infty} e^{2\pi \tr T_2}
\\&\phantom{=}{}\times
\int_1^\infty
a^{-\rho_n}\left( \left( 2\pi T_1 a^2-\frac{\kappa}{2}+\frac{a}{2}\frac{\partial}{\partial a}\right) W'_T(m(d(a)),0,\Phi_\kappa)\right)
e^{2\pi Q(x_1 a)}\,\frac{da}{a}.
\end{align*}
Noticing that
\begin{align*}
&2a^{-\rho_n-1}e^{2\pi Q(x_1 a)} \left( 2\pi T_1 a^2-\frac{\kappa}{2}+\frac{a}{2}\frac{\partial}{\partial a}\right) W'_T(m(d(a)),0,\Phi_\kappa)\\
&= \frac{\partial}{\partial a} \left( W'_T(m(d(a)),0,\Phi_\kappa)  e^{2\pi Q(x_1 a)} a^{-\rho_n}\right),
\end{align*}
we find
\begin{align*}
\Ht_\infty^\main(x) &=  B_{n,\infty} e^{2\pi \tr T_2}
\int_1^\infty \frac{\partial}{\partial a} \left( W'_T(m(d(a)),0,\Phi_\kappa)  e^{2\pi Q(x_1 a)} a^{-\rho_n}\right)\,da\\
&=  B_{n,\infty} e^{2\pi \tr T_2}\left(
-W_T'(1,0,\Phi_\kappa)e^{2\pi Q(x_1)}+\lim_{a\to \infty}
W'_T(m(d(a)),0,\Phi_\kappa)  e^{2\pi Q(x_1 a)} a^{-\rho_n}\right)\\
&= -B_{n,\infty} e^{2\pi \tr T}\cdot W_T'(1,0,\Phi_\kappa)
\\ &\phantom{=}{}
+  B_{n,\infty} e^{2\pi \tr T_2}\left(\lim_{a\to \infty}
W'_T(m(d(a)),0,\Phi_\kappa)  e^{2\pi Q(x_1 a)} a^{-\rho_n}\right).
\end{align*}
We now employ Corollary
\ref{cor:wderivasy} to evaluate the limit on the right hand side.
We obtain
\begin{align*}
\Ht_\infty^\main(x)&= -B_{n,\infty} e^{2\pi \tr T}\cdot W_T'(1,s_0,\Phi_\kappa) \\
&\phantom{=}{}+
\begin{cases} \displaystyle\frac{B_{n,\infty} i^{-n}(2\pi)^{\rho_n}}{\Gamma(\rho_{n})}
\cdot e^{2\pi (\tr T_2 -{}^tT_{12}T_1^{-1}T_{12})}\,
W'_{\tilde T_2 }
(1,0,\Phi_{\kappa-1/2}) ,&\text{if $T_1>0$,}\\[2ex]
0,&\text{if $T_1\leq 0$.}
\end{cases}
\end{align*}
Hence the claim follows from \eqref{eq:Bquotient}.
\end{proof}

\subsection{An alternative proof of Proposition \ref{prop:LocalSiegel-WeilR}}

\label{sect:5.1}

Here we use Corollary \ref{cor:e11} and Proposition \ref{prop:geolsw} to give an alternative way of computing the constant $B_{n,\infty}$ appearing in Proposition
~\ref{prop:LocalSiegel-WeilR}.
Assume that $n=m+1$.
Let $\phi_\infty(x, z) \in S(V^n)\otimes C^\infty(\calD)$ with $\phi_\infty(hx, hz) = \phi_\infty(x, z)$ for all $z \in \mathcal D$, $x \in V^n$ and $h \in H(\R)$.
Then by Theorem \ref{theo:LocalSiegel-Weil} we know that
\begin{align}
\label{eq:lsw-1}
\int_{\mathcal D} \phi_\infty(x, z) \,\Omega^m =B_{n, \infty}  \cdot  W_{T}(1, 0, \lambda(\phi_\infty))
\end{align}
for {\em some} non-zero constant $B_{n,\infty}$, which is independent of $\phi_\infty(x,z)$ and $T=Q(x)$. To compute $B_{n,\infty}$ we pick the special Schwartz function
\[
\psi_{11}^*(x,z)=\varphi_G(x_1,z)\cdot \varphi_{KM}^{n-1,*}((x_2,\dots,x_n),z)
\]
as in \eqref{eq:psi11}.
Evaluating \eqref{eq:lsw-1} in the limit $x_1\to 0$ and using the fact that $\varphi_G(0,z)=1$, we obtain
\begin{align}
\label{eq:lsw-2}
\int_{\mathcal D} \varphi_{KM}^{m}((x_2,\dots,x_n),z) =B_{n, \infty}  \cdot  W_{T}(1, 0, \lambda(\psi_{11}^*)),
\end{align}
where $T=\kzxz{0}{0}{0}{T_2}$ and $T_2=Q((x_2,\dots,x_n))$.
The left hand side of \eqref{eq:lsw-2} is given by Proposition \ref{prop:geolsw}. If $T_2>0$ we have
\begin{align}
\label{eq:deg0}
\int_{\mathcal D} \varphi_{KM}^{m}((x_2,\dots,x_n),z) = 2e^{-2\pi \tr T_2}.
\end{align}
We now compute the right hand side of \eqref{eq:lsw-2}.

\begin{lemma}
\label{lem:singwhitt}
For $T=\kzxz{0}{0}{0}{T_2}$ with $T_2>0$ as above, we have
\begin{align*}
W_{T}(1, 0, \lambda(\psi_{11}^*))&=-\frac{
2^{\frac{n}{2}(\rho_n +1)}}{\Gamma_n(\rho_n) }\left(\frac{\pi}{i}\right)^{n\rho_n}
e^{-2\pi \tr T_2}.
\end{align*}
\end{lemma}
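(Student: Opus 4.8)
The plan is to reduce the statement, by the Lie--algebra identities of Corollary~\ref{cor:e11} and Proposition~\ref{prop:e11}, to the evaluation of the \emph{degenerate} Whittaker function $W_T(1,0,\Phi_\kappa)$ for the standard section, and then to compute the latter by means of Shimura's formulas for the confluent hypergeometric function. For the first step, note that $n=m+1$ forces $\kappa=\rho_n$, so Corollary~\ref{cor:e11} reads $r(p_-(e_{11}))W_T(g,s,\Phi_\kappa)=\tfrac{s}{2}W_T(g,s,\lambda(\psi_{11}^*))$. I would evaluate the left--hand side at $g=1$ using Proposition~\ref{prop:e11}. Since $T=\kzxz{0}{0}{0}{T_2}$ has vanishing first row and column, one has ${}^td(r)Td(r)=T$ for all $r$, and because $\det d(r)=r>0$ the transformation law \eqref{eq:i1} gives $W_T(m(d(r)),s,\Phi_\kappa)=r^{\rho_n-s}W_T(1,s,\Phi_\kappa)$; moreover $\tr(Te_{11})=T_{11}=0$, and at $a=1$ the operator $\sum_i a_{i1}\partial_{a_{i1}}$ reduces to $\partial_{a_{11}}$. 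Hence Proposition~\ref{prop:e11} yields
\[
r(p_-(e_{11}))W_T(m(a),s,\Phi_\kappa)\big|_{a=1}
=\Bigl(-\tfrac{\kappa}{2}+\tfrac12(\rho_n-s)\Bigr)W_T(1,s,\Phi_\kappa)=-\tfrac{s}{2}\,W_T(1,s,\Phi_\kappa),
\]
and comparing with Corollary~\ref{cor:e11} and cancelling $s/2$ gives $W_T(1,s,\lambda(\psi_{11}^*))=-W_T(1,s,\Phi_\kappa)$ for all $s$; in particular $W_T(1,0,\lambda(\psi_{11}^*))=-W_T(1,0,\Phi_\kappa)$.

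It remains to show that $W_T(1,0,\Phi_\kappa)$ is given by the nonsingular formula of Proposition~\ref{prop:whitt0}(ii) with $\tr T$ replaced by $\tr T_2$, i.e.\ $W_T(1,0,\Phi_\kappa)=\frac{(-2\pi i)^{n\rho_n}2^{-n(n-1)/4}}{\Gamma_n(\rho_n)}e^{-2\pi\tr T_2}$. Using \eqref{eq:weta}, $W_T(1,s,\Phi_\kappa)=c_n(\rho_n+\tfrac s2,\tfrac s2)\,\eta^{(n)}(2\cdot 1_n,\pi T,\rho_n+\tfrac s2,\tfrac s2)$, and since $\pi T\ge 0$ the domain in \eqref{eq:eta} equals $\{u>\pi T\}$; substituting $u=\pi T+w$ gives
\[
\eta^{(n)}\bigl(2\cdot 1_n,\pi T,\rho_n+\tfrac s2,\tfrac s2\bigr)=e^{-2\pi\tr T_2}\int_{w>0}e^{-2\tr w}\det(2\pi T+w)^{s/2}\det(w)^{s/2-\rho_n}\,dw.
\]
Writing $w$ in block form as in \eqref{eq:varblock} and using Lemma~\ref{lem:block} to factor the two determinants, I would integrate out the first diagonal entry of $w$ (Euler's integral, producing $\Gamma(\tfrac s2-\tfrac{n-1}{2})$) and the off--diagonal block (Lemma~\ref{lem:gaussian}); what remains is a genus--$(n-1)$ $\eta$--integral with the positive definite matrix $\pi T_2$ as argument. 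Invoking the identity $\Gamma_n(\tfrac s2)=\pi^{(n-1)/2}\Gamma_{n-1}(\tfrac s2)\Gamma(\tfrac s2-\tfrac{n-1}{2})$ together with the genus--$(n-1)$ case of Proposition~\ref{prop:whitt0}(ii) (which amounts to $\lim_{s\to0}\Gamma_{n-1}(\tfrac s2)^{-1}\eta^{(n-1)}(2\cdot 1_{n-1},\pi T_2,\rho_{n-1}+\tfrac s2,\tfrac s2)=e^{-2\pi\tr T_2}$), and checking that the elementary powers of $2$ and $\pi$ all cancel while the error terms produced in the reduction carry an extra factor $s$ and hence vanish after dividing by $\Gamma_n(\tfrac s2)$, one obtains $\lim_{s\to0}\Gamma_n(\tfrac s2)^{-1}\eta^{(n)}(2\cdot 1_n,\pi T,\rho_n+\tfrac s2,\tfrac s2)=e^{-2\pi\tr T_2}$, and hence the asserted value of $W_T(1,0,\Phi_\kappa)$.

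Finally, combining the two steps with $(-2\pi i)^{n\rho_n}=2^{n\rho_n}(\pi/i)^{n\rho_n}$ (as $i^{-1}=-i$) and $n\rho_n-\tfrac{n(n-1)}{4}=\tfrac{n(n+3)}{4}=\tfrac n2(\rho_n+1)$ gives the claimed identity. The main obstacle is the second step: the map $T\mapsto W_T(1,0,\Phi_\kappa)$ is genuinely discontinuous across the singular locus (it vanishes when $T$ is approached along indefinite matrices, by Proposition~\ref{prop:whitt0}(i), but not along positive definite ones), so the value cannot simply be read off from a naive limit and one must keep careful track of the rank reduction of the $\eta$--integral; alternatively, one can justify continuity of $W_T(1,0,\Phi_\kappa)$ restricted to the positive semidefinite cone from the integral representation of the suitably normalized $\eta$--function (cf.~\cite[Section~4]{Sh}) and then apply Proposition~\ref{prop:whitt0}(ii) to $\kzxz{\epsilon}{0}{0}{T_2}$ as $\epsilon\to0^+$.
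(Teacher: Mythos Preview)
Your first step---deriving $W_T(1,s,\lambda(\psi_{11}^*))=-W_T(1,s,\Phi_\kappa)$ from Corollary~\ref{cor:e11}, Proposition~\ref{prop:e11}, and the transformation law~\eqref{eq:i1}---is correct and essentially identical to the paper's argument.

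Your second step, however, differs from the paper and has a genuine gap. The paper computes $W_T(1,0,\Phi_\kappa)$ via Shimura's $\omega$-function, invoking \cite[(4.6.K)]{Sh} (which gives the precise $\omega$--$\eta$ relation for a corank-one $T$), then \cite[Theorem~4.2]{Sh} and \cite[(4.35.K)]{Sh} to evaluate $\omega(2,\pi T,\rho_n,0)$. Your direct reduction of the $\eta^{(n)}$-integral is natural, but the details you give are incorrect: carrying out the block decomposition and the Gaussian integral in $w_{12}$ carefully, the $w_1$-integral produces $2^{-s}\Gamma(s)$, not $\Gamma(\tfrac{s}{2}-\tfrac{n-1}{2})$, and the remaining genus-$(n-1)$ integral is $\eta^{(n-1)}(2,\pi T_2,\rho_{n-1}+\tfrac{s}{2},\tfrac{s}{2}-\tfrac12)$, not with second parameter $\tfrac{s}{2}$. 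More seriously, formula~\eqref{eq:weta} is only stated for $\det T\neq 0$; for your degenerate $T$ the integrand in~\eqref{eq:eta} acquires an extra boundary singularity (since $\det(u+\pi T)\to 0$ as $u\to\pi T$), so the naive integral need not agree with the analytic continuation appearing in Shimura's identity. Indeed, carrying out your reduction with the corrected gamma factors yields an answer differing from the lemma by a factor of $2$, confirming that~\eqref{eq:weta} cannot be applied blindly to singular $T$. Your alternative suggestion at the end---continuity of $T\mapsto W_T(1,0,\Phi_\kappa)$ along the positive semidefinite cone, combined with Proposition~\ref{prop:whitt0}(ii) for $\operatorname{diag}(\epsilon,T_2)$ as $\epsilon\to 0^+$---would give the correct answer, but the continuity claim must be justified and you do not do so.
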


\begin{proof}
We first show that for $T=\kzxz{0}{0}{0}{T_2}$, we have
\begin{align}
\label{eq:deg1}
W_{T}(1, s, \lambda(\psi_{11}^*)) = -W_{T}(1, s, \Phi_\kappa).
\end{align}
In fact,
using the notation of Corollary \ref{cor:e11} and Proposition \ref{prop:e11}, we have
\begin{align*}W_T(1,s,\lambda(\psi_{11}^*))
&=\frac{2}{s}
\cdot \big(
r(p_-(e_{11}))W_{T}(g, s, \Phi_\kappa)\big)\mid_{g=1}\\
&=\frac{2}{s}\cdot
\left(2\pi \tr ( T e_{11} \,{}^t) -\frac{\kappa}{2}+\frac{1}{2}
\frac{\partial}{\partial a_{11}}\right)
W_T\left(m(a),s,\Phi_\kappa\right)\mid_{a=1}
.
\end{align*}
Here we have also used that $n=m+1$ and therfore $\rho_n=\kappa$.
Taking into account the  special form of $T$ and the transformation law \eqref{eq:i1}, we deduce
\begin{align*}
W_T(1,s,\lambda(\psi_{11}^*))
&=\frac{2}{s}\cdot
\left(-\frac{\kappa}{2}+\frac{1}{2}
\frac{\partial}{\partial a_{11}}\right)\big(a_{11}^{\rho_n-s}
W_T\left(1,s,\Phi_\kappa\right)\big)\mid_{a_{11}=1}\\
&=
- W_T\left(1,s,\Phi_\kappa\right)
.
\end{align*}

Next we compute $W_T(1,0,\Phi_\kappa)$ for $T=\kzxz{0}{0}{0}{T_2}$. 
According  \cite[(4.6.K)]{Sh} and \eqref{eq:weta} the function $\omega(g,h,\alpha,\beta)$ of \cite{Sh} satisfies
\begin{align*}
\omega(2\cdot 1_n,\pi T,\alpha,\beta)&=
2^{-(n-1)\alpha} \frac{|2\pi T_2|^{\rho_n-\alpha}|2\cdot 1_n|^{\alpha+\beta-\rho_n}}{\Gamma_{n-1}(\beta-\frac{1}{2})\Gamma_1(\alpha+\beta-\rho_n)}\eta(2\cdot 1_n,\pi T,\alpha,\beta)\\
&=
2^{-(n-1)\alpha} \frac{|2\pi T_2|^{\rho_n-\alpha}|2\cdot 1_n|^{\alpha+\beta-\rho_n}}{c_n(\alpha,\beta)\Gamma_{n-1}(\beta-\frac{1}{2})\Gamma_1(\alpha+\beta-\rho_n)}W_T(1_n,s,\Phi_\kappa)\\
&= 2^{-\rho_n(\frac{n}{2}+1)+n\beta-\frac{n}{2}+\alpha(2-n)}
\pi^{\alpha(1-n)-1} i^{n(\alpha-\beta)} {{|T_2|}}^{\rho_n-\alpha}\\
&\phantom{=}{}\times\frac{\Gamma(\beta)\Gamma_n(\alpha)}{\Gamma(\alpha+\beta-\rho_n)} W_T(1_n,s,\Phi_\kappa).
\end{align*}
Here, in the latter equality, we have also used \eqref{eq:mg2}.
We find
\begin{align*}
\omega(2,\pi T,\rho_n,0)&= 2^{-\frac{3}{2}n\rho_n +\rho_n-\frac{n}{2}}
\pi^{\rho_n(1-n)-1} i^{n\rho_n}\Gamma_n(\rho_n) W_T(1,0,\Phi_\kappa).
\end{align*}
On the other hand, by \cite[Theorem 4.2]{Sh} and  \cite[(4.35.K)]{Sh}, we have
\begin{align*}
\omega(2,\pi T,\rho_n,0)&= \omega(2,\pi T, \rho_n+\frac{1}{2}, \frac{1}{2})=  2^{-(n-1)\rho_n}\pi^{\frac{n-1}{2}}e^{-2\pi \tr T} ,
\end{align*}
and therefore
\begin{align}
\label{eq:deg2}
e^{-2\pi \tr T} = 2^{-\frac{n}{2}(\rho_n +1)}
\pi^{-n\rho_n } i^{n\rho_n}\Gamma_n(\rho_n) W_T(1,0,\Phi_\kappa).
\end{align}
Putting this identity into \eqref{eq:deg1}, we obtain the assertion.
\end{proof}

Combining \eqref{eq:lsw-2}, \eqref{eq:deg0},  and Lemma \ref{lem:singwhitt}, we find
\begin{align*}
B_{n,\infty} = -
2^{\frac{4-n^2-3n}{4}}\left(\frac{i}{\pi}\right)^{n\rho_n}\Gamma_n(\rho_n) .
\end{align*}
In particular, we have $B_{n,\infty}/B_{n-1,\infty}= i^n\frac{\Gamma(\rho_n)}{(2\pi)^{\rho_n}}$, and $B_{1,\infty}= \frac{1}{\pi i}$, and $B_{2,\infty}= \frac{i}{4\sqrt{2}\pi^2}$.


\section{The local arithmetic Siegel-Weil formula at an
odd prime $p$}
\label{sect:LASW-finite}


In this section we assume that $p \ne 2$ is a prime.
Let $W=W(\kay)$ be the Witt ring of $\kay$ and $\K= W_{\Q} $ be the fraction field of $W$,  which is the completion of the maximal unramified extension of $\Q_p$. Let $\sigma$ be the Frobenius of $W$ (such that its reduction to $\kay$ is the Frobenius $x\mapsto x^p$).

Let $L$ be a unimodular quadratic lattice over $\Z_p$ of rank $n+1$ and put $V=L_{\Q_p}$. Let $C(L)$ be the Clifford algebra of $L$, and let  $D(L) =\Hom (C(L), \Z_p)$ be its dual. We write
$\HH =\GSpin(L)$ for the general Spin group over $\Z_p$, and notice that $\HH(\Z_p) \subset C(L)^\times$ acts on $C(L)$ via left multiplication and and thus acts on $D_L$.
Let $\iota$ be the main involution on  $C(V)$ which fixes $V$ point-wise. If $\delta \in  C(V)^\times$ with $\delta^\iota=-\delta$, then $\psi_\delta(x, y) =\tr (x \delta y^\iota)$ defines a  non-degenerate  symplectic form on $C(V)$. We will require that $\delta \in  C(L)$ and $\delta \delta^\iota \in \Z_p^\times$, which implies  that $C(L)$ is unimodular under this symplectic form. This induces an embedding
\begin{equation} \label{eq:embedding}
i= i_\delta: \,   \HH \rightarrow  \hbox{GSp}(C(L)).
\end{equation}
It is also known that  $\HH$ is `cut out' by a family tensors
 $(s_\alpha)$,  $s_\alpha \in C(L)^\otimes$, in the sense  that for any $\Z_p$ algebra $R$ we have
$$
\HH(R) =\{ h \in  \GSp(C(L))(R)\mid\; h s_\alpha =s_\alpha\}.
$$

\subsection{The local unramified Shimura datum and the Rapoport-Zink space  associated to $\tilde H$}

\label{sect:RZSpace}

Here we set up some notation for the rest of this section. We recall the construction of an unramified local Shimura datum for $\tilde H$ due to Howard and Pappas, and the associated Rapoport-Zink space. We refer to \cite{HP} for details.

Choose a $\Z_p$-basis $e =\{ e_1, \dots, e_{n+1}\}$ of $L$ with Gram matrix
 \begin{equation} \label{eq6.4}
( (e_i, e_j)) = \diag(I_{n-2}, \epsilon_L,  \begin{pmatrix} 0 &1 \\ 1& 0 \end{pmatrix}),
\end{equation}
with $\epsilon_L =-\det L$.  Define
\begin{align}
\mu&:  \mathbb G_m \rightarrow \HH, \quad t \mapsto  \mu(t) = t^{-1} e_{n} e_{n+1} + e_{n+1} e_n \in \HH,  \label{eq6.5}
\\
 b&=  e_{n-1} (p^{-1} e_n + e_{n+1}) \in \HH(\Q_p) \subset \HH(\K). \label{eq6.6}
\end{align}
Then $(\tilde H, [b], \{\mu\}, C(L))$ is the local unramified Shimura datum constructed by Howard and Pappas in \cite[Section 4]{HP} for $\tilde H$. Here $\{\mu\}$ is the conjugacy class of the cocharacter $\mu$ under $\tilde H(\K)$, and $[b]$ is the $\sigma$-conjugacy class of the basic element $b$, i.e., the set of elements $h^\sigma b h^{-1}$ with $h \in  \HH(\K)$.  Associated to $b$ there are two isocrystals
$$
(V_\K=V\otimes_{\Q_p}\K, b \circ \sigma),   \quad \hbox{ and } \quad  (D_\K = D(L)\otimes_{\Z_p}\K, b \circ \sigma).
$$
Let
$$
\newV =V_\K^{b\circ \sigma},  \quad \hbox{ and } \quad \newL = (L \otimes_{\Z_p} W)^{b \circ \sigma}.
$$
A direct calculation shows that
 $\newL$ has a $\Z_p$-basis $e'=\{ e_1', \dots, e_{n+1}'\}$ with Gram matrix
\begin{equation} \label{eq:LatticeLambda}
( (e_i', e_j')) = \diag( I_{n-2}, \epsilon_{\newL}, p, -p u)
\end{equation}
where $u \in \Z_p^\times$ with $(p, u)=-1$, and $ -u\epsilon_{\newL}= \epsilon_L$. We can actually take $e_i'=e_i$ for $i\le n-2$.  In particular, $\newV=\newL \otimes_{\Z_p} \Q_p$
is a quadratic space over $\Q_p$ with the same dimension and the same determinant, but with  opposite Hasse invariant as $V$.  

According to \cite[Lemma 2.2.5]{HP}, there is a unique $p$-divisible group
\[
\mathbb X_0=\mathbb X_0(\tilde H, [b], \{ \mu \}, C(L))
\]
over $\kay$  whose contravariant Dieudonn\'e module is $\mathbb D (\mathbb X_0)(W)  \cong D_W =D(L) \otimes_{\Z_p} W$ with Frobenius
$F=b \circ \sigma$. Moreover, the Hodge filtration  on $\mathbb D(\mathbb X_0)(\kay)$ is induced by $\mu_{\kay}$ (up to conjugation). The symplectic form $\psi_\delta$ induces a principal polarization $\lambda_0$ on $\mathbb X_0$.

Let $\RZ(\mathbb X_0, \lambda_0)$ be the Rapoport-Zink space  associated to $\hbox{GSp}(C(L), \psi_\delta)$, see \cite{RZ} and \cite[Section 2.3]{HP}.
It  is
a smooth formal scheme over $\hbox{Spf}(W)$ representing  the moduli problem over $\hbox{Nilp}_W$ of triples $(X, \lambda, \rho)/S$,   where $S$ is a formal scheme over $W$ on which  $p$ is Zariski locally nilpotent, $(X, \lambda)$ is a $p$-divisible group with principal polarization $\lambda$, and $\rho$ is a quasi-isogeny
$$
\rho:  \mathbb X_0 \times_{\kay} \bar{S} \dashrightarrow X\times_S \bar S, \quad  \bar S = S\times_W \kay,
$$
which respects polarization  up to a scalar,  in the sense that Zariski locally on $\bar S$, we have
$$
\rho^\vee \circ \lambda \circ \rho= c(\rho)^{-1} \lambda_0, \quad c(\rho) \in \Q_p^\times.
$$
Let
$\RZ= \RZ(\tilde H,  [b], \{\mu\},  C(L))$ be  the GSpin Rapoport-Zink space constructed in \cite[Section 4]{HP}. This space comes with a closed immersion $\RZ\to \RZ(\mathbb X_0, \lambda_0)$, and by restricting the universal object one obtains a
is a universal triple $(X^{\univ}, \lambda^{\univ} , \rho^{\univ})$ over $\RZ$.  The universal quasi-isogeny preserves the polarization  only up to scalar, which  induces  a decomposition of $\RZ$ as a union of  open and closed formal subschemes
\begin{equation}
\RZ =\bigsqcup_l \RZ^{(l)},
\end{equation}
where $\RZ^{(l)} \subset \RZ$ is cut out by the condition $\ord_p c(\rho^{\univ}) =l \in \Z$.
 According to \cite[Section 7]{HP} (see also Section \ref{sect:ASW-finite} here), $\RZ$  can be used to uniformize the supersingular locus at $p$ of some Shimura variety associated with $(\tilde H,\calD)$.

Notice that $V_{\K}$ acts on $C(V)_{\K}$ via right multiplication, which induces an action on the isocristal $D_{\K} $.  This gives an embedding  $V_{\K} \subset  \End (D_{\K})$.
Moreover,   $\newV=V_{\K}^{b \circ \sigma} \subset \End (D_{\K})$  commutes with the Frobenius $F= b\circ \sigma $. Since  $D_\K\cong \mathbb D(\mathbb X_0)(\K)$,  we obtain an embedding  $\newV  \subset \End^0(\mathbb X_0)$. We call $\newV$ the {\em special endomorphism space} of $\mathbb X_0$ following \cite{HP}.

Let $\tilde\newH$ be the algebraic group $\GSpin(\newV)$. Then
 $\tilde\newH(\Q_p)=\{ h\in \tilde H(\K)\mid \; hb =b\sigma(h)\}$ acts by automorphisms on $D_\K $, giving rise to a quasi-action  on  $\mathbb X_0$. This quasi-action has the property
$$
c(h \rho) = \mu_{\tilde \H}(h) c(\rho),
$$
where $\mu_{\tilde \H}$ is the spin character of $\tilde \H$.
So $h \in \tilde\H(\Q_p)$ induces an isomorphism $\RZ^{(l)} \cong \RZ^{(l+\ord_p \mu_{\tilde\H}(h))}$.  In  particular, we have
\begin{equation}
p^\Z \backslash \RZ \cong \RZ^{(0)} \bigsqcup \RZ^{(1)}.
\end{equation}
According to \cite[Corollary 7.8]{Shen},  $\overline{\RZ}=p^\Z \backslash \RZ$ is exactly the Rapoport-Zink space of $H$ associated to the basic local unramified Shimura datum induced from the datum $(\tilde H, [b],\{\mu\}, C(L))$.

Finally, let $ J \subset \newV$ be an  integral $\Z_p$-submodule of rank $ 1 \le r \le n$. We define the special cycle $\mathcal Z(J)$, following Soylu \cite{Cihan-thesis}, as
  the formal subscheme of $\RZ$ cut out by the condition
\begin{equation} \label{eq:SpecialCycle}
\rho \circ J \circ \rho^{-1} \subset  \End(X) .
\end{equation}
Here, for an $S$-point $\alpha:  S \rightarrow \RZ$,  $X = \alpha^*(X^{\univ})$ and $\rho =\alpha^*(\rho^{\univ})$ are the pull-backs of  the universal objects.

If $J$ has a ordered $\Z_p$-basis $x =(x_1, \dots, x_r) \in \newV^r$,
we also denote  $\mathcal Z(J) =\mathcal Z(x)$. The moment matrix
$T =Q(x) = \frac{1}2( (x_i, x_j)) $ in $\Sym_{r}(\Q_p)$ is
determined by $J$ up to $\Z_p$-equivalence. Soylu gave an explicit formula of the dimension of the reduced scheme $\mathcal Z(J)^{\red}$ underlying
$\mathcal Z(J)$ in terms of $T$ and $L$, see \cite[Section 4.2]{Cihan-thesis}.

The purpose of this section is to prove a local arithmetic Siegel-Weil formula for $\mathcal Z(J)$. We show that when
$\mathcal Z(J)$ is $0$-dimensional, the local height of each  point $P \in \mathcal Z(J)$ depends only on $T$, not on the choice of the point $P$, and is equal to the central derivative of some local Whittaker function (Theorem \ref{theo:LASW-finite}). 

\subsection{Dual vertex lattices and decomposition of the Rapoport-Zink space}
\label{sect:6.2}

A $\Z_p$-lattice $\Lambda \subset \newV=V_\K^{b\circ\sigma}$ is called a \emph{dual vertex} lattice if $p\Lambda' \subset \Lambda \subset \Lambda'$, i.e., its dual   $\Lambda'$ is a vertex lattice in the sense of  \cite{HP}. Let
$$
\Omega_\Lambda=\Lambda'/\Lambda, \quad Q_\Lambda(x) =pQ(x) \mod \Z_p,
$$
be the associated quadratic space over $\mathbb F_p$. Then $t_\Lambda =\dim_{\mathbb F_p} \Omega_\Lambda$ is called the type  number of $\Lambda$.
Let  $\bar{\Omega}_\Lambda =\Omega_\Lambda \otimes_{\mathbb F_p}\kay$.
According to \cite[Section 5.3]{HP},
 there is a projective variety $S_{\Lambda}$ over $\kay$ such that
$$
S_\Lambda(\bar{\mathbb F}_p) = \left\{ \mathfrak L \subset \bar \Omega_\Lambda:\; \text{maximal isotropic and  $\dim(\mathfrak L + \hbox{Frob}(\mathfrak L)) =t_\Lambda/2 +1$}\right\}.
$$
 Moreover, $S_\Lambda =S_{\Lambda}^+ \cup S_{\Lambda}^-$ has two connected components, both of which are smooth  and projective of dimension $t_{\Lambda}/2 -1$

For a dual vertex lattice $\Lambda$ of $\newV$, let $\RZ_\Lambda$ be the closed formal subscheme of $\RZ$  defined  by the condition
$$
\rho \circ \Lambda \circ \rho^{-1} \subset \End(X).
$$
The following theorem summarizes some of the basic properties of $\RZ_\Lambda$ and $\RZ$.  Assertions (1), (3), (4) are due to Howard and Pappas (see \cite{HP}, Proposition~5.1.2, Section~6.5, Remark~6.5.7). The second assertion is due to Li and Zhu \cite[Theorem 4.2.11]{LZ}.

\begin{theorem} The following are true.
\begin{enumerate}

\item
For a dual vertex lattice $\Lambda$, $t_\Lambda$ is even and
 $$
 t_\Lambda \le t_{\text{\rm max}} =
  \begin{cases}
    n &\ff n \hbox{ is even},
    \\
    n-1  &\ff n \hbox{ is odd},  \det L = (-1)^{\frac{n+1}2} ,
    \\
    n+1  &\ff n \hbox{ is odd},  \det L \ne (-1)^{\frac{n+1}2} .
    \end{cases}
 $$
Moreover,  $\Omega_\Lambda$ is the unique non-split space over $\mathbb F_p$ of dimension $t_\Lambda$, and every dual vertex lattice contains a `minimal' dual vertex lattice with $t_\Lambda =t_{\text{\rm max}}$. Moreover, $\RZ_\Lambda^{\red}$ is of  dimension $\frac{t_\Lambda}2-1$.

\item
The formal scheme $\RZ_\Lambda$ is reduced.

\item
One has
$$
\RZ= \bigcup_{t_\Lambda =t_{\text{\rm max}} } \RZ_\Lambda,
$$
and
$$
\RZ^{\red} =\bigsqcup_{\Lambda} \operatorname{BT}_\Lambda, \quad  \operatorname{BT}_\Lambda =\RZ_\Lambda- \bigcup_{\Lambda_1 \subsetneq \Lambda} \RZ_{\Lambda_1}.
$$

\item
Let $\overline{\RZ}_\Lambda =p^\Z \backslash \RZ_\Lambda$. Then
$$
\overline{\RZ}_\Lambda \cong S_\Lambda
$$
as projective varieties over $\kay$.
\end{enumerate}
\end{theorem}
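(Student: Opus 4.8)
The plan is to recognize that this theorem assembles facts that are already in the literature: assertions (1), (3) and (4) are due to Howard and Pappas \cite{HP}, and assertion (2) to Li and Zhu \cite{LZ}. Thus the proof consists of recalling the relevant structures attached to the local unramified Shimura datum $(\tilde H,[b],\{\mu\},C(L))$ of Section~\ref{sect:RZSpace} and checking that these results apply to it. I would organize it as follows.

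First, assertion (1) is purely a statement about lattices in the quadratic space $\newV$ over $\Q_p$, which by \eqref{eq:LatticeLambda} has the same dimension $n+1$ and determinant as $V$ but the opposite Hasse invariant. For a dual vertex lattice $\Lambda$ one takes a Jordan splitting $\Lambda=\Lambda_0\oplus\Lambda_1$ with $\Lambda_0$ unimodular and $p^{-1}\Lambda_1$ unimodular; then $\Omega_\Lambda\cong p^{-1}\Lambda_1/\Lambda_1$ carries a non-degenerate $\F_p$-valued quadratic form, so $t_\Lambda=\rank\Lambda_1$. A Hasse-invariant computation comparing $\Lambda_0$, $\Lambda_1$ and $\newV$ --- as in \cite{HP} --- shows that $t_\Lambda$ is even, that $\Omega_\Lambda$ is the unique non-split quadratic $\F_p$-space of its dimension, and gives the bound $t_\Lambda\le t_{\max}$ with $t_{\max}$ as displayed, as well as the existence, inside any dual vertex lattice, of a minimal one of type $t_{\max}$. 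The equality $\dim\RZ_\Lambda^{\red}=t_\Lambda/2-1$ is then \cite[Proposition~5.1.2]{HP}. For assertions (3) and (4) I would recall the Howard--Pappas description of $\RZ^{\red}(\kay)$ in terms of special lattices, i.e.\ self-dual $W$-lattices $M\subset\newV_{\K}$ with $(M+\sigma(M))/M\cong W/pW$: associating to $M$ the smallest dual vertex lattice $\Lambda(M)$ of $\newV$ with $M\subset\Lambda(M)\otimes_{\Z_p}W$ exhibits the Bruhat--Tits stratification $\RZ^{\red}=\bigsqcup_\Lambda\operatorname{BT}_\Lambda$ and the closed cover $\RZ=\bigcup_{t_\Lambda=t_{\max}}\RZ_\Lambda$, and identifies $\overline{\RZ}_\Lambda=p^{\Z}\backslash\RZ_\Lambda$ with the variety $S_\Lambda$ of maximal isotropic subspaces $\mathfrak L\subset\bar\Omega_\Lambda$ with $\dim(\mathfrak L+\sigma(\mathfrak L))=t_\Lambda/2+1$; that $S_\Lambda$ has two smooth projective components of dimension $t_\Lambda/2-1$ is part of \cite[Sections~5.3 and~6.5]{HP}.

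The one point that genuinely requires more than the point-counting above, and which I expect to be the main obstacle, is assertion (2): that $\RZ_\Lambda$ is reduced, and not merely that $\RZ_\Lambda^{\red}\cong S_\Lambda$. For this I would invoke \cite[Theorem~4.2.11]{LZ}, where $\RZ_\Lambda$ is studied via the theory of local models for the relevant special orthogonal group and is shown to be isomorphic, as a scheme, to a (visibly reduced) closed subscheme of an orthogonal Grassmannian. No independent argument for reducedness is attempted; the purpose of this section is only to have these inputs available in the form needed for the local arithmetic Siegel--Weil formula, Theorem~\ref{theo:p-ASW}.
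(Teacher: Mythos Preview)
Your proposal is correct and matches the paper's approach exactly: the paper does not prove this theorem but simply attributes (1), (3), (4) to Howard--Pappas \cite{HP} (Proposition~5.1.2, Section~6.5, Remark~6.5.7) and (2) to Li--Zhu \cite[Theorem~4.2.11]{LZ}, precisely as you do. Your additional sketch of the lattice-theoretic content of (1) and the special-lattice description underlying (3) and (4) goes somewhat beyond what the paper records, but this is only elaboration of the cited sources, not a different argument.
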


\begin{proposition}
\label{prop:SpecialLattice}  Up to $\Z_p$-isomorphism, there is a unique  dual vertex lattice $\Lambda(t)$ of type number $t$ for every even integer  $ 1< t=2r \le t_{\text{\rm max}}$, which is given by     $\Lambda(t) = \bigoplus \Z_p f_i$ with Gram matrix
\begin{equation}  \label{eq: StandardBasis}
((f_i, f_j)) = \diag(I_{n-t}, \alpha,  pI_{t-1}, p \beta )
\end{equation}
with  $\alpha, \beta \in \Z_p^\times/(\Z_p^{\times})^2$ satisfying
$$
(p,  (-1)^r \beta) =-1,  \quad  \hbox{ and  } \quad  \alpha \beta  =\det L  \mod (\Z_p^{\times})^2.
$$
In particular,  $\Lambda(2)$ is the lattice given by  (\ref{eq:LatticeLambda}).
\end{proposition}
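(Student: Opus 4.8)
The plan is to work entirely with the quadratic space $\Omega_\Lambda = \Lambda'/\Lambda$ over $\mathbb F_p$ and use the known classification of quadratic lattices over $\Z_p$ (for $p$ odd). First I would observe that a dual vertex lattice $\Lambda$ of type number $t$ sits in the chain $p\Lambda' \subset \Lambda \subset \Lambda'$, so $\Lambda/p\Lambda'$ has $\mathbb F_p$-dimension $n+1-t$ and is the radical of the induced (unimodular, after scaling) form, while $\Omega_\Lambda = \Lambda'/\Lambda$ carries the nondegenerate form $Q_\Lambda(x) = pQ(x) \bmod \Z_p$ of dimension $t$. By part (1) of the preceding theorem, $\Omega_\Lambda$ is forced to be the unique nonsplit (anisotropic-kernel-of-dimension-$2$) quadratic space over $\mathbb F_p$ of dimension $t$; this pins down the discriminant of $\Omega_\Lambda$ up to squares in $\mathbb F_p^\times$. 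Then I would invoke the Jordan splitting theorem for $\Z_p$-lattices with $p$ odd: $\Lambda$ decomposes orthogonally as $\Lambda_0 \perp p\Lambda_1$ where $\Lambda_0$ and $\Lambda_1$ are unimodular, of ranks $n+1-t$ and $t$ respectively (the rank of the $p$-modular part being exactly $t_\Lambda$ because $\Omega_\Lambda$ is nondegenerate of that dimension). A unimodular $\Z_p$-lattice of rank $k$ (for $p$ odd) is determined up to isomorphism by its rank and its discriminant class in $\Z_p^\times/(\Z_p^\times)^2$, so we may write $\Lambda_0 \cong \langle I_{n-t}, \alpha\rangle$ and $\Lambda_1 \cong \langle I_{t-1}, \beta\rangle$ for some $\alpha,\beta \in \Z_p^\times/(\Z_p^\times)^2$; this gives the asserted Gram matrix $\diag(I_{n-t},\alpha,pI_{t-1},p\beta)$.

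Next I would identify the two constraints on $(\alpha,\beta)$. The scaled $p$-modular part $\Lambda_1$ reduces modulo $p$ to $\Omega_\Lambda$ with the form $Q_\Lambda$, so the discriminant of $\Omega_\Lambda$ equals the image of $\beta$ (up to the normalization by $p$'s, which is a fixed power of $p$ absorbed into $Q_\Lambda$ being $p$ times the original form). The nonsplitness of $\Omega_\Lambda$ of dimension $t=2r$ translates, via the standard formula for the Hasse/discriminant invariant of the hyperbolic-plus-anisotropic-kernel decomposition over $\mathbb F_p$, into the condition that $(-1)^r\beta$ is a nonsquare mod $p$, i.e. $(p,(-1)^r\beta) = -1$ as a Hilbert symbol over $\Q_p$. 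For the second condition, I would compute $\det L$: since $\det\Lambda = \det\Lambda_0 \cdot \det(p\Lambda_1) = \alpha \cdot p^t\beta$ and $\det L$ differs from $\det\Lambda$ only by the fixed factor $p^t$ (because $[\Lambda':\Lambda]=p^t$ and $L$ is unimodular of the same dimension with $\Lambda,L$ in the same $\Q_p$-space — here I would use that $\newV$ has the same determinant as $V$, stated in Section~\ref{sect:RZSpace}), one gets $\alpha\beta \equiv \det L \bmod (\Z_p^\times)^2$. Conversely, given any $(\alpha,\beta)$ satisfying these two relations, the lattice $\diag(I_{n-t},\alpha,pI_{t-1},p\beta)$ is readily checked to be a dual vertex lattice of type $t$ inside a space isometric to $\newV$, and its isomorphism class is unique because $\alpha$ is then determined by $\beta$ and $\det L$, while $\beta$ is determined up to squares by the nonsplitness condition. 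Finally, for $t=2$ one has $r=1$, the conditions become $(p,-\beta)=-1$ and $\alpha\beta=\det L$, and comparing with \eqref{eq:LatticeLambda} (where the Gram matrix is $\diag(I_{n-2},\epsilon_{\newL},p,-pu)$ with $(p,u)=-1$ and $-u\epsilon_{\newL}=\epsilon_L=-\det L$) shows $\beta = -u$, $\alpha = \epsilon_{\newL}$ satisfy exactly these relations, so $\Lambda(2)$ is the lattice of \eqref{eq:LatticeLambda}.

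The main obstacle I anticipate is bookkeeping the various factors of $p$ and the discriminant/Hasse-invariant normalizations consistently: one must be careful that $Q_\Lambda = pQ \bmod \Z_p$ really does make $\Lambda_1$'s reduction nondegenerate of the correct discriminant, and that the Hilbert-symbol condition $(p,(-1)^r\beta)=-1$ is genuinely equivalent (not just implied by) $\Omega_\Lambda$ being the nonsplit space of dimension $2r$ — this uses that over $\mathbb F_p$ ($p$ odd) a nondegenerate quadratic form of even dimension $2r$ is nonsplit if and only if its discriminant times $(-1)^r$ is a nonsquare. Everything else is a routine application of the Jordan decomposition and the classification of unimodular $\Z_p$-lattices for $p\neq 2$; no hard analysis or geometry is needed, so once the invariants are matched up correctly the uniqueness and existence both fall out immediately.
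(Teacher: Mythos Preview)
Your proposal is correct and follows essentially the same route as the paper: both arguments use the Jordan splitting $\Lambda \cong \Lambda_0 \perp p\Lambda_1$ forced by $p\Lambda' \subset \Lambda \subset \Lambda'$ together with $t_\Lambda = t$, then classify the unimodular pieces by their discriminants $\alpha,\beta$, and finally pin down $\alpha\beta$ via $\det \newV = \det V$ (using that $p^t$ is a square since $t$ is even).

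The one substantive difference is in how the condition $(p,(-1)^r\beta)=-1$ is obtained. You deduce it from the preceding theorem's assertion that $\Omega_\Lambda$ is the nonsplit $\F_p$-space of dimension $t$, translating nonsplitness into the discriminant condition. The paper instead computes the Hasse invariant of $\newV$ directly: since $L$ is unimodular, $V$ has Hasse invariant $+1$, so $\newV$ has Hasse invariant $-1$, and a short Hilbert-symbol telescoping gives
\[
-1 = (2p,(2p)^{t-1}\beta)(2p,(2p)^{t-2}\beta)\cdots(2p,2p\beta) = (2p,(2p)^r\beta) = (p,(-1)^r\beta).
\]
Your route is slightly cleaner if one is happy to quote the $\Omega_\Lambda$ fact from Howard--Pappas; the paper's route is more self-contained and makes explicit that the condition on $\beta$ is really a Hasse-invariant statement about $\newV$ rather than something special to the reduction. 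One small imprecision in your write-up: the sentence ``$\det L$ differs from $\det\Lambda$ only by the fixed factor $p^t$ because $[\Lambda':\Lambda]=p^t$'' conflates lattices in different spaces ($L\subset V$ versus $\Lambda\subset\newV$); the correct statement is simply $\det\newV = \alpha p^t\beta \equiv \alpha\beta$ in $\Q_p^\times/(\Q_p^\times)^2$ and $\det\newV=\det V=\det L$, which is what you invoke anyway.
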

\begin{proof} Since
$$
p\Lambda' \subset \Lambda \subset \Lambda',
$$
we see that $\Lambda = \oplus \Z_p e_i$ with
$$
Q(\sum x_i e_i )= \sum \alpha_i p^{a_i} x_i^2
$$
with  $\alpha_i \in \Z_p^\times$ and $ 0 \le a_1 \le a_2 \le \dots\le a_{n+1}  \le 1$. The condition  $t_\Lambda =\dim_{\mathbb F_p} \Lambda'/\Lambda =t$ implies $a_1=\dots =a_{n-t+1} =0$, and $a_{n-t+2} =\dots =a_{n+1} =1$. So we can change the basis to make  (\ref{eq: StandardBasis}) true.  Since $L$ is unimodular, $V$ has Hasse invariant $1$, and hence $\newV$ has Hasse invariant $-1$, i.e.,
$$
-1 =(2p, (2p)^{t-1} \beta) (2p, (2p)^{t-2}\beta) \cdots (2p, 2p\beta) = (2p, (2p)^r \beta) = (p, (-1)^r \beta).
$$
In particular,  $E=\Q_p(\sqrt{(-1)^r \beta})$ is  the unique unramified quadratic field extension   of $\Q_p$, and $\beta$ is uniquely determined up to a square by this condition.  On the other hand, $\det \newV =\det V$ gives
$$
 \alpha \beta =\det L  \mod (\Z_p^{\times})^2,
$$
which then determines $\alpha$  uniquely up to a square.
\end{proof}

\subsection{Special cycles and local heights}

Recall the definition of the special cycle $\mathcal Z(J)$ at the end of Section \ref{sect:RZSpace}. It is not hard to see \cite[Section 4.2]{Cihan-thesis} that
$$
\mathcal Z(J)^{\red} = \bigcup_{\substack{\text{$\Lambda$ dual vertex lattice}\\ J \subset \Lambda}}  \RZ_\Lambda^{\red}.
$$

 The following theorem  is part of \cite[Theorems 4.13, 4.16 and Proposition  4.15 ]{Cihan-thesis}. (Recall our convention that $\dim V=n+1$ and notice that our $2T$ is Soylu's $T$.)

\begin{theorem} (Soylu) \label{theo:Soylu}
Let $\mathcal Z(J)^{\red}$ be the reduced scheme of $\mathcal Z(J)$ and assume that $J=J(x_1, \dots, x_n)\subset \newV $ has rank $n$ and is integral.  Assume that  $T =Q(x)$
is $\Z_p$-equivalent to $\diag(T_1, T_2)$ where $T_1$ is unimodular of rank $r=r(T)$ (which is also the rank of $T\pmod p$ over $\kay$), and $T_2 \in p\Sym_{n-r}(\Z_p)$.
Then  $\mathcal Z(J)^{\red}$ is $0$-dimensional  if and only if  one of  the following conditions  holds:
\begin{enumerate}
\item  $r(T)=n-1, n-2$.

\item    $r(T)= n-3$, and  $\det (2 T_1)  = \det L$.
\end{enumerate}
In such a case,
$$
\mathcal Z(J)^{\red} =\bigsqcup_{\substack{ J \subset \Lambda \\ t_\Lambda=2}} \RZ_\Lambda = \bigsqcup_{\substack{ J \subset \Lambda \\ \Lambda \cong \Lambda(2) }} \RZ_\Lambda.
$$
\end{theorem}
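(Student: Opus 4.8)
The plan is to translate the statement into a purely lattice-theoretic assertion about dual vertex lattices, and then to settle that assertion using the Jordan classification of quadratic $\Z_p$-lattices together with the Hasse invariant of $\newV$.

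First I would reduce to combinatorics of dual vertex lattices. The decomposition $\mathcal Z(J)^{\red}=\bigcup_{\Lambda}\RZ_\Lambda^{\red}$, the union being over dual vertex lattices $\Lambda\supset J$, together with the facts recalled above that $\RZ_\Lambda$ is reduced of dimension $t_\Lambda/2-1$ and that there is (up to isometry) a unique dual vertex lattice of each even type $2\le t\le t_{\max}$ (Proposition~\ref{prop:SpecialLattice}), shows that $\mathcal Z(J)^{\red}$ is either empty or equidimensional of dimension $\tfrac12 t_{\max}(J)-1$, where $t_{\max}(J):=\max\{t_\Lambda : \Lambda\text{ dual vertex},\ J\subset\Lambda\}$. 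Hence $\mathcal Z(J)^{\red}$ is $0$-dimensional if and only if no dual vertex lattice of type $\ge 4$ contains $J$, and in that case $\mathcal Z(J)^{\red}=\bigcup_{J\subset\Lambda,\ t_\Lambda=2}\RZ_\Lambda$. To see that this union is disjoint, note that for a point $x$ the $\Z_p$-lattice $\Lambda(x)=\{v\in\newV : \rho\circ v\circ\rho^{-1}\in\End(X)\}$ of integral special endomorphisms is again a dual vertex lattice (it is integral and contains a dual vertex lattice of maximal type, so it is sandwiched between a vertex lattice and its dual), and $\RZ_\Lambda$ is cut out by the condition $\Lambda\subseteq\Lambda(x)$; thus a common point of $\RZ_{\Lambda_1}$ and $\RZ_{\Lambda_2}$ with $\Lambda_1\ne\Lambda_2$ of type $2$ would give $\Lambda_1+\Lambda_2\subseteq\Lambda(x)$, so $\Lambda(x)$ would strictly contain $\Lambda_1$ and therefore have type $<2$, i.e.\ be self-dual --- impossible since $p$ is odd and $\newV$, having Hasse invariant $-1$, contains no self-dual lattice.

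Next I would compute $t_{\max}(J)$ from $T=Q(x)$. For a dual vertex lattice of type $t$ with Jordan splitting $\Lambda=\Lambda_0\perp p\Lambda_1$, Proposition~\ref{prop:SpecialLattice} pins down $\Lambda_0$ (unimodular of rank $n+1-t$) and $\Lambda_1$ (unimodular of rank $t$) up to isometry in terms of $\det L$ and the condition that $\newV$ have Hasse invariant $-1$. Writing $T\sim\diag(T_1,T_2)$ with $T_1$ unimodular of rank $r=r(T)$ and $T_2\in p\Sym_{n-r}(\Z_p)$, the condition $J\subset\Lambda$ is analyzed by elementary-divisor bookkeeping on Jordan components. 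Comparing unimodular parts forces $r\le n+1-t$, so $t_{\max}(J)\le n+1-r$; since $t_{\max}(J)$ is even, this already gives $t_{\max}(J)\le 2$ when $r\in\{n-1,n-2\}$. When $r=n-3$ one obtains $t_{\max}(J)\le 4$, and a type-$4$ lattice can contain $J$ only if $\det(2T_1)$ equals the discriminant of the unimodular component of the type-$4$ lattice, which by the relation between $\alpha$, $\beta$, $\det L$ and the Hasse invariant in Proposition~\ref{prop:SpecialLattice} differs from $\det L$ exactly by the nonsquare unit $u$; thus $t_{\max}(J)=2$ precisely when $\det(2T_1)=\det L$. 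Conversely, for $r\le n-4$, or $r=n-3$ with $\det(2T_1)\ne\det L$, I would exhibit a dual vertex lattice of type $4$ (or larger) containing $J$ by realizing $T_1$ in the unimodular component and absorbing $T_2$ (which is $\equiv 0\bmod p$) together with a generator of $J^\perp$ into the remaining part of $\Lambda$, using that $p$-adic quadratic lattices of rank $\ge 3$ represent everything forced by their invariants. This yields positive dimension in all remaining cases, and in the $0$-dimensional case the displayed disjoint-union description follows from reducedness of $\RZ_\Lambda$ and the uniqueness in Proposition~\ref{prop:SpecialLattice}.

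The main obstacle is the last, constructive part of the second step: producing, for every $T$ outside the two listed families, an actual dual vertex lattice of type $4$ containing $J$. This needs a careful case analysis along the full Jordan type of $T_2$ (in particular whether $T_2/p$ is again unimodular) and a careful handling of the corank-one gap between $J$ and $\Lambda$, and it is precisely here, in the boundary case $r=n-3$, that the Hasse invariant of $\newV$ enters and separates conditions (1) and (2) from the positive-dimensional regime.
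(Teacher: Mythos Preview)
Your proposal is correct and follows essentially the same route as the paper: reduce the dimension question to the purely lattice-theoretic problem of determining the maximal type $t_{\max}(J)$ of a dual vertex lattice containing $J$, use the bound $t_\Lambda\le n+1-r$ coming from the unimodular summand $M_1$, and in the borderline case $r=n-3$ compare $\det(2T_1)$ with the determinant of the unimodular part of the putative type-$4$ lattice, where the Hasse-invariant constraint on $\newV$ forces the obstruction $\det(2T_1)\ne\det L$. The paper likewise treats the necessity direction in this way and, exactly as you flag as the ``main obstacle'', does not carry out the constructive converse (producing a type-$4$ dual vertex lattice containing $J$ when $r\le n-4$ or when $r=n-3$ and $\det(2T_1)\ne\det L$) but simply refers to Soylu's thesis for that step.

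Two small remarks. First, your disjointness argument via the lattice $\Lambda(x)$ of integral special endomorphisms is more than the paper offers (it just asserts the disjoint union); your reasoning is sound, but the fact that $\Lambda(x)$ is itself a dual vertex lattice is really a consequence of the Howard--Pappas stratification rather than of the sandwich argument you sketch, so you should cite that directly. Second, your claim that $\mathcal Z(J)^{\red}$ is \emph{equidimensional} is stronger than needed and not obviously justified by the decomposition alone; for the theorem it suffices to know $\dim\mathcal Z(J)^{\red}=\tfrac12 t_{\max}(J)-1$, which follows immediately from $\dim\RZ_\Lambda^{\red}=t_\Lambda/2-1$.
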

\begin{proof} We give a sketch of the proof in this special case to give a rough idea what is involved in the general theorems of
Soylu \cite[Section 4]{Cihan-thesis}. Choose a $\Z_p$-basis $e=\{ e_1, \dots, e_n\}$ of $J$ with $\frac{1}2 (e_i, e_j) =\diag(T_1, T_2)$, and let $M_1$ be the submodule of $J$ generated by $e_1, \dots, e_r$, which is unimodular.  To have $J \hookrightarrow \Lambda$, one has to have $t_\Lambda  \le n-r+1$. In the case $r=n-1, n-2$, one has $t_\Lambda =2$. and $\RZ_\Lambda =\RZ_\Lambda^{\red}$ is reduced of dimension $0$. So
$$
\mathcal Z(J)^{\red}  =\bigsqcup_{\substack{ J \subset \Lambda \\ t_\Lambda=2}} \RZ_\Lambda .
$$

In the case $r(T) =n-3$, one might have $t_\Lambda =2$ or $4$.  If $t_\Lambda =4$, then  (as $M_1$ is unimodular)
$$
\Lambda \cong M_1 \oplus \Lambda_2,
$$
where $\Lambda_2$ has a $\Z_p$-basis with Gram matrix $p\diag(1, 1, 1, \epsilon)$. Since the Hasse invariant of $\newV$ is $-1$, we see $(p, -\epsilon) =-1$. On the other hand, $\det \newV =\det  V$ forces
$$
\epsilon = \det M_1  \det L =\det (2T_1) \det L,  \hbox{ i.e., } \det (2T_1) \ne \det L .
$$
Therefore, if $\det (2T_1) =  \det L$,  we cannot embed $J$ into a dual vertex lattice $\Lambda$ with $t_\Lambda =4$, and thus $\mathcal Z(J)$ is $0$-dimensional and reduced as argued above.

When $\det(2T_1) \ne \det L$, Soylu proved that there is indeed some embedding $J\subset \Lambda$ with $t_\Lambda =4$. We refer to \cite[Section 4]{Cihan-thesis} for the details.
\end{proof}

Let $M_1$ be a unimodular  quadratic $\Z_p$-lattice of rank $r<n-2$, and assume that there are isometric embeddings $M_1\subset L$ and $ M_1 \subset \Lambda$,
where $\Lambda \subset \newV$ is a dual vertex lattice.  Write
\begin{equation} \label{eq:SplitLattice}
L=M_1 \oplus L_2, \quad  \Lambda =M_1 \oplus \Lambda_2.
\end{equation}
Notice that, choosing proper bases of $M_1$ and $L$, the data $b$ and $\mu$  defined in  \eqref{eq6.5} and \eqref{eq6.6} still make sense for the unimodular lattice $L_2$, so we have a local unramified Shimura datum $(\HH(r), [b], \{\mu\}, C(L_2))$ and its associated Rapoport-Zink space $\RZ(r)$. Here $\HH(r) = \hbox{GSpin}(L_2)$. Moreover,  one can easily check that $\newV_2 ={L_2}_\K^{b \circ \sigma}$ is a direct summand of $\newV$, and $\Lambda_2$ is a dual vertex lattice of $\newV_2$.
The embdding $L_2 \subset L$ induces a closed immersion
\begin{equation}
i(r):  \RZ(r) \hookrightarrow \RZ.
\end{equation}
The following proposition is a direct consequence of \cite[Lemma 3.1.1]{LZ}.

\begin{proposition} (Li-Zhu)  Let the notation be as above and assume $r \le n-3$. 
\begin{enumerate}
\item One has
$$
i(r) \RZ(r) = \mathcal Z(M_1).
$$

\item Assume that $J=M_1 \oplus J_2$ is a $\Z_p$-submodule of $\Lambda$. Then
$$
i(r) \mathcal Z_{\RZ(r)} (J_2) = \mathcal Z_{\RZ}(J).
$$
\end{enumerate}
\end{proposition}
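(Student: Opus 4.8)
The statement to prove is a direct consequence of the cited result of Li and Zhu (\cite[Lemma 3.1.1]{LZ}), so the plan is essentially to unwind the definitions and check that the hypotheses of that lemma are met in our situation. The key point is that a Rapoport--Zink space together with its special cycles is determined by the pair consisting of the $p$-divisible group $\mathbb X_0$ and its space of special quasi-endomorphisms $\newV$, and that splitting off a unimodular summand $M_1$ from $L$ corresponds, on the level of $p$-divisible groups, to a product decomposition compatible with all the structure.

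First I would make the decompositions in \eqref{eq:SplitLattice} precise on the isocrystal level. Choosing compatible $\Z_p$-bases of $M_1$, $L_2$ and $L$ (using that $M_1$ is unimodular of rank $r<n-2$, so that it can be taken to be a split orthogonal summand not involving the last two basis vectors $e_n,e_{n+1}$ appearing in \eqref{eq6.5}--\eqref{eq6.6}), the element $b$ and cocharacter $\mu$ lie in $\GSpin(L_2)\subset \GSpin(L)$; hence the Howard--Pappas datum $(\tilde H(r),[b],\{\mu\},C(L_2))$ sits inside $(\tilde H,[b],\{\mu\},C(L))$, and the corresponding $p$-divisible group $\mathbb X_0(r)$ together with its polarization is obtained from $\mathbb X_0$ via the Clifford-algebra decomposition $C(L)\cong C(L_2)\otimes C(M_1)$. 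Under this identification one checks $\newV = \newV_2 \oplus M_{1,\Q_p}$ inside $\End^0(\mathbb X_0)$, with $M_{1,\Q_p}$ acting through the $C(M_1)$-factor; this is exactly the content needed to invoke \cite[Lemma 3.1.1]{LZ}, which identifies the closed immersion $i(r)\colon \RZ(r)\hookrightarrow \RZ$ with the locus where the quasi-endomorphisms from $M_1$ are integral, i.e., with $\mathcal Z(M_1)$ by the defining condition \eqref{eq:SpecialCycle}. This proves (1).

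For (2), I would argue that the special cycles behave well under $i(r)$: for an $S$-point of $\RZ(r)$, its image under $i(r)$ carries the $p$-divisible group $X$ with the quasi-isogeny $\rho$, and the integrality condition $\rho\circ J\circ\rho^{-1}\subset\End(X)$ for $J = M_1\oplus J_2$ decomposes as the conjunction of integrality for $M_1$ (automatic on the image of $i(r)$, since that image already lies in $\mathcal Z(M_1)$ by part (1)) and integrality for $J_2\subset\newV_2$ (which is precisely the condition cutting out $\mathcal Z_{\RZ(r)}(J_2)$). Therefore $i(r)^{-1}\bigl(\mathcal Z_{\RZ}(J)\bigr) = \mathcal Z_{\RZ(r)}(J_2)$, and since $i(r)$ is a closed immersion with image $\mathcal Z(M_1)\supseteq \mathcal Z_{\RZ}(J)$, this gives $i(r)\,\mathcal Z_{\RZ(r)}(J_2) = \mathcal Z_{\RZ}(J)$ as claimed.

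The only real subtlety --- and the step I would spend the most care on --- is verifying the compatibility of the polarizations and of the family of defining tensors $(s_\alpha)$ under the Clifford-algebra tensor decomposition $C(L)\cong C(L_2)\otimes C(M_1)$, so that $\mathbb X_0$ really is the ``external product'' of $\mathbb X_0(r)$ with a fixed $p$-divisible group attached to $M_1$, and so that the quasi-actions of $\tilde H(r)$ and of the special endomorphism spaces are the restrictions of those for $\tilde H$. Once the choice of $\delta\in C(L)^\times$ with $\delta^\iota=-\delta$, $\delta\delta^\iota\in\Z_p^\times$ is made to respect the decomposition (which is possible because $M_1$ is an orthogonal summand), this is a formal check, and it is exactly the setting in which \cite[Lemma 3.1.1]{LZ} is stated; so I would simply cite that lemma for the hard part rather than reprove it, and present the above as the verification that our $\mathcal Z(J)$, $\RZ(r)$, $i(r)$ match the objects there.
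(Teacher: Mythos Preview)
Your approach is essentially correct but takes a different, more elaborate route than the paper. The paper's proof is a one-line induction: choose a diagonal basis $\{x_1,\dots,x_r\}$ of $M_1$ with unit norms $\alpha_i\in\Z_p^\times$, and apply \cite[Lemma~3.1.1]{LZ} $r$ times, once for each $x_i$ (noting that the lemma goes through when the norm is any unit, not just $1$). Each application peels off one rank-$1$ unimodular summand and identifies the corresponding closed immersion with a special divisor $\mathcal Z(x_i)$; composing gives both (1) and (2) simultaneously.

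By contrast, you attempt the identification in a single step via the Clifford-algebra tensor decomposition $C(L)\cong C(L_2)\otimes C(M_1)$ and a product decomposition of $\mathbb X_0$. This is conceptually reasonable, but it front-loads exactly the ``subtlety'' you flag (compatibility of polarizations and of the tensors $(s_\alpha)$ under the tensor decomposition), and it is not literally the setting of \cite[Lemma~3.1.1]{LZ}, which is stated for a single special endomorphism. So your sentence ``this is exactly the setting in which \cite[Lemma~3.1.1]{LZ} is stated'' overstates matters; you would either have to reprove a higher-rank version of that lemma yourself, or---much more simply---fall back on the iterative argument the paper uses, which sidesteps all of the Clifford-algebra compatibility checks.
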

\begin{proof} Assume that $\{ x_1, \dots, x_r\}$ is a basis of $M_1$ with Gram matrix $\diag(\alpha_1, \dots, \alpha_r)$ and $\alpha_i \in \Z_p^\times$. Applying \cite[Lemma 3.1.1]{LZ} $r$-times, we obtain the above proposition. Notice that their lemma still holds with the same proof when the norm of $x_n$ is a unit in $\Z_p$ (not necessarily equal to $1$).
\end{proof}

From now on, we assume that $\mathcal Z(J)$ is $0$-dimensional. For $P \in  \mathcal Z(J)$, its local height index is defined to be
\begin{equation}
\label{eq:htp}
\Ht_p(P) = \hbox{the length of the  formal complete local ring } \widehat{\mathcal O}_{\mathcal Z(J), P}.
\end{equation}

By Theorem \ref{theo:Soylu}, we have $r(T) \ge n-3$. There is a decomposition
$$
J =M_1 \oplus J_2
$$
with $M_1$ unimodular of rank $n-3$.  Furthermore we can choose bases of $M_1$ and $J_2$ so that the  Gram matrix of $J$ becomes $2T$ with $T = \diag(T_1, T_2)$ where $2T_1$ is the Gram matrix of $M_1$ and $T_2= \diag(\alpha_1 p^{a_1}, \alpha_2 p^{a_2}, \alpha_3 p^{a_3})$ is the matrix of $J_2$ with $\alpha_i \in \Z_p^\times$ and $0\le a_1 \le a_2 \le a_3$. We can always embed $M_1$ into $L$. Assume that $\mathcal Z(J)(\kay) $ is not empty. Then there is an embedding $M_1 \subset J \subset \Lambda$ for some dual vertex lattice $\Lambda$.

\begin{corollary} \label{cor:LiZhu}
Let the notation and hypotheses be as above (in particular $\mathcal Z(J)$ is $0$-dimensional).
For  $P \in  \mathcal Z(J)$  we write $P^*= i(n-3)^{-1}P \in \RZ(n-3)$. Then
$$
\Ht_p(P)  = \Ht_p(P^*).
$$
\end{corollary}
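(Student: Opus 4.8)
The plan is to deduce Corollary~\ref{cor:LiZhu} essentially formally from part~(2) of the Li--Zhu proposition above, using only that $i(n-3)$ is a closed immersion.

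First I would fix the lattices. Since $\mathcal Z(J)$ is assumed $0$-dimensional, Theorem~\ref{theo:Soylu} gives $r(T)\ge n-3$, and by the discussion preceding the corollary we may write $J=M_1\oplus J_2$ with $M_1$ unimodular of rank $n-3$ and isometrically embed $M_1\subset L$. Because $\mathcal Z(J)(\kay)\neq\emptyset$ there is a dual vertex lattice $\Lambda\subset\newV$ with $M_1\subset J\subset\Lambda$; the unimodularity of $M_1$ allows one to split it off in both $L$ and $\Lambda$, so that $L=M_1\oplus L_2$, $\Lambda=M_1\oplus\Lambda_2$ and $J=M_1\oplus J_2$ with $J_2\subset\Lambda_2$, where $L_2$ is unimodular of rank $4$. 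As explained after \eqref{eq:SplitLattice}, the data $b,\mu$ still define a local unramified Shimura datum for $L_2$, hence a Rapoport--Zink space $\RZ(n-3)$ together with a closed immersion $i(n-3)\colon\RZ(n-3)\hookrightarrow\RZ$, and $\newV_2=(L_2)_\K^{b\circ\sigma}$ is an orthogonal direct summand of $\newV$ containing $J_2$, with $\Lambda_2$ a dual vertex lattice of $\newV_2$. This is precisely the situation in which the Li--Zhu proposition applies, with $r=n-3$.

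Next I would invoke part~(2) of that proposition: it yields the equality of closed formal subschemes $i(n-3)\,\mathcal Z_{\RZ(n-3)}(J_2)=\mathcal Z_{\RZ}(J)$ inside $\RZ$. Since $i(n-3)$ is a closed immersion, hence a monomorphism of formal schemes which is an isomorphism onto its image, this equality upgrades to an isomorphism of formal schemes $i(n-3)\colon\mathcal Z_{\RZ(n-3)}(J_2)\xrightarrow{\ \sim\ }\mathcal Z_{\RZ}(J)=\mathcal Z(J)$. In particular $\mathcal Z_{\RZ(n-3)}(J_2)$ is itself $0$-dimensional; and for $P\in\mathcal Z(J)$ the point $P^*:=i(n-3)^{-1}(P)$ is well defined, lies in $\mathcal Z_{\RZ(n-3)}(J_2)$, and the induced homomorphism of complete local rings $\widehat{\calO}_{\mathcal Z(J),P}\to\widehat{\calO}_{\mathcal Z_{\RZ(n-3)}(J_2),P^*}$ is an isomorphism of Artinian local rings. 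Taking lengths, which by the definition \eqref{eq:htp} are $\Ht_p(P)$ and $\Ht_p(P^*)$ respectively, gives $\Ht_p(P)=\Ht_p(P^*)$, as claimed.

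I do not expect any serious obstacle here: all of the geometric content sits in the cited Li--Zhu proposition (whose proof is an $(n-3)$-fold iteration of \cite[Lemma~3.1.1]{LZ}, applied to a basis of $M_1$ consisting of vectors of unit norm, so that the lemma indeed applies). The only steps requiring attention are the book-keeping ones: verifying that the hypotheses of that proposition hold, i.e.\ that the compatible decompositions $L=M_1\oplus L_2$, $\Lambda=M_1\oplus\Lambda_2$, $J=M_1\oplus J_2$ genuinely exist --- which is guaranteed exactly by the $0$-dimensionality of $\mathcal Z(J)$ (through Theorem~\ref{theo:Soylu}) together with $\mathcal Z(J)(\kay)\neq\emptyset$ --- and recording the elementary fact that a closed immersion of formal schemes induces isomorphisms on complete local rings at points of the source and their images.
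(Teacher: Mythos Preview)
Your proposal is correct and is precisely the argument the paper intends: the corollary is stated without proof because it follows immediately from part~(2) of the Li--Zhu proposition via the fact that the closed immersion $i(n-3)$ identifies $\mathcal Z_{\RZ(n-3)}(J_2)$ with $\mathcal Z(J)$ as formal schemes, hence identifies the complete local rings and their lengths. The only content beyond the proposition is the bookkeeping you describe (existence of the compatible splittings, guaranteed by Theorem~\ref{theo:Soylu} and unimodularity of $M_1$), and you have handled that correctly.
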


 The local height $\Ht_p(P^*)$ has been studied in  \cite{KRHilbert} (the case $a_1=0$ actually follows from \cite{KRY-book} with $n-3$ replaced by $n-2$).
Assume the decompositions (\ref{eq:SplitLattice}).  Notice that $L_2$ is unimodular of rank $4$. There are two cases: either
\begin{enumerate}
\item[(i)]
$\det L_2 =1$ and $L_2 \cong M_2(\Z_p)$ with $Q(x) =\det x$, or
\item[(ii)] $\det L_2= u \in \Z_p^\times$ where
$E=\Q_p(\sqrt u)$ is  the unique unramified quadratic field extension of $\Q_p$,  i.e.  $(p, u)=-1$.
\end{enumerate}
In the second  case, $L_2$ is $\Z_p$-equivalent to $\Z_p^4$ with the  quadratic form $Q(x) =x_1x_2 + x_3^2 -u x_4^2$, or more conceptionally
$$
L_2 \cong \{ A= \kzxz { \alpha} {a} {b} {\alpha'}: \; a, b \in \Z_p, \alpha \in \mathcal O_E\}, \quad Q(A) =\det A,
$$
where $\alpha'$ is the Galois conjugate of $\alpha$. The second case only occurs when $r(T) \ge n-2$, i.e., $a_1=0$. Indeed, if $a_1>0$, i.e., $r(T)=n-3$, then we would  have
$$
\det (2T_1) =\det L = \det M_1 \det L_2 =\det(2T_1) \det L_2,
$$
which implies $1=\det L_2 = u$, a contradiction.  The condition $a_1=0$ is exactly the condition given  in \cite[Theorem 2]{KRHilbert} for $\mathcal Z_{\RZ(n-3)}(J_2)(\kay)$ to be finite. So in both cases, $\RZ(n-3)$ is associated to the supersingular locus at $p$ of the Hilbert modular surface over a real quadratic field $F$ with $p$ split or inert in $F$, $F_p=\Q_p\times \Q_p$ or $E$. In  \cite{KRHilbert}, Kudla and Rapoport considered twisted Hilbert modular surfaces to avoid issues with the boundary. But their localization at $p$, considered in  \cite[Sections 6-12]{KRHilbert}, is  for our $p$ the same as for a regular Hilbert modular surface, and hence their local results apply. We restate it here as the following theorem for convenience. In case (ii), $a_1=0$, it is \cite[Proposition 6.2]{KRHilbert}. In case (i), it is \cite[Proposition 5.4]{GK}, restated in \cite[Proposition 11.2]{KRHilbert} with a minor mistake (it should not assume $a_1=0$ in this case).

\begin{proposition}\label{prop:KR} (Kudla-Rapoport) Let the notation and hypotheses be as above, in particular $\mathcal Z_{\RZ(n-3)}(J_2)$ is $0$-dimensional, and let $P^* \in \mathcal Z_{\RZ(n-3)}(J_2)$. Recall that $T_2$ is $\Z_p$-equivalent to $\diag(\alpha_1 p^{a_1}, \alpha_2 p^{a_2}, \alpha_3 p^{a_3})$ with $0 \le a_1 \le a_2 \le a_3$, and $\alpha_i \in \Z_p^\times$.  Then
$
\Ht_p(P^*) =\nu_p(T_2),
$
where $\nu_p(T_2)$ is given as follows:
\begin{enumerate}
\item  When $a_2 \equiv a_1 \pmod 2$,
$
\nu_p(T_2)
$
is equal to
$$
\sum_{i=0}^{a_1-1} (i+1)(a_1+_2+a_3-3i) p^i
+\!\sum_{i=a_1}^{\frac{a_1+a_2}2 -1} \! (a_1+1) (2a_1+ a_2 +a_3-4i) p^i
+\frac{a_1+1}2(a_3-a_2+1)p^{\frac{a_1+a_2}2}.
$$

\item  When $a_2 \not\equiv a_1 \pmod 2$,
$
\nu_p(T_2)
$
is equal to
$$
\sum_{i=0}^{a_1-1} (i+1)(a_1+_2+a_3-3i) p^i
+\!\sum_{i=a_1}^{\frac{a_1+a_2-1}2}\! (a_1+1) (2a_1+ a_2 +a_3-4i) p^i.
$$
\end{enumerate}
\end{proposition}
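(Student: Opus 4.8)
The plan is to deduce this from the local intersection computations of Kudla and Rapoport on Hilbert modular surfaces, to which our situation reduces once the group $\tilde H(n-3) = \GSpin(L_2)$ is analyzed. By \eqref{eq:SplitLattice} the lattice $L_2$ is unimodular of rank $4$; its even Clifford algebra is, localized at $p$, an order in $M_2(F_p)$, where $F_p = \Q_p\times\Q_p$ in case (i) ($\det L_2 = 1$) and $F_p = E$ is the unramified quadratic extension in case (ii) ($\det L_2 = u$). Accordingly, the local unramified Shimura datum $(\tilde H(n-3),[b],\{\mu\},C(L_2))$ of Section~\ref{sect:RZSpace} is exactly the one underlying a Hilbert modular surface over a real quadratic field in which $p$ splits, resp.\ is inert. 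Under this dictionary the Rapoport--Zink space $\RZ(n-3)$ is the space uniformizing the supersingular locus at $p$ studied in \cite[Sections~6--12]{KRHilbert}, the special endomorphism space $\newV_2 = (L_2)_{\K}^{b\circ\sigma}$ is their space of special quasi-endomorphisms, and for a basis $\{y_1,y_2,y_3\}$ of $J_2$ the special cycle $\mathcal Z_{\RZ(n-3)}(J_2)$ is the triple intersection of the associated special divisors.

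With this identification in place, I would quote the length formulas directly. In case (ii) the finiteness of $\mathcal Z_{\RZ(n-3)}(J_2)(\kay)$ forces $a_1 = 0$, as noted before the statement, and \cite[Proposition~6.2]{KRHilbert} gives the length of $\widehat{\mathcal O}_{\mathcal Z_{\RZ(n-3)}(J_2),P^*}$ in exactly the closed form appearing here. In case (i) the relevant computation goes back to Gross--Keating: it is carried out in \cite[Proposition~5.4]{GK} and restated as \cite[Proposition~11.2]{KRHilbert}, and again produces $\nu_p(T_2)$. The one subtlety is that \cite[Proposition~11.2]{KRHilbert} is stated under the unneeded hypothesis $a_1 = 0$; revisiting the argument of \cite[Proposition~5.4]{GK} one checks that it is valid for all $0\le a_1\le a_2\le a_3$, which is the generality we need.

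It remains only to match normalizations: our moment matrix is $T_2 = \frac12((y_i,y_j))$, so $2T_2$ is the Gram matrix used in \cite{KRHilbert} and \cite{GK}, and with $T_2 \sim \diag(\alpha_1 p^{a_1},\alpha_2 p^{a_2},\alpha_3 p^{a_3})$ the triple $(a_1,a_2,a_3)$ is the sequence of Gross--Keating invariants in terms of which the cited formulas are phrased; the units $\alpha_i$ do not enter, for the reasons explained there. Combining these inputs gives $\Ht_p(P^*) = \nu_p(T_2)$ with the two displayed formulas. The main obstacle is purely bookkeeping rather than conceptual: one has to verify that the moduli problem and special divisors of \cite{KRHilbert} really are the $\GSpin$ Rapoport--Zink space and cycles of Howard--Pappas and Soylu used here, and that removing the hypothesis $a_1 = 0$ from \cite[Proposition~11.2]{KRHilbert} indeed produces the stated case~(1) formula --- i.e.\ that it is the correct continuation of the Gross--Keating length polynomial into the range $a_1 > 0$.
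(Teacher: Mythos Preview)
Your proposal is correct and matches the paper's treatment essentially line for line: the paper does not prove this proposition but attributes it to Kudla--Rapoport, explaining in the paragraphs preceding it exactly the reduction you describe---identifying $\RZ(n-3)$ with the Rapoport--Zink space for a Hilbert modular surface (split or inert according to cases (i) and (ii)), citing \cite[Proposition~6.2]{KRHilbert} in case (ii) where $a_1=0$ is forced, and \cite[Proposition~5.4]{GK} (restated as \cite[Proposition~11.2]{KRHilbert}) in case (i), with the same remark that the hypothesis $a_1=0$ in the latter is an unneeded assumption. Your additional comments on the Clifford algebra and the factor-of-two normalization are consistent with the paper's conventions and fill in details the paper leaves implicit.
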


\subsection{Local Whittaker functions and the local arithmetic Siegel-Weil formula}  \label{sect:LocalDensity} Let $\psi=\psi_p$ be the `canonical'  unramified additive character of $\Q_p$ used in this paper.
Let $L$ be an integral quadratic lattice over $\Z_p$ of rank $l$, and let $\chi_L =( (-1)^\frac{l(l-1)}2 \det L, \, \cdot)_p$ be the associated quadratic character. For every integer $r\geq 0$ we also consider the lattice $L^{(r)}=L\oplus H^r$, where $H=\Z_p^2$  is the standard hyperbolic plane with the quadratic form $Q(x, y) =xy$. We temporarily allow $L$ to be non-unimodular.

Let $T \in \Sym_n(\Z_p)$ be non-singular with $n \le l$. Then according to \cite[Appendix]{Ku1} and \cite{YaDensity}, there is a local  density polynomial  $\alpha_p(X, T, L)$ of $X$ such that for every integer $r \ge 0$, one has
 $$
 \alpha_p(p^{-r}, T, L) =\int_{\Sym_n(\Q_p)} \int_{L^{(r), n}}  \psi( \tr b (Q(x) -T)) \,dx \,db,
 $$
where $dx$ and $db$ are the standard Haar measures with $\vol(L, dx) =\vol(L^{(r)}, dx) =1$ and $\vol(\Sym_n(\Z_p), db) =1$.  We write  $\varphi_L=\cha(L^n)\in S(L_{\Q_p}^n)$ for the characteristic function of $L$. Then it is easy to see that
$$
W_{T, p}(1, s, \lambda(\varphi_L)) = \left(\frac{\gamma(L)}{\sqrt{[L':L]}}\right)^n \alpha_p(p^{-s}, T, L).
$$
Here $\gamma(L)=\gamma(L\otimes_{\Z_p}\Q_p)$ is the local Weil index.
 We also recall \cite[Section 2]{YaDensity}  that
$
\alpha_p(p^{-r}, T, L)
$
is the local  representation density $\alpha_p(M_T, L^{(r)})=\beta_p(M_T, L^{(r)})$ studied in Kitaoka's book \cite[Section 5.6]{KiBook}. Here $M_T=\Z_p^n$  is the quadratic lattice associated to $T$, i.e., with the quadratic form  $Q(x) ={}^tx T x$.  For a unimodular lattice $L$ of rank $l$, define
\begin{equation} \label{eq:deltaL}
\delta_L =\begin{cases}
  0 &\ff   l \equiv 1 \pmod 2,
  \\
  \chi_L(p) &\ff  l \equiv 0 \pmod 2.
  \end{cases}
\end{equation}


\begin{lemma} \label{lem6.6} Let $T \in \Sym_n(\Z_p)$ with $\det T \in \Z_p^\times$, i.e., $T$ is unimodular.
\begin{enumerate}
\item   Assume that $L=L_1 \oplus L_0$
 is an integral lattice  over $\Z_p$ such that $Q(x) \in p\Z_p$ for every $x \in L_0$. Then
$$
\alpha_p(X, T, L) =\alpha_p(X, T, L_1).
$$

\item Assume that $L$ is $\Z_p$-unimodular of rank $l \ge n$. Then
$$
\alpha_p(X, T, L)=(1- \delta_{L}   p^{-\frac{l}2} X ) (1+\delta_{M_T\oplus L^-}  p^{-\frac{l-n}2}X ) \prod_{\frac{l-n+1}2 \le e \le  \frac{l-1}2 } (1-p^{-2e} X^2).
$$
Here $L^-$ denotes the lattice $L$ with the rescaled quadratic form $Q^-(x) =-Q(x)$.
\end{enumerate}
\end{lemma}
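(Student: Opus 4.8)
For part (1), the statement is a purely combinatorial reduction of the local density integral. The plan is to write $L = L_1 \oplus L_0$ and decompose a vector $x \in L^{(r),n}$ accordingly as $x = x_1 \oplus x_0$ with $x_1 \in L_1^{(r),n}$ (absorbing the hyperbolic summands into $L_1^{(r)} = L_1 \oplus H^r$) and $x_0 \in L_0^n$. Since $Q(x_0) \in p\,\Sym_n(\Z_p)$ by hypothesis, and since $Q(x) = Q(x_1) + Q(x_0)$, we have $\psi(\tr b(Q(x)-T)) = \psi(\tr b(Q(x_1)-T))\cdot \psi(\tr b\, Q(x_0))$. The key point is that when we integrate over $b \in \Sym_n(\Q_p)$ against the remaining $x_0$-integral, the factor $\int_{L_0^n}\psi(\tr b\,Q(x_0))\,dx_0$ does \emph{not} simply factor off — instead one should integrate $b$ over $\Sym_n(\Q_p)$ first. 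For unimodular $T$, a cleaner route is to use the representation-density interpretation: $\alpha_p(p^{-r},T,L) = \beta_p(M_T, L^{(r)})$ counts (in the limit sense) representations of $M_T$ by $L^{(r)} = L_1^{(r)}\oplus L_0$. Since $T$ is unimodular and every vector of $L_0$ has norm in $p\Z_p$, any representation $M_T \to L^{(r)}$ must land in $L_1^{(r)}$ modulo the radical: reducing mod $p$, the image of $M_T$ is a nondegenerate (unimodular) subspace, hence meets the totally isotropic-mod-$p$ part $L_0 \otimes \F_p$ trivially in the relevant sense, so $\beta_p(M_T, L^{(r)}) = \beta_p(M_T, L_1^{(r)})$. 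Making this precise via Hensel's lemma / the stratification of representations by the mod-$p$ image gives $\alpha_p(X,T,L) = \alpha_p(X,T,L_1)$ as polynomials in $X$, since it holds for $X = p^{-r}$ for all $r \geq 0$.

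For part (2), this is the classical explicit formula for the local density of a unimodular lattice representing a unimodular form. The plan is to cite and adapt the computation of Kitaoka \cite[Section 5.6]{KiBook} (or the formulation in \cite{YaDensity}, \cite{Ku1}): when both $M_T$ (rank $n$) and $L$ (rank $l$) are $\Z_p$-unimodular with $p \neq 2$, the representation density $\beta_p(M_T, L^{(r)})$ is computed by a direct counting argument over $\F_p$ combined with a smooth-point count, yielding the stated product. Concretely, one writes $L^{(r)} = L \oplus H^r$, notes $\delta_{L^{(r)}} = \delta_L$ and $\chi_{M_T \oplus (L^{(r)})^-} = \chi_{M_T \oplus L^-}$ (the hyperbolic planes do not change the discriminant character), and then the local density of one unimodular form by another is governed by the orthogonal group orders over $\F_p$: the factor $(1 - \delta_L p^{-l/2}X)$ comes from the "size" of $O(L \otimes \F_p)$, the factor $(1 + \delta_{M_T \oplus L^-} p^{-(l-n)/2}X)$ from the orthogonal complement $O((M_T)^\perp$ inside $L)$, and the product $\prod_{(l-n+1)/2 \le e \le (l-1)/2}(1-p^{-2e}X^2)$ from the "unipotent" part, i.e.\ the successive fibrations in choosing the images of the $n$ basis vectors of $M_T$. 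One verifies the formula at $X = p^{-r}$ for all $r\geq 0$ by the standard stratification (Gauss sums / counting isotropic-type data), and since both sides are polynomials in $X$ agreeing at infinitely many points, they are equal.

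The main obstacle is part (2): getting the signs and the precise exponents $e$ in the product exactly right, and correctly identifying the discriminant character $\delta_{M_T \oplus L^-}$ (note the appearance of the \emph{negated} lattice $L^-$, which flips a sign in the Hasse/discriminant bookkeeping). This is genuinely a matter of carefully tracking the $\F_p$-orthogonal group orders through the fibration, and it is here that the reference to Kitaoka's tables is essential — the derivation itself is routine but error-prone. Part (1) is comparatively straightforward once the representation-density interpretation and the mod-$p$ reduction argument are in place. I would present part (1) with a short self-contained argument and part (2) by reduction to the cited classical computation, spelling out only the identification of the three factors.
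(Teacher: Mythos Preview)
Your overall strategy matches the paper's: both parts are proved by reducing $\alpha_p(p^{-r},T,L)$ to a count of isometries over $\F_p$ via Kitaoka's formula $\alpha_p(1,T,L) = p^{\frac{n(n+1)}{2}-nl}\,|\{\text{isometries } \tilde M_T \to \tilde L\}|$ (replacing $L$ by $L^{(r)}$ so one may take $X=1$), and part (2) then quotes Kitaoka's Theorem 1.3.2 together with the identification $\chi(\tilde M)=\chi_M(p)=\delta_M$.

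There is, however, a genuine slip in your part (1) argument. You write that a representation $M_T \to L^{(r)}$ ``must land in $L_1^{(r)}$'' and that its image ``meets the totally isotropic-mod-$p$ part $L_0\otimes\F_p$ trivially.'' This is not correct, and taken literally would give the wrong answer: if isometries $\tilde M_T \to \tilde L$ had vanishing $\tilde L_0$-component, then the two counts of isometries (into $\tilde L$ and into $\tilde L_1$) would be equal, but the normalizing prefactors $p^{\frac{n(n+1)}{2}-nl}$ and $p^{\frac{n(n+1)}{2}-nl_1}$ differ by $p^{-nl_0}$, so the densities would \emph{not} agree. The actual mechanism, which the paper spells out, is the opposite: since $\tilde L_0$ carries the zero form, any $\F_p$-linear map $\tilde M_T\to \tilde L = \tilde L_1\oplus \tilde L_0$ is an isometry if and only if its $\tilde L_1$-component is an isometry, while the $\tilde L_0$-component is \emph{completely arbitrary}. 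Hence
\[
|\{\text{isometries } \tilde M_T\to \tilde L\}| \;=\; |\{\text{isometries } \tilde M_T\to \tilde L_1\}|\cdot |\Hom(\tilde M_T,\tilde L_0)| \;=\; |\{\text{isometries } \tilde M_T\to \tilde L_1\}|\cdot p^{\,n l_0},
\]
and the factor $p^{nl_0}$ exactly cancels the discrepancy in the normalizations, giving $\alpha_p(1,T,L)=\alpha_p(1,T,L_1)$. Once you fix this point your argument is complete and coincides with the paper's.
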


\begin{proof}
Let $\tilde L=L/pL$ with the $\F_p$-valued quadratic form $\tilde Q(x) =Q(x) \mod p $ for an integral quadratic $\Z_p$-lattice $L$. Replacing $L$ by $L^{(r)}$, we may assume $X=1$ in the proof.

For (1),   write $l$ and  $l_i$ for the rank of $L$ and $L_i$ respectively with  $l =l_0+l_1$. Notice that $\tilde L_0$ is a  zero quadratic space of dimension $l_0$.  Every isometry from $\tilde M_T$ to $\tilde L$ splits into the sum of an isometry from $\tilde M_T$ to $\tilde L_1$ and a homomorphism from $\tilde M_T$ to $\tilde L_0$.    So \cite[p.~99, exercise]{KiBook} gives
\begin{align*}
\alpha_p(1, T, L)
&=p^{\frac{n(n+1)}2- nl} |\{ \hbox{isometries from $\tilde M_T$ to } \tilde L\}|
\\
&=p^{\frac{n(n+1)}2- nl} |\{ \hbox{isometries from $\tilde M_T$ to } \tilde L_1\}| \cdot |\Hom(\tilde M_T, \tilde L_0)|
\\
&=p^{\frac{n(n+1)}2- n l_1} |\{ \hbox{isometries from $\tilde M_T$ to } \tilde L_1\}|
\\
&=\alpha_p(1, T, L_1).
\end{align*}

For (2),   \cite[Theorem 1.3.2]{KiBook} and the formula in \cite[p.~99, exercise]{KiBook} imply
$$
\alpha_p(1, T, L) = (1- \chi(\tilde{L})  p^{-\frac{l}2}) (1+\chi(\tilde{M}_T \oplus \tilde{L}^-) p^{-\frac{l-n}2}) \prod_{\substack{l-n+1 \le e \le  l-1 \\ \text{$e$ even}}} (1-p^{-e}),
$$
where $\chi(\tilde M)$ for an unimodular quadratic  $\Z_p$-lattice  $\tilde M$ is defined as follows. When  $l =\dim \tilde M$ is odd, $\chi(\tilde M)=0$. When  $l$ is even, $\chi(\tilde M)$ is $\pm 1$ depending on whether $\tilde M$ is equivalent to a direct  sum of hyperbolic planes  or not. Assume $l=2r+2$ is even.  Since $M$ is unimodular, $M$ is equivalent to $H^r \oplus M_0$ with $M_0 =\Z_p^2$ with $Q(x, y) =x^2 -\epsilon y^2$ for some $\epsilon \in \Z_p^\times$. Then  $\chi(\tilde{M})=1$ if and only if $\tilde{M}_0$  is a hyperbolic plane, which is the same as saying that $\epsilon$ is a square in $\mathbb F_p$, i.e.,  $(\epsilon, p) =1$. On the other hand, it is easy to check that
$$
\chi_M(x) =( (-1)^{r+1} \det M, x) =(\epsilon, x).
$$
So $\chi(\tilde M) =\chi_M(p)$ in this case.  This proves (2).
\end{proof}

\begin{proposition} \label{prop:Surprise} Assume that  $T$ is $\Z_p$-equivalent  to $\diag(T_1, T_2)$ with $T_1$ being unimodular of rank $n-3$ and $T_2 =\diag(\alpha_1 p^{a_1}, \alpha_2 p^{a_2}, \alpha_3 p^{a_3})$ with $\alpha_i \in \Z_p^\times$ and $0\le a_1 \le a_2 \le a_3$. Let $L$ be a  unimodular lattice of rank $n+1$.  Let $M_1$ be the unimodular quadratic lattice with Gram matrix $2T_1$, and fix an embedding $M_1 \hookrightarrow L$, which results in a decomposition $L=M_1\oplus L_2$. Then $W_{T, p}(1, 0, \lambda(\varphi_L))=0$ if and only if $W_{T_2, p}(1, 0, \lambda(\varphi_{L_2}))=0$. In such a case, we have
$$
\frac{W_{T, p}'(1, 0, \lambda(\varphi_L))}{W_{T^u, p}(1, 0, \lambda(\varphi_L))}
=\frac{W_{T_2, p}'(1, 0, \lambda(\varphi_{L_2}))}{W_{T_2^u, p}(1, 0, \lambda(\varphi_{L_2}))}
=\nu_p(T_2).
$$
Here $T^u$ and $T_2^u$ denote any  unimodular symmetric matrices over $\Z_p$ of order $n$ and $3$, respectively, and $\nu_p(T_2)$ is given in Proposition \ref{prop:KR}.
\end{proposition}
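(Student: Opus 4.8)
The plan is to reduce the general statement to the rank-$3$ case by an explicit computation with the local density polynomials of Lemma \ref{lem6.6}. First I would observe that by construction $L = M_1 \oplus L_2$ with $M_1$ unimodular of rank $n-3$ and $L_2$ unimodular of rank $4$, and that under the $\Z_p$-equivalence $T \sim \diag(T_1, T_2)$ the matrix $2T_1$ is the Gram matrix of $M_1$, while $T_2$ has entries in $\Z_p$ with $T_2 = \diag(\alpha_1 p^{a_1}, \alpha_2 p^{a_2}, \alpha_3 p^{a_3})$. Since $W_{T,p}(1,s,\lambda(\varphi_L))$ differs from $\alpha_p(p^{-s}, T, L)$ only by the nonzero constant $(\gamma(L)/\sqrt{[L':L]})^n = \gamma(L)^n$ (as $L$ is unimodular), it suffices to prove the corresponding identities for the density polynomials $\alpha_p(X, T, L)$, $\alpha_p(X, T_2, L_2)$, and the unimodular reference terms $\alpha_p(X, T^u, L)$, $\alpha_p(X, T_2^u, L_2)$.

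Next I would carry out the reduction $\alpha_p(X, T, L) \rightsquigarrow \alpha_p(X, T_2, L_2)$ up to an explicit elementary factor. The idea is a relative version of Lemma \ref{lem6.6}: because $M_1$ is a unimodular direct summand of both $M_T$ (after the $\Z_p$-change of basis realizing $T \sim \diag(2T_1/2, T_2)$, i.e.\ $M_T \cong M_1 \oplus M_{T_2}$) and of $L$, every isometry $\widetilde{M}_T \to \widetilde{L}$ over $\F_p$ restricts to an isometry of the nondegenerate summand $\widetilde{M}_1$ and can be completed on the orthogonal complement $\widetilde{M}_1^\perp \cong \widetilde{L}_2$. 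Counting as in the proof of Lemma \ref{lem6.6}(1) — using \cite[p.~99, exercise]{KiBook} and Witt's theorem over $\F_p$ to count extensions of a fixed embedding of $\widetilde M_1$ — gives a clean factorization
\[
\alpha_p(X, T, L) = \gamma_1(X)\cdot \alpha_p(X, T_2, L_2),
\]
where $\gamma_1(X)$ is a product of terms of the shape $(1 - p^{-2e}X^2)$ and a possible $(1 \pm p^{-e}X)$ coming from the unimodular block $M_1$, and in particular $\gamma_1(1) \neq 0$ and $\gamma_1$ has no zero at $X=1$; here one uses $r = n-3 < n - 2$, so the relevant exponents $e$ satisfy $e \geq 1$ and the factors $(1-p^{-2e})$ are nonzero. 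Applying the same computation to a unimodular $T^u = \diag(2T_1/2, T_2^u)$ with $L = M_1 \oplus L_2$ yields $\alpha_p(X, T^u, L) = \gamma_1(X)\cdot \alpha_p(X, T_2^u, L_2)$ with the \emph{same} factor $\gamma_1(X)$ (it depends only on $M_1$ and $L$, not on the complementary block). Since $T_2^u$ is unimodular of rank $3$, Lemma \ref{lem6.6}(2) shows $\alpha_p(1, T_2^u, L_2) \neq 0$, hence $W_{T_2^u,p}(1,0,\lambda(\varphi_{L_2})) \neq 0$; the product formula then gives at once that $W_{T,p}(1,0,\lambda(\varphi_L)) = 0 \iff W_{T_2,p}(1,0,\lambda(\varphi_{L_2})) = 0$, and, taking the logarithmic derivative at $s=0$ (equivalently $X = p^{-s}$, so $X=1$) of $\alpha_p(X,T,L) = \gamma_1(X)\alpha_p(X,T_2,L_2)$ and of $\alpha_p(X,T^u,L) = \gamma_1(X)\alpha_p(X,T_2^u,L_2)$, the common nonvanishing factor $\gamma_1$ cancels in the quotient:
\[
\frac{W_{T,p}'(1,0,\lambda(\varphi_L))}{W_{T^u,p}(1,0,\lambda(\varphi_L))}
= \frac{\alpha_p'(1,T,L)}{\alpha_p(1,T^u,L)}
= \frac{\alpha_p'(1,T_2,L_2)}{\alpha_p(1,T_2^u,L_2)}
= \frac{W_{T_2,p}'(1,0,\lambda(\varphi_{L_2}))}{W_{T_2^u,p}(1,0,\lambda(\varphi_{L_2}))}.
\]
(One must check that the vanishing order of $\alpha_p(X,T,L)$ at $X=1$ is exactly $1$, so that $\alpha_p'(1,T,L)$ is the right object; this follows because $\gamma_1$ is nonzero at $1$ and $\alpha_p(X,T_2,L_2)$ has a simple zero there, which in turn is exactly the content of the rank-$3$ analysis.)

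Finally I would identify the common value with $\nu_p(T_2)$. This is precisely the rank-$3$ case: $L_2$ is unimodular of rank $4$ and, by the discussion in Section \ref{sect:6.2} distinguishing the cases $\det L_2 = 1$ (so $L_2 \cong M_2(\Z_p)$) and $\det L_2 = u$ with $\Q_p(\sqrt u)/\Q_p$ unramified, the quotient $\alpha_p'(1,T_2,L_2)/\alpha_p(1,T_2^u,L_2)$ is exactly the quantity computed by Kudla and Rapoport; it equals $\Ht_p(P^*) = \nu_p(T_2)$ of Proposition \ref{prop:KR} by the very normalization of that proposition (one checks that the reference Whittaker value $W_{T_2^u,p}(1,0,\lambda(\varphi_{L_2}))$ matches the normalizing factor used there, via Proposition \ref{prop:Measure}, so that no spurious constant is introduced). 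The main obstacle I anticipate is the relative factorization step: making the counting argument over $\F_p$ precise — in particular verifying that the factor $\gamma_1(X)$ genuinely depends only on the unimodular block and is identical for $T$ and $T^u$, and pinning down the exact power of $(X-1)$ dividing each density polynomial so that the logarithmic derivatives behave as claimed. This is where one has to be careful with the case distinctions $r(T) = n-1, n-2, n-3$ and with the parity conditions hidden in $\delta_L$ and $\delta_{M_T \oplus L^-}$ of Lemma \ref{lem6.6}(2); everything else is bookkeeping with known formulas.
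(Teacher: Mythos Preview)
Your approach is essentially the same as the paper's: reduce to the rank-$3$ case via a factorization of local density polynomials, then invoke the Kudla--Rapoport computation. Two points are worth noting.

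First, the ``relative factorization'' you flag as the main obstacle is exactly \cite[Corollary~5.6.1]{KiBook}, which the paper simply cites: for $M_T \cong M_1 \oplus M_{T_2}$ with $M_1$ unimodular and $L = M_1 \oplus L_2$, one has $\alpha_p(X,T,L) = \alpha_p(X,T_1,L)\,\alpha_p(X,T_2,L_2)$. Your sketch of reproving this by counting over $\F_p$ \`a la Lemma~\ref{lem6.6}(1) is not quite adequate: that lemma's mod-$p$ argument works because the source $T$ there is unimodular (so Hensel lifts mod-$p$ isometries), but your $T_2$ is not unimodular, so $\widetilde{M}_{T_2}$ is degenerate and the count of isometries into $\widetilde{L}_2$ does not recover $\alpha_p(1,T_2,L_2)$. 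Kitaoka's result is a genuine $p$-adic statement, not a mod-$p$ one; citing it directly removes the obstacle.

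Second, for the identification with $\nu_p(T_2)$, the paper does not go through Proposition~\ref{prop:Measure}. It instead quotes the explicit formula $W_{T_2,p}'(1,0,\lambda(\varphi_{L_2})) = \gamma(V_2)^3(1-p^{-2})(1-\chi_{L_2}(p)p^{-2})\,\nu_p(T_2)$ from \cite[Propositions~7.2 and~11.5]{KRHilbert}, and computes the denominator $W_{T_2^u,p}(1,0,\lambda(\varphi_{L_2})) = \gamma(V_2)^3(1-p^{-2})(1-\chi_{L_2}(p)p^{-2})$ directly from Lemma~\ref{lem6.6}(2) applied to the rank-$4$ unimodular lattice $L_2$. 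The quotient is then manifestly $\nu_p(T_2)$, with no further normalization check needed.
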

\begin{proof} By Lemma \ref{lem6.6},
$W_{T^u, p}(1, 0, \lambda(\varphi_L)) =\gamma(V_p)^n \alpha(1, T^u, L)$ does not depend on the choice of the $p$-unimodular $T^u$. We choose $T^u=\diag(T_1, T^u_2)$ with $T^u_2$  being unimodular.
Now  \cite[Corollary 5.6.1]{KiBook} implies  ($X=p^{-s}$)
\begin{align*}
\frac{W_{T, p}(1,s,  \lambda(\varphi_L))}{W_{T^u, p}(1, s, \lambda(\varphi_L))}
&=\frac{\alpha_p(X, T, L)}{\alpha_p(X, T^u, L)}
\\
&=\frac{\alpha_p(X, T_1, L) \alpha_p(X, T_2, L_2)}{\alpha_p(X, T_1, L) \alpha_p(X, T^u_2, L_2)}
\\
&=\frac{W_{ T_2, p}(1, s, \lambda(\varphi_{L_2}))}{W_{ T^u_2, p}(1, s, \lambda(\varphi_{L_2}))}.
\end{align*}
This proves the first identity and also the claim about the vanishing at $s=0$. Assume $W_{T, p}(1, 0, \lambda(\varphi_L) )=0$. We have by \cite[Propostions 11.5 and 7.2]{KRHilbert},
$$
W_{T_2, p}'(1, 0, \lambda(\varphi_{L_2}) )= \gamma(V_2^3) (1 -p^{-2}) (1-\chi_{L_2}(p) p^{-2})\nu_p(T_2).
$$
On the other hand, Lemma \ref{lem6.6} gives
$$
W_{T_2^u, p}(1, 0, \lambda(\varphi_{L_2}) )= \gamma(V_2^3) (1 -p^{-2}) (1-\chi_{L_2}(p) p^{-2}).
$$
Now the second identity is clear.
\end{proof}

Combining Propositions \ref{prop:KR} and \ref{prop:Surprise} and Corollary \ref{cor:LiZhu}, we obtain the following local arithmetic Siegel-Weil formula.

\begin{theorem} \label{theo:LASW-finite} Let $L$ be a unimodular quadratic $\Z_p$-lattice of rank $n+1$
with $p \ne 2$, and let $\RZ$ be the Rapoport-Zink space
as in Section \ref{sect:RZSpace}.
Let $T \in \Sym_n(\Z_p)$ be of rank $n$ and let $J \subset \newV$ be a $\Z_p$-sublattice of rank $n$  which has a basis with Gram matrix $2T$. Assume $\mathcal Z(J)$ is $0$-dimensional and let $P \in \mathcal Z(J)$. Then $$
\Ht_p(P) \log p = \frac{W_{T, p}'(1, 0, \lambda(\varphi_L))}{W_{T^u, p}(1, 0, \lambda(\varphi_L))},
$$
where $T^u$ is a unimodular matrix in $\Sym_n(\Z_p)$ (i.e., $\det T^u \in \Z_p^\times$).
\end{theorem}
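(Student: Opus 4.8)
The plan is to combine the three main ingredients that have been assembled in this section, reducing the general case to the case $n=3$ treated by Kudla--Rapoport. First I would recall the setup: $L$ is $\Z_p$-unimodular of rank $n+1$, $T\in\Sym_n(\Z_p)$ has rank $n$, and $\mathcal Z(J)$ is $0$-dimensional. By Theorem~\ref{theo:Soylu}, the $0$-dimensionality forces $T$ to be $\Z_p$-equivalent to $\diag(T_1,T_2)$ with $T_1$ unimodular of rank $r(T)\in\{n-1,n-2,n-3\}$; in the borderline case $r(T)=n-3$ we also have $\det(2T_1)=\det L$. In all cases we may write $T=\diag(T_1,T_2)$ with $T_1$ unimodular of rank $n-3$ (absorbing the extra unimodular blocks into $T_1$ when $r(T)>n-3$), and $T_2$ a symmetric $\Z_p$-matrix of rank $3$, $\Z_p$-equivalent to $\diag(\alpha_1 p^{a_1},\alpha_2 p^{a_2},\alpha_3 p^{a_3})$.

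Next I would set up the geometric recursion. Let $M_1$ be the unimodular lattice with Gram matrix $2T_1$; fix an embedding $M_1\hookrightarrow L$, giving $L=M_1\oplus L_2$ with $L_2$ unimodular of rank $4$. Since $\mathcal Z(J)(\kay)$ is nonempty, there is a dual vertex lattice $\Lambda\supset J\supset M_1$, so by Proposition~\ref{prop:SpecialLattice} and the discussion around \eqref{eq:SplitLattice} we may split $\Lambda=M_1\oplus\Lambda_2$ and $J=M_1\oplus J_2$ with $J_2\subset\Lambda_2\subset\newV_2$. By the Li--Zhu proposition (the consequence of \cite[Lemma 3.1.1]{LZ} stated in this section), the closed immersion $i(n-3)\colon\RZ(n-3)\hookrightarrow\RZ$ identifies $i(n-3)\,\mathcal Z_{\RZ(n-3)}(J_2)=\mathcal Z_{\RZ}(J)$, and hence, via Corollary~\ref{cor:LiZhu}, for each $P\in\mathcal Z(J)$ with $P^*=i(n-3)^{-1}P$ we have $\Ht_p(P)=\Ht_p(P^*)$. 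This shows in particular that $\Ht_p(P)$ is independent of the choice of $P$ (it depends only on $T_2$, i.e.\ only on $T$ up to $\Z_p$-equivalence). Now $\RZ(n-3)$ is exactly the local model studied by Kudla--Rapoport, and Proposition~\ref{prop:KR} gives $\Ht_p(P^*)=\nu_p(T_2)$ with the explicit polynomial $\nu_p(T_2)$ of Proposition~\ref{prop:KR}, so $\Ht_p(P)\log p=\nu_p(T_2)\log p$. (One has to be slightly careful about whether the formula asserts $\Ht_p(P)=\nu_p(T_2)$ or $\Ht_p(P)\log p=\nu_p(T_2)$; I would match conventions so that the final identity comes out as stated, tracking the single factor of $\log p$.)

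On the Whittaker side I would invoke Proposition~\ref{prop:Surprise} directly: with $T^u\in\Sym_n(\Z_p)$ unimodular and $T_2^u\in\Sym_3(\Z_p)$ unimodular, it gives
\[
\frac{W_{T,p}'(1,0,\lambda(\varphi_L))}{W_{T^u,p}(1,0,\lambda(\varphi_L))}
=\frac{W_{T_2,p}'(1,0,\lambda(\varphi_{L_2}))}{W_{T_2^u,p}(1,0,\lambda(\varphi_{L_2}))}=\nu_p(T_2),
\]
and moreover that $W_{T^u,p}(1,0,\lambda(\varphi_L))$ is independent of the choice of unimodular $T^u$ (via Lemma~\ref{lem6.6}(2), which shows $\alpha_p(X,T^u,L)$ depends only on $\delta_L$, $\delta_{M_{T^u}\oplus L^-}$, and the rank, and for unimodular $T^u$ the character of $M_{T^u}\oplus L^-$ is determined by $\det L$). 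Combining the two computations yields
\[
\Ht_p(P)\log p=\nu_p(T_2)\log p\cdot\frac{1}{\log p}\cdot\dots
\]
— more precisely, after fixing the $\log p$ normalization, $\Ht_p(P)\log p = W'_{T,p}(1,0,\lambda(\varphi_L))/W_{T^u,p}(1,0,\lambda(\varphi_L))$, which is the claim.

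I expect the main obstacle to be bookkeeping rather than a conceptual gap: namely (i) verifying that the ``absorption'' of the extra unimodular blocks into $T_1$ when $r(T)>n-3$ is compatible with the hypotheses of Proposition~\ref{prop:Surprise} and of the Li--Zhu splitting (one needs the embedding $M_1\hookrightarrow L$ to exist and to be compatible with an embedding $M_1\hookrightarrow\Lambda$, which is where nonemptiness of $\mathcal Z(J)(\kay)$ is used), and (ii) correctly normalizing the factors of $\log p$ and $\gamma(V_p)^n$ so that the ratio $W'_{T,p}/W_{T^u,p}$ matches $\Ht_p(P)\log p$ on the nose, including checking that the local Weil-index factors $(\gamma(L)/\sqrt{[L':L]})^n$ cancel in the quotient. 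A secondary point to handle carefully is the case distinction $r(T)=n-1,n-2$ versus $r(T)=n-3$: in the first two cases the Kudla--Rapoport input is still the relevant rank-$3$ computation with $a_1=0$ (or $a_1=a_2=0$), and one should check that Proposition~\ref{prop:KR}'s formula and Proposition~\ref{prop:Surprise} both apply uniformly, which they do since $T_2$ is allowed to be non-unimodular there. None of this requires new ideas; it is assembling Theorem~\ref{theo:Soylu}, Corollary~\ref{cor:LiZhu}, Proposition~\ref{prop:KR}, and Proposition~\ref{prop:Surprise} in sequence.
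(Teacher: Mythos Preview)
Your proposal is correct and follows exactly the paper's approach: the theorem is obtained by combining Corollary~\ref{cor:LiZhu} (the Li--Zhu geometric reduction $\Ht_p(P)=\Ht_p(P^*)$), Proposition~\ref{prop:KR} (the Kudla--Rapoport computation $\Ht_p(P^*)=\nu_p(T_2)$), and Proposition~\ref{prop:Surprise} (the Whittaker recursion identifying the ratio with $\nu_p(T_2)$), precisely as you outline. Your caution about the single factor of $\log p$ is well placed, and the bookkeeping concerns you flag (absorbing extra unimodular blocks into $T_1$, cancellation of the $\gamma$-factors in the quotient) are routine and handled by the hypotheses of the cited results.
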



\section{Arithmetic Siegel-Weil formulas} \label{sect:ArithSW}


In this section, we will prove the arithmetic Siegel-Weil formulas as stated in Theorem~\ref{maintheo1} and Remark~\ref{rem:maintheo1}.
Throughout, let $V$ be a quadratic space over $\Q$ of signature $(m, 2)$, and let $H=\SO(V)$.

\subsection{Vanishing of coefficients of Eisenstein series}

Let $n=m+1$ and let $\mathcal C =\otimes_{p\le \infty} \mathcal C_p$  be the incoherent quadratic space over $\A$ defined in the introduction.
Recall the $G_\A$-equivariant map
$$
\lambda=\otimes \lambda_p:  S(\mathcal C^n) \rightarrow  I(0, \chi_V), \quad \lambda(\phi)(g) =\omega(g) \phi(0).
$$
For simplicity, we also write $\lambda(\phi)$ for the associated standard section in  $I(s, \chi_V)$.
Let $\phi_\infty^{\mathcal C}(x)  = e^{- \pi \tr (x, x)} \in S(\mathcal C_\infty^n)$, then  $\lambda_\infty(\phi_\infty) =\Phi_\kappa \in  I(s, \chi_V) $ is the standard section of weight $\kappa=\frac{m+2}{2}$.  Recall that for a standard factorizable section  $\Phi=\prod \Phi_p \in  I(s,\chi_V)$, the Eisenstein series
$$
E(g, s, \Phi) = \sum_{\gamma \in P_\Q \backslash \Sp_n(\Q)} \Phi(\gamma g, s)
$$
has a meromorphic continuation to the whole  complex $s$-plane and is holomorphic at $s=0$. It has a Fourier expansion of the form
\[
E(g, s, \Phi) = \sum_{T\in \Sym_n(\Q)} E_T(g,s,\Phi).
\]
When  $T \in \Sym_n(\Q)$ is non-singular, the $T$-th Fourier coefficient factorizes,
$$
E_T(g, s,  \Phi) =\prod_{p \le \infty} W_{T, p}(g_p, s, \Phi_p),
$$
into  product of local Whittaker functions, see \eqref{eq:Whittaker}.
For every $\phi \in S(V(\A_f)^n) =S(\mathcal C_{\A_f}^n)$, we define the Siegel-Eisenstein series of weight $\kappa$  on the Siegel upper half plane $\mathbb  H_n$ as
\begin{equation}
E(\tau, s,  \lambda(\phi)\otimes \Phi_\kappa) = (\det v)^{-\frac{\kappa}{2}}\cdot E(g_\tau, s,  \lambda(\phi)\otimes  \Phi_\kappa),
\end{equation}
where we write $g_\tau=n(u)m(a)\in G_\R$ with $u=\Re(\tau)\in \Sym_n(\R)$ and $a\in \GL_n(\R)$ such that $a\,{}^ta=v$ as usual. In particular, we have  $g_\tau (i 1_n)= \tau$.
We could choose for $a$ the positive symmetric square root of $v$ but we do not have to.
The Eisenstein series vanishes automatically at $s=0$ due to the incoherence. The arithmetic Siegel-Weil formula, envisioned by Kudla, aims to give arithmetic meaning to its central derivative at $s=0$. From now on, assume $T=\Sym_n(\Q)$ is non-singular, and let
\begin{equation}
\Diff(\mathcal C, T) =\{ p \le \infty: \;\mathcal C_p  \hbox{ does not represent } T\}
\end{equation}
be Kudla's  Diff set  defined in the introduction.
Then  $\Diff(\mathcal C, T)$ is a finite set of odd order, and $\infty \in \Diff(\mathcal C, T)$ if and only if $T$ is not positive definite. Moreover, when $p \in \Diff(\mathcal C, T)$, then $W_{T, p}(g_p, 0, \lambda_p(\phi_p)) =0$.   So
\begin{equation} \label{eq:Order}
\ord_{s=0} E_T(g,s, \lambda(\phi)\otimes \Phi_\kappa) \ge |\Diff(\mathcal C, T)|
\end{equation}
for every $\phi \in S(V(\A_f)^n)$.

\subsection{The arithmetic Siegel-Weil formula at  infinity}
Here we prove Theorem \ref{maintheo1} (2) of the introduction. We begin by recalling the global setup.

For a compact open subgroup $K\subset H(\A_f)$ we consider
the Shimura variety $X_K$ whose associated complex space is
\begin{align*}
X_K(\C)= H(\Q)\bs \calD\times H(\A_f)/K.
\end{align*}
It is a quasi-projective variety of dimension $m$, which has a canonical model over $\Q$.

Given $x =(x_1, \dots, x_n) \in V(\Q)^n$ with $Q(x) =\frac{1}2 (x, x) =\frac{1}2 ( (x_i, x_j)) >0$, let
 $H_x$ be the stabilizer of $x$ in $H$. For $h \in H(\A_f)$, let $K_{h, x} = H_x(\A_f) \cap h K h^{-1}$ be the corresponding compact open subgroup of $H_x(\A_f)$.  Then
$$
H_x(\Q) \backslash  \calD_x \times H_x(\A_f)/K_{h, x} \rightarrow X_K,  \quad [z, h_1]\mapsto [z, h_1 h]
$$
gives rise to a cycle $Z(h, x)$ in $X_K$ of codimension $n$. More generally, given a positive definite $T\in \Sym_n(\Q)$ and any $K$-invariant Schwartz function $\varphi\in S(V^n(\A_f))$, Kudla \cite{Ku:Duke} defines a weighted cycle as follows: If there exists an $x \in V^n(\Q)$ with $Q(x) =T$, put
$$
Z(T, \varphi)  =\sum_{h\in H_x(\A_f)\backslash H(\A_f)/K} \varphi(h^{-1}x) Z(h, x) \in Z^n(X_K).
$$
If there is no such  $x$, set $Z(T, \varphi)=0$.  These weighted cycles behaves well under pull-back (for varying $K$).
Moreover, if $T\in \Sym_n(\Q)$ is regular but not positive definite, we put $Z(T,\varphi)=0$.


If $T\in \Sym_n(\Q)$ is regular, we define a
Green current for the cycle $Z(T,\varphi)$ by
\[
G(T,\varphi,v,z,h) = \sum_{\substack{x\in V^n(\Q)\\Q(x)=T}}
\varphi(h^{-1}x)\cdot \xi^n_0(xa,z),
\]
where $z\in \calD$, $h\in H(\A_f)$, and $a\,{}^ta=v=\Im(\tau)$.
The pair
\[
\widehat Z(T,\varphi,v) =\big( Z(T,\varphi), G(T,\varphi,v)\big)\in \Cha^n_\C(X_K)
\]
defines an arithmetic cycle, which depends on $v$.
For the rest of this section we assume that $n=m+1$.
In this case,
the cycles $Z(T,\varphi)$ are all trivial (in the generic fiber) for signature reasons.
However, for indefinite $T$, the arithmetic cycles $\widehat Z(T,\varphi,v)$ typically have a non-trivial current part.
We are interested in their archimedian arithmetic degree
\[
\widehat\deg_\infty\widehat Z(T,\varphi,v) = \frac{1}{2}\int_{X_K(\C)} G(T,\varphi,v).
\]

We are now ready to prove Theorem \ref{maintheo1} (2) of the introduction, which we restate here in a version which also gives an explicit value for the constant of proportionality.

\begin{theorem}
\label{theo:ArithSW-infinite}
Assume that $T \in \Sym_n(\Q) $ is of signature $(n-j, j)$ with $j>0$ and that $\varphi \in S(V(\A_f)^n)$ is $K$-invariant. Then the arithmetic Siegel-Weil formula holds for $T$, i.e.,
$$
 \widehat{\deg}_\infty\widehat{ Z}(T, \varphi,v) \cdot q^T= C_{n,\infty}\cdot  E_T'(\tau,0, \lambda(\varphi)\otimes \Phi_\kappa),
$$
where the constant $C_{n,\infty}$ is given as follows. Let $L\subset V$ be an integral lattice, and let $d_L h =\prod_{p < \infty} d_{L_p} h$ be the associated Haar measure on $H(\A_f)$, and $C(L) =\prod_{p <\infty} C(L_p)$ be the associated constant given in  Proposition \ref{prop:Measure} (with respect to the unramified additive character $\psi_f$ of $\A_f$).  Then
$$
C_{n,\infty} = -B_{n, \infty} \frac{C(L)}{\vol(K, d_L h)} .
$$
\end{theorem}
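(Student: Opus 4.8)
\textbf{Proof plan for Theorem \ref{theo:ArithSW-infinite}.}
The plan is to reduce the global archimedian arithmetic degree to a finite sum of archimedian local heights $\Ht_\infty$, and then apply the archimedian arithmetic local Siegel--Weil formula (Theorem \ref{thm:alsw}) together with the non-archimedian local Siegel--Weil formula of Proposition \ref{prop:Measure} to identify the product of local Whittaker factors with $E_T'(\tau,0,\lambda(\varphi)\otimes\Phi_\kappa)$. First I would unwind the definition of $\widehat\deg_\infty$. Since
$$
\widehat\deg_\infty\widehat Z(T,\varphi,v)=\frac12\int_{X_K(\C)}G(T,\varphi,v)
$$
and $G(T,\varphi,v,z,h)=\sum_{Q(x)=T}\varphi(h^{-1}x)\,\xi^n_0(xa,z)$ with $a\,{}^ta=v$, I would write the integral over $X_K(\C)=H(\Q)\bs\calD\times H(\A_f)/K$ as a sum over $H(\Q)$-orbits of $x\in V^n(\Q)$ with $Q(x)=T$. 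For such $T$ of signature $(n-j,j)$ with $j>0$ there is a single orbit if $V$ represents $T$ (equivalently $\Diff(\calC,T)=\{\infty\}$) and none otherwise; in the latter case both sides vanish by \eqref{eq:Order}, so assume $V$ represents $T$. Fixing one $x_0$ with $Q(x_0)=T$ and using that the pointwise stabilizer $H_{x_0}$ is trivial (since $\dim V=n+1$ and $T$ is nonsingular), the standard unfolding gives
$$
\frac12\int_{X_K(\C)}G(T,\varphi,v)=\frac12\sum_{h\in H(\A_f)/K}\varphi(h^{-1}x_0)\int_{\calD}\xi^n_0(x_0 a,z)
=\Big(\sum_{h\in H(\A_f)/K}\varphi(h^{-1}x_0)\Big)\cdot \Ht_\infty(x_0 a),
$$
using $H(\R)$-equivariance of $\xi^n_0$ so that the inner integral depends only on $Q(x_0 a)={}^taTa$ and equals $\Ht_\infty(x_0a)$. (One must check convergence of the sum over $h$; this is where the rapid decay of $\xi^n_0$ from \cite[Section 6]{KM2} and the compactness/finiteness of the relevant double cosets enter.)

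Next I would recognize the finite sum over $h$ as an adelic orbital integral. With $\varphi=\prod_{p<\infty}\varphi_p$ factorizable (it suffices to treat this case by linearity, and for the stated constant one takes $\varphi=\varphi_L=\prod\cha(\hat L_p^n)$, though the argument works for general $K$-invariant $\varphi$ after adjusting the constant appropriately), the sum $\sum_{h\in H(\A_f)/K}\varphi(h^{-1}x_0)$ equals $\vol(K,d_Lh)^{-1}\,O_T(\varphi,d_Lh)=\vol(K,d_Lh)^{-1}\prod_{p<\infty}O_T(\varphi_p,d_{L_p}h)$, where $O_T$ is the orbital integral from Section \ref{sect2.1}. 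By Theorem \ref{theo:LocalSiegel-Weil} and Proposition \ref{prop:localSW} (taking $g=1$), each local factor is $C(L_p)\,W_{T,p}(1,0,\lambda(\varphi_p))$, so
$$
\sum_{h\in H(\A_f)/K}\varphi(h^{-1}x_0)=\frac{C(L)}{\vol(K,d_Lh)}\prod_{p<\infty}W_{T,p}(1,0,\lambda(\varphi_p)).
$$
Then Theorem \ref{thm:alsw} applied to $x_0 a\in V^n(\R)$ with moment matrix ${}^taTa$ gives
$$
\Ht_\infty(x_0a)\cdot q^T=-B_{n,\infty}\det(v)^{-\kappa/2}\,W_{T,\infty}'(g_\tau,0,\Phi_\kappa),
$$
since $q^T=e(\tr(Tu))$ matches the transformation behavior \eqref{eq:i11} and $\det(v)^{-\kappa/2}W_{T,\infty}(g_\tau,s,\Phi_\kappa)$ vanishes at $s=0$ (as $T$ is not positive definite, by Proposition \ref{prop:whitt0}).

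Finally I would assemble the pieces. Because $W_{T,\infty}(g_\tau,0,\Phi_\kappa)=0$ while all finite $W_{T,p}(1,0,\lambda(\varphi_p))$ are the values at $s=0$ of holomorphic functions, the derivative of the product $E_T(g_\tau,s,\lambda(\varphi)\otimes\Phi_\kappa)=W_{T,\infty}(g_\tau,s,\Phi_\kappa)\prod_{p<\infty}W_{T,p}(1,s,\lambda(\varphi_p))$ at $s=0$ is $W_{T,\infty}'(g_\tau,0,\Phi_\kappa)\prod_{p<\infty}W_{T,p}(1,0,\lambda(\varphi_p))$, and dividing by $\det(v)^{\kappa/2}$ gives $E_T'(\tau,0,\lambda(\varphi)\otimes\Phi_\kappa)$. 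Combining the three displayed identities,
$$
\widehat\deg_\infty\widehat Z(T,\varphi,v)\cdot q^T
=\frac{C(L)}{\vol(K,d_Lh)}\Big(\prod_{p<\infty}W_{T,p}(1,0,\lambda(\varphi_p))\Big)\big(-B_{n,\infty}\big)\det(v)^{-\kappa/2}W_{T,\infty}'(g_\tau,0,\Phi_\kappa),
$$
which is exactly $C_{n,\infty}\cdot E_T'(\tau,0,\lambda(\varphi)\otimes\Phi_\kappa)$ with $C_{n,\infty}=-B_{n,\infty}C(L)/\vol(K,d_Lh)$. I expect the main obstacle to be the rigorous justification of the unfolding and the convergence/interchange of the sum over $h\in H(\A_f)/K$ with the integral over $\calD$ when $X_K(\C)$ is noncompact — in particular, controlling the contributions near the boundary of the toroidal compactification and making sure the Green current $G(T,\varphi,v)$ is integrable there — which is why the hypothesis $j>0$ (so that $\calD_x$ is empty over $\C$ and $\xi^n_0$ is rapidly decaying) is used in an essential way.
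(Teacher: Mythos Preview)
Your proposal is correct and follows essentially the same route as the paper: unfold the integral over $X_K(\C)$, reduce to a single $H(\Q)$-orbit using Witt's theorem and the triviality of the pointwise stabilizer (since $\dim V=n+1$), factor into the archimedian local height (handled by Theorem~\ref{thm:alsw}) times an adelic orbital integral (handled by the local Siegel--Weil formula, Proposition~\ref{prop:localSW}), and reassemble via the product expansion of $E_T$. One small point shared with the paper's own write-up: your parenthetical ``equivalently $\Diff(\calC,T)=\{\infty\}$'' and the appeal to \eqref{eq:Order} do not literally cover the case $j\ge 3$ where $V_\infty$ itself fails to represent $T$ while all $V_p$ do; there the right-hand side still vanishes, but because $\ord_{s=0}W_{T,\infty}\ge 2$ by Proposition~\ref{prop:whitt0}(i), not because $|\Diff|>1$.
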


\begin{proof}
The archimedian arithmetic degree is given by
\begin{align*}
 \widehat \deg_\infty\widehat Z(T,\varphi,v) &= \frac{1}{2}\int_{X_K(\C)} G(T,\varphi,v)\\
&=  \frac{1}{2}\int\limits_{H(\Q)\bs \calD\times H(\A_f)/K} \sum_{\substack{x\in V(\Q)^n\\ Q(x)=T}}
\varphi(h_f^{-1} x) \cdot \xi_0^n(xa,z)\, dh_f.
\end{align*}
This quantity vanishes if $V(\Q)$ does not represent $T$. Then, by the  Hasse principle, there is at least one finite prime $p$ such that $V(\Q_p)$ does not represent $T$, i.e., $p \in \Diff(\mathcal C, T)$. As $\infty \in \Diff(\mathcal C, T)$, we see that $|\Diff(\calC, T)| >1$ and  that  $E_T'(g,0,\lambda(\varphi)\otimes \Phi_\kappa) =0$. Hence the theorem holds trivially.

We now assume that there exists an $x_0\in V(\Q)^n$ with $Q(x_0)=T$.
Then, by Witt's theorem, any other $x\in V(\Q)^n$ with $Q(x)=T$ is an $H(\Q)$-translate of $x_0$. Let $dh_f$ be any prefixed Haar measure on $H(\A_f)$.  Notice also that that  the point-wise stabilizer $H_{x_0}(\Q)$ of $x_
0$ is trivial since $n=m+1$.  By unfolding, the above integral is equal to
\begin{align*}
  \widehat \deg_\infty(\widehat Z(T,\varphi,v)) &=  \frac{1}{2}\vol(K,dh_f)^{-1}\int_{H_{x_0}(\Q)
\bs \calD\times H(\A_f)}
\varphi(h_f^{-1} x_0) \cdot \xi_0^n( x_0 a,z)\, dh_f\\
&=\frac{1}{2}
\vol(K, dh_f)^{-1} \int_{ H(\A_f)}
\varphi(h_f^{-1} x_0)\, dh_f\cdot
\int_{\calD}\xi_0^n( x_0 a,z)  .
\end{align*}
By Theorem \ref{thm:alsw}, the archimedian integral is equal to
$$
\frac{1}{2}\int_{\calD}\xi_0^n( x_0 a,z) =
\Ht_\infty(x_0a)
= -B_{n,\infty}\det(v)^{-\kappa/2}\cdot W'_{T,\infty}(g_\tau,s_0,\Phi_\kappa)\cdot q^{-T}.
$$
On the other hand, the quantity
$$
\vol(K, dh_f)^{-1} \int_{ H(\A_f)}
\varphi(h_f^{-1} x_0)\, dh_f
$$
is clearly independent of the choice of the product Haar measure $dh_f$. We choose $dh_f = d_L h$, then the local Siegel-Weil formula, Proposition~\ref{prop:localSW}, gives
$$
\vol(K, dh_f)^{-1} \int_{ H(\A_f)}
\varphi(h_f^{-1} x_0)\, dh_f  = \frac{C(L)}{\vol(K, d_L h)} W_{T, f}(1, 0, \lambda(\varphi)).
$$
This implies the assertion.
%
\end{proof}

\subsection{The arithmetic Siegel-Weil formula at a finite prime} \label{sect:ASW-finite}

Assume that $n=m+1$. Let $p\ne 2$ be a prime number. Let $L\subset V$ be a $p$-unimodular lattice.  Let  $H=\SO(L)$,  and put $\tilde H=\GSpin(L)$. Let $K=K_pK^p$  be a compact open subgroup of $H(\A_f)$ fixing $L$ with
$$
K_p=H(\Z_p)=\{ h \in H(\Q_p): \;  hL_p =L_p\}.
$$
For convenience, we assume that there is a   compact open subgroup $\tilde K\subset \tilde H(\A_f)$ which  contains $\hat\Z^\times$ and which maps onto $K$. Such a $\tilde K$ always exists if $K$ is
contained in the discriminant kernel subgroup of some even lattice in $V$ (see Remark \ref{rem:7.4}).
%
Under this assumption,  the Shimura variety $X_K$ associated to $(H, K)$ is the same as the Shimura variety associated to $(\tilde H, \tilde K)$. The associated complex spaces are both equal to
$$
\tilde H(\Q) \backslash \mathbb D \times \tilde H(\A_f)/\tilde K \cong H(\Q) \backslash \mathbb D \times H(\A_f)/K.
$$
Let $e,f\in V$ be orthogonal vectors of negative length in $\Z_{(p)}^\times$. Then $\delta =ef\in C(V)$
 with  $\delta^\iota =-\delta$
and $\norm(\delta) =\delta \delta^\iota  \in \Z_{(p)}^\times$.
This determines a symplectic form $\psi_\delta(x, y) =\tr (x \delta y^\iota)$  on $C(V)$, for which
the lattice $C(L)$ is $p$-unimodular.
We obtain an embedding
$$
\HH \rightarrow \hbox{GSp}(C(V))
$$
and a morphism of Shimura varieties over $\Q$ from $X_K$ to the Siegel Shimura variety determined by the symplectic space $(C(V),\psi_\delta)$ and a suitable compact open subgroup.

%
The integral model of the Siegel Shimura variety induces then an integral model $\mathcal X=\mathcal X_K$ of $X_K$ \cite{Ki}, \cite[Section 4]{AGHM}.  Kisin showed that  $\mathcal X$  is smooth over $\Z_{(p)}$, if the compact open subgroup $K^p\subset H(\A_f^p)$ is sufficiently small.

By pulling back the universal abelian scheme, we obtain a
polarized abelian scheme $(\mathcal A^{\text{KS}}, \lambda^{\text{KS}}, \eta^{\text{KS}})$ with level structure over $\mathcal X$, the {\em Kuga-Satake abelian scheme}. It is equipped with a right $C(L)$-action.

Given a $\Z_{(p)}$-scheme $S$ and  an $S$-point   $\alpha:  S \rightarrow \mathcal X$, we obtain a triple  $\mathbf{A}_\alpha =(A, \lambda, \eta ) =\alpha^*(\mathcal A^{\text{KS}}, \lambda^{\text{KS}}, \eta^{\text{KS}})$ by pulling back the Kuga-Satake scheme.  In  particular, $\eta$ is a $\tilde K^{p}$-level structure
$$
\eta:  \mathbf{H}_{\A_f^p} :=\bigotimes_{\substack{l < \infty \\  l\ne p}}\left( H_{l}^1(A)\otimes_{\Z_l} \Q_l\right)   \xrightarrow{\sim} C(V) \otimes_\Q \A_f^p,
$$
sending $\mathbf{V}_{\A_f^p} $ (the \'etale realization of the motive associated to the representation of $\tilde H$ on $V$) onto $V\otimes \A_f^p$.
Let $V(\mathbf{A}_\alpha)\subset \End_{C(L)}(A)_{(p)}$
 be the space of special endomorphisms of $\mathbf A_\alpha$ defined in  \cite[Definition 3.3]{Cihan-thesis}.

Given $T\in \Sym_n(\Q)$ with $\det T \ne 0$, the special cycle $\mathcal Z(T) \rightarrow \mathcal X$ is defined as the stack over $\mathcal X$ with functor of points
$$
\mathcal Z(T)(S) =\{ (\alpha,  x): \;  \alpha  \in \mathcal X(S), \,  x=(x_1, \dots, x_n) \in V(\mathbf{A}_\alpha)^n, \, Q( x) =T, \,\eta \circ x_j \circ  \eta^{-1}  \in \hat{L}^{(p)} \},
$$
where $\hat{L}^{(p)} = \prod_{ l\neq  p} L_l$, and $\hat L=\prod_{l} L_l$. In this subsection we drop the Schwartz function  $\varphi$ from the notation of $\calZ(T,\varphi)$, since we only consider it here for the characteristic function of $\hat L^n$.

Soylu showed in  \cite[Proposition 3.7]{Cihan-thesis} that  the image of the forgetful map
$$
\mathcal Z(T) \rightarrow \mathcal X
$$
sends $\mathcal Z(T)(\kay)$ into the supersingular locus $\mathcal X_{ss} \subset \mathcal X(\kay)$.
According to \cite[Proposition 7.2.3]{HP}, there exists an $\alpha_0 \in \mathcal X_{ss}$ such that the  $p$-divisible group $(X_0, \lambda_0)$ associated to $\mathbf{A}_{\alpha_0}$ is equal the $p$-divisible group $(\mathbb X_0,\lambda_0)$ considered in Section \ref{sect:RZSpace}.

 According to \cite[Theorem 7.2.4]{HP} or \cite[Theorem 1.2]{Shen}, there is an isomorphism  of formal schemes
\begin{equation} \label{eq:FpPoints}
\Theta:  \newH(\Q) \backslash \overline{\RZ} \times \newH(\A_f^p)/K^p  \cong \tilde{\newH}(\Q)\backslash \RZ \times \tilde{\newH}(\A_f^p)/\tilde K^p
   \cong (\widehat{\mathcal X}_W)_{/\mathcal X_{ss}},
\end{equation}
where $(\widehat{\mathcal X}_W)_{/\mathcal X_{ss}}
$ is the completion of  $\mathcal X_W$ along  the supersingular  locus $\mathcal X_{ss}$, and $\overline{\RZ} =p^\Z \backslash \RZ$.
The above discussion implies that for every $(\alpha, x) \in \mathcal Z(T)(\kay)$ the space of special endomorphisms satisfies
$$
V(\mathbf{A}_\alpha) \otimes\Q \cong  \newV,
$$
where $\newV$ is the neighboring quadratic space over $\Q$ associated with $\mathcal C$ at $p$.

\begin{proposition}
\label{prop:Counting}
Let $\newL$ be a fixed lattice of $\newV$ such that $\newL_p $ is  a dual  vertex lattice in $\newV_p$ of type $2$ as in Section~\ref{sect:6.2} and $\hat{\newL}_{q} \cong \hat{L}_{q}$ for $q\ne p$. Let $\varphi_{\newL}=\cha(\hat{\newL}^n)$.  Let $T \in \Sym_n(\Q)$ and assume that it satisfies the conditions of Theorem \ref{theo:Soylu} at the  prime $p$.
Then
\begin{align*}
|\mathcal Z(T)(\kay)|&:=\sum_{x \in \mathcal Z(T)(\kay)} \frac{1}{|\Aut(x)|}
\\
&= 2 \frac{C(\newL)}{\vol(\newK, d_\newL h)} W_{T, f}(1, 0, \lambda(\varphi_\newL))<\infty.
\end{align*}
Here  $\newK=\newK_p K^p$  is the compact open subgroup of $\newH(\A_f)$ with $\newK_p=\SO(\newL_p)\subset \newH(\Q_p)$.
In  particular,  if $\mathcal Z(T)(\kay)$ is not empty, then $ \Diff(\calC, T)=\{p \}$.
\end{proposition}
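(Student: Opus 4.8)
The strategy is to count the points of $\mathcal Z(T)(\kay)$ via the $p$-adic uniformization \eqref{eq:FpPoints} and then identify the resulting adelic orbital integral with a product of local Siegel--Weil contributions, using Proposition \ref{prop:Measure} at $p$ and Proposition \ref{prop:localSW} at the remaining finite places. First I would use the isomorphism $\Theta$ of \eqref{eq:FpPoints} to translate a $\kay$-point of $\mathcal Z(T)$ into the data of a point of $\overline{\RZ}$ together with a class in $\newH(\Q)\backslash\newH(\A_f^p)/K^p$ and a special quasi-endomorphism tuple $x\in\newV^n$ with $Q(x)=T$; the finiteness assertion is exactly Theorem \ref{theo:Soylu}, which guarantees $\mathcal Z(T)^{\red}$ is $0$-dimensional and that the only contributing dual vertex lattices have $t_\Lambda=2$, so each $\RZ_\Lambda$ occurring is a single reduced point (so the count with automorphisms makes sense). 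A crucial input here is that supersingularity of $\mathbf A_\alpha$ forces $V(\mathbf A_\alpha)\otimes\Q\cong\newV$, so that a necessary condition for $\mathcal Z(T)(\kay)\neq\emptyset$ is that $\newV$ (equivalently $V^{(p)}$) represents $T$; since $\newV$ differs from $\mathcal C_p$ only in Hasse invariant at $p$, this gives $\Diff(\mathcal C,T)=\{p\}$, the last assertion of the proposition.

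Next, I would set up the counting formula. Fixing one $x_0\in\newV^n$ with $Q(x_0)=T$ (which exists by the previous paragraph when the set is non-empty), Witt's theorem says every other such tuple is an $\newH(\Q)$-translate, and since $n=m+1$ the pointwise stabilizer $\newH_{x_0}$ is trivial. Unfolding the double coset description of $\mathcal X_{ss}$ through $\Theta$, the weighted cardinality $\sum_{x}\frac{1}{|\Aut(x)|}$ becomes
\[
\sum_{x\in\mathcal Z(T)(\kay)}\frac{1}{|\Aut(x)|}
= \Big(\sum_{\Lambda\subset\newL_p\ \text{dual vertex},\ t_\Lambda=2}|\RZ_\Lambda(\kay)|\Big)\cdot \vol(K^p,dh^p)^{-1}\int_{\newH(\A_f^p)}\varphi_{\newL}^p(h^{-1}x_0)\,dh^p,
\]
where the local factor at $p$ collects the number of type-$2$ dual vertex lattices containing the image of $x_0$. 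By Theorem (Howard--Pappas/Li--Zhu) part (4), $\overline{\RZ}_\Lambda\cong S_\Lambda$ and for $t_\Lambda=2$ the variety $S_\Lambda=S_\Lambda^+\sqcup S_\Lambda^-$ is two reduced points, so $|\RZ_\Lambda(\kay)|=2$; moreover by Proposition \ref{prop:SpecialLattice} all such $\Lambda$ are $\Z_p$-equivalent to $\Lambda(2)=\newL_p$. Hence the $p$-component of the count is $2$ times the number of $\Z_p$-lattices $\newL_p'\cong\newL_p$ with $x_0\in\newL_p'{}^n$, and this is precisely $\vol(\newK_p,d_{\newL_p}h)^{-1}\int_{\newH(\Q_p)}\varphi_{\newL_p}(h^{-1}x_0)\,dh$ provided one knows that the stabilizer condition $\{h:\, h x_0\in\newL_p^n\}$ equals $\newK_p$ up to the right volume normalization --- this is exactly the content of Proposition \ref{prop:Measure}(2) (or (1)), giving $\vol(\newK_p,d_{\newL_p}h)/C(\newL_p)=W_{T,p}(1,0,\lambda(\varphi_{\newL_p}))$ for the relevant unimodular-plus-$\pi$ shape of $T$.

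Finally I would assemble the pieces: at the finite places $q\neq p$ the orbital integral $\int_{\newH(\Q_q)}\varphi_{\newL_q}(h^{-1}x_0)\,dh$ is computed by the local Siegel--Weil formula of Proposition \ref{prop:localSW} as $C(\newL_q)\,W_{T,q}(1,0,\lambda(\varphi_{\newL_q}))$, and at $p$ by the paragraph above; choosing the product Haar measure $d_{\newL}h=\prod_{q<\infty}d_{\newL_q}h$ and multiplying through, the volume factors combine into $\vol(\newK,d_\newL h)$ and the constants into $C(\newL)=\prod_{q<\infty}C(\newL_q)$, yielding
\[
\sum_{x\in\mathcal Z(T)(\kay)}\frac{1}{|\Aut(x)|}
= 2\,\frac{C(\newL)}{\vol(\newK,d_\newL h)}\,W_{T,f}(1,0,\lambda(\varphi_\newL))
= 2\,\frac{C(\newL)}{\vol(\newK,d_\newL h)}\prod_{q<\infty}W_{T,q}(1,0,\lambda(\varphi_{\newL_q})),
\]
which is finite since only finitely many factors differ from the unramified value $1$. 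The main obstacle I anticipate is the bookkeeping in matching the geometric count of type-$2$ dual vertex lattices through $\Theta$ with the adelic orbital integral --- specifically verifying that the factor $2=|S_\Lambda(\kay)|$ appears exactly once per uniformized point (not over- or under-counted against the $\RZ^{(0)}\sqcup\RZ^{(1)}$ decomposition and the $p^{\Z}$-quotient) and that the local stabilizer identity $\{h\in\newH(\Q_p):h x_0\in\newL_p^n\}=\newK_p$ holds under the hypotheses of Theorem \ref{theo:Soylu}; the latter should follow from the explicit Gram matrix \eqref{eq:LatticeLambda} for $\newL_p$ exactly as in the proof of Proposition \ref{prop:Measure}.
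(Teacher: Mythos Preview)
Your approach is essentially the paper's: $p$-adic uniformization via $\Theta$, Soylu's theorem to reduce to type-$2$ dual vertex lattices (each contributing $|S_\Lambda(\kay)|=2$ reduced points), unfolding to an adelic orbital integral, and the local Siegel--Weil formula to convert each local orbital integral into $C(\newL_q)\,W_{T,q}$. Two small corrections: in your first displayed formula the sum should range over dual vertex lattices $\Lambda$ of type $2$ with $x_0\in\Lambda^n$ (not $\Lambda\subset\newL_p$); and you do not need Proposition~\ref{prop:Measure}(2) or any stabilizer identity at $p$ --- the equality between the lattice count and $\vol(\newK_p)^{-1}\int_{\newH(\Q_p)}\varphi_{\newL_p}(h^{-1}x_0)\,dh$ is immediate from the coset description, after which Proposition~\ref{prop:localSW} applies uniformly at every finite place (this also disposes of your anticipated obstacle about the factor $2$ and the $p^{\Z}$-quotient, since the paper works directly on $\overline{\RZ}$).
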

\begin{proof} Let $\pi: \mathcal Z(T) \rightarrow \mathcal X$ be the forgetful map, and identify via (\ref{eq:FpPoints})
$$
\mathcal X_{ss}(\kay) = \newH(\Q) \backslash \overline{\RZ} \times \newH(\A_f^p)/K^p,
$$
where $\mathcal X_{ss}(\kay)$ denotes the supersingular locus of $\mathcal X(\kay)$. By a result of Soylu \cite[Proposition 3.7]{Cihan-thesis}
 the image of $\mathcal Z(T)(\kay)$ lies in $\mathcal X_{ss}(\kay)$.
Notice that $(\mathbf A,  x) \in  \mathcal Z(T)(\kay)$ implies that  the $p$-divisible group $X$ of $\mathbf A$ belongs to $\mathcal Z(J(x))(\kay)$, where $J(x)$ is the  sublattice of $\newV_p$ generated by the $p$-adic components of $x$ (recall that the stabilizer of $x$ in $\HH_p$ is trivial).

By Proposition \ref{prop:SpecialLattice}, we have
$$
\overline{\mathcal Z (J(x) )}(\kay) =\bigsqcup_{\substack{ t_\Lambda =2 \\ x \in \Lambda^n}} S_\Lambda(\kay)
   =\bigsqcup_{\substack{ h_p \in \newH(\Q_p)/\newK_p \\ x \in  h_p\newL_p}} S_{h_p\newL_p}(\kay),
$$
where $\overline{\mathcal Z(J)}$ is the image of $\mathcal Z (J)$ in $\overline{\RZ}$.  Recall that $|S_{\Lambda}(\kay)|=2$ for any dual vertex lattice $\Lambda \subset \newV_p$ of type $2$. So we find
\begin{align*}
\sum_{x \in \mathcal Z(T)(\kay)} \frac{1}{|\Aut(x)|}
 &=2 \sum_{\substack{ x \in \newV^n  \\ Q(x) =T}}
    \sum_{ h \in \newH (\Q) \backslash \newH(\A_f)/\newK}\frac{1}{|\Gamma_h|} \varphi_{\newL}(h^{-1} x)
 \\
 &=\frac{2}{\vol(\newK, d_{\newL} h)} \int_{\newH(\A_f)} \varphi_{\newL}(h^{-1} x) d_{\newL} h,
\end{align*}
if there is an $ x \in  \newV^n$ with $Q( x) =T$ (otherwise, it is zero). Here
$d_{\newL}$ is the Haar  measure on $\newH(\A_f)$  associated to the lattice $\newL$, and
$
|\Gamma_h|=h^{-1} H(\Q) h\cap K
$.
Now applying the local Siegel-Weil formula, we obtain the proposition.
\end{proof}

Recall that the  arithmetic degree of $\mathcal Z(T)$ at $p$ is defined as
\begin{equation}
\widehat{\deg}_p(\mathcal Z(T)) =\sum_{x \in \mathcal Z(T)(\kay)} \frac{ \Ht_p(x)}{|\Aut(x)|}\cdot \log p
\end{equation}
where $\Ht_p(x)$ is the length of the \'etale local ring $\co_{\mathcal Z(T), x}$ of $\mathcal Z(T)$ at the point $x$.
The following result is a refinement of Theorem \ref{maintheo1} (3).

\begin{theorem}
\label{thm:arithswfin}
Fix a prime number $p \ne 2$.  Let $L\subset V $ be a $p$-unimodular lattice. Let $T \in \Sym_n(\Q)$ such that $T_p$ satisfies the conditions in  Theorem \ref{theo:Soylu}. Then the  arithmetic Siegel-Weil formula holds for $T$ with
$$
\widehat{\deg}_p \mathcal Z(T)\cdot  q^T =C_{n, p} \cdot E_T'(\tau, 0, \lambda(\varphi_L)\otimes \Phi_\kappa),
$$
where
$$
C_{n, p} = - B_{n, \infty}  \frac{C(L)}{\vol(K, d_L h)} .
$$
In particular,  $C_{n, p} =C_{n, \infty}$.
\end{theorem}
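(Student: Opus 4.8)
The plan is to combine the local arithmetic Siegel--Weil formula at $p$ (Theorem \ref{theo:LASW-finite}), the mass formula for the supersingular points of $\calZ(T)$ (Proposition \ref{prop:Counting}), the local Siegel--Weil formula together with its consequences for Haar measures (Propositions \ref{prop:localSW} and \ref{prop:Measure}), and the evaluation of the archimedean Whittaker function at $s=0$ (Remark \ref{rem2.11}). Write $\newV=\newV^{(p)}$ for the neighbouring quadratic space of $\calC$ at $p$, fix a lattice $\newL\subset\newV$ with $\newL_p$ a type-$2$ dual vertex lattice and $\newL_q\cong L_q$ for $q\ne p$ as in Proposition \ref{prop:Counting}, and set $\newH=\SO(\newV)$ and $\newK=\newK_pK^p$ with $\newK_p=\SO(\newL_p)$; note $\kappa=\rho_n=\tfrac{m+2}{2}$ since $n=m+1$. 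I would first reduce to the case $\Diff(\calC,T)=\{p\}$: if $p\notin\Diff(\calC,T)$ or $|\Diff(\calC,T)|>1$, then $\calZ(T)(\kay)=\emptyset$ by the last assertion of Proposition \ref{prop:Counting}, so the left side vanishes, while the right side vanishes by the argument proving Theorem \ref{maintheo1}(1) (using $W_{T,p}(1,0,\lambda(\varphi_{L_p}))=0$, the factorization $E_T=\prod_v W_{T,v}$, and \eqref{eq:Order}). When $\Diff(\calC,T)=\{p\}$, the matrix $T$ is positive definite, $\newV$ represents $T$ at every place hence globally, and $\calZ(T)(\kay)\neq\emptyset$.

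Assume now $\Diff(\calC,T)=\{p\}$. By Theorem \ref{theo:LASW-finite} the local height $\Ht_p(x)\log p=W_{T,p}'(1,0,\lambda(\varphi_{L_p}))/W_{T^u,p}(1,0,\lambda(\varphi_{L_p}))$ is independent of $x\in\calZ(T)(\kay)$, so with Proposition \ref{prop:Counting},
\[
\widehat{\deg}_p\calZ(T)=|\calZ(T)(\kay)|\cdot\frac{W_{T,p}'(1,0,\lambda(\varphi_{L_p}))}{W_{T^u,p}(1,0,\lambda(\varphi_{L_p}))}=\frac{2\,C(\newL)}{\vol(\newK,d_\newL h)}\,W_{T,f}(1,0,\lambda(\varphi_\newL))\cdot\frac{W_{T,p}'(1,0,\lambda(\varphi_{L_p}))}{W_{T^u,p}(1,0,\lambda(\varphi_{L_p}))}.
\]
On the Eisenstein side I would factor $E_T(\tau,s,\lambda(\varphi_L)\otimes\Phi_\kappa)=\det(v)^{-\kappa/2}W_{T,\infty}(g_\tau,s,\Phi_\kappa)\prod_{q<\infty}W_{T,q}(1,s,\lambda(\varphi_{L_q}))$; since $W_{T,p}(1,0,\lambda(\varphi_{L_p}))=0$ only the $p$-factor survives differentiation at $s=0$, and the archimedean factor is $\det(v)^{-\kappa/2}W_{T,\infty}(g_\tau,0,\Phi_\kappa)=-\tfrac{2}{B_{n,\infty}}q^T$ by Remark \ref{rem2.11} and the transformation law \eqref{eq:i11}, so
\[
E_T'(\tau,0,\lambda(\varphi_L)\otimes\Phi_\kappa)=-\frac{2}{B_{n,\infty}}\,q^T\,W_{T,p}'(1,0,\lambda(\varphi_{L_p}))\prod_{q\ne p}W_{T,q}(1,0,\lambda(\varphi_{L_q})).
\]
Using $W_{T,f}(1,0,\lambda(\varphi_\newL))=W_{T,p}(1,0,\lambda(\varphi_{\newL_p}))\prod_{q\ne p}W_{T,q}(1,0,\lambda(\varphi_{L_q}))$ (as $\newL_q\cong L_q$ for $q\ne p$), the asserted identity $\widehat{\deg}_p\calZ(T)\cdot q^T=-B_{n,\infty}\tfrac{C(L)}{\vol(K,d_Lh)}E_T'(\tau,0,\lambda(\varphi_L)\otimes\Phi_\kappa)$ — after cancelling the common factor and the local constants and measures away from $p$ (which coincide on the two sides since $\newL_q\cong L_q$ there) — reduces to the purely local identity
\[
\frac{C(\newL_p)}{\vol(\newK_p,d_{\newL_p}h)}\,W_{T,p}(1,0,\lambda(\varphi_{\newL_p}))=\frac{C(L_p)}{\vol(K_p,d_{L_p}h)}\,W_{T^u,p}(1,0,\lambda(\varphi_{L_p})).
\]

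It remains to prove this local identity, which is the crux. By Proposition \ref{prop:Measure}(1) the right-hand side equals $1$. For the left-hand side, the local Siegel--Weil formula (Proposition \ref{prop:localSW}) with $\phi=\varphi_{\newL_p}$ and a vector $x$ with $Q(x)=T$ spanning $J(x)\subset\newV_p$ gives $W_{T,p}(1,0,\lambda(\varphi_{\newL_p}))=\vol(\{h\in\SO(\newV_p):h^{-1}x\in\newL_p^n\},d_{\newL_p}h)/C(\newL_p)$; since $\SO(\newV_p)$ acts transitively on the type-$2$ dual vertex lattices (Proposition \ref{prop:SpecialLattice}), this volume is $\vol(\newK_p,d_{\newL_p}h)$ times the number $N_p(T)$ of type-$2$ dual vertex lattices of $\newV_p$ containing $J(x)$, so the claim is $N_p(T)=1$ (equivalently $W_{T,p}(1,0,\lambda(\varphi_{\newL_p}))=W_{T^\flat,p}(1,0,\lambda(\varphi_{\newL_p}))$ for $T^\flat$ as in Proposition \ref{prop:Measure}(2)). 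To prove $N_p(T)=1$: by Soylu's hypothesis $T\sim\diag(T_1,T_2)$ with $T_1$ unimodular of rank $r=r(T)\in\{n-1,n-2,n-3\}$ and entries of $T_2$ in $p\Z_p$, so $J(x)=M_1\oplus J_2$ with $M_1$ unimodular of rank $r$. Any type-$2$ dual vertex lattice $\Lambda\supseteq J(x)$ splits as $\Lambda=M_1\oplus\Lambda_2$ with $\Lambda_2$ a type-$2$ dual vertex lattice of the complement $W:=(M_1^\perp\cap\newV_p)\otimes_{\Z_p}\Q_p$, of dimension $n+1-r\in\{2,3,4\}$; the existence of $\Lambda$ forces $W$ anisotropic (in the corank-$3$ case this is exactly Soylu's condition $\det(2T_1)=\det L$ from Theorem \ref{theo:Soylu}), hence $W$ has a unique maximal $\Z_p$-lattice and therefore a unique type-$2$ dual vertex lattice $\Lambda_2$, and a short valuation count using that the entries of $T_2$ lie in $p\Z_p$ shows $J_2\subset\Lambda_2$, so $\Lambda$ is uniquely determined. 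This yields $N_p(T)=1$, the local identity, and the theorem; the equality $C_{n,p}=C_{n,\infty}$ is immediate from the displayed formula for $C_{n,p}$ and the definition of $C_{n,\infty}$ in Theorem \ref{theo:ArithSW-infinite}.

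I expect the last step to be the main obstacle: establishing $N_p(T)=1$ and reconciling it with Propositions \ref{prop:Measure}(1)--(2) requires a careful analysis of dual vertex lattices in $\newV_p$ and of how Soylu's $0$-dimensionality conditions translate into anisotropy of the complement $W$. The remaining difficulty is the bookkeeping of the two families of local measures ($d_{L_q}h$ versus $d_{\newL_q}h$, which agree for $q\ne p$ but differ at $p$) and of the constants $C(L)$ and $C(\newL)$; this is routine once one notices that every local contribution away from $p$ cancels between the two sides.
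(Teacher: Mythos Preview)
Your proposal follows exactly the paper's route: combine Theorem~\ref{theo:LASW-finite} with the counting formula of Proposition~\ref{prop:Counting}, factor the Eisenstein coefficient, and use Remark~\ref{rem2.11} for the archimedean factor. The paper's own proof is more compressed---it simply cites Theorem~\ref{theo:LASW-finite} and Propositions~\ref{prop:Counting}, \ref{prop:Surprise}, \ref{prop:Measure} and writes down the chain of equalities---but the underlying computation is the same. In particular, both arguments hinge on the purely local identity
\[
\frac{C(\newL_p)}{\vol(\newK_p,d_{\newL_p}h)}\,W_{T,p}(1,0,\lambda(\varphi_{\newL_p}))=1,
\]
i.e.\ $N_p(T)=1$. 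You have correctly isolated this as the crux; the paper leaves it implicit (Proposition~\ref{prop:Measure}(2) only proves it for the single matrix $T^\flat=\diag(T_1,\pi)$), so your attempt actually supplies more detail than the paper does.

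Your anisotropy argument for $N_p(T)=1$ is the right idea and can be completed. Two points deserve care. First, ``the existence of $\Lambda$ forces $W$ anisotropic'' is not the right justification: what you need is a direct Hasse-invariant computation. Since $M_1$ is unimodular with unit determinant and $p$ is odd, one has $\mathrm{Hasse}(M_1)=1$ and $(\det M_1,\det W)_p=1$, so $\mathrm{Hasse}(W)=\mathrm{Hasse}(\newV_p)=-1$, while $\det W\in\Z_p^\times$. For $\dim W\in\{2,3\}$ this already forces $W$ anisotropic; for $\dim W=4$ one also needs $\det W$ to be a square, which is exactly Soylu's condition $\det(2T_1)=\det L$. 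Second, ``unique maximal lattice $\Rightarrow$ unique type-$2$ dual vertex lattice'' is not automatic: you must check that the maximal integral lattice $M_{\max}\subset W$ satisfies $[M_{\max}':M_{\max}]=p^2$. This holds in each case (a short computation with explicit diagonal models, using that the rescaled form $p^{-1}Q$ on the non-unimodular summand is anisotropic over $\F_p$), and then the standard identity $[M':M]=[M_{\max}':M_{\max}]\cdot[M_{\max}:M]^2$ forces any type-$2$ lattice to equal $M_{\max}$. Since $J_2$ is integral it lies in $M_{\max}$, giving $N_p(T)=1$. With these two refinements your argument is complete.
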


\begin{proof}
We have by  Theorem ~\ref{theo:LASW-finite},  Propositions~\ref{prop:Counting}, ~\ref{prop:Surprise}, and ~\ref{prop:Measure} that
\begin{align*}
\widehat{\deg}_p(\mathcal Z(T))
 &= \sum_{x \in \mathcal Z(T)(\kay)} \frac{1}{|\Aut(x)|}\cdot \Ht_p(x) \log p
 \\
   &=\frac{2 C(\newL)}{\vol(\newK, d_\newL h)} W_{T, f}(1, 0, \lambda(\varphi_\newL)) \cdot  \nu_p(T_2)\log p\\
   &= \frac{2 C(L)}{\vol(K, d_L h)}  \frac{\vol(K_p, d_{L_p})}{C(L_p)} \frac{C(\newL_p)}{\vol(\newK_p, d_{\newL_p}h)} W_{T, p}(1, 0, \lambda(\varphi_{\newL_p}))
   \\
   &\qquad \cdot  \Big(\prod_{q\nmid p\infty}W_{T, q}(1, s, \lambda(\varphi_{L_q})\Big)\Big|_{s=0} \cdot\frac{W_{T, p}'(1, 0, \lambda(\varphi_{L_p}))}{W_{\tilde T, p}(1, 0, \lambda(\varphi_{L_p}))}
 \\
  &= \frac{2 C(L)}{\vol(K, d_L h)}\cdot \frac{E_T'(1, 0, \lambda(\varphi_L) \otimes\Phi_\kappa)}{W_{T, \infty}(1, 0, \Phi_\kappa)}.
 \end{align*}
 Here $\tilde T\in \Sym_n(\Z_p)$ is any $p$-unimodular matrix.
Remark \ref{rem2.11}
gives
 \begin{equation}
- \frac{1}{2}B_{n, \infty} \cdot  W_{T, \infty}(1, 0, \Phi_\kappa)= e^{-2 \pi \tr T}
 \end{equation}
 for any positive definite $n\times n$ matrix $T$.  So
 $$
 - \frac{1}{2}B_{n, \infty} \cdot (\det v)^{-\kappa/2} W_{T, \infty}(g_\tau, 0,  \Phi_\kappa)
=  q^T.
 $$
 Hence we obtain the claimed formula.
\end{proof}

\begin{remark}
\label{rem:7.4}
In this subsection we have assumed for convenience that  $\varphi$
is the characteristic function of $\hat L$, and that there is a  compact open subgroup $\tilde K\subset \tilde H(\A_f)$ containing $\hat\Z^\times$ and mapping onto $K$.
Both can be relaxed. First, we can naturally modify the definition $\mathcal Z(T)$  in \cite{Cihan-thesis} to include $\mathcal Z(T, \varphi)$ for all $\varphi =\varphi_p \varphi^p \in S(V(\A_f)^n)^K$ with $\varphi_p=\cha(L_p^n)$.  The proof of Proposition \ref{prop:Counting} goes through without any change.
As already mentioned, the assumption on $K$ is always fulfilled if
there exists an even lattice $M\subset V$ which is stabilized by $K$ and such that $K$ acts trivially on $M'/M$. In other words,
every `sufficiently small'
compact open subgroups $K$ satisfíes the condition. Finally, we indicate how the results can be modified to hold for general $K$.
Take a compact open subgroup $\tilde K_1$ of $\tilde H(\A_f)$ containing $\hat{\Z}^\times$ such that its image $K_1$ in $H(\A_f)$ is contained in $K$.  Then there is a natural projection
$\mathcal X_{K_1}\rightarrow  \mathcal X_K$ and an analogous projection of Rapoport-Zink spaces. 
The $p$-adic uniformization  identity (\ref{eq:FpPoints}) still holds according to \cite[Theorem 1.2]{Shen}. For $\varphi \in S(V(\A_f)^n)^K$, the special cycle $\mathcal Z_{K_1}(T, \varphi)$ is $K$-invariant and descends to a special cycle $\mathcal Z_K(T, \varphi)$ on $\mathcal \calX_K$.
\end{remark}

\begin{remark}
Assume that $\Diff(\mathcal C, T) =\{ p \}$. We observe the following variant of the local arithmetic Siegel-Weil formula:
\begin{align*}
\Ht_p(x) \log p &= \frac{W_{T, p}'(1, 0, \lambda(\varphi_L))}{W_{\tilde T, p}'(1, 0, \lambda(\varphi_L))},  \quad  \text{$p<\infty$, where $\tilde T$ is $p$-unimodular},
\\
\frac{1}2 \Ht_\infty(x)  &=\frac{W_{T, \infty}'(1, 0, \Phi_\kappa)}{W_{\tilde T, \infty}'(1, 0, \Phi_\kappa)},  \quad
\text{$p=\infty$, where $\tilde T$ is positive definite with $\tr \tilde T =\tr T$.}
\end{align*}
Here the extra $\frac{1}2$ makes sense as we do the integral over the whole symmetric domain $\mathcal D$ instead of its connected component $\mathcal D^+$, while at a finite prime $p$, we did it at each  individual point (connected component). This reinterpretation is different from the previous ones used in \cite{KRY-book}, \cite{KRHilbert}, \cite{KRSiegel}, and \cite{HY} among others.
\end{remark}

\end{document}